\PassOptionsToPackage{pdfauthor={},pdftitle={},hypertexnames=false}{hyperref}
\documentclass[aos]{imsart} 

\setpkgattr{journal}{name}{\empty}
\setpkgattr{author}{prefix}{}

\RequirePackage{amsthm,amsmath,amsfonts,amssymb, yhmath}
\RequirePackage[numbers,sort]{natbib}
\RequirePackage[colorlinks,citecolor=blue,urlcolor=blue]{hyperref}
\RequirePackage{graphicx}

\usepackage{comment} 
\usepackage{natbib} 
\usepackage[usenames,dvipsnames]{color}
\definecolor{RefColor}{rgb}{0,0,.85}  

\makeatletter
\def\acks{\gdef\@thefnmark{}\@footnotetext}
\makeatother  

\usepackage{mathtools}
\usepackage[capitalize]{cleveref}

\usepackage{dsfont}
\usepackage{mathdots}
\usepackage{nccmath} 
\usepackage{scalerel}
\usepackage[hang,flushmargin,para]{footmisc}
\usepackage{caption}
\usepackage{titlesec}
\titleformat*{\section}{\large\bfseries}
\titleformat*{\subsection}{\bfseries}

\RequirePackage[OT1]{fontenc}

\usepackage[english]{babel}

\usepackage{enumitem}
\setlist[itemize]{leftmargin=1.5em}

\usepackage{booktabs}
\usepackage{tikz}
\usetikzlibrary{calc}
\usetikzlibrary{decorations.markings,decorations.pathreplacing}
\tikzstyle{mybraces}=[mirrorbrace/.style={
          decoration={brace, mirror},
          decorate},brace/.style={
          decoration={brace},
          decorate}]

\usepackage{amsmath,amssymb,amscd,amsfonts,amsthm,mathtools}
\usepackage{thmtools}

\usepackage{times}

\startlocaldefs

\DeclareMathOperator{\Tr}{Tr}

\renewcommand{\P}{\mathbb{P}}

\renewcommand{\c}[1]{\{{#1}\}}

\newcommand{\abs}[1]{\lvert #1 \rvert}

\newcommand{\norm}[1]{\lVert #1 \rVert}

\newcommand{\normbb}[1]{\bigg\lVert #1 \bigg\rVert}

\newcommand{\scalar}[2]{\langle{#1} \mspace{2mu}, {#2}\rangle}

\DeclareMathOperator{\diag}{diag}

\newcommand{\2} {\mspace{2 mu}}

\newcommand{\wh}{\widehat}

\renewcommand{\P}{\mathbb{P}}

\newcommand{\Var}{\mathrm{Var}}
\newcommand{\Cov}{\mathrm{Cov}}

\newcommand{\eps}{\varepsilon}

\newcommand{\R}{\mathbb{R}}  
\ifdefined\C\renewcommand{\C}{\mathbb{C}}\else\newcommand{\C}{\mathbb{C}}\fi 
\renewcommand{\Im}{\mathrm{Im}\,} 
\renewcommand{\Re}{\mathrm{Re}\,} 
\newcommand{\N}{\mathbb{N}}  
\renewcommand{\P}{\mathbb{P}}  
\newcommand{\Z}{\mathbb{Z}}  
\newcommand*{\defeq}{\mathrel{\vcenter{\baselineskip0.5ex \lineskiplimit0pt\hbox{\scriptsize.}\hbox{\scriptsize.}}}=}

\newcommand{\df}{\mathfrak d}

\newcommand{\ff}{\mathfrak f}
\newcommand{\gf}{\mathfrak g}

\newcommand{\Bf}{\mathfrak B}
\newcommand{\Ff}{\mathfrak F}
\newcommand{\Mf}{\mathfrak M}
\newcommand{\Sf}{\mathfrak S}

\newcommand{\Hb}{\mathbb H}

\newcommand{\normtwo}[1]{\lVert #1 \rVert_{2}}

\newcommand{\normtwoinf}[1]{\lVert #1 \rVert_{2\to\infty}}
\newcommand{\norminf}[1]{\lVert #1 \rVert_{\infty}}

\DeclareMathOperator{\Gap}{Gap}

\allowdisplaybreaks

\usepackage{amsthm}
\usepackage{thmtools}

\theoremstyle{plain}
\declaretheoremstyle[postheadspace=.4em,headfont=\bfseries,bodyfont=\itshape,spaceabove=8pt,
spacebelow=10pt]{basic}
\theoremstyle{basic}
\declaretheorem[style=basic,name={Theorem}]{theorem}
\declaretheorem[style=basic,sibling=theorem,name={Lemma}]{lemma}

\declaretheorem[style=basic,sibling=theorem,name={Proposition}]{proposition}
\declaretheorem[style=basic,sibling=theorem,name={Corollary}]{corollary}
\theoremstyle{definition}
\newtheorem{definition}[theorem]{Definition}
\declaretheorem[style=definition,name={Remark}]{remark}
\declaretheorem[style=definition,name={Remark},numbered=no]{remark*}
\declaretheorem[style=definition,name={Example}]{example}
\declaretheorem[style=definition,name={Assumption}]{assumption}
\declaretheorem[style=definition,name={Model}]{model}

\crefname{assumption}{Assumption}{Assumptions}
\crefname{model}{Model}{Models}

\usepackage{mathtools}
\usepackage{tikz}
\usetikzlibrary{shapes.geometric}
\usetikzlibrary{arrows.meta}

\makeatletter 
\newcommand{\htarget}[1]{\Hy@raisedlink{\hypertarget{#1}{}}} 
\makeatother
\usepackage{amsmath,amssymb,amscd,amsfonts,amsthm,mathtools}

\usepackage{cancel}
\usepackage{nccmath,tikz}
\usepackage{stmaryrd,scalerel} 

\usepackage{float}

\usetikzlibrary{calc}
\usetikzlibrary{decorations.markings,decorations.pathreplacing}
\usetikzlibrary{shapes.misc}

\DeclareDocumentCommand{\hcancel}{mO{0pt}O{0pt}O{0pt}O{0pt}}{%
    \tikz[baseline=(tocancel.base)]{
        \node[inner sep=0pt,outer sep=0pt] (tocancel) {#1};
        \draw[red] ($(tocancel.south west)+(#2,#3)$) -- ($(tocancel.north east)+(#4,#5)$);
    }%
}%

\usepackage{enumitem}
\newenvironment{proplist}{\begin{enumerate}[
    leftmargin=2.5em,
    labelwidth=0em,
    label=(\roman{enumi}),
    topsep=.1em,
    partopsep=0em,
    itemsep=0em
    ]}
{\end{enumerate}}

 \DeclareMathOperator*{\argmax}{argmax}

\def\P{{\mathbb P}}

\def\Var{\text{\rm Var}}
\def\Cov{\text{\rm Cov}}

\def\Tr{\text{\rm Tr}}

\def\bA{\boldsymbol{A}}

\def\bF{\boldsymbol{F}}

\def\bH{\boldsymbol{H}}
\def\bI{\boldsymbol{I}}
\def\bJ{\boldsymbol{J}}

\def\bS{\boldsymbol{S}}

\def\be{\boldsymbol{e}}

\def\br{\boldsymbol{r}}

\def\bu{\boldsymbol{u}}
\def\bv{\boldsymbol{v}}

\def\bx{\boldsymbol{x}}
\def\by{\boldsymbol{y}}
\def\bz{\boldsymbol{z}}

\def\bzero{\boldsymbol{0}}
\def\bone{\boldsymbol{1}}

\def\fB{\mathfrak{B}}

\def\fF{\mathfrak{F}}
\def\fG{\mathfrak{G}}

\def\fM{\mathfrak{M}}

\def\fS{\mathfrak{S}}

\def\fU{\mathfrak{U}}

\def\C{\mathbb{C}}
\def\D{\mathbb{D}}

\def\bbH{\mathbb{H}}

\def\N{\mathbb{N}}

\def\P{{\mathbb P}}

\def\R{\mathbb{R}}

\def\Z{\mathbb{Z}}

\def\c{\mathbb{c}}

\def\r{\mathbb{r}}

\def\cA{\mathcal{A}}

\def\cE{\mathcal{E}}
\def\cF{\mathcal{F}}
\def\cG{\mathcal{G}}
\def\cH{\mathcal{H}}

\def\cN{\mathcal{N}}

\def\cP{\mathcal{P}}

\def\cR{\mathcal{R}}

\def\cV{\mathcal{V}}

\DeclareMathOperator{\msum}{\medmath\sum}

\DeclareMathOperator{\mint}{\medmath\int}
\newcommand{\argdot}{{\,\vcenter{\hbox{\tiny$\bullet$}}\,}}
\newcommand{\tagaligneq}{\refstepcounter{equation}\tag{\theequation}}

\newcommand{\msup}{\sup\nolimits}
\newcommand{\minf}{\inf\nolimits}

\newcommand{\ind}{\mathbb{I}}

\newcommand{\mean}{\mathbb{E}}
\usepackage[dvipsnames]{xcolor}

\def\diag{\text{\rm diag}}
\def\Tr{\text{\rm Tr}}

\usepackage{makecell}

\expandafter\def\expandafter\normalsize\expandafter{%
    \normalsize%
    \setlength\abovedisplayskip{4pt}%
    \setlength\belowdisplayskip{4pt}%
    \setlength\abovedisplayshortskip{-8pt}%
    \setlength\belowdisplayshortskip{2pt}%
}

\usepackage{CJKutf8}

\endlocaldefs

\makeatletter
\newcommand{\righttagsonce}{%
  \tagsleft@false
  \let\veqno\@@eqno
}
\makeatother

\begin{document}

    \begin{frontmatter}
    \title{
     The zero pattern of a design matrix drives multiple descent in over-parameterized regression
    }
        \author{\fnms{Kevin Han}~\snm{Huang}$^{*,\dagger}$}
        \author{\fnms{Haoyu}~\snm{Ye}$^{*,\ddagger}$}
        \author{\fnms{Somak}~\snm{Laha}$^{\ddagger}$}
        \author{\fnms{Morgane}~\snm{Austern}$^{\ddagger}$}
        \\[1em]
        \normalfont\small $^\dagger$University of Warwick \qquad  $^\ddagger$Harvard University

        \footnotetext[1]{\hspace{-.3em}Equal contributions.}
    
    \begin{abstract}
        Over-parameterized linear regression has been widely studied over the last decade. However, most existing works assume that the covariates are independent and that their covariance matrices are non-degenerate. In this paper, we relax both assumptions and derive deterministic equivalents for the prediction risk in a vanishing-ridge regime. We show that degeneracy of the covariance matrices and dependence can lead to multiple descent, and characterize where the corresponding peaks can occur. Our proofs use a novel graph representation of the variance profile. We show that maximum matchings and the Dulmage--Mendelsohn decomposition of the associated bipartite graph identify the configurations at which the variance becomes singular.
    \end{abstract}

    \end{frontmatter}

\section{Introduction}

Over the last decade, our understanding of over-parameterized models has been reshaped by a
simple observation: they often predict well even when they interpolate the training data.
Plotted against model size, the test error need not follow the classical U-shape: it can
fall, rise to a peak at the interpolation threshold, and fall again. This \emph{double
descent} is well-understood for linear and kernel models. In the isotropic case it comes from
a single spectral event \citep{belkin2019reconciling, hastie2022surprises}: at the threshold the design {matrix} loses rank, the minimum-norm interpolator amplifies its smallest singular values,
and the risk spikes. In these analyses the {data} covariance is non-degenerate, either isotropic or
positive definite, so the spectrum reaches the hard edge at a single { dimension-to-sample-size} ratio and the
curve has one peak. More recent works { observe} \emph{several} peaks, but usually place them by
design through the architecture, separate feature blocks or distinct { scaling regimes}
\citep{d2020triple, adlam2020neural, meng2024multiple}. We ask when, why, and \emph{where}
multiple descent can arise from the structure of the data alone {in the proportional scaling regime}.

This paper gives such a mechanism, together with an exact rule for its peaks. We study ridge
least squares with a vanishing penalty, which approximates the minimum $\ell_2$-norm
(ridgeless) estimator. The { covariates} are Gaussian and
either \emph{heterogeneous}, with observations of differing covariances, or \emph{dependent},
with correlation across observations.  A single design and a single estimator
can already produce several risk peaks, and both the number of peaks and their locations are fixed by a combinatorial invariant of the design, {which} reads off the pattern of zeros in its
covariance structure.

\paragraph*{Setup.}
We observe $(X_i, y_i)_{i\le n}$ generated by $y_i = X_i^\top\beta + \epsilon_i$. Let $X \in \mathbb{R}^{n\times p}$ be the matrix of covariates whose $i$-th row is $X_i^\top$; $X$ is known as the design matrix. 
We estimate the signal $\beta$ by \emph{ridge} least squares with a vanishing
penalty $\lambda=\lambda_n\downarrow0$,
\[
    \hat\beta_\lambda \;=\; \Big(\mfrac{1}{n}\msum_{i\le n} X_iX_i^\top + \lambda I_p\Big)^{\!-1}\,
    \mfrac{1}{n}\msum_{i\le n} X_i y_i \;,
\]
a computable proxy for the minimum-$l_2$-norm (ridgeless) interpolator. As
$\lambda\downarrow0$ the ridge estimator converges to that interpolator, and a small positive
penalty is what numerical implementations use. We work in the proportional regime $p/n\to\gamma\in(0,\infty)$
and measure performance by the out-of-sample prediction risk $\cR^\lambda(X)$. Let
$W_n = \frac1n X^\top X$ be the sample covariance. Since the observations are heterogeneous,
there is no single population covariance to compare against; we therefore use the
\emph{averaged covariance}
\[
    \bar\Sigma \;\coloneqq\; \mfrac1n\msum_{i\le n}\Var[X_i]\;,
\]
the mean of the $n$ per-observation covariances. The risk splits into bias and variance
(\Cref{lem:bias-var-decomp}): $\cR^\lambda(X) = \cR_B^\lambda(X) + \cR_V^\lambda(X)$ with
\[
    \cR_B^\lambda(X) = \big\|\bar\Sigma^{1/2}\big((W_n+\lambda I_p)^{-1}W_n - I_p\big)\beta\big\|^2\;,
    \qquad
    \cR_V^\lambda(X) = \mfrac{\sigma_\epsilon^2}{n}\Tr\big(\bar\Sigma\,W_n(W_n+\lambda I_p)^{-2}\big)\;.
\] 
Both are governed by the smallest eigenvalues of $W_n$, near zero --- \emph{the hard edge of the sample-covariance spectrum} in the terminology of random-matrix theory; see, e.g. \citep{HachemHardyNajim2016HardEdge}, and the classical Bessel-kernel analysis of \citep{TracyWidom1994Bessel}. As the penalty vanishes, {
the variance diverges when the smallest nonzero eigenvalues of $W_n$ approach this edge, and the bias concentrates on the directions outside the row space of $X$, with a zero bias if $W_n$ is asymptotically full-rank.
}
In the isotropic model, {the variance diverges at the single threshold $\gamma =1$, where the bias also changes from zero to nonzero.}

\paragraph*{Gaussian designs and variance profile.}
We consider two Gaussian designs. In the \emph{heterogeneous} design (\Cref{mod:het}) the
observations are independent and $X_i = \Sigma_i^{1/2} Z_i$ with $Z_i\sim\cN(0,I_p)$, so each
$X_i$ has its own covariance $\Sigma_i$. In the \emph{dependent} design (\Cref{mod:dep}) the
observations are correlated through a finite sum of separable parts,
\[
    X \;=\; \msum_{m=1}^M \big(\tilde\Sigma^{(m)}\big)^{1/2} Z^{(m)} \big(\Sigma^{(m)}\big)^{1/2}\;;
\]
we call this \emph{finite-rank dependence}, and data augmentation is a natural instance. In both
designs{, we assume that} the covariances share an eigenbasis but may be rank-deficient, so some observations
carry no variance along some directions. (Positive-definite covariances without a shared
eigenbasis are the subject of \Cref{sec:non:sim:diag:but:pos}.) This is the regime of
interacting degeneracies: the design can lose rank in several places, so a single design with a
single estimator can show several risk peaks, contributing to triple descent or more.

{ Both} the heterogeneous and the dependent designs can be \emph{completely
characterized} through a single, simpler object. After rotating
to the shared eigenbasis, each becomes a Gaussian matrix with independent entries and a fixed
variance matrix $S\in\R^{n\times p}$ --- what we call the \emph{variance profile} --- with
$\Var[X_{ij}]=nS_{ij}$ (\Cref{lem:model:equivalence,lem:reduction:var:profile} in
\Cref{appendix:setup}). A zero entry $S_{ij}=0$ marks a direction that observation $i$ cannot see, so $S$ records the degeneracies as a pattern of zeros. The profile is explicit in every case. Write
$\Sigma_i=U^\top D^{(i)}U$ for the shared eigendecomposition of the heterogeneous covariances,
and $\tilde\Sigma^{(m)}=\tilde U^\top \tilde D^{(m)}\tilde U$, $\Sigma^{(m)}=U^\top D^{(m)}U$
for the eigendecompositions in \Cref{mod:dep}, with $\tilde D^{(m)}\in\R^{n\times n}$ and
$D^{(m)}\in\R^{p\times p}$ {as the diagonal matrix of eigenvalues}. 
The
reductions of \Cref{lem:reduction:var:profile,lem:model:equivalence} in \Cref{appendix:setup} give:

\begin{center}
\begin{tabular}{@{}ll@{}}
\toprule
Design & Induced variance profile $S$ \\
\midrule
\Cref{mod:het} (heterogeneous) & $S_{il}=\tfrac1n\,D^{(i)}_{ll}$ 
\\[2mm]
\Cref{mod:dep} (dependent)     & $S_{il}=\tfrac1n\sum_{m=1}^M \tilde D^{(m)}_{ii}\,D^{(m)}_{ll}$ \\
\bottomrule
\end{tabular}
\end{center}

\noindent The zero pattern of $S$, and { how it characterizes} the  multiple-descent phenomenon, is
detailed in \Cref{subsec:vp-model,sec:vp-risk-main}.

\paragraph*{The limiting risk.}
Fix the variance profile $S$. The risk then has one description, through a vector
$\br(\lambda)=(r_l(\lambda))_{l\le p}$ with one entry per coordinate, defined for $\lambda>0$ by
the fixed-point system
\[
    1 \;=\; r_l(\lambda) + \msum_{i=1}^n \mfrac{S_{il}\, r_l(\lambda)}{\lambda + \sum_{j=1}^p S_{ij}\, r_j(\lambda)}\;,
    \qquad 1\le l\le p\;.
\]
Write $r_l(0)$ and $\partial r_l(0)$ for its value and $\lambda$-derivative as $\lambda {= \lambda_n} \downarrow0$.
Under a mild flatness and connectivity condition on $S$ (\Cref{asst:flat}), the bias and variance
match explicit deterministic equivalents (\Cref{thm:master}). The variance equivalent is a
weighted sum of the derivatives,
\[
    \cR_V^{
         \lambda
    }(X) \;-\; \mfrac{\sigma_\epsilon^2}{n}\msum_{l=1}^p \bar\Sigma_{ll}\,\partial r_l({\lambda}) \;\xrightarrow{\rm a.s.}\; 0\;.
\]
{ The} bias equivalent stays bounded and is an explicit function of $\br( \lambda)$ alone;
in the vanishing-ridge limit, its mass can only sit on the coordinates with $r_l(0)>0$
(\Cref{thm:bias-support}). 
Two numbers per coordinate { enter the risk}: {t}he mass
$r_l(0)$ that coordinate $l$ places at zero marks where bias can persist, and the rate
$\partial r_l(0) = \lim_{\lambda\downarrow0}\partial r_l(\lambda)$, which measures how fast small
nonzero eigenvalues pile up at zero, decides whether the variance stays bounded. 
{
    Notably at different coordinates of $\br$, bias transition and variance divergence can happen at \emph{different} values of $\gamma$. This generalizes the bias--variance split of the isotropic theory \citep{hastie2022surprises}, where all coordinates are equivalent and the risk peaks at a single ratio $\gamma$, to a multiple-descent theory characterized by $S$.
}

\vspace{.5em}

\paragraph*{Main findings.}
{ (i) A} single design and a single estimator can already produce several
risk peaks. {
This can arise from data heterogeneity, dependence, or both.}
\\[.2em]
{ (ii) Multiple descents are driven by} spectral
degeneracy, which heterogeneity and dependence can each create. {By contrast, p}ositive-definite covariances that share an eigenbasis are non-degenerate, { creating} only the classical peak at $\gamma=1$  no matter how heterogeneous they are; we conjecture that the same holds when they do not commute (\Cref{sec:non:sim:diag:but:pos}).
\\[.2em]
{ (iii) The peak locations are completely determined by the zero pattern of the variance profile $S$, through the maximum matchings of the bipartite graph induced by $S$. In particular,  non-zero biases are introduced by the
coordinates that some maximum matching leaves unmatched, and a variance peak occurs when the
residual matching becomes rigid (see \Cref{sec:vp-risk-main} for a precise definition).}

{\paragraph*{Proof techniques.} The key technical ingredient of our results is a novel combination of techniques from random matrix theory and graph theory. First, we} extend the local law {of Alt, Erdős and Krüger} \citep{alt2017local} {for a sample covariance matrix under the variance profile model} by explicitly tracking the spectral parameter {of the matrix resolvent}. {This enables the analysis of ridge least squares} at a vanishing ridge level
$\lambda=\lambda_n\downarrow0$ (\Cref{appendix:local:law}). {Second, we characterize both the bias and variance terms of the risk through the Dulmage--Mendelsohn decomposition of the bipartite graph associated with the variance profile $S$, yielding a precise graph-theoretic description.}

\paragraph*{Motivating examples.}
Data augmentation, which enlarges a training set with random transformations of each sample,
is one of the most common heuristics in machine learning and a natural source of dependence.
The augmented copies of one sample are strongly dependent, while copies of different samples
are independent, so augmentation is a natural test of our mechanism. As we show
(\Cref{ex:augmentation}), it is an instance of the dependent design (\Cref{mod:dep}) with $M=2$
separable components; by our reduction, it carries the limiting risk of a heterogeneous
variance profile.  
\Cref{fig:intro-aug} previews the consequence. The unaugmented design has
the single classical interpolation peak. A single augmented design, in contrast, already shows
\emph{multiple} risk peaks: multiple descent produced by dependence alone, with the peaks
where the {zero pattern} of the induced profile predicts (\Cref{sec:vp-risk-main}). We return to this
experiment in \Cref{subsec:sim-aug}.

{A second example is when one uses not only real data but also synthetic observations generated from a 
generative model \citep{lacan2023gan}. It has been observed that synthetic data can be significantly less diverse than real data, and in certain settings have a much lower-rank covariance structure than the real data \citep{lacan2023gan}. Hence, the combined data is heterogeneous and can have different degeneracy patterns. This is exactly what we observe in
\Cref{subsec:sim-gan},  where we consider gene expression data pooled with samples from a generative adversarial
network (GAN), and what we observe is a triple descent phenomenon with two peaks.
}

\begin{figure}[t]
    \centering
    \includegraphics[width=0.72\linewidth]{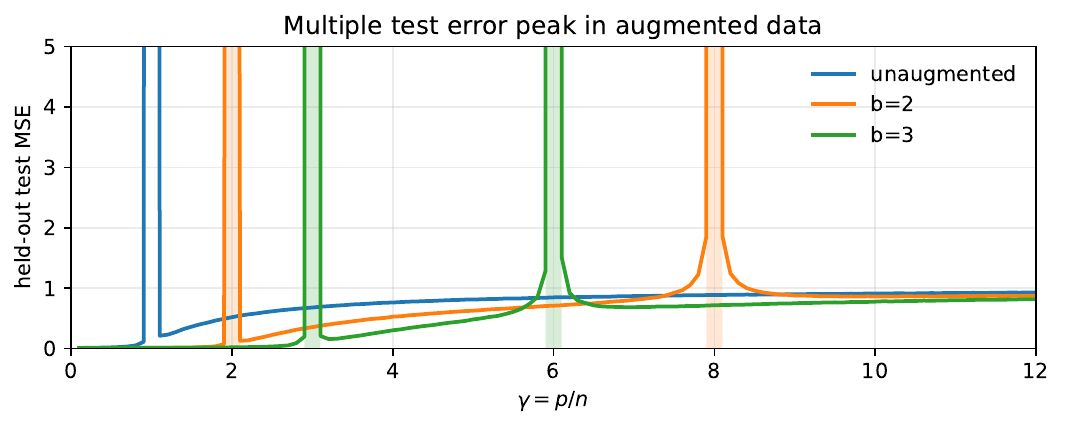}
    \caption{   
    Multiple descent from data augmentation (\Cref{mod:dep}). Held-out test error
    against $\gamma=p/n$ for an unaugmented design and for random within-block permutations of
    block size $b=2,3$ ($n=100$, $k=5$ copies). The unaugmented curve has a single
    interpolation peak; the augmented designs show additional peaks. Full setup in
    \Cref{subsec:sim-aug}.}
    \label{fig:intro-aug}
\end{figure}

\paragraph*{Organization.} \Cref{sec:setup-main} introduces the two designs {with heterogeneity and dependence}, presents the variance profile as their
common reduction, states the regularity assumption, and gives the limiting risk.
\Cref{sec:vp-risk-main} introduces the bipartite graph induced by the variance profile, and develops the graph-theoretic tools that locates the bias and the variance peaks.
\Cref{sec:non:sim:diag:but:pos} considers the positive-definite case with no shared eigenbases, where a single threshold
persists. \Cref{sec:simulations} verifies our messages on synthetic experiments.
\section{Related literature}

\paragraph*{Interpolation, benign overfitting, and double descent.}
Double descent was first identified by \citep{belkin2019reconciling}: the test risk can fall
again in the over-parameterized regime, past the interpolation peak. It was observed in
deep models by \citep{nakkiran2021deep} and made precise for ridgeless regression by
\citep{hastie2022surprises}. The over-parameterized regime has since been
studied for ridge regression \citep{dobriban2018high, richards2021asymptotics, tsigler2023benign},
for the minimum-norm interpolator \citep{bartlett2020benign, muthukumar2020harmless}, and for
random-features regression, a solvable model of a two-layer network, where
\citep{mei2022generalization} derived the exact double-descent curve;
see \citep{bartlett2021deep} for a survey. {In the isotropic case, a single spectral degeneracy gives one peak:} at the
interpolation threshold {$\gamma=1$,} the smallest singular values of the design approach zero and the
pseudo-inverse amplifies them, so the variance diverges. The bias comes from the signal
directions the data never sees{, and becomes non-zero when $\gamma > 1$}.

{Our theory builds on the baseline of ridge and ridgeless regression}, which we now recall in more detail.
\citep{dobriban2018high} gave the
exact limiting prediction risk of ridge regression and classification for a general
population covariance, and \citep{hastie2022surprises} gave exact bias and variance formulas
for the ridgeless interpolator; both work in the proportional regime $p/n\to\gamma$.
\citep{cheng2024dimension} instead gave dimension-free deterministic equivalents that bound
the risk through the covariance spectrum rather than the ratio $\gamma$, from
finite-dimensional up to trace-class features; their bounds are sharp while the smallest
nonzero sample eigenvalue stays bounded away from zero, that is, away from the hard edge we
study. 
{The optimal regularization has also been studied in this regime}: \citep{wu2020optimal} find the optimal
anisotropic ridge penalty, \citep{kobak2020optimal} show it can be zero or negative, and
\citep{nakkiran2021optimal} show that optimal regularization removes the peak; see also
\citep{bibas2021distribution}. 
These results share the same setting: the observations are identically distributed under a
single population covariance, and that covariance is positive definite and well-conditioned
(as \citep{hastie2022surprises} requires). This homogeneity and non-degeneracy are what force a
single interpolation threshold. We drop both. Keeping the bias-variance framework of
\citep{hastie2022surprises} and its anisotropic local law
\citep{knowles2017anisotropic}, we replace the isotropic input with
variance-profile tools and let the ridge parameter vanish, so that heterogeneous or dependent
observations with rank-deficient covariances produce the several thresholds the
single-threshold theory does not reach.

\paragraph*{From double to multiple descent.}
{Another} line of work shows that the risk curve can carry more than one peak. \citep{d2020triple} analysed a
random-features regression model, and confirmed the effect in neural networks, separating a
peak tied to the input dimension from one tied to the number of features.
\citep{adlam2020neural} gave exact asymptotics for kernel regression with the neural tangent
kernel and traced a further peak to a second, quadratic overparameterization scale.
\citep{liang2020multiple} proved multiple-descent risk bounds for kernel interpolation across
the scalings $d = n^\alpha$. \citep{chen2021multiple} showed, for linear regression, that
appending coordinates drawn from suitably chosen distributions can raise or lower the risk at
each step, so the curve can be made to have any number of peaks at prescribed locations.
\citep{pmlr-v139-mel21a} derived multiple descent in ridge regression when the input covariance
has several well-separated eigenvalue scales aligned with the target, one peak appearing as
each scale becomes resolved. Closest to us, \citep{meng2024multiple} gave a precise theory for
multi-component random-feature models, in which each component contributes its own
interpolation peak. In almost all of these works the extra peaks are put there by design,
through the architecture, extra feature blocks, several kernel scales, or a constructed
covariance. Our peaks instead come from {the support, {or equivalently} the zero pattern,} of a single
heterogeneous or dependent design. Indeed, our contribution is not that several peaks can
occur, but a rule for how many there are and where they sit.

\paragraph*{Heterogeneity and dependence.}
The closest comparators study ridge regression with a variance profile. \citep{bigot2024high}
treat independent but non-identically distributed covariates given by a variance profile, and
\citep{dabo2025high} extend this to random features.
Both rely on a global deterministic equivalent driven by the bulk of the limiting spectrum; \citep{bigot2024high} reach the ridgeless limit only under a non-degeneracy condition that keeps the spectrum away from zero. We instead let the ridge vanish, so that the bias and the variance singularities are governed by the {zero pattern} of the profile, rather than by the bulk of the limiting spectrum. This regime is harder to analyze: the constants in a bulk deterministic equivalent blow up as the penalty goes to zero and the spectrum reaches the hard edge, so the resolvent must be controlled through a local law whose dependence on the spectral parameter is explicit down to zero.
{Other works consider heterogeneity and dependence in more specific setups:} \citep{song2024generalization} studies pooled minimum-norm interpolation
under covariate shift, \citep{moniri2025asymptotics} treats linearly dependent covariates, and
\citep{huang2022data} study the dependence induced by data augmentation. A parallel route uses
Gaussian universality and the convex Gaussian min-max theorem rather than the resolvent:
universality for empirical risk minimization \citep{montanari2022universality, han2023universality}
and random features \citep{hu2022universality}, the Gaussian comparison inequalities of
\citep{Gordon1985SomeIF} in the modern form of \citep{thrampoulidis2018precise}, and extensions
to dependent data \citep{lahiry2023universality, mallory2025universality}.

\paragraph*{{Random matrix analysis under the variance profile model}.} Our random-matrix {tool comes from} the theory of Gram matrices with a variance profile; see
\citep{couillet2022random} for random-matrix methods in machine learning.
\citep{hachem2007deterministic} gave deterministic equivalents for such matrices, and
\citep{alt2017local} proved entrywise and averaged local laws for an arbitrary variance
profile, covering both the rectangular soft edge and the hard edge at zero. The supporting
quadratic-vector-equation machinery, including the stability and singularity analysis of
the matrix Dyson equation, is due to \citep{Ajanki_2019, ajanki2017universalitygeneralwignertypematrices};
see also \citep{yin2020singularvaluedistributionlarge}. Near interpolation the risk is
controlled not by the bulk spectrum but by the resolvent at a small spectral parameter. We
therefore re-derive the local law while tracking the spectral parameter explicitly, trading
the sharp uniform bounds of \citep{alt2017local} for bounds that remain valid at a vanishing
ridge level. This is where a small penalty still exposes the degeneracies that a fixed ridge
would smooth out, and where our analysis departs from the anisotropic local law of
\citep{knowles2017anisotropic} used in the single-threshold theory.

\paragraph*{Structural rank, matchings, and the Dulmage--Mendelsohn decomposition.}
The combinatorial half of our analysis links this random-matrix theory to classical
structural-rank theory, through the bipartite graph of {the variance profile}.
Hall's theorem~\citep{hall1935representatives} says when a bipartite graph
has a matching that saturates one side, and Berge's theorem~\citep{berge1957two}
describes maximum matchings through alternating paths. The Dulmage--Mendelsohn
decomposition~\citep{Dulmage_Mendelsohn_1958} organizes all maximum matchings into a
canonical block form, long used in sparse numerical linear algebra to read off structural
rank and the structure of pseudo-inverses~\citep{pothen1990computing, lovasz2009matching}.
The strong Hall property~\citep{brualdi1994strong} is a form of non-singularity in these settings. Here these tools carry a statistical meaning: a
maximum matching identifies which coordinates the design can miss, and the strong-Hall
condition separates the nonsingular regimes from the boundaries where the risk diverges. 
{We use them to} locate the bias support and the variance peaks.

\section{Main results} \label{sec:setup-main}

\subsection{The ridgeless prediction problem}

We work in the
 high-dimensional linear model. Given $n$ mean-zero
observations $X_1, \ldots, X_n \in \R^p$, 
possibly dependent across
observations and heterogeneously distributed, we observe responses generated by
\begin{align*}
    y_i \;\coloneqq\; X_i^\top \beta + \epsilon_i\;,
    \qquad
    \epsilon_i \ \text{ i.i.d.~with } \ \mean[\epsilon_i] = 0 \ \text{ and } \ \Var[\epsilon_i] = \sigma^2_\epsilon\;,
\end{align*}
where $\beta \in \R^p$ is an unknown signal and the noise $(\epsilon_i)_{i \le n}$ is
independent of the design. We estimate $\beta$ by \emph{ridge} least squares with a
deterministic penalty $\lambda = \lambda_n \downarrow 0$,
\begin{align*}
    \hat \beta_\lambda
    \;\coloneqq\;
    \Big( \mfrac{1}{n} \msum_{i \le n} X_i X_i^\top + \lambda I_p \Big)^{\!-1}
    \mfrac{1}{n} \msum_{i \le n} X_i y_i\;.
\end{align*}
As $\lambda\downarrow 0$, $\hat\beta_\lambda$ converges to the minimum {$\ell_2$}-norm
(ridgeless) {estimator}.
Let
$(X_{\rm new}, Y_{\rm new})$ be an independent test pair with
$Y_{\rm new} = X_{\rm new}^\top\beta + \epsilon_{\rm new}$, where $\epsilon_{\rm new}$
is an independent copy of the noise and $X_{\rm new}$ is drawn as
in~\eqref{eq:test-gen-main} below. Its test error splits into an irreducible noise
{variance} and a reducible {component},
\begin{align*}
    \mean\!\big[\,(Y_{\rm new} - X_{\rm new}^\top\hat\beta_\lambda)^2 \;\big|\; (X_i)_{i \le n}\big]
    \;=\;
    \sigma^2_\epsilon \;+\; \cR^\lambda(X)\;,
\end{align*}
and we study the reducible \emph{prediction risk}, the random scalar
\begin{align*}
    \cR^\lambda(X)
    \;\coloneqq\;
    \mean\!\Big[\, \big( X_{\rm new}^\top (\beta - \hat\beta_\lambda) \big)^2
    \;\Big|\; (X_i)_{i \le n} \Big]\;,
    \quad \text{ in the proportional regime }
    \mfrac{p}{n} \to \gamma \in (0,\infty)\;.
\end{align*}
Because the observations are heterogeneous, there is no single population
covariance against which to measure prediction error; instead, we generate the
test point $X_{\rm new}$ from the {uniform} mixture of the training marginals. Writing $\mu_i$
for the law of $X_i$, we draw
\begin{align*}
    U \;\sim\; \mathrm{Uniform}\{1, \ldots, n\}
    \qquad\text{ and }\qquad
    X_{\rm new}\mid U \;\sim\; \mu_U\;,
    \tagaligneq \label{eq:test-gen-main}
\end{align*}
independently of the training data $(X_i, y_i)_{i\le n}$. This is the natural
generalization of the test mechanism used in the homogeneous theory of ridgeless
regression~\citep{hastie2022surprises}: when the $X_i$ are identically distributed,
it reduces to drawing $X_{\rm new}$ from the common law.

All of the analysis is organized around the $\R^{n\times p}$ data matrix and the
$\R^{p\times p}$ sample covariance matrix
\begin{align*}
    X \;\coloneqq\;
    \begin{psmallmatrix}
        \leftarrow X_1^\top \rightarrow \\[-.1em]
        \vdots \\[-.1em]
        \leftarrow X_n^\top \rightarrow
    \end{psmallmatrix}\;,
    \qquad\qquad
    W_n \;\coloneqq\; \mfrac{1}{n}\msum_{i\le n} X_i X_i^\top \;=\; \mfrac{1}{n} X^\top X\;.
\end{align*}
The vanishing-ridge risk is controlled by the hard edge of \(W_n\). Rank deficiency creates bias, through directions outside the row space of \(X\), while small nonzero singular values make the variance blow up as the penalty vanishes. In the homogeneous, non-degenerate model this mechanism has a single threshold at \(\gamma=1\). In heterogeneous or dependent designs, different parts of the design may become singular at different effective thresholds. The following well-known bias--variance decomposition isolates these two effects in terms of \(W_n\) and the averaged covariance.

\begin{proposition}[Bias--variance decomposition] \label{lem:bias-var-decomp}
Under the test mechanism~\eqref{eq:test-gen-main}, 
with
\begin{align*}
    \bar\Sigma \;\coloneqq\; \mfrac{1}{n}\msum_{i\le n}\Var[X_i]\;,
\end{align*}
the prediction risk decomposes as
\(\cR^\lambda(X) = \cR_B^\lambda(X) + \cR_V^\lambda(X)\), where
\begin{align*}
    \cR_B^\lambda(X)
    &\;\coloneqq\;
    \big\| \bar\Sigma^{1/2}\big( (W_n+\lambda I_p)^{-1} W_n - I_p \big)\beta \big\|^2,\\
    \cR_V^\lambda(X)
    &\;\coloneqq\;
    \mfrac{\sigma^2_\epsilon}{n}\Tr\big( \bar\Sigma\, W_n (W_n+\lambda I_p)^{-2} \big)\;.
\end{align*}
\end{proposition}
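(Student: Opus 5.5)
I would prove this by a direct computation, conditioning on the design $X$ throughout and then averaging over the noise and the independent test point. First I substitute $Y = X\beta + \epsilon$ with $\epsilon = (\epsilon_1,\dots,\epsilon_n)^\top$ into the estimator:
\[
\hat\beta_\lambda = (W_n+\lambda I_p)^{-1}\tfrac1n X^\top(X\beta+\epsilon)
= (W_n+\lambda I_p)^{-1}W_n\beta + (W_n+\lambda I_p)^{-1}\tfrac1n X^\top\epsilon.
\]
Hence $\beta-\hat\beta_\lambda = -\big((W_n+\lambda I_p)^{-1}W_n - I_p\big)\beta - (W_n+\lambda I_p)^{-1}\tfrac1n X^\top\epsilon \eqqcolon -b - v$. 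Here $b$ is a deterministic function of $X$, and $v$ is linear in $\epsilon$.

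Next I average over the test point. By \eqref{eq:test-gen-main}, $X_{\rm new}$ is independent of the training data. Each $\mu_i$ has mean zero, so the mixture $X_{\rm new}$ also has mean zero, and its second moment is $\mean[X_{\rm new}X_{\rm new}^\top] = \frac1n\sum_i \mean[X_iX_i^\top] = \frac1n\sum_i\Var[X_i] = \bar\Sigma$. Conditionally on the training data, this gives $\mean[(X_{\rm new}^\top(b+v))^2\mid X,\epsilon] = (b+v)^\top\bar\Sigma(b+v)$.

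I then average over the noise. The noise is independent of the design, with mean zero and covariance $\sigma_\epsilon^2 I_n$. The cross term $2b^\top\bar\Sigma v$ is linear in $\epsilon$, so it vanishes after averaging. The bias part is $b^\top\bar\Sigma b = \|\bar\Sigma^{1/2}b\|^2 = \cR_B^\lambda(X)$. For the variance part, write $R = (W_n+\lambda I_p)^{-1}$. Then
\[
\mean[v^\top\bar\Sigma v\mid X] = \tfrac{\sigma_\epsilon^2}{n^2}\Tr\big(\bar\Sigma R X^\top X R\big)
= \tfrac{\sigma_\epsilon^2}{n}\Tr\big(\bar\Sigma R W_n R\big).
\]
Since $R$ and $W_n$ commute, $RW_nR = W_nR^2$, which gives $\cR_V^\lambda(X)$.

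There is no real obstacle here. The only point needing care is the meaning of the conditioning in the definition of $\cR^\lambda(X)$. As defined, it conditions only on $(X_i)$, so the noise in $\hat\beta_\lambda$ is averaged out, and this is exactly what kills the cross term. Conditioning on the responses as well would instead leave a noise-dependent variance term. The other ingredients are the zero-mean assumption, which identifies the mixture's second moment with $\bar\Sigma$, and independence of the test point from the training data. The decomposition of the test error into $\sigma_\epsilon^2+\cR^\lambda(X)$ follows in the same way, because $\epsilon_{\rm new}$ is independent of everything else.
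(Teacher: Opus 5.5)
Your proposal is correct and follows essentially the same route as the paper. You average over the independent mixture test point, so that $X_{\rm new}X_{\rm new}^\top$ is replaced by $\bar\Sigma$. You then substitute $X\beta+\epsilon$ into $\hat\beta_\lambda$, note that the cross term vanishes when the noise is averaged, and evaluate the variance part as a trace using that $W_n$ commutes with $(W_n+\lambda I_p)^{-1}$. Your write-up simply spells out the intermediate steps that the paper's three-line computation leaves implicit.
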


The bias $\cR_B^\lambda(X)$ collects the signal in the directions where $W_n$ is small. Since
$(W_n+\lambda I_p)^{-1}W_n - I_p = -\lambda(W_n+\lambda I_p)^{-1}$, the bias shrinks the signal
component along each eigenvector of $W_n$ with eigenvalue $s$ by the factor $\lambda/(s+\lambda)$:
components on eigenvalues well above $\lambda$ are recovered, {while} components on eigenvalues of order
$\lambda$ or below are not. {As $\lambda$ gets small}, $(W_n+\lambda I_p)^{-1}W_n$ approaches the projection $W_n^\dagger W_n$ onto
the row space of $X$, so in the limit the bias is carried by the directions outside the row
space; we do not let $\lambda=\lambda_n$ tend to zero faster than {what}
\Cref{thm:master} permits. The
variance $\cR_V^\lambda(X)$ is a $\bar\Sigma$-weighted trace of $W_n(W_n+\lambda I_p)^{-2}$,
the term through which the smallest {nonzero} eigenvalues of $W_n$, of order $\lambda_n$ or below, drive
the risk to diverge. The decomposition is standard; a derivation from the test
mechanism~\eqref{eq:test-gen-main} is given in \Cref{appendix:setup}.

\subsection{Heterogeneous and dependent designs}

\label{subsec:practical-models}

We study two families of Gaussian designs. The first is a model of pure heterogeneity: the
observations are independent but {may have different} covariances.

\begin{model}[Heterogeneous data] \label{mod:het}
$X_1, \ldots, X_n$ are independent $\R^p$ Gaussian vectors, each generated from a symmetric
positive semi-definite {covariance} matrix $\Sigma_i \in \R^{p\times p}$ as
\begin{align*}
    X_i \;=\; \Sigma_i^{1/2} Z_i\;,
    \qquad Z_i \overset{\rm i.i.d.}{\sim} \cN(0, I_p)\;.
\end{align*}
We assume that the matrices $(\Sigma_i)_{i\le n}$ are simultaneously diagonalizable.
\end{model}

Two features of \Cref{mod:het} {are important for our analysis}. First, it permits rank deficiency, at
the cost of requiring a common eigenbasis: some coordinates may carry zero variance in some
observations. 
 This is the regime of interacting
degeneracies, and hence of multiple descent. {A natural example of this regime is} highly collinear data, where
a few common factors carry almost all the variance in a shared basis and leave many
coordinate directions degenerate. Second, the emphasis on positive
\emph{semi}-definiteness, rather than the positive definiteness
assumed throughout much of the literature, is what lets the \emph{{zero} pattern} of the
design govern the risk. A zero eigenvalue of some $\Sigma_i$ means observation $i$ does not
see a coordinate direction, and which observations see which directions {is what sets} where
the design loses rank and where the peaks {occur}. Covariances that are positive definite but
share no eigenbasis fall outside this model; we {consider them} in \Cref{sec:non:sim:diag:but:pos},
where we conjecture that they produce only the familiar single-threshold behavior.

The second model keeps the coordinate structure but introduces dependence \emph{across}
observations, through finitely many separable components.

\begin{model}[Dependent data: finite-rank dependence] \label{mod:dep}
Fix $M\in\N$, a sequence of positive semi-definite $\R^{p \times p}$ matrices $(\Sigma^{(m)})_{m \in \N}$ and another sequence of positive semi-definite simultaneously diagonalizable $\R^{n \times n}$ matrices $(\tilde \Sigma^{(m)})_{m \in \N}$. The data matrix is
\begin{align*}
    X \;=\; \msum_{m=1}^M \big(\tilde\Sigma^{(m)}\big)^{1/2} Z^{(m)} \big(\Sigma^{(m)}\big)^{1/2}\;,
\end{align*}
where $(Z^{(m)})_{m\le M}$ are i.i.d.~$\R^{n\times p}$ matrices with i.i.d.~$\cN(0,1)$
entries. We assume that the matrices $(\Sigma^{(m)})_{m\le M}$ are simultaneously
diagonalizable.
\end{model}

We call \Cref{mod:dep} \emph{finite-rank dependence} because the full $np\times np$ covariance of
$\operatorname{vec}(X)$ is a sum of only $M$ separable terms:
\begin{align*}
    \Cov[X_{ij}, X_{i'j'}]
    \;=\;
    \msum_{m=1}^M \tilde\Sigma^{(m)}_{ii'}\,\Sigma^{(m)}_{jj'}\;,
    \tagaligneq \label{eq:cov-dep-main}
\end{align*}
so the $\tilde\Sigma^{(m)}$ carry sample-to-sample dependence and the $\Sigma^{(m)}$ carry
coordinate-to-coordinate dependence. 
Taking each $\tilde\Sigma^{(m)}$ block-diagonal in a common partition of the observations
gives a separable subclass of \emph{block dependence}: observations in the same block may be
correlated, those in different blocks are independent. We keep this stronger finite-rank,
separable structure, with $M$ fixed independent of $n$ and $p$, because it is what keeps the
model tractable{; as we show next, it is} reducible to a heterogeneous variance profile.

\begin{remark}
The restriction that $M$ is fixed is not essential. Our results also
allow $M=M_n$ to vary with $n$, provided the induced variance profile satisfies
\Cref{asst:flat}.
\end{remark}

\begin{example}[Data augmentation and finite-rank dependence] \label{ex:augmentation}
Data augmentation is a natural source of \Cref{mod:dep}. Let $V_1,\dots,V_n$ be i.i.d.\
mean-zero base samples in $\R^p$, and let $\{T_{ic}\}_{i\le n,\,c\le k}$ be i.i.d.\ random
transformations of $\R^p$, independent of the data (for instance, random within-block
coordinate permutations, as in \Cref{subsec:sim-aug}). Augmentation forms the $nk$ rows
\begin{align*}
    X_{(i,c)} \;=\; T_{ic}\,(V_i)\;,\qquad 1\le i\le n,\ 1\le c\le k\;,
\end{align*}
which play the role of the sample in \Cref{mod:dep}. Rows from different base samples are
independent, while the $k$ rows sharing a base sample $V_i$ are dependent. Writing
\begin{align*}
    \Sigma_{11} \;\coloneqq\; \Var[T_{ic}(V_i)]\;,\qquad
    \Sigma_{12} \;\coloneqq\; \Cov[T_{ic}(V_i),\,T_{ic'}(V_i)]\quad(c\ne c')\;,
\end{align*}
the row covariance is, for $a=(i,c)$ and $b=(i',c')$,
\begin{align*}
    \Cov[X_a, X_b]
    \;=\;
    \ind\{i=i'\}\,\Sigma_{12}
    \;+\;
    \ind\{a=b\}\,(\Sigma_{11}-\Sigma_{12})\;,
    \qquad\text{with}\qquad
    \Sigma_{11}-\Sigma_{12}\;\succeq\;0\;.
\end{align*}
This is the finite-rank covariance~\eqref{eq:cov-dep-main} with $M=2$: a \emph{shared}
component carrying the base draw, $\tilde\Sigma^{(1)}=I_n\otimes\bone_k\bone_k^\top$ and
$\Sigma^{(1)}=\Sigma_{12}$, and an \emph{idiosyncratic} component carrying the copy-specific
fluctuation, $\tilde\Sigma^{(2)}=I_{nk}$ and $\Sigma^{(2)}=\Sigma_{11}-\Sigma_{12}$. 
When $\Sigma_{11}$ and $\Sigma_{12}$ are
simultaneously diagonalizable, as for the within-block permutation design of
\Cref{subsec:sim-aug}, this is \Cref{mod:dep}, so \Cref{lem:model:equivalence} in
\Cref{appendix:setup} reduces it
to a heterogeneous variance profile, and the support of that profile predicts the extra risk
peaks. We return to this design empirically in \Cref{subsec:sim-aug}.
\end{example}

\subsection{The variance-profile model and the reduction} \label{subsec:vp-model}

Our results are proved for a Gaussian design whose entries are independent but have unequal
variances{,}{ known as }a \emph{variance-profile} matrix 
\citep{hachem2007deterministic, alt2017local}{.} {We then transfer these results} to the two designs above
through the reduction of \Cref{subsec:reduction}.

\begin{model}[Variance profile] \label{mod:vp}
The $\R^{n\times p}$ matrix $X$ has independent mean-zero Gaussian entries with variance
profile $nS\in\R^{n\times p}$:
\begin{align*}
    \Var\Big[\mfrac{1}{\sqrt n}\,X_{ij}\Big] \;=\; S_{ij}\;.
\end{align*}
\end{model}

The normalization is chosen so that the rescaled design $\frac{1}{\sqrt n}X$ carries the
profile $S$ used in \citep{alt2017local}. 
 Under \Cref{mod:vp} the row covariances are diagonal,
$\Var[X_i] = \diag(nS_{i1}, \ldots, nS_{ip})$, so the averaged covariance is
\begin{align*}
    \bar\Sigma
    \;=\;
    \mfrac1n\msum_{i\le n}\Var[X_i]
    \;=\;
    \msum_{i\le n}\diag(S_{i1}, \ldots, S_{ip})\;,
    \qquad\text{i.e.}\qquad
    \bar\Sigma_{ll} \;=\; \msum_{i\le n} S_{il}\;.
    \tagaligneq \label{eq:Sigmabar-S}
\end{align*}

Although \Cref{mod:vp} has independent entries, it is more general than the
i.i.d.~isotropic design, and it is the object to which both practical designs reduce. {The \emph{zero pattern} of $S$} is what
produces several interacting degeneracies, and hence multiple descent.

The reduction is what makes $S$ the right object. For \Cref{mod:het} and \Cref{mod:dep},
passing to the common eigenbasis turns
each design into a Gaussian matrix with independent entries --- that is, into \Cref{mod:vp}
with an explicit profile $S$ built from the covariance eigenvalues
(\Cref{lem:model:equivalence,lem:reduction:var:profile} in \Cref{appendix:setup}).
It therefore suffices to analyze the variance-profile model,
which we do in \Cref{subsec:master} and in \Cref{sec:vp-risk-main}; the resulting limiting
risk for \Cref{mod:het} and \Cref{mod:dep} is recorded in \Cref{cor:practical-risk}. The
positive-definite designs of \Cref{sec:non:sim:diag:but:pos} do \emph{not} reduce
to a variance profile; we discuss what can still be said about them there.

\subsection{The variance-profile assumption}

{
Our limiting-risk theorem asks that the profile $S$ be uniformly small and
sufficiently connected, which are the standing assumption of
\citep{alt2017local,huang2022data}. Both are mild. When a profile violates the connectivity condition by decomposing into non-interacting components, the
risk also decomposes accordingly. And in that scenario, it is enough that each component be uniformly small and connected
(\Cref{rem:decomposable}).
}

\begin{assumption}[Variance profile] \label{asst:flat}
There exist constants $s_*, \psi_1, \psi_2 > 0$ and integers $L_1, L_2\in\N$ such that
\begin{align*}
    S_{ij} \;\le\; \mfrac{s_*}{p+n}\;,
    \qquad
    \big[(S S^\top)^{L_1}\big]_{ii'} \;\ge\; \mfrac{\psi_1}{n+p}\;,
    \qquad
    \big[(S^\top S)^{L_2}\big]_{jj'} \;\ge\; \mfrac{\psi_2}{n+p}\;,
\end{align*}
for all $1\le i,i'\le n$ and $1\le j,j'\le p$, where $(\cdot)^L$ denotes the $L$-th matrix power above.
\end{assumption}

 The flatness bound $S_{ij}\le s_*/(p+n)$ ensures that no one coordinate's variance is abnormally large, so
the limiting spectrum is shaped by the profile as a whole rather than by a few dominant
entries. 

The lower bounds on powers of $SS^\top$ and $S^\top S$ are connectivity conditions.
Observe first that
\[
    (SS^\top)_{ii'} \;=\; \msum_{j=1}^p S_{ij}\,S_{i'j}
\]
So $SS^\top$ is the \emph{overlap matrix} of the rows: its $(i,i')$ entry is positive exactly
when observations $i$ and $i'$ place variance on a common coordinate. Higher powers record
indirect overlap. The entry $\big[(SS^\top)^{L_1}\big]_{ii'}$ is the total weight of all chains
$i = i_0, i_1, \ldots, i_{L_1} = i'$ in which consecutive observations share a coordinate, and it
is positive exactly when $i$ and $i'$ are linked by such a chain of length at most $L_1$.
The bound $\big[(SS^\top)^{L_1}\big]_{ii'}\ge \psi_1/(n+p)$ for \emph{every} pair $(i,i')$ asks
for two things: \\
{(i)} any two observations are joined by such a chain of length at most $L_1$,
so the profile is connected with no two observations more than $L_1$ overlap-steps apart; \\
{(ii) } the weight along these chains is at least $\psi_1/(n+p)$, uniformly in $n$ and $p$: the
overlaps stay bounded below rather than merely nonzero. The bound on $(S^\top S)^{L_2}$ is the
same with observations and coordinates exchanged. Together with the flatness bound, these are
exactly assumptions (A) and (B) of \citep{alt2017local}.

The support of $S$ alone fixes the smallest power $L_1$ for which every entry of $(SS^\top)^{L_1}$
is positive. The next three patterns illustrate this; in each we take the nonzero entries equal,
so that $(SS^\top)_{ii'}$ counts the coordinates shared by observations $i$ and $i'$, and $L_1$
is the number of steps needed to link every pair of rows.
\begin{itemize}
    \item \emph{Diameter one} ($L_1=1$). For the pattern
    \begin{align*}
        S \;=\;
        \begin{psmallmatrix} 1 & 1 & 0 \\ 1 & 0 & 1 \\ 0 & 1 & 1 \end{psmallmatrix}\;,
        \qquad\text{so}\qquad
        SS^\top \;=\;
        \begin{psmallmatrix} 2 & 1 & 1 \\ 1 & 2 & 1 \\ 1 & 1 & 2 \end{psmallmatrix}\;,
    \end{align*}
    every off-diagonal entry of $SS^\top$ is already positive: each pair of observations
    shares a coordinate directly, so $L_1=1$.
    \item \emph{Diameter two} ($L_1=2$). For the chain pattern
    \begin{align*}
        S \;=\;
        \begin{psmallmatrix} 1 & 1 & 0 & 0 \\ 0 & 1 & 1 & 0 \\ 0 & 0 & 1 & 1 \end{psmallmatrix}\;,
        \qquad\text{so}\qquad
        SS^\top \;=\;
        \begin{psmallmatrix} 2 & 1 & 0 \\ 1 & 2 & 1 \\ 0 & 1 & 2 \end{psmallmatrix}\;,
    \end{align*}
    we have $(SS^\top)_{13}=0$: observations $1$ and $3$ share no coordinate. They are
    linked only through observation $2$, and $\big[(SS^\top)^2\big]_{13}=1>0$, so $L_1=2$ but
    not $L_1=1$.
    \item \emph{Reducible} (no finite $L_1$). For the block pattern
    \begin{align*}
        S \;=\;
        \begin{psmallmatrix}
            1 & 1 & 0 & 0 \\ 1 & 1 & 0 & 0 \\ 0 & 0 & 1 & 1 \\ 0 & 0 & 1 & 1
        \end{psmallmatrix}\;,
        \qquad\text{so}\qquad
        SS^\top \;=\;
        \begin{psmallmatrix}
            2 & 2 & 0 & 0 \\ 2 & 2 & 0 & 0 \\ 0 & 0 & 2 & 2 \\ 0 & 0 & 2 & 2
        \end{psmallmatrix}\;,
    \end{align*}
    $SS^\top$ is block diagonal, and so is every power; the cross-block entry
    $\big[(SS^\top)^{L}\big]_{13}$ is $0$ for \emph{all} $L$. No finite $L_1$ works: the design
    splits into two groups that never interact, and any sequence of profiles built on this
    pattern falls outside \Cref{asst:flat}. While this example does not satisfy \Cref{asst:flat} itself, each of its two blocks satisfies the assumption. The remark below discusses how our theory is still applicable in this case.
\end{itemize}

{

\begin{remark}[Decomposable profiles] \label{rem:decomposable}
{\Cref{asst:flat}} can hold for finite $L_1, L_2$ only if the variance profile is sufficiently connected. In particular, a variance profile that splits into components, like the reducible pattern above, fails \Cref{asst:flat} for every
$L_1$ and $L_2$. Nonetheless, in this case, the risk decomposes into the same components as the variance profile, with each component  satisfying \Cref{asst:flat} and thereby addressable by our theory. To see this, let $I_1,\ldots,I_K$ and $J_1,\ldots,J_K$ be the
observations and the coordinates of the components of the profile $S$, and let $n_k \coloneqq |I_k|$,
$p_k \coloneqq |J_k|$, $S^{(k)} \coloneqq (S_{ij})_{i\in I_k,\, j\in J_k}$,
$X^{(k)} \coloneqq (X_{ij})_{i\in I_k,\, j\in J_k}$ and $\beta_{J_k} \coloneqq (\beta_l)_{l\in J_k}$.
Components with $n_k=0$ are coordinates no observation sees, and are discarded. As $S_{ij}=0$
across components, permuting rows and columns makes $S$, and hence $X$, $W_n$ and $\bar\Sigma$,
block diagonal along $J_1,\ldots,J_K$, so both terms of \Cref{lem:bias-var-decomp} split into
their $J_k$ blocks $W^{(k)}$ and $\bar\Sigma^{(k)}$:
\begin{align*}
    \cR_B^\lambda(X)
    \;&=\;
    \msum_{k\le K}\big\| (\bar\Sigma^{(k)})^{1/2}\big( (W^{(k)}+\lambda I_{p_k})^{-1}W^{(k)} - I_{p_k}\big)\beta_{J_k}\big\|^2\;,
    \\
    \cR_V^\lambda(X)
    \;&=\;
    \mfrac{\sigma^2_\epsilon}{n}\msum_{k\le K}\Tr\big(\bar\Sigma^{(k)}\,W^{(k)}(W^{(k)}+\lambda I_{p_k})^{-2}\big)\;.
\end{align*}
Each summand is the risk of its component. $X^{(k)}$ follows \Cref{mod:vp} with $n_k$ observations
and profile $\frac{n}{n_k}S^{(k)}$, and the $J_k$ block of $\hat\beta_\lambda$ is its ridge
estimator at ridge level $\frac{n}{n_k}\lambda$, so
$\cR^\lambda(X;\beta) = \sum_{k\le K}\frac{n_k}{n}\,\cR^{n\lambda/n_k}(X^{(k)};\beta_{J_k})$. If
each component satisfies \Cref{asst:flat} in its own normalization, with $p_k/n_k$ converging in
$(0,\infty)$ and $n_k/n$ bounded away from $0$, then {our theory (}\Cref{thm:master}) applies to each summand and
the equivalents add, every component keeping its own thresholds. We therefore assume \Cref{asst:flat} for all
of $S$, without any loss of generality. 
\end{remark}
}

\subsection{The limiting risk} \label{subsec:master}

We can now give the limiting form of the prediction risk $\cR^\lambda(X)$ under the
variance-profile model {(\Cref{mod:vp})}.
The deterministic equivalent is governed by a vector $\br(\lambda) = (r_l(\lambda))_{l\le p}$
indexed by the coordinates.
 For a ridge level $\lambda > 0$ it solves
the fixed-point system
\begin{align*}
    1 \;=\; r_l(\lambda) + \msum_{i=1}^n \mfrac{S_{il}\, r_l(\lambda)}{\lambda + \sum_{j=1}^p S_{ij}\, r_j(\lambda)}\;,
    \qquad 1\le l\le p\;.
    \tagaligneq \label{eq:r-fixedpoint}
\end{align*}
The system~\eqref{eq:r-fixedpoint} is the coordinate-wise reduction, to the diagonal of the
resolvent, of the matrix Dyson equation for the variance profile. By the
quadratic-vector-equation theory of \citep{Ajanki_2019, alt2017local}, once $\lambda$ is allowed
to range over $\C\setminus(-\infty,0]$ the system has $\br$ as its \emph{unique} solution, each
$r_l$ admits the spectral representation of \Cref{lem:r-spectral} below, and the ridgeless limit
$r_l(0)\coloneqq\lim_{\lambda\downarrow0}r_l(\lambda)$ exists in $[0,1]$ for every $l$; these
facts are established in \Cref{appendix:properties:r} (\Cref{lem:r:eqns,lem:r:values}). The two
objects we need are the value
and the derivative of $\br$ at the hard edge.

\begin{lemma}[Spectral representation of $\br$] \label{lem:r-spectral}
For each $1\le l\le p$ there is a symmetric probability measure $\nu_l$ on $\R$ such that,
for all $\lambda\ge 0$,
\begin{align*}
    r_l(\lambda) \;=\; \mint_\R \mfrac{\lambda}{t^2+\lambda}\,\nu_l(dt)\;.
\end{align*}
Consequently
\begin{align*}
    r_l(0) \;=\; \nu_l(\{0\})
    \qquad\text{ and }\qquad
    \partial r_l(0) \;\coloneqq\; \lim_{\lambda\downarrow 0}\partial_\lambda r_l(\lambda)
    \;=\; \mint_{\R\setminus\{0\}} \mfrac{1}{t^2}\,\nu_l(dt) \;\in\; [0,\infty]\;.
\end{align*}
\end{lemma}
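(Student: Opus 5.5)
Our plan is to realize $r_l$ as a diagonal entry of the resolvent of a Hermitian block (linearized) matrix, and to take $\nu_l$ to be the limiting spectral measure at that diagonal index. Set $Y\coloneqq n^{-1/2}X$ and consider the $(n+p)\times(n+p)$ Hermitian matrix
\begin{align*}
    H \;=\; \begin{pmatrix} 0 & Y \\ Y^\top & 0 \end{pmatrix}\;,
\end{align*}
whose spectrum is symmetric about $0$. For $z=i\eta$ with $\eta>0$, the lower-right block of $(H-z)^{-1}$ equals $z(Y^\top Y-z^2)^{-1}=i\eta\,(W_n+\eta^2)^{-1}$. Writing $\lambda=\eta^2$, we get $-i\sqrt\lambda\,[(H-i\sqrt\lambda)^{-1}]_{n+l,n+l}=\lambda[(W_n+\lambda)^{-1}]_{ll}$. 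This is the random analogue of $r_l(\lambda)$.

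\textbf{Deterministic side.} The vector Dyson equation of \citep{Ajanki_2019, alt2017local} for $H$, with variance profile $\begin{psmallmatrix}0&S\\S^\top&0\end{psmallmatrix}$, has a unique solution $m(z)=(m_1,m_2)$ with $\Im m>0$ on $\C^+$. Each coordinate is the Stieltjes transform of a probability measure: $m_{2,l}(z)=\int (t-z)^{-1}\nu_l(dt)$. By the $t\mapsto -t$ symmetry of the profile, each $\nu_l$ is symmetric.

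\textbf{Matching $r_l$.} We eliminate $m_1$ from the two block equations, $-1/m_{1,i}=z+\sum_j S_{ij}m_{2,j}$ and $-1/m_{2,l}=z+\sum_i S_{il}m_{1,i}$, and set $z=i\sqrt\lambda$. A short algebraic check should show that $r_l(\lambda)\coloneqq -i\sqrt\lambda\,m_{2,l}(i\sqrt\lambda)$ satisfies \eqref{eq:r-fixedpoint}. Uniqueness of the solution of \eqref{eq:r-fixedpoint} (\Cref{lem:r:eqns}) then identifies the two.

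\textbf{Spectral formula.} Symmetrizing the Stieltjes integral gives
\begin{align*}
    -i\sqrt\lambda\int\frac{\nu_l(dt)}{t-i\sqrt\lambda}\;=\;\int\frac{\lambda}{t^2+\lambda}\,\nu_l(dt)\;,
\end{align*}
because the odd part $\int t/(t^2+\lambda)\,\nu_l(dt)$ vanishes. This is the stated representation for $\lambda>0$. At $\lambda=0$ we read it as the limit, consistent with \Cref{lem:r:values}.

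\textbf{Consequences.} Since $\lambda/(t^2+\lambda)\to\ind\{t=0\}$ boundedly, dominated convergence gives $r_l(0)=\nu_l(\{0\})$. For the derivative,
\begin{align*}
    \partial_\lambda r_l(\lambda)=\int\frac{t^2}{(t^2+\lambda)^2}\,\nu_l(dt)\;,
\end{align*}
with the atom at $0$ contributing nothing. Differentiation under the integral is justified for $\lambda>0$ because the integrand is bounded by $1/(4\lambda)$. On $t\ne0$, $t^2/(t^2+\lambda)^2$ increases monotonically to $1/t^2$ as $\lambda\downarrow0$, so monotone convergence yields $\partial r_l(0)=\int_{\R\setminus\{0\}}t^{-2}\,\nu_l(dt)\in[0,\infty]$.

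The main obstacle is the identification step: checking that the reduced scalar system \eqref{eq:r-fixedpoint} is exactly the Dyson equation evaluated on the imaginary axis, with the correct signs and branch. This also requires that uniqueness is invoked in the class where $r_l\in(0,1]$ corresponds to $\Im m>0$.
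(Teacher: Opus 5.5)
Your proposal is correct and follows essentially the same route as the paper. The paper defines $r_l(\lambda)=-\sqrt{-\lambda}\,\fM_{n+l}(\sqrt{-\lambda})$ from the QVE solution and checks that it solves \eqref{eq:r-fixedpoint} (\Cref{lem:r:eqns}); it then symmetrizes the Stieltjes representation with the symmetric measure $\nu_l=\rho_{n+l}$ (\Cref{lem:r:values}). Your use of monotone convergence for $\partial r_l(0)$ is in fact the right justification: the paper invokes dominated convergence there, which does not directly cover the case where the limit is $+\infty$.
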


This representation exposes the two mechanisms behind double descent: $r_l(0) = \nu_l(\{0\})$ is the mass that the
$l$-th coordinate places exactly at the hard edge, and it fixes the coordinates on which the \emph{bias} is supported (\Cref{thm:bias-support});
$\partial r_l(0) = \int_{\R\setminus \{0\}} t^{-2}\nu_l(dt)$ measures how strongly small but nonzero
eigenvalues accumulate near the edge, and it drives the \emph{variance}. The latter
is finite when $\nu_l$ has no mass concentrating at zero from outside, and
infinite when the spectrum touches the edge.

Write $\cR_B^{\lambda}(X;\beta)$ {and $\cR_V^{\lambda}(X;\beta)$ } for the bias and variance term of \Cref{lem:bias-var-decomp}, { where we make the dependence on} signal $\beta\in\R^p$ {explicit}. \Cref{thm:master} below evaluates it along the vanishing sequence
$\lambda=\lambda_n$ {and under \Cref{mod:vp}}.

\begin{theorem}[Limiting vanishing-ridge risk under a variance profile] \label{thm:master}
Suppose \Cref{mod:vp} and \Cref{asst:flat} hold, and let $p/n\to\gamma\in(0,\infty)$. There
is a constant $\alpha_*>0$, depending only on the constants $s_*,\psi_1,\psi_2,L_1,L_2$ of
\Cref{asst:flat}, such that the following holds for every fixed $\lambda_0>0$ and
$\delta>0$, and every deterministic ridge sequence $\lambda=\lambda_n$ with
$\lambda_n\in(n^{-\alpha},\lambda_0)$ for some fixed $0<\alpha<\alpha_*$: for any deterministic
signal $\beta$ with $\sup_p\|\beta\|<\infty$ and $\|\beta\|_{\ell_1}\le n^{\frac14-\delta}$, the bias and
variance terms $\cR_B^{\lambda_n}(X;\beta)$ and $\cR_V^{\lambda_n}(X)$ of
\Cref{lem:bias-var-decomp} admit deterministic equivalents, almost surely:
\begin{align*}
    \cR_B^{\lambda_n}(X;\,\beta) - \msum_{l=1}^p \beta_l^2\; e_l^\top\, \bA(\lambda_n)^{-1}\,\bar\Sigma\,\br(\lambda_n) \;\xrightarrow{\rm a.s.}\; 0, \tagaligneq \label{eq:bias-limit}
\end{align*}
where $e_l$ is the $l$-th standard basis vector of $\R^p$ and
\begin{align*}
    \bA(\lambda) \;\coloneqq\; \diag\{\br(\lambda)\}^{-1} - \diag\{\br(\lambda)\}\, S^\top \diag\{\lambda\bone_n+S\br(\lambda)\}^{-2} S .
\end{align*}
Moreover,
\begin{align*} \cR_V^{\lambda_n}(X) - \mfrac{\sigma_\epsilon^2}{n} \msum_{l=1}^p \bar\Sigma_{ll}\,\partial r_l(\lambda_n) \;\xrightarrow{\rm a.s.}\; 0\,. \tagaligneq \label{eq:var-limit}
\end{align*}
\end{theorem}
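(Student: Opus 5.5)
The plan is to write both risk terms as derivatives of resolvent functionals that the local law controls. We then transfer the deterministic equivalents from the resolvent to its derivatives using analyticity and Cauchy estimates, paying only polynomial factors in $\lambda_n^{-1}\le n^{\alpha}$.

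\emph{Step 1: local law.} Let $G(\lambda)\coloneqq(W_n+\lambda I_p)^{-1}$. Using the $\lambda$-explicit local law of \Cref{appendix:local:law} (the extension of \citep{alt2017local}), we aim for a bound, with overwhelming probability and uniformly over $\lambda$ in a complex disc $D_n$ of radius $c\lambda_n$ around $\lambda_n$, of the form
\[
\max_l\abs{\lambda G_{ll}(\lambda)-r_l(\lambda)}+\max_{l\ne k}\abs{\lambda G_{lk}(\lambda)}\le n^{-1/2+\varepsilon}\lambda_n^{-C}.
\]
Here $C$ depends only on the constants of \Cref{asst:flat}. The identification of the diagonal with $r_l$ comes from the fixed point \eqref{eq:r-fixedpoint}. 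It is consistent with \Cref{lem:r-spectral}, since $\lambda\int(t^2+\lambda)^{-1}\nu_l(dt)$ is exactly the Stieltjes-type transform of the singular-value measure. Choosing $\alpha_*$ small enough that $n^{\alpha_* C}\ll n^{1/2-\varepsilon}$ makes every later error $o(1)$. A union bound over a net in $D_n$ together with Borel--Cantelli upgrades the estimates to almost-sure statements.

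\emph{Step 2: variance.} A direct computation gives $\partial_\lambda[\lambda G(\lambda)]=G-\lambda G^2=W_nG^2$. Hence
\[
\cR_V^\lambda=\mfrac{\sigma_\epsilon^2}{n}\msum_l\bar\Sigma_{ll}\,\partial_\lambda[\lambda G_{ll}(\lambda)].
\]
Both $\lambda G_{ll}$ and $r_l$ are analytic on $D_n$. Cauchy's formula therefore bounds the derivative of their difference at $\lambda_n$ by $(c\lambda_n)^{-1}$ times the sup bound from Step 1. Since \eqref{eq:Sigmabar-S} gives $\bar\Sigma_{ll}\le s_*n/(n+p)$, we have $\frac1n\sum_l\bar\Sigma_{ll}=O(1)$, and \eqref{eq:var-limit} follows.

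\emph{Step 3: bias.} Since $(W_n+\lambda)^{-1}W_n-I=-\lambda G$, we have
\[
\cR_B^\lambda=\lambda^2\beta^\top G\bar\Sigma G\beta=\lambda^2\partial_t\big|_{t=0}\,\beta^\top(W_n+\lambda I-t\bar\Sigma)^{-1}\beta.
\]
Because $\bar\Sigma$ is diagonal, $(W_n+\lambda I-t\bar\Sigma)^{-1}$ is a resolvent with a coordinate-dependent spectral parameter $\lambda-t\bar\Sigma_{ll}$. The Dyson equation and local law of Step 1 extend to this setting for $|t|\le c'\lambda_n$, with a perturbed solution $\br(\lambda,t)$ solving \eqref{eq:r-fixedpoint} with $\lambda$ replaced by $\lambda-t\bar\Sigma_{ll}$ in the appropriate places.

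We split the quadratic form into a diagonal and an off-diagonal part. The off-diagonal part is at most $\|\beta\|_{\ell_1}^2\max_{l\neq k}|G_{lk}|\le n^{1/2-2\delta}\cdot n^{-1/2+\varepsilon}\lambda_n^{-C}\to0$; this is exactly where the hypothesis $\|\beta\|_{\ell_1}\le n^{1/4-\delta}$ is used. The diagonal part is $\sum_l\beta_l^2\,\partial_t$ of the deterministic diagonal, again controlled by Cauchy's formula in $t$. We then compute $\partial_t\br(\lambda,0)$ by implicitly differentiating the fixed-point system. The Jacobian of \eqref{eq:r-fixedpoint} with respect to $\br$, after multiplying by $\diag\{\br\}^{-1}$, is precisely $\bA(\lambda)$. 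The $t$-derivative of the source term is proportional to $\bar\Sigma\br$, which yields $e_l^\top\bA^{-1}\bar\Sigma\br$ after the normalisation by $\lambda^2$. The bound $\sup_p\|\beta\|<\infty$ keeps $\sum_l\beta_l^2$ bounded.

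\emph{Main obstacle.} The hardest step is the stability input behind Steps 1 and 3. We need a bound on $\|\bA(\lambda)^{-1}\|$, equivalently on the stability operator of the Dyson equation, that grows at most polynomially in $\lambda^{-1}$ as $\lambda\downarrow0$ near the hard edge. We also need this to persist for the perturbed parameter $\lambda-t\bar\Sigma$. I would first derive it from the Perron--Frobenius structure of $\diag\{\br\}S^\top\diag\{\lambda+S\br\}^{-2}S\diag\{\br\}$. The connectivity bounds of \Cref{asst:flat} should give a spectral gap below $1$ of order $\lambda^{C}$, via the identity obtained by multiplying \eqref{eq:r-fixedpoint} by $r_l$. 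This gap then feeds the bootstrapping argument of the local law and fixes $\alpha_*$.
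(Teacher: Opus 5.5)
Your Steps 1--2 and the implicit differentiation at the end of Step 3 follow the paper's argument. That argument uses the local law for the Hermitization at $z=\sqrt{-\lambda}$, where $\lambda[(W_n+\lambda I_p)^{-1}]_{ll}-r_l(\lambda)=-\sqrt{-\lambda}\,(G_{n+l,n+l}-\fM_{n+l})(\sqrt{-\lambda})$. It then uses analyticity in $\lambda$ with a Cauchy estimate for the variance, and a $\bar\Sigma$-perturbation whose derivative yields $\bA^{-1}\bar\Sigma\br$ for the bias.

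\textbf{The gap: the $t$-derivative in Step 3.} Cauchy's formula in $t$ needs the deterministic equivalent of $\beta^\top(W_n+\lambda I_p-t\bar\Sigma)^{-1}\beta$, diagonal and off-diagonal parts, uniformly on a circle of \emph{non-real} $t$. Nothing in Step 1 gives that.

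For real $t$ with $|t|\le c'\lambda$ and $c's_*<1$, the reduction works. Write $W_n+\lambda I_p-t\bar\Sigma=D^{1/2}\big(\tfrac1n(XD^{-1/2})^\top(XD^{-1/2})+\lambda I_p\big)D^{1/2}$ with $D=I_p-(t/\lambda)\bar\Sigma\succ0$. Then $XD^{-1/2}$ is again a variance-profile matrix satisfying \Cref{asst:flat} uniformly in $t$. This is exactly the paper's $X_\eta=X(I_p+\eta\bar\Sigma)^{-1/2}$ with $t=-\lambda\eta$ (\Cref{lem:asst:VP}).

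For non-real $t$ the reduction fails. The weights $D^{-1/2}$ are complex, so the rescaled matrix is no longer a variance-profile design. What you would need is a local law for a generalized resolvent with a non-scalar complex spectral parameter. \Cref{thm:local:law:VP} does not provide this: its proof (\Cref{lem:abs:fM:bound,lem:im:fM:bound}, the stability bound \Cref{lem:bound_B_inverse}, and the bootstrap) is for a scalar $z\in\bbH$. So the claim that ``the Dyson equation and local law extend to this setting'' is unproved, and your bias argument rests on it.

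\textbf{How the paper closes it.} It keeps the perturbation real and replaces Cauchy by the real-variable inequality of \Cref{lemma:ConvDerivative}, $|f'-g'|\lesssim\|f-g\|_\infty/\delta'+\|f'''-g'''\|_\infty(\delta')^2$. This is applied to $\cF^\lambda(\eta)=\lambda\beta^\top(W_n+\lambda I_p+\lambda\eta\bar\Sigma)^{-1}\beta$ and its deterministic counterpart. It needs third $\eta$-derivative bounds. On the random side these are trivial ($\lesssim\|\beta\|^2s_*^3$). On the deterministic side they come from bounds on $\partial_\eta^k\fM^\eta$, $k\le3$, via the perturbed stability operator $\fB_\eta^{-1}$ (\Cref{lem:Meta:derivatives}). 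You could instead keep Cauchy by first bounding the difference on the complex $t$-disc with a two-constants (harmonic-measure) argument. That would combine the sharp bound on the real segment with crude $\mathrm{poly}(\lambda^{-1})$ bounds, but it too would have to be supplied.

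\textbf{Two smaller points.}
\begin{enumerate}
\item Your ``main obstacle'' is not where the difficulty lies. The fixed-point identity $r_l\sum_iS_{il}/(\lambda+(S\br)_i)=1-r_l$ plus Cauchy--Schwarz gives the symmetric bound $\bA\,\diag\{\br\}\succeq\diag\{\br\}$. Hence $\|\bA^{-1}\|\le(\min_l r_l(\lambda))^{-1}\le1+s_*/\lambda$, since \eqref{eq:r-fixedpoint} gives $1/r_l\le1+\bar\Sigma_{ll}/\lambda$. The real stability input is the $z$-explicit bound on $\fB^{-1}(z)$ over $\D_{\upsilon,K}$ that drives the bootstrap.
\item Your off-diagonal error $n^{-2\delta+\varepsilon}\lambda_n^{-C}$ vanishes only if $\alpha C<2\delta$. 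So your $\alpha_*$ must depend on $\delta$, not only on the constants of \Cref{asst:flat}. The paper's proof has the same dependence, through its choice $\upsilon\le\delta/3$.
\end{enumerate}
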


\begin{remark}[Beyond the Gaussian case] \label{rem:non-gaussian}
{
The proof of \Cref{thm:master} relies on an adaptation of the local law of \citep{alt2017local}, which holds also for independent \emph{non-Gaussian} entries with matching variance profile and uniformly bounded moments.
}
 The
limiting risk~\eqref{eq:bias-limit}--\eqref{eq:var-limit} therefore continues to describe
any such variance-profile design. 
{
    Gaussianity is used only for reducing Model \ref{mod:het} and Model \ref{mod:dep} to the variance profile model by an orthogonal-invariance argument; see Section \ref{subsec:reduction}.
}
For non-Gaussian heterogeneous data{,} the
master theorem still applies whenever the design is already entrywise independent. This is
{supported by numerical experiment}{; see \Cref{fig:sim-latent-nongauss}.
}
\end{remark}

The analogous result for the heterogeneous and dependent designs (\Cref{mod:het,mod:dep}) is deferred to Section \ref{subsec:reduction}. As a sanity check, we first examine the isotropic cases.

\begin{example}[Isotropic i.i.d.\ design] \label{ex:iso}
When the observations are i.i.d.\ isotropic Gaussians, $S_{ij} = 1/n$, $\bar\Sigma = I_p$,
and all coordinates share a common scalar $r(\lambda)$. In the limit $p/n\to\gamma$ the
system~\eqref{eq:r-fixedpoint} becomes $1 = r(\lambda) + r(\lambda)/(\lambda + \gamma r(\lambda))$,
i.e.\ $\gamma r(\lambda)^2 + (1+\lambda-\gamma) r(\lambda) - \lambda = 0$; the root with
$r(0)\ge 0$ gives
\begin{align*}
    r(0) \;=\; \Big(1 - \mfrac1\gamma\Big)_{\!+}
    \qquad\text{ and }\qquad
    \partial r(0)
    \;=\;
    \begin{cases}
        \frac{1}{1-\gamma}, & \gamma < 1,\\[.3em]
        \frac{1}{\gamma(\gamma-1)}, & \gamma > 1.
    \end{cases}
\end{align*}
Here $r(0) = \nu(\{0\})$ is exactly the atom of the Mar\v{c}enko--Pastur law at the
origin. Substituting into~\eqref{eq:bias-limit}--\eqref{eq:var-limit} recovers the
familiar ridgeless formulas of \citep{hastie2022surprises}: the bias is $0$ for
$\gamma<1$ and $\|\beta\|^2(\gamma-1)/\gamma$ for $\gamma>1$, while the variance is
$\sigma_\epsilon^2\gamma/(1-\gamma)$ for $\gamma<1$ and $\sigma_\epsilon^2/(\gamma-1)$
for $\gamma>1$, both diverging as $\gamma\to 1$. There is a single spectral degeneracy,
located at the single aspect ratio $\gamma = 1$, and hence a single peak.
\end{example}

\begin{example}[Isotropic design with unseen coordinates] \label{ex:iso-unseen}
Let every observation see only the first $p_1$ coordinates, with $p_1/p\to\theta\in(0,1)$:
that is, $S_{il} = 1/n$ for $l\le p_1$ and $S_{il}=0$ otherwise. This is \Cref{mod:het} with
the rank-deficient covariances $\Sigma_i = \diag(1,\ldots,1,0,\ldots,0)$. It is decomposable {(\Cref{rem:decomposable})}, so \Cref{asst:flat} is applied blockwise.
The system~\eqref{eq:r-fixedpoint} then decouples. For an unseen coordinate $l>p_1$ the sum on
the right-hand side of \eqref{eq:r-fixedpoint} is empty, so $r_l(\lambda)=1$ for all $\lambda$:
the coordinate carries full hard-edge mass, $r_l(0)=1$, and $\partial r_l(0)=0$. The seen
coordinates share a common scalar $r(\lambda)$ that satisfies the isotropic equation of
\Cref{ex:iso} with the smaller ratio $\gamma_1 \coloneqq \lim p_1/n = \theta\gamma$ in place of
$\gamma$; in particular $r(0)=(1-1/\gamma_1)_+$. In the language of \Cref{sec:vp-risk-main},
the unseen block belongs to the degenerate set $\bJ_S$ at every aspect ratio. 
The seen block enters $\bJ_S$ only once $\gamma_1>1$, and the variance diverges
at the single shifted threshold $\gamma = 1/\theta$, where the seen block becomes square. One
rank-deficient design thus already moves the peak away from $\gamma=1$ and adds an
ever-present bias; two such blocks with different ratios give two peaks (\Cref{ex:two-group-disjoint}).
\end{example}

We defer the proof of \Cref{thm:master} to \Cref{appendix:proof:bias} (bias term) and \Cref{appendix:proof:var} (variance term). The proof adapts the ridge-resolvent
framework of \citep{hastie2022surprises} to the variance profile, substituting the
deterministic Dyson solution of \citep{alt2017local} for the empirical resolvent.
The one {main technical} ingredient concerns the vanishing penalty. The local law of \citep{alt2017local}
is stated for spectral parameters bounded away from $0$, so to follow the resolvent as
$\lambda_n\downarrow0$ we re-derive it with the dependence on the spectral parameter tracked
explicitly{. This gives} a quantitative refinement of \citep{alt2017local, Ajanki_2019} {(\Cref{appendix:local:law}), which may be} of independent interest. {Our result therefore} lets $\lambda_n$ approach zero as fast as the
resolution of the local law permits, which is what brings out the structural degeneracies that
a fixed ridge penalty would smooth over; we read them off the support of $S$ in
\Cref{sec:vp-risk-main}.

\begin{remark}[The exact ridgeless limit]\label{rem:exact-ridgeless}
Our results are stated for a vanishing penalty $\lambda_n\downarrow0$ rather than the exact
ridgeless interpolator ($\lambda=0$). Reaching $\lambda=0$ would require controlling the
smallest \emph{nonzero} singular value of $X$ (the soft lower edge of the non-zero bulk)
uniformly down to zero, a sharper input than our local law provides. We expect the limiting
risk to be unchanged, and leave this refinement open.
\end{remark}

\subsection{Reduction to a variance profile} \label{subsec:reduction}

The limiting risk for the two designs now follows from \Cref{thm:master} with no extra
work. As explained in \Cref{subsec:vp-model} and proved in \Cref{appendix:setup}, each \Cref{mod:het}
design becomes, in the common eigenbasis of its covariances, a variance profile (\Cref{mod:vp})
with an explicit $S$ (\Cref{lem:reduction:var:profile}), and each \Cref{mod:dep} design reduces
to this case after regrouping, up to a rotation of the signal (\Cref{lem:model:equivalence}).
Feeding the induced profile into \Cref{thm:master} gives the risk for both designs.

\begin{corollary}[Limiting risk for the two designs] \label{cor:practical-risk}
Suppose \Cref{mod:het} or \Cref{mod:dep} holds, and let $S$ be the variance profile it
induces through \Cref{lem:reduction:var:profile} (for \Cref{mod:dep}, after the reduction of
\Cref{lem:model:equivalence}), with signal rotation $U$; both lemmas are in \Cref{appendix:setup}.
Assume $S$ satisfies \Cref{asst:flat},
$p/n\to\gamma\in(0,\infty)$, and take a ridge sequence $\lambda_n\in(n^{-\alpha},\lambda_0)$ with
$0<\alpha<\alpha_*$ as in \Cref{thm:master}, and a signal $\beta$ whose rotation $U\beta$ meets
the signal conditions of \Cref{thm:master} (namely $\sup_p\|U\beta\|<\infty$ and
$\|U\beta\|_{l_1}\le n^{\frac14-\delta}$; recall $\|U\beta\|=\|\beta\|$). Then the
vanishing-ridge risk $\cR^{\lambda_n}(X;\beta)$ obeys the conclusion of \Cref{thm:master} with the
profile $S$ and the rotated signal $U\beta$: its bias and variance terms agree almost surely with
the deterministic equivalents \eqref{eq:bias-limit}--\eqref{eq:var-limit}. 
\end{corollary}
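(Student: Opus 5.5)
The corollary follows from \Cref{thm:master} once we check that both risk terms of \Cref{lem:bias-var-decomp} are invariant under the orthogonal changes of basis used in the reductions.

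\emph{Heterogeneous design (\Cref{mod:het}).} Write $\Sigma_i = U^\top D^{(i)} U$ with a common orthogonal $U$, and set $\tilde X \coloneqq X U^\top$. The rows are $\tilde X_i = U\Sigma_i^{1/2} Z_i = (D^{(i)})^{1/2} U Z_i$. Since $UZ_i \sim \cN(0,I_p)$ are i.i.d., $\tilde X$ has independent centred Gaussian entries with $\Var[\tilde X_{il}] = D^{(i)}_{ll} = nS_{il}$. This is \Cref{mod:vp} with the profile of \Cref{lem:reduction:var:profile}.

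Next we transport the risk. We have $\tilde W_n \coloneqq \frac1n \tilde X^\top \tilde X = U W_n U^\top$ and $\tilde{\bar\Sigma} \coloneqq \frac1n\sum_i D^{(i)} = U\bar\Sigma U^\top$, which is $\diag(\sum_i S_{il})$, in agreement with \eqref{eq:Sigmabar-S}. Conjugation commutes with resolvents, so
\[
U\big((W_n+\lambda I)^{-1}W_n - I\big)U^\top = (\tilde W_n+\lambda I)^{-1}\tilde W_n - I,
\qquad
\bar\Sigma^{1/2} = U^\top \tilde{\bar\Sigma}^{1/2} U .
\]
It follows that
\[
\cR_B^\lambda(X;\beta) = \cR_B^\lambda(\tilde X; U\beta),
\qquad
\cR_V^\lambda(X) = \cR_V^\lambda(\tilde X),
\]
by trace cyclicity. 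Here $\cR^\lambda(\tilde X;\cdot)$ denotes the expressions of \Cref{lem:bias-var-decomp} computed from $\tilde W_n$ and $\tilde{\bar\Sigma}$. Note that $\tilde{\bar\Sigma}$ is exactly the averaged covariance of the profile model, so \Cref{thm:master}'s $\bar\Sigma$ is the right object. Applying \Cref{thm:master} to $\tilde X$ with signal $U\beta$ gives the result, because the hypotheses on $U\beta$ are precisely those assumed and $\|U\beta\|=\|\beta\|$.

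\emph{Dependent design (\Cref{mod:dep}).} Write $\tilde\Sigma^{(m)} = \tilde U^\top \tilde D^{(m)}\tilde U$ and $\Sigma^{(m)} = U^\top D^{(m)} U$, and set $\tilde X \coloneqq \tilde U X U^\top = \sum_m (\tilde D^{(m)})^{1/2} (\tilde U Z^{(m)} U^\top)(D^{(m)})^{1/2}$. The matrices $\tilde U Z^{(m)}U^\top$ are again i.i.d.\ standard Gaussian matrices, by bi-orthogonal invariance. Entry $(i,l)$ of $\tilde X$ is then $\sum_m (\tilde D^{(m)}_{ii} D^{(m)}_{ll})^{1/2} G^{(m)}_{il}$, with all $G^{(m)}_{il}$ independent. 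Hence the entries are independent Gaussians with variance $\sum_m \tilde D^{(m)}_{ii}D^{(m)}_{ll} = nS_{il}$, as in \Cref{lem:model:equivalence}.

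Left multiplication by $\tilde U$ leaves $X^\top X$ unchanged, so $\tilde W_n = U W_n U^\top$ exactly as before. The averaged covariance of the rows is $\bar\Sigma = \frac1n\sum_i \Var[X_i]$, and $\sum_i \Var[X_i] = \mean[X^\top X]$ is also unchanged by $\tilde U$. Therefore $U\bar\Sigma U^\top = \frac1n\mean[\tilde X^\top\tilde X] = \diag(\sum_i S_{il})$. The same identities then give $\cR^\lambda(X;\beta) = \cR^\lambda(\tilde X;U\beta)$, and \Cref{thm:master} applies.

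\emph{Main obstacle: the test mechanism in the dependent case.} The decomposition in \Cref{lem:bias-var-decomp} only involves $\bar\Sigma$ through the mixture law of $X_{\rm new}$. Under \eqref{eq:test-gen-main} that mixture has second moment $\frac1n\sum_i\Var[X_i]$, even when the rows are dependent. So the risk depends on the design only through $W_n$ and $\bar\Sigma$, and the noise independence is unaffected by rotating the rows. Once this is granted, everything reduces to the conjugation identities above. The remaining routine step is the almost-sure transfer, which is immediate because the two risks coincide pathwise.
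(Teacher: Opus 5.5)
Your proposal is correct and follows the paper's route. You rotate to the common eigenbasis so that the design becomes a variance-profile matrix (\Cref{lem:model:equivalence,lem:reduction:var:profile}), check that both risk terms are invariant under $W_n\mapsto UW_nU^\top$, $\bar\Sigma\mapsto U\bar\Sigma U^\top$, $\beta\mapsto U\beta$, and then invoke \Cref{thm:master}. Two of your choices are small improvements on the paper's write-up: defining $\tilde X=\tilde U X U^\top$ pathwise instead of using equality in distribution makes the risk identity hold surely, so the almost-sure conclusion transfers with no coupling remark, and computing $\bar\Sigma$ through $\mean[X^\top X]$ is a slightly slicker way to get the averaged covariance.
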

\section{The variance-profile regime: degeneracy and multiple descent} \label{sec:vp-risk-main}

The deterministic equivalents in \Cref{thm:master} are explicit functions of the fixed-point
solution~\eqref{eq:r-fixedpoint}, and their behavior along a vanishing ridge is captured by its
value $\br(0)$ and derivative $\partial\br(0)$ at the hard edge. We
determine both from the support of $S$ by classical matching theory: the bias support in
\Cref{subsec:bias-support} and the variance blow-up in \Cref{subsec:switching}. Together they
locate the descent peaks through the Dulmage--Mendelsohn structure of the variance graph.
{
Throughout, $X$ obeys \Cref{mod:vp} with profile $S$ satisfying \Cref{asst:flat}, and
$\gamma = \lim p/n$. When 
{ the variance profile} is disconnected, \Cref{asst:flat} is imposed on each component
and every statement below is read componentwise (\Cref{rem:decomposable}).
}

{We read both $\br(0)$ and $\partial \br(0)$} from a single bipartite graph attached to $S$. The
\emph{variance graph} $\cG_S$ (\Cref{def:var-graph}) joins observation $i$ to coordinate $j$
whenever $S_{ij}>0$. The bias is carried by the \emph{degenerate set} $\bJ_S$
(\Cref{def:Jset}), the coordinates that some maximum matching leaves unmatched: we show that
$r_l(0)>0$ if and only if $l\in\bJ_S$ (\Cref{thm:bias-support}). The variance is governed by a
\emph{switching condition} (\Cref{def:switching}) on the design that remains once $\bJ_S$ and
its adjacent rows are removed; the derivative $\partial\br(0)$ is finite if and only if this
condition fails, so each design at which it holds is a point where the variance diverges, {giving} a
descent peak (\Cref{thm:var-switch}).

\subsection{The variance graph and the support of the bias} \label{subsec:bias-support}

The relevant object is the bipartite graph of the profile $S$. Since
permuting rows or columns with identical patterns leaves the problem invariant, we
quotient out such permutations by passing to the graph.

\begin{definition}[Variance graph] \label{def:var-graph}
The variance graph $\cG_S$ of $S$ is the bipartite graph on the vertex set
$\cV_S = \cV_{\rm row}\cup\cV_{\rm col}$, with $\cV_{\rm row} = \{1,\ldots,n\}$,
$\cV_{\rm col} = \{1,\ldots,p\}$, and an edge $i\sim j$ if and only if $S_{ij} > 0$. We
write $\cE_S$ for its edge set.
\end{definition}

A \emph{matching} is a set of edges no two of which share a vertex; a \emph{maximum
matching} $\cE^{\rm max}_S$ is one of largest cardinality. The number $|\cE^{\rm max}_S|$
is not realized by a unique matching but is itself unique, and equals the
\emph{structural rank} of $S$---the maximal rank over all matrices with the same support
as $S$, equivalently the almost-sure rank of $X$. The combinatorial invariant that
governs the bias is the set of columns that can be left out.

\begin{definition}[Degenerate column set] \label{def:Jset}
The degenerate column set of $S$ is
\begin{align*}
    \bJ_S
    \;\coloneqq\;
    \big\{\, 1\le j\le p \;\big|\; \text{some maximum matching of } \cG_S \text{ leaves } j \text{ unmatched} \,\big\}
    \;\subseteq\; \cV_{\rm col}\;.
\end{align*}
\end{definition}

The set $\bJ_S$ is a classical object: it is the column part of the
Dulmage--Mendelsohn (DM) decomposition of the bipartite graph
$\cG_S$~\citep{Dulmage_Mendelsohn_1958, pulleyblank1996matchings, pothen1990computing},
the canonical partition encoding the structure of all maximum matchings, and it is
computed in linear time by alternating-path search from any single maximum matching.
It is widely used in sparse numerical linear algebra to read off structural rank and
the support of pseudoinverses, and is available in standard routines (for instance
\texttt{dmperm} in \textsc{Matlab}). The following two examples illustrate this concept.

\begin{example}[All-one variance profile] \label{ex:all-ones}
When the observations are i.i.d.~isotropic Gaussians, the variance profile $S$ in \Cref{mod:vp} is the all-one $n \times p$ matrix. In this case, we can compute the size of any maximum matching as
\begin{align*}
    | \cE^{\rm max}_S | \;=\; \min\{n,p\}\;,
\end{align*}
which is also the asymptotic rank of the sample covariance matrix $X^\top X$. For any given matching, the number of unmatched vertices in $\cV_{\rm col}$ is $p - | \cE^{\rm max}_S | = p - \min\{n,p\}$, which captures the dimension of the null space of $S$. On the other hand, notice that while the set of unmatched vertices is matching-specific, our degenerate column set $\bJ_S$ captures possible degeneracies induced by \emph{all} matchings. Indeed, since the columns of $S$ are identical, the columns of Gaussian data matrix $X$ are exchangeable, and we do not expect the quantity that characterizes degeneracies to be matching specific. Observe that in this case, $\bJ_S$ is either the empty set or the full set:
\begin{align*}
    \bJ_S \;=\; \begin{cases}
        \emptyset & \text{ if } p \leq n\;,
        \\
        \{1, \ldots, p\} & \text{ if } p > n\;.
    \end{cases}
\end{align*}
Using our result on the connection between $\bJ_S$ and bias, the statistical intuition is the following: In the underparameterized regime, signals in all dimensions are fully described by the data, the set of degenerate columns is empty and the bias term of the linear regression is $0$. In the overparameterized regime, the signal along any dimension has a non-negligible probability to be missed out by the data, so the set of degenerate columns is the full set and a bias is introduced along every dimension.
\end{example}

\begin{example}[Two-group profile with overlapping column sets]
\label{ex:two-group-overlapping}
Consider the following example of $S$, where the observations consist of two
independent heterogeneous groups with overlapping column sets:
\begin{align*}
    S
    \;=&\;
    \begin{psmallmatrix}
        \bone_{n_1\times(p_1-p_{12})}
        & \bone_{n_1\times p_{12}}
        & 0
        & 0
        \\
        0
        & \bone_{n_2\times p_{12}}
        & \bone_{n_2\times(p_2-p_{12})}
        & 0
    \end{psmallmatrix}
    \;,
\end{align*}
where $\bone_{n'\times p'}$ denotes the $n'\times p'$ matrix consisting of
all ones. Let $\cV_1,\cV_2\subseteq\cV_{\rm col} \coloneq [p]$ denote the column index
sets of the two groups, where \begin{align*}
    \cV_1
    \coloneqq \{1,\ldots,p_1\}, \qquad
    \cV_2
    \coloneqq
    \{p_1-p_{12}+1,\ldots,p_1+p_2-p_{12}\}.
\end{align*}
Then
\begin{align*}
    |\cV_1|=p_1,
    \qquad
    |\cV_2|=p_2,
    \qquad
    |\cV_1\cap\cV_2|=p_{12}>0.
\end{align*}
We further write
\begin{align*}
    \cV_0
    \;\coloneqq\;
    \{p_1+p_2-p_{12}+1,\ldots,p\}
    \;=\;
    \cV_{\rm col}\setminus(\cV_1\cup\cV_2).
\end{align*}
The four groups of columns in $S$ are indexed,
from left to right, by $\cV_1\setminus\cV_2$, $\cV_1\cap\cV_2$,
$\cV_2\setminus\cV_1$, and $\cV_0$. In particular, the coordinates in
$\cV_1\cap\cV_2$ are adjacent to both observation groups, while those in $\cV_0$ have no incident edges.
The degenerate column set is given by the following four cases.
\begin{proplist}
    \item If $p_1-p_{12}\le n_1$, $p_2-p_{12}\le n_2$, and
    $p_1+p_2-p_{12}\le n_1+n_2$, then
    $\bJ_S=\cV_0$.

    \item If $p_1-p_{12}>n_1$ and $p_2\le n_2$, then
    $\bJ_S=\cV_0\cup(\cV_1\setminus\cV_2)$.

    \item If $p_2-p_{12}>n_2$ and $p_1\le n_1$, then
    $\bJ_S=\cV_0\cup(\cV_2\setminus\cV_1)$.

    \item If $p_1>n_1$, $p_2>n_2$, and
    $p_1+p_2-p_{12}>n_1+n_2$, then
    $\bJ_S=\cV_0\cup\cV_1\cup\cV_2$.
\end{proplist}

\begin{figure}[h]
    \centering
    \resizebox{\linewidth}{!}{%
        \begingroup
\providecommand{\cV}{\mathcal V}
\providecommand{\bJ}{\mathbf J}

\definecolor{edgegray}{gray}{0.62}
\definecolor{rowfill}{gray}{0.86}
\definecolor{rowdraw}{gray}{0.63}
\definecolor{matchred}{RGB}{220,0,0}

\tikzset{
  rowvertex/.style={
    circle,
    draw=rowdraw,
    fill=rowfill,
    text=black!70,
    line width=0.35pt,
    minimum size=5.7mm,
    inner sep=0pt,
    font=\sffamily\scriptsize
  },
  colvertex/.style={
    circle,
    draw=black,
    fill=black,
    text=white,
    line width=0.35pt,
    minimum size=5.7mm,
    inner sep=0pt,
    font=\sffamily\scriptsize
  },
  varianceedge/.style={
    draw=edgegray,
    line width=0.38pt,
    line cap=round
  },
  matchingedge/.style={
    draw=matchred,
    line width=1.25pt,
    line cap=round
  },
  groupbrace/.style={
    decorate,
    decoration={brace,amplitude=3.2pt},
    draw=black!65,
    line width=0.32pt
  },
  leftgroupbrace/.style={
    decorate,
    decoration={brace,amplitude=3.2pt,mirror},
    draw=black!65,
    line width=0.32pt
  },
  paneltitle/.style={
    align=center,
    font=\small
  },
  grouplabel/.style={
    font=\scriptsize,
    text=black!85,
    inner sep=1pt
  },
  jslabel/.style={
    font=\small
  }
}

\begin{tikzpicture}[x=1cm,y=1cm]

\begin{scope}[shift={(0,7.2)}]
  \node[paneltitle] at (4.25,3.48) {\textbf{Case 1}\\[-1pt]
    $\begin{gathered}
      p_1-p_{12}\le n_1,\quad p_2-p_{12}\le n_2,\\[-1pt]
      p_1+p_2-p_{12}\le n_1+n_2
    \end{gathered}$};

  \node[rowvertex] (c1r1) at (1.45, 2.12) {1};
  \node[rowvertex] (c1r2) at (1.45, 1.34) {2};
  \node[rowvertex] (c1r3) at (1.45, 0.30) {3};
  \node[rowvertex] (c1r4) at (1.45,-0.48) {4};

  \node[colvertex] (c1a1) at (4.15, 2.30) {1};
  \node[colvertex] (c1s1) at (4.15, 1.26) {2};
  \node[colvertex] (c1b1) at (4.15, 0.22) {3};
  \node[colvertex] (c1z1) at (4.15,-1.00) {4};

  \foreach \r in {c1r1,c1r2}{
    \draw[varianceedge] (\r) -- (c1a1);
    \draw[varianceedge] (\r) -- (c1s1);
  }
  \foreach \r in {c1r3,c1r4}{
    \draw[varianceedge] (\r) -- (c1s1);
    \draw[varianceedge] (\r) -- (c1b1);
  }

  \draw[matchingedge] (c1r1) -- (c1a1);
  \draw[matchingedge] (c1r2) -- (c1s1);
  \draw[matchingedge] (c1r3) -- (c1b1);

  \draw[leftgroupbrace] (1.02,2.45) -- (1.02,1.01)
    node[midway,left=4pt,grouplabel] {group 1};
  \draw[leftgroupbrace] (1.02,0.63) -- (1.02,-0.81)
    node[midway,left=4pt,grouplabel] {group 2};

  \draw[groupbrace] (4.57,2.61) -- (4.57,1.99)
    node[midway,right=4pt,grouplabel] {$\cV_1\setminus\cV_2$};
  \draw[groupbrace] (4.57,1.57) -- (4.57,0.95)
    node[midway,right=4pt,grouplabel] {$\cV_1\cap\cV_2$};
  \draw[groupbrace] (4.57,0.53) -- (4.57,-0.09)
    node[midway,right=4pt,grouplabel] {$\cV_2\setminus\cV_1$};
  \draw[groupbrace] (4.57,-0.69) -- (4.57,-1.31)
    node[midway,right=4pt,grouplabel] {$\cV_0$};

  \node[jslabel] at (4.15,-1.80) {$\bJ_S=\cV_0$};
\end{scope}

\begin{scope}[shift={(8.75,7.2)}]
  \node[paneltitle] at (4.25,3.48) {\textbf{Case 2}\\[-1pt]
    $p_1-p_{12}>n_1,\qquad p_2\le n_2$};

  \node[rowvertex] (c2r1) at (1.45, 1.86) {1};
  \node[rowvertex] (c2r2) at (1.45, 0.52) {2};
  \node[rowvertex] (c2r3) at (1.45,-0.26) {3};

  \node[colvertex] (c2a1) at (4.15, 2.55) {1};
  \node[colvertex] (c2a2) at (4.15, 1.87) {2};
  \node[colvertex] (c2s1) at (4.15, 0.88) {3};
  \node[colvertex] (c2b1) at (4.15, 0.08) {4};
  \node[colvertex] (c2z1) at (4.15,-1.00) {5};

  \foreach \c in {c2a1,c2a2,c2s1}{
    \draw[varianceedge] (c2r1) -- (\c);
  }
  \foreach \r in {c2r2,c2r3}{
    \draw[varianceedge] (\r) -- (c2s1);
    \draw[varianceedge] (\r) -- (c2b1);
  }

  \draw[matchingedge] (c2r1) -- (c2a1);
  \draw[matchingedge] (c2r2) -- (c2s1);
  \draw[matchingedge] (c2r3) -- (c2b1);

  \draw[leftgroupbrace] (1.02,2.19) -- (1.02,1.53)
    node[midway,left=4pt,grouplabel] {group 1};
  \draw[leftgroupbrace] (1.02,0.85) -- (1.02,-0.59)
    node[midway,left=4pt,grouplabel] {group 2};

  \draw[groupbrace] (4.57,2.86) -- (4.57,1.56)
    node[midway,right=4pt,grouplabel] {$\cV_1\setminus\cV_2$};
  \draw[groupbrace] (4.57,1.19) -- (4.57,0.57)
    node[midway,right=4pt,grouplabel] {$\cV_1\cap\cV_2$};
  \draw[groupbrace] (4.57,0.39) -- (4.57,-0.23)
    node[midway,right=4pt,grouplabel] {$\cV_2\setminus\cV_1$};
  \draw[groupbrace] (4.57,-0.69) -- (4.57,-1.31)
    node[midway,right=4pt,grouplabel] {$\cV_0$};

  \node[jslabel] at (4.15,-1.80)
    {$\bJ_S=\cV_0\cup(\cV_1\setminus\cV_2)$};
\end{scope}

\begin{scope}[shift={(0,0)}]
  \node[paneltitle] at (4.25,3.48) {\textbf{Case 3}\\[-1pt]
    $p_2-p_{12}>n_2,\qquad p_1\le n_1$};

  \node[rowvertex] (c3r1) at (1.45, 2.14) {1};
  \node[rowvertex] (c3r2) at (1.45, 1.36) {2};
  \node[rowvertex] (c3r3) at (1.45, 0.22) {3};

  \node[colvertex] (c3a1) at (4.15, 2.55) {1};
  \node[colvertex] (c3s1) at (4.15, 1.61) {2};
  \node[colvertex] (c3b1) at (4.15, 0.64) {3};
  \node[colvertex] (c3b2) at (4.15,-0.04) {4};
  \node[colvertex] (c3z1) at (4.15,-1.00) {5};

  \foreach \r in {c3r1,c3r2}{
    \draw[varianceedge] (\r) -- (c3a1);
    \draw[varianceedge] (\r) -- (c3s1);
  }
  \foreach \c in {c3s1,c3b1,c3b2}{
    \draw[varianceedge] (c3r3) -- (\c);
  }

  \draw[matchingedge] (c3r1) -- (c3a1);
  \draw[matchingedge] (c3r2) -- (c3s1);
  \draw[matchingedge] (c3r3) -- (c3b1);

  \draw[leftgroupbrace] (1.02,2.47) -- (1.02,1.03)
    node[midway,left=4pt,grouplabel] {group 1};
  \draw[leftgroupbrace] (1.02,0.55) -- (1.02,-0.11)
    node[midway,left=4pt,grouplabel] {group 2};

  \draw[groupbrace] (4.57,2.86) -- (4.57,2.24)
    node[midway,right=4pt,grouplabel] {$\cV_1\setminus\cV_2$};
  \draw[groupbrace] (4.57,1.92) -- (4.57,1.30)
    node[midway,right=4pt,grouplabel] {$\cV_1\cap\cV_2$};
  \draw[groupbrace] (4.57,0.95) -- (4.57,-0.35)
    node[midway,right=4pt,grouplabel] {$\cV_2\setminus\cV_1$};
  \draw[groupbrace] (4.57,-0.69) -- (4.57,-1.31)
    node[midway,right=4pt,grouplabel] {$\cV_0$};

  \node[jslabel] at (4.15,-1.80)
    {$\bJ_S=\cV_0\cup(\cV_2\setminus\cV_1)$};
\end{scope}

\begin{scope}[shift={(8.75,0)}]
  \node[paneltitle] at (4.25,3.48) {\textbf{Case 4}\\[-1pt]
    $\begin{gathered}
      p_1>n_1,\quad p_2>n_2,\\[-1pt]
      p_1+p_2-p_{12}>n_1+n_2
    \end{gathered}$};

  \node[rowvertex] (c4r1) at (1.45, 1.72) {1};
  \node[rowvertex] (c4r2) at (1.45, 0.28) {2};

  \node[colvertex] (c4a1) at (4.15, 2.15) {1};
  \node[colvertex] (c4s1) at (4.15, 1.15) {2};
  \node[colvertex] (c4b1) at (4.15, 0.15) {3};
  \node[colvertex] (c4z1) at (4.15,-1.00) {4};

  \draw[varianceedge] (c4r1) -- (c4a1);
  \draw[varianceedge] (c4r1) -- (c4s1);
  \draw[varianceedge] (c4r2) -- (c4s1);
  \draw[varianceedge] (c4r2) -- (c4b1);

  \draw[matchingedge] (c4r1) -- (c4a1);
  \draw[matchingedge] (c4r2) -- (c4b1);

  \draw[leftgroupbrace] (1.02,2.05) -- (1.02,1.39)
    node[midway,left=4pt,grouplabel] {group 1};
  \draw[leftgroupbrace] (1.02,0.61) -- (1.02,-0.05)
    node[midway,left=4pt,grouplabel] {group 2};

  \draw[groupbrace] (4.57,2.46) -- (4.57,1.84)
    node[midway,right=4pt,grouplabel] {$\cV_1\setminus\cV_2$};
  \draw[groupbrace] (4.57,1.46) -- (4.57,0.84)
    node[midway,right=4pt,grouplabel] {$\cV_1\cap\cV_2$};
  \draw[groupbrace] (4.57,0.46) -- (4.57,-0.16)
    node[midway,right=4pt,grouplabel] {$\cV_2\setminus\cV_1$};
  \draw[groupbrace] (4.57,-0.69) -- (4.57,-1.31)
    node[midway,right=4pt,grouplabel] {$\cV_0$};

  \node[jslabel] at (4.15,-1.80)
    {$\bJ_S=\cV_0\cup\cV_1\cup\cV_2$};
\end{scope}

\end{tikzpicture}
\endgroup%
    }
    \caption{
        Four regimes for the degenerate column set of the two-group
        variance profile.
        Gray vertices represent rows, black vertices represent columns,
        thin gray edges indicate nonzero entries of \(S\), and red edges
        show one maximum matching.
        The expression below each panel gives \(\mathbf J_S\), the set of
        columns left unmatched by at least one maximum matching.
    }
    \label{fig:two-group-degenerate-columns}
\end{figure}

The complete matching calculation is given in \Cref{sec:two-group-alt}.
\end{example}

To connect $\bJ_S$ to the system~\eqref{eq:r-fixedpoint} we use a Hall-type
characterization. For a column set $J\subseteq\{1,\ldots,p\}$ write
$I_J(S) = \{i : S_{ij} > 0 \text{ for some } j\in J\}$ for its neighboring rows and
$S(J) = (S_{ij})_{i\in I_J(S),\, j\in J}$ for the induced submatrix.

\begin{definition}[Row-side strong Hall property] \label{def:row-shp}
A matrix $A\in\R^{n'\times p'}$ has the row-side strong Hall property if, for every
$1\le k\le n'$, each set of $k$ rows of $A$ has nonzero entries in at least $k+1$
distinct columns.
\end{definition}

\begin{lemma}[Matching characterization of $\bJ_S$] \label{lem:J-shp}
$\bJ_S$ is the unique maximal column set $J$ whose induced submatrix $S(J)$ has the
row-side strong Hall property.
\end{lemma}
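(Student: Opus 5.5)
The plan is to prove two inclusions. First, every column set $J$ whose induced submatrix $S(J)$ has the row-side strong Hall property satisfies $J\subseteq\bJ_S$. Second, $\bJ_S$ itself has the property. Together these show that $\bJ_S$ is the largest such set, and that it contains every other one, which gives uniqueness. The empty set satisfies the property vacuously, so the family is nonempty. Throughout I use that $I_J(S)$ contains \emph{all} row-neighbours of $J$ in $\cG_S$.

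\emph{Step 1 (any strong-Hall set lies in $\bJ_S$).} Let $S(J)$ have the row-side strong Hall property and fix $j_0\in J$. Any $k$ rows of $I_J(S)$ have at least $k+1$ neighbours in $J$, hence at least $k$ neighbours in $J\setminus\{j_0\}$. By Hall's theorem~\citep{hall1935representatives} there is a matching $M'$ saturating $I_J(S)$ into $J\setminus\{j_0\}$. Now take any maximum matching $M$ of $\cG_S$ and split it as $M=M_1\cup M_2$:
\begin{itemize}
    \item $M_1$ consists of the edges of $M$ incident to $I_J(S)$, so $|M_1|\le |I_J(S)|$;
    \item $M_2$ consists of the remaining edges. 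Their rows lie outside $I_J(S)$, so their columns lie outside $J$.
\end{itemize}
Hence $M'$ and $M_2$ are vertex-disjoint. The union $M'\cup M_2$ is a matching of size $|I_J(S)|+|M_2|\ge|M|$, so it is maximum, and it leaves $j_0$ unmatched. Therefore $j_0\in\bJ_S$.

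\emph{Step 2 (alternating-path description of $\bJ_S$).} Fix one maximum matching $M$ and let $U$ be the set of columns it leaves unmatched. Call a column reachable if it is joined to $U$ by an $M$-alternating path. Such a path starts at $U$, goes column $\to$ row along a non-matching edge and row $\to$ column along a matching edge, so it has even length. I claim $\bJ_S$ equals the set of reachable columns.
\begin{itemize}
    \item If $j$ is reachable, flipping $M$ along the path gives a maximum matching that leaves $j$ unmatched.
    \item Conversely, suppose a maximum matching $M^*$ leaves $j$ unmatched but $M$ matches $j$. Then the component of $M\oplus M^*$ containing $j$ is an alternating path starting at $j$ with an $M$-edge. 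By maximality of both matchings it has even length, so it ends at a column unmatched by $M$, i.e.\ a column of $U$.
\end{itemize}
This is the standard Berge/Dulmage--Mendelsohn argument.

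Two consequences follow for any row $r$ adjacent to a column of $\bJ_S$:
\begin{itemize}
    \item $r$ is matched by $M$, since otherwise we would have an augmenting path;
    \item $r$ is itself reached, and its mate lies in $\bJ_S$.
\end{itemize}
So $M$ maps $I_{\bJ_S}(S)$ injectively into $\bJ_S\setminus U$.

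\emph{Step 3 ($\bJ_S$ has the strong Hall property).} This is the step I expect to need the most care. Take $R\subseteq I_{\bJ_S}(S)$ with $|R|=k\ge1$, and suppose for contradiction that $R$ has at most $k$ neighbours in $\bJ_S$. The $k$ distinct mates of $R$ lie in $\bJ_S$, so the neighbourhood of $R$ in $\bJ_S$ is exactly the set of mates of $R$.

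Now choose $r\in R$ of minimal alternating distance from $U$. Its predecessor on a shortest alternating path is a column $c\in\bJ_S$ joined to $r$ by a non-matching edge, so $c\ne$ mate$(r)$. Since $c$ is a neighbour of $r$ in $\bJ_S$, it must be the mate of some $r'\in R$ with $r'\ne r$. There are two cases:
\begin{itemize}
    \item If $c\in U$, then $c$ is unmatched and cannot be a mate, a contradiction.
    \item Otherwise $c$ is reached through its mate $r'$, which lies strictly closer to $U$ than $r$. This contradicts the minimality of $r$.
\end{itemize}
Hence $R$ has at least $k+1$ neighbours in $\bJ_S$. Combining Steps 1--3 proves the lemma.
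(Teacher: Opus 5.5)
Your proof is correct. Steps~2 and~3 follow the paper's argument. The paper also fixes one maximum matching and identifies $\bJ_S$ with the columns reachable from the unmatched columns in the oriented graph $\hat\cH$, using Berge's theorem and symmetric differences. It then proves the strong Hall property of $S(\bJ_S)$ by taking, for a row set $I'$, the first row of $I'$ met on an alternating path from an unmatched column. Your row of minimal alternating distance plays exactly that role.

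The real difference is Step~1, the maximality/uniqueness half. The paper argues by contradiction inside the same reachability structure. If $B=J\setminus\bJ_S\neq\emptyset$, the mates $I_B$ of $B$ under the fixed matching can have no neighbour in $\bJ_S$, since otherwise $B$ would be reachable. Hence $I_B$ has exactly $|I_B|$ neighbours in $J$, which violates strong Hall. You instead apply Hall's theorem to match $I_J(S)$ into $J\setminus\{j_0\}$. You then splice this matching with the edges of an arbitrary maximum matching that avoid $I_J(S)$; these automatically avoid $J$, because $I_J(S)$ is the full row-neighbourhood of $J$.

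Your exchange argument is self-contained and needs neither the alternating-path characterization nor a particular choice of matching. It also directly produces a maximum matching that misses $j_0$. The paper's version is shorter once $\hat\cH$ is set up, and it exposes the tight block $I_B\cup B$ that the strong Hall property forbids. Either way, the two inclusions give the lemma.
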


{Lemma~\ref{lem:J-shp} shows that $\bJ_S$ does not depend on the choice of
maximum matching, but is determined by the Dulmage--Mendelsohn structure of
the variance graph.}
{S}tarting from any maximum matching, \(\bJ_S\) can be recovered as
the set of columns reachable by alternating paths from unmatched columns. 
{We use the row-side strong Hall formulation to analyze the fixed
point equation for \(r(0)\).} The proof is deferred to \Cref{app:bias-support}.

{The next result connects the matching structure to the bias.}
The
estimator is biased exactly along the degenerate coordinates, in the vanishing-penalty limit.

\begin{theorem}[Bias support] \label{thm:bias-support}
For every $n$ and $p$, the support of $\br(0)$ is $\bJ_S$; that is,
\begin{align*}
    \bJ_S \;=\; \{\, 1\le l\le p \;|\; r_l(0) > 0 \,\}\;.
\end{align*}
\end{theorem}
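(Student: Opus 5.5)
We will work directly with the limiting fixed-point equations at $\lambda=0$. First we derive them. Write $m_i(\lambda)\coloneqq \lambda+\sum_j S_{ij}r_j(\lambda)$. By \Cref{lem:r-spectral} and monotonicity, $r_l(0)$ exists. Rearranging \eqref{eq:r-fixedpoint} gives
\begin{align*}
    1-r_l(\lambda) \;=\; \msum_i \mfrac{S_{il}\,r_l(\lambda)}{m_i(\lambda)}\;.
\end{align*}
Summing over $l$ yields the identity
\begin{align*}
    \msum_l (1-r_l(\lambda)) \;=\; \msum_i \mfrac{m_i(\lambda)-\lambda}{m_i(\lambda)} \;=\; n-\msum_i \mfrac{\lambda}{m_i(\lambda)}\;,
\end{align*}
and the analogous identity holds for any column set $J$ and its neighbourhood $I_J(S)$, as an inequality. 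Along $\lambda\downarrow0$ we define row quantities $q_i\coloneqq\lim \lambda/m_i(\lambda)\in[0,1]$, passing to a subsequence if needed; these play the role of the hard-edge mass on the row side. The resulting limit system is a pair of complementary equations. For columns, $r_l=0$ or $\sum_i S_{il}/m_i(0)=(1-r_l)/r_l$. For rows, $m_i(0)=\sum_j S_{ij}r_j(0)$, with $q_i>0$ only if $m_i(0)=0$.

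We then prove the inclusion $\operatorname{supp}\br(0)\subseteq\bJ_S$. Let $J_+=\{l: r_l(0)>0\}$. For a row set $R\subseteq I_{J_+}(S)$, apply the counting identity restricted to $J_+$. If $J_+$ failed the row-side strong Hall property, there would be rows $R$ with $|N(R)\cap J_+|\le|R|$. Summing the column equations over $N(R)\cap J_+$ would then give $\sum_{l}(1-r_l)\ge\sum_{i\in R}\sum_l S_{il}r_l/m_i = |R|$, because each $i\in R$ with $m_i>0$ has all its $r$-mass on $N(R)$ inside $J_+$. This forces $\sum_{l\in N(R)\cap J_+} r_l\le 0$, a contradiction. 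The same computation over the full closure shows that $S(J_+)$ has the strong Hall property. \Cref{lem:J-shp} (maximality) then gives $J_+\subseteq\bJ_S$.

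For the reverse inclusion $\bJ_S\subseteq J_+$, we argue by contradiction. Suppose $r_l(0)=0$ for some $l\in\bJ_S$. Sum the $\lambda>0$ identity over $J=\bJ_S$ with rows $I=I_J(S)$. Rows in $I$ may also see columns outside $J$, but by the DM structure (\Cref{lem:J-shp}) the rows of $I$ are matched into $J$, and columns in $J^c$ adjacent to $I$ do not exist in the maximal strong-Hall block. Hence $\sum_{l\in J}(1-r_l(\lambda))\le |I| < |J|$, using the strict surplus from strong Hall with $k=|I|$. This gives $\sum_{l\in J}r_l(0)\ge |J|-|I|>0$, which shows that some mass lies on $J$ but not that it lies on every $l$. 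To get positivity at each $l$, we use irreducibility inside the DM block. If $r_l(0)=0$, then $\sum_i S_{il}/m_i(\lambda)\to\infty$, so some neighbouring row has $m_i(0)=0$, which forces $r_j(0)=0$ for all $j\sim i$. Propagating along alternating paths from unmatched columns, which reach all of $\bJ_S$ by the alternating-path description after \Cref{lem:J-shp}, the zero set within $\bJ_S$ would be closed under the neighbourhood operation. Applying the surplus inequality to that closed zero sub-block then gives a contradiction, since a strong-Hall sub-block carries positive total mass.

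The main obstacle is this last propagation step. The zero set must be shown to be a union of full row/column neighbourhoods, and "$r_l(0)=0$ forces an infinite row weight" must be made rigorous. Both require controlling the rates at which $r_l(\lambda)$ and $m_i(\lambda)$ vanish, which is not mere limit bookkeeping. We would first try to bound $m_i(\lambda)\gtrsim\lambda$ uniformly, using positivity of $r$ for $\lambda>0$ together with the identity $\sum_i\lambda/m_i=n-\sum_l(1-r_l)$, and then run the closure argument on the block where $\lambda/m_i\to q_i>0$.
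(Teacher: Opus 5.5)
Your proof of $\operatorname{supp}\br(0)\subseteq\bJ_S$ is the paper's argument. You sum the $\lambda=0$ column equations over the $J_+$-neighbourhood of a row set, note that every such row carries all its mass inside that neighbourhood, conclude that $S(J_+)$ has the row-side strong Hall property, and invoke maximality in \Cref{lem:J-shp}. That half is fine.

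The genuine gap is the reverse inclusion $\bJ_S\subseteq J_+$, which you do not complete. Your surplus bound $\sum_{l\in\bJ_S}r_l(0)\ge|\bJ_S|-|\bI_S|>0$ is correct. It needs only $m_i(\lambda)\ge\sum_{l\in\bJ_S}S_{il}r_l(\lambda)$, not your assertion that no column of $\bJ_S^c$ is adjacent to $\bI_S$. That assertion is false: in case (ii) of \Cref{ex:two-group-overlapping} the group-1 rows lie in $\bI_S$ and see $\cV_1\cap\cV_2\subseteq\bJ_S^c$. As you say, the surplus bound only gives positive total mass.

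The propagation step then fails as described. From $r_l(0)=0$ you correctly get one neighbouring row with $m_i(0)=0$, and hence zeros on all of that row's columns. But other neighbours of $l$ may have $m_i(0)>0$, so nothing forces the zero set to be closed under full neighbourhoods. Applying the surplus inequality with all of $I_Z(S)$ on the right is then too lossy. Alternating paths from unmatched columns play no role here. The difficulty is also not one of rates: $m_i(\lambda)\ge\lambda$ holds trivially, and the limits $m_i(0)=\sum_jS_{ij}r_j(0)$ exist.

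The missing idea is to discard, in the limit, the rows with $m_i(0)>0$. Suppose $Z\coloneqq\{l\in\bJ_S: r_l(0)=0\}\neq\emptyset$, and set $R_0\coloneqq\{i: m_i(0)=0\}$ and $R\coloneqq R_0\cap I_Z(S)\subseteq\bI_S$. Sum $1-r_l(\lambda)=\sum_i S_{il}r_l(\lambda)/m_i(\lambda)$ over $l\in Z$ and let $\lambda\downarrow0$. The left side tends to $|Z|$. On the right, rows with $m_i(0)>0$ contribute terms tending to $0$, because their numerators $\sum_{l\in Z}S_{il}r_l(\lambda)$ vanish. Each row of $R$ contributes at most $1$. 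Hence $|Z|\le|R|$, so in particular $R\neq\emptyset$. Every neighbour in $\bJ_S$ of a row in $R$ has $r_j(0)=0$ and therefore lies in $Z$. Row-side strong Hall of $S(\bJ_S)$ then gives $|Z|\ge|R|+1$, a contradiction. This is pure limit bookkeeping built from your own ingredients.

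The paper proves this inclusion differently. It uses the convex potential $\Phi(\bx)=\sum_l(e^{x_l}-x_l)+\sum_i\log(\sum_jS_{ij}e^{x_j})$ and compares $\Phi(\bx)$ with $\Phi(\bx^{-\bJ_S})$ through a layer-cake argument over the level sets of $-x_j$ on $\bJ_S$. Combined with strong Hall, this yields the quantitative bound $\min_{j\in\bJ_S}\log r_j(0)\ge C(\bJ_S)>-\infty$.
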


\Cref{thm:bias-support} identifies the hard-edge atom in the deterministic equivalent with a structural-rank defect of the support graph. In particular, the coordinates with \(r_l(0)>0\) are those that can be left unmatched by some maximum matching. 
Thus the hard-edge mass behind the deterministic bias equivalent of \Cref{thm:master} is confined to \(\bJ_S\): coordinates outside \(\bJ_S\) are matched in every maximum matching and carry no hard-edge mass, while coordinates in \(\bJ_S\) are precisely the directions along which the vanishing-ridge estimator may retain bias. 
{In \Cref{ex:two-group-overlapping}, this means that the bias lives on \(\cV_0\) in all regimes, and, depending on the regime, additionally on \(\cV_1\setminus\cV_2\), \(\cV_2\setminus\cV_1\), or all of \(\cV_1\cup\cV_2\). The proof is deferred to \Cref{app:bias-support}.}

\subsection{The switching condition and the location of the peaks} \label{subsec:switching}

The bias support $\bJ_S$ records \emph{which} coordinates are degenerate; the variance
records \emph{when} the degeneracy is critical, i.e.\ when small eigenvalues pile up at
the hard edge and $\partial\br(0)$ diverges. After deleting the
  degenerate coordinates and the rows that feed them, what remains is the residual
  design. Write
$\bI_S \coloneqq I_{\bJ_S}(S) = \{i : S_{ij} > 0 \text{ for some } j\in\bJ_S\}$ and
$\bJ_S^c, \bI_S^c$ for the complements.

\begin{definition}[Residual profile] \label{def:residual}
The residual profile is the submatrix $\bS^{\rm res} \coloneqq (S_{ij})_{i\notin\bI_S,\, j\notin\bJ_S}$,
with variance graph $\cG_{\bS^{\rm res}}$ defined as in \Cref{def:var-graph}.
\end{definition}

\begin{definition}[Switching condition] \label{def:switching}
The switching condition holds if some row $i\notin\bI_S$ is matched by \emph{every}
maximum matching of $\cG_{\bS^{\rm res}}$.
\end{definition}

At the switching configuration there is an unavoidable
square subdesign forcing the smallest nonzero eigenvalues of $X$ down to the edge  so
that $\bJ_S$ is on the verge of growing and the variance is about to blow up. As with
the bias, the condition has a clean Hall-type reading.

\begin{definition}[Column-side strong Hall property] \label{def:col-shp}
A matrix $A\in\R^{n'\times p'}$ has the column-side strong Hall property if, for every
$1\le q\le p'$, each set of $q$ columns of $A$ has nonzero entries in at least $q+1$
distinct rows.
\end{definition}

\begin{lemma}[Hall form of switching] \label{lem:switch-shp}
The switching condition is equivalent to the failure of the column-side strong Hall
property for $\bS^{\rm res}$.
\end{lemma}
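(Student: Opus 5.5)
The plan is to reduce the statement to a purely combinatorial fact about the residual graph $\cG_{\bS^{\rm res}}$. The first step shows that this graph has a matching that saturates all of its columns. With that in hand, both the switching condition and the failure of the column-side strong Hall property become the same statement: the existence of a ``tight'' nonempty column set $Q$ with $|N(Q)|=|Q|$. Here $N(\cdot)$ denotes the neighbourhood in $\cG_{\bS^{\rm res}}$.

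\emph{Step 1: structure of the residual.} Fix a maximum matching $\cM$ of $\cG_S$. I will use the alternating-path description of $\bJ_S$ recalled after \Cref{lem:J-shp}: $\bJ_S$ is the set of columns reachable by $\cM$-alternating paths from columns left unmatched by $\cM$. Two consequences follow from this description and the maximality of $\cM$.
\begin{itemize}
\item Every row of $\bI_S$ is $\cM$-matched, and its mate lies in $\bJ_S$. If some $i\in\bI_S$ were unmatched, an alternating path from an unmatched column through $\bJ_S$ to $i$ would be augmenting, contradicting maximality. If $i$ were matched outside $\bJ_S$, its mate would be reachable, hence in $\bJ_S$, a contradiction.
\item Every column outside $\bJ_S$ is $\cM$-matched, since unmatched columns lie in $\bJ_S$. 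Its mate is not in $\bI_S$, because rows of $\bI_S$ are already matched into $\bJ_S$.
\end{itemize}
Restricting $\cM$ to $\bI_S^c\times\bJ_S^c$ therefore gives a matching of $\cG_{\bS^{\rm res}}$ that saturates all $p'=|\bJ_S^c|$ columns. Consequently every maximum matching of $\cG_{\bS^{\rm res}}$ has size $p'$ and saturates every column. By Hall's theorem, $|N(Q)|\ge|Q|$ for all column sets $Q$. So the column-side strong Hall property fails exactly when some nonempty $Q$ has $|N(Q)|=|Q|$.

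\emph{Step 2: a tight set implies switching.} Let $Q\neq\emptyset$ satisfy $|N(Q)|=|Q|$. Every maximum matching saturates $Q$, and it can only do so using rows of $N(Q)$. Since $|N(Q)|=|Q|$, it must use all of them. Hence every row of $N(Q)$ is matched by every maximum matching, and $N(Q)\neq\emptyset$ because $|N(Q)|=|Q|\ge1$. This is the switching condition.

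\emph{Step 3: switching implies a tight set.} This is the step needing the most care; I would argue it via Berge's theorem. Fix a maximum matching $\cM'$ of $\cG_{\bS^{\rm res}}$. Let $R$ be the set of rows reachable from $\cM'$-unmatched rows by alternating paths (a non-matching edge from a row to a column, then the matching edge back to a row). Let $T$ be the complement of $R$ among the residual rows.
\begin{itemize}
\item Every $r\in R$ is unmatched by some maximum matching: swap along an even alternating path ending at $r$. So if row $i$ is matched by every maximum matching, then $i\in T$, and $T\neq\emptyset$.
\item Every row of $T$ is matched, since unmatched rows lie in $R$. Set $Q\coloneqq\cM'(T)$, the set of mates of $T$, so that $|Q|=|T|\ge1$.
\item If some $r\in R$ were adjacent to some $c\in Q$, then $c$ would be reached from $r$ and its mate (in $T$) would lie in $R$, a contradiction. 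Hence $N(Q)\subseteq T$, so $|N(Q)|\le|Q|$.
\end{itemize}
Thus $Q$ violates the column-side strong Hall property.

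The degenerate case of an empty residual is handled by convention: the property holds vacuously and no row exists. I expect Step 1 to be the main obstacle, specifically pinning down that rows of $\bI_S$ are always matched into $\bJ_S$. This depends on using the alternating-path/DM description of $\bJ_S$ rather than only its strong Hall characterization in \Cref{lem:J-shp}.
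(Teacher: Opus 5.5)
Your proof is correct. Steps 1 and 2 are the paper's argument. The paper also uses the reachability description $\bJ_S = J(\hat\cH)$ from the proof of \Cref{lem:J-shp} to see that the mates of $\bJ_S^c$ lie in $\bI_S^c$. It then obtains the weak column-side Hall inequality from Hall's theorem, and notes that a tight set forces every row of its neighbourhood. The point you flag as the main obstacle, that $\bI_S$ is matched into $\bJ_S$, is already established inside the proof of \Cref{lem:J-shp}.

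Where you differ is Step 3. The paper argues by deletion. If row $i$ is used by every maximum matching of $\cG_{\bS^{\rm res}}$, then $\cG_{\bS^{\rm res}}$ with $i$ removed has no matching saturating $\bJ_S^c$. Hall's theorem then gives some $J'\subseteq\bJ_S^c$ with $|J'| > |I_{J'}(\bS^{\rm res})\setminus\{i\}|$. The weak Hall inequality forces $i\in I_{J'}(\bS^{\rm res})$ and $|J'| = |I_{J'}(\bS^{\rm res})|$.

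You instead fix one maximum matching $\mathcal{M}'$ and build the tight set explicitly: it is the set of mates of the rows not alternating-reachable from unmatched rows. This is the procedure the paper only uses afterwards, in \Cref{rem:compute:switch}. The deletion argument is shorter, because Hall's theorem does the work as a black box. Your construction is more informative. Combining $N(Q)\subseteq T$ with the weak Hall inequality gives $N(Q) = T$. Step 2 and your first bullet then show that $T$ is exactly the set of rows matched by every maximum matching. That is precisely the characterization \Cref{rem:compute:switch} relies on, so your argument proves the lemma and justifies that computational remark at the same time.
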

The proof is deferred to \Cref{app:var-switch}.  The next theorem characterizes when $\partial\br(0)$ is finite: this occurs
exactly when the residual profile satisfies the column-side strong Hall
property.

\begin{theorem}[Variance divergence and the residual fixed point] \label{thm:var-switch}
The vector $\partial\br(0)$ is finite, i.e.\ $\partial\br(0)\in\R^p$, if and only if the
switching condition fails. When it does, the residual coordinates
$v_k^0 = \partial_\lambda r_k(0)$, $k\in\bJ_S^c$, are the unique positive solution of
\begin{align*}
    1 \;=\; \msum_{i\in\bI_S^c}\mfrac{S_{ik}\, v_k^0}{1 + \sum_{k'\in\bJ_S^c} S_{ik'} v_{k'}^0}\;,
    \qquad k\in\bJ_S^c\;,
    \tagaligneq \label{eq:partial-r-residual}
\end{align*}
and the degenerate coordinates $u_j^0 = \partial_\lambda r_j(0)$, $j\in\bJ_S$, solve the
linear system obtained by linearizing~\eqref{eq:r-fixedpoint} about $\br(0)$,
\begin{align*}
    \mfrac{u_j^0}{r_j(0)}
    \;=\;
    \msum_{i\in\bI_S}\mfrac{S_{ij}\, r_j(0)}{\big(\sum_{j'\in\bJ_S} S_{ij'} r_{j'}(0)\big)^2}
    \Big(1 + \msum_{j'\in\bJ_S} S_{ij'} u_{j'}^0 + \msum_{k\in\bJ_S^c} S_{ik} v_k^0\Big)\;,
    \quad j\in\bJ_S\;.
    \tagaligneq \label{eq:partial-r-support}
\end{align*}
\end{theorem}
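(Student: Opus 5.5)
The plan is to work with the difference quotients of $\br$ rather than with derivatives. By \Cref{lem:r-spectral}, for every $l$,
\begin{align*}
    \frac{r_l(\lambda)-r_l(0)}{\lambda} \;=\; \int_{\R\setminus\{0\}} \frac{1}{t^2+\lambda}\,\nu_l(dt)\;,
\end{align*}
and this quantity increases monotonically to $\partial r_l(0)$ as $\lambda\downarrow 0$. Hence $\partial\br(0)\in\R^p$ if and only if these quotients stay bounded, and in that case they converge to $\partial\br(0)$.

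By \Cref{thm:bias-support}, $r_k(0)=0$ for $k\in\bJ_S^c$, so $v_k(\lambda)\coloneqq r_k(\lambda)/\lambda$ increases to $\partial r_k(0)$. A row $i\notin\bI_S$ only touches columns in $\bJ_S^c$, so its denominator in \eqref{eq:r-fixedpoint} is $\lambda(1+\sum_{k'}S_{ik'}v_{k'}(\lambda))$. A row $i\in\bI_S$ has denominator tending to $\sum_{j\in\bJ_S}S_{ij}r_j(0)>0$. For $k\in\bJ_S^c$, the equation \eqref{eq:r-fixedpoint} therefore reads
\begin{align*}
    1 \;=\; \lambda v_k + \sum_{i\in\bI_S^c}\frac{S_{ik}v_k}{1+\sum_{k'}S_{ik'}v_{k'}} + O(\lambda v_k)\;.
\end{align*}
If $v(\lambda)$ stays bounded, then passing to the limit shows that $v^0=\partial\br(0)|_{\bJ_S^c}$ solves \eqref{eq:partial-r-residual}. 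It is positive because each of its equations forces $v_k^0>0$.

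\emph{Necessity (switching $\Rightarrow$ divergence).} Suppose \eqref{eq:partial-r-residual} has a finite positive solution. For any column set $K\subseteq\bJ_S^c$, sum the equations over $k\in K$ and enlarge the inner sum to all of $\bJ_S^c$. This gives
\begin{align*}
    |K| \;\le\; \sum_{i\in N(K)}\frac{\sum_{k}S_{ik}v^0_k}{1+\sum_k S_{ik}v^0_k} \;<\; |N(K)|\;,
\end{align*}
where $N(K)$ denotes the neighbourhood of $K$ in $\cG_{\bS^{\rm res}}$. This is exactly the column-side strong Hall property for $\bS^{\rm res}$. So if the switching condition holds, \Cref{lem:switch-shp} rules out any finite positive solution. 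By the limit argument above, boundedness of $v(\lambda)$ would produce one, so some $\partial r_k(0)=\infty$.

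\emph{Sufficiency and uniqueness.} Equation \eqref{eq:partial-r-residual} is the stationarity condition of
\begin{align*}
    F(x) \;=\; \sum_{k\in\bJ_S^c}x_k-\sum_{i\in\bI_S^c}\log\Big(1+\sum_k S_{ik}e^{x_k}\Big),\qquad v_k=e^{x_k}.
\end{align*}
This function is concave, since log-sum-exp is convex, and strictly concave along the relevant directions when the residual graph is connected; otherwise one argues blockwise. I would show that column-side strong Hall makes $-F$ coercive. Along a ray $x=x_0+td$, the growth rate is
\begin{align*}
    \sum_k d_k-\sum_i \max_{k\sim i}(d_k)_+\;.
\end{align*}
Decomposing $d$ by level sets and applying $|N(K)|\ge|K|+1$ to each superlevel set makes this rate negative. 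This yields a unique maximizer, i.e.\ a unique positive solution. To show that $v(\lambda)$ actually stays bounded, I would argue by contradiction. Rescale $v(\lambda)/\|v(\lambda)\|_\infty$ and take the set $K$ of coordinates of maximal order. Summing the $\lambda$-equations over $K$ gives $|K|\le|N(K)\cap\{\text{rows whose denominator blows up}\}|+o(1)$ in a way that contradicts strong Hall. Alternatively, I would use the unique solution as a comparison supersolution together with monotonicity of the fixed-point map. This a priori bound is the step I expect to be the main obstacle, because it requires quantitative control of the $O(\lambda v_k)$ cross-terms from rows in $\bI_S$.

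\emph{The degenerate coordinates.} Once $v^0$ is finite, I would differentiate \eqref{eq:r-fixedpoint} for $j\in\bJ_S$ at $\lambda=0$, using that $r_k(\lambda)=\lambda v_k^0+o(\lambda)$ on $\bJ_S^c$. This gives the linear system \eqref{eq:partial-r-support}, of the form $\bA_{\bJ}u=b$ with $b$ finite and $\bA_{\bJ}$ the $\bJ_S$-block of $\bA(0)$. It remains to show that $\bA_{\bJ}$ is invertible, and that the difference quotients $(r_j(\lambda)-r_j(0))/\lambda$ are bounded. For invertibility I would use the row-side strong Hall property of $S(\bJ_S)$ from \Cref{lem:J-shp}: I expect it to make $\diag(r)\,\bA_{\bJ}\,\diag(r)$ a strictly diagonally dominant M-matrix, via the identity $1-r_j(0)=\sum_i S_{ij}r_j(0)/(S r(0))_i$. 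Boundedness of the quotients then follows from invertibility and the implicit function theorem. Finally, finiteness on both blocks gives $\partial\br(0)\in\R^p$ whenever the switching condition fails.
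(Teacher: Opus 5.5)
\paragraph{Relation to the paper.} Your proposal follows the paper's proof closely. Necessity comes from summing the limiting residual equations over a column set $K$ to get $|K|<|I_K(\bS^{\rm res})|$. Sufficiency uses the same potential (your $F$ is the paper's $-\Psi$), with coercivity from the column-side strong Hall property. The $\bJ_S$-block comes from linearizing \eqref{eq:r-fixedpoint}. Three local differences are improvements or alternatives:
\begin{itemize}
\item Writing $(r_l(\lambda)-r_l(0))/\lambda=\int_{\R\setminus\{0\}}(t^2+\lambda)^{-1}\nu_l(dt)$ makes the difference quotients monotone. This rigorously identifies ``$\partial\br(0)\in\R^p$'' with ``bounded difference quotients''; the paper simply assumes these limits exist.
\item Your level-set computation of the recession slope handles mixed directions. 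The paper's coercivity check only sends single blocks to $\pm\infty$.
\item For invertibility you use diagonal dominance: the row sums of $\bA_{\bJ}\diag\{\br(0)\}$ on $\bJ_S$ equal $r_j(0)>0$. The paper instead proves this same (symmetric) matrix positive definite via a variance decomposition. Both work, and yours needs only $r_j(0)>0$.
\end{itemize}

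\paragraph{The gap.} The missing step is the one you flag: showing that $v(\lambda)=(r_k(\lambda)/\lambda)_{k\in\bJ_S^c}$ stays bounded. The paper does not supply it either. It passes from ``$\Psi$ has a finite minimizer'' directly to ``$\partial r_k(0)$ exists'', so you are right that the potential only produces a candidate limit.

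Your contradiction sketch, however, points the wrong way. An estimate $|K|\le|N(K)|+o(1)$ is fully compatible with strong Hall. What you need is that every residual row adjacent to $K$ contributes a ratio tending to $1$, which gives equality.

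Here is the fix. By monotonicity, $K\coloneqq\{k\in\bJ_S^c:\partial r_k(0)=\infty\}$ is well defined, and $v_{k'}(\lambda)$ stays bounded for $k'\notin K$. Summing the exact equations over $k\in K$ gives
\[
|K| \;=\; \sum_{k\in K} r_k(\lambda)
\;+\; \sum_{i\in\bI_S}\frac{\sum_{k\in K}S_{ik}\,r_k(\lambda)}{\lambda+\sum_{j=1}^p S_{ij}\,r_j(\lambda)}
\;+\; \sum_{i\in\bI_S^c}\frac{\sum_{k\in K}S_{ik}\,v_k(\lambda)}{1+\sum_{k'\in\bJ_S^c}S_{ik'}\,v_{k'}(\lambda)}\;.
\]
Take $\lambda\downarrow0$ term by term.
\begin{itemize}
\item The first two terms vanish, because $r_k(\lambda)\to r_k(0)=0$ and the $\bI_S$-denominators tend to $\sum_{j\in\bJ_S}S_{ij}r_j(0)>0$. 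No rate is needed, so the cross-terms you worry about are harmless.
\item In the last sum, rows not adjacent to $K$ contribute $0$.
\item Each adjacent row contributes a ratio tending to $1$. Its numerator diverges, while the rest of its denominator, $1+\sum_{k'\notin K}S_{ik'}v_{k'}(\lambda)$, stays bounded.
\end{itemize}
Hence $|K|=|I_K(\bS^{\rm res})|$, which contradicts column-side strong Hall unless $K=\emptyset$.

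With this bound, existence of the limit $v^0$ is free by monotonicity, and the potential is needed only for uniqueness. $\Psi$ is strictly convex everywhere, so no connectivity or blockwise case split is required. Indeed, each $\log(1+\sum_kS_{ik}e^{y_k})$ is strictly convex in the coordinates it involves, and every residual column has a residual neighbour by weak Hall.

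\paragraph{The degenerate block.} Your final step is correct in substance, but ``implicit function theorem'' is not quite the right justification. The theorem produces some solution branch, and identifying that branch with $\br$ needs uniqueness of positive solutions of \eqref{eq:r-fixedpoint} (\Cref{lem:r:eqns}). Done jointly in $(u,v)$, with a block-triangular Jacobian, that route would also settle the residual bound.

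The simpler argument is as follows. Subtract the $\lambda=0$ equation from the $\lambda$-equation for $j\in\bJ_S$ and divide by $\lambda$. The result is linear in $u(\lambda)=(r_j(\lambda)-r_j(0))/\lambda$. Its coefficient matrix depends on $\br(\lambda)$ only through the row denominators, so it converges to $\bA_{\bJ}$ whether or not $u$ is bounded. Its right-hand side stays bounded once $v$ does. Invertibility of $\bA_{\bJ}$ then bounds $u(\lambda)$, and monotonicity yields convergence to the solution of \eqref{eq:partial-r-support}. The paper likewise stops at mere solvability of \eqref{eq:partial-r-support}.
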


The proof is deferred to \Cref{app:var-switch}. Combining \Cref{thm:bias-support}, \Cref{lem:switch-shp} and \Cref{thm:var-switch} gives the structural rule announced above. The bias is supported on the columns left unmatched by some maximum matching of \(\cG_S\). Meanwhile variance blow-up occurs exactly when the residual graph, obtained after deleting \(\bJ_S\) and its neighboring rows, loses column-side strong Hall slack. Both are determined by the Dulmage--Mendelsohn structure of the variance graph. Along a path of models, each switching configuration encountered by the residual graph gives one candidate peak of the ridgeless risk.

\begin{example}[Multiple descent from two groups] \label{ex:two-group-peaks}

Return to the two-group with overlap profile of \Cref{ex:two-group-overlapping}, with
\(n_1=n_2=n/2\). If $\bJ_S=\cV_0$, the residual graph is the full graph on
$\cV_1\cup\cV_2$. The column-side strong Hall property fails when
$|\cV_1\setminus\cV_2|=n/2$, $|\cV_2\setminus\cV_1|=n/2$, or
$|\cV_1\cup\cV_2|=n$.
If $\bJ_S=\cV_0\cup(\cV_1\setminus\cV_2)$, removing $\bJ_S$ and its neighboring rows leaves the complete bipartite graph between the second group and $\cV_2$, which satisfies the switching condition exactly when $p_2=n/2$. Similarly, if $\bJ_S=\cV_0\cup(\cV_2\setminus\cV_1)$, the switching condition holds exactly when $p_1=n/2$. If $\bJ_S=\cV_0\cup\cV_1\cup\cV_2$, the residual graph is empty, so the switching condition does not hold.

Therefore, the switching condition holds in the following five cases:
\begin{align*}
    \textnormal{(i)}   &&& p_1-p_{12}=n/2,
        & &\text{with } p_2\le n/2;\\
    \textnormal{(ii)}  &&& p_2-p_{12}=n/2,
        & &\text{with } p_1\le n/2;\\
    \textnormal{(iii)} &&& p_1+p_2-p_{12}=n,
        & &\text{with } p_1-p_{12}\le n/2
            \text{ and } p_2-p_{12}\le n/2;\\
    \textnormal{(iv)}  &&& p_2=n/2,
        & &\text{with } p_1-p_{12}>n/2;\\
    \textnormal{(v)}   &&& p_1=n/2,
        & &\text{with } p_2-p_{12}>n/2.
\end{align*}
The complete matching calculation is given in \Cref{sec:two-group-alt}.
\end{example}

\subsection{Risk peaks in the limit} 
The results so far in this section hold for finite $n$ and $p$. We allow
the variance profile to change with $n$ as long as it satisfies
\Cref{asst:flat}. To study when risk peaks
occur asymptotically, we consider a sequence of variance profiles obtained
by replicating a fixed prototype: Let $\widetilde S\in\R_+^{m\times k}$, where $m$ and $k$ are fixed. For
$t\in\N$, set
\begin{equation}\label{eqn:prototype_definition}
    n_t\coloneqq tm,
    \qquad
    p_t\coloneqq tk,
    \qquad
    S^{(t)}
    \coloneqq
    \frac1t\,\widetilde S\otimes(\bone_t\bone_t^\top)
    \in\R_+^{n_t\times p_t}.
\end{equation}
Then $p_t/n_t=k/m$ for every $t$, and the zero pattern of $S^{(t)}$ is determined by
that of $\widetilde S$.

For $\lambda>0$, let
$\widetilde{\br}(\lambda)=(\widetilde r_b(\lambda))_{b=1}^k$
denote the fixed-point solution in \eqref{eq:r-fixedpoint} corresponding
to $\widetilde S$. Also, we write
$\partial_\lambda\widetilde r_b(0)
\coloneqq\lim_{\lambda\downarrow0}\partial_\lambda\widetilde r_b(\lambda)$.
\begin{corollary}[Variance along the prototype sequence]
\label{cor:prototype-variance}
Suppose $\widetilde S$ satisfies \Cref{asst:flat}, let $S^{(t)}$ be defined by
\eqref{eqn:prototype_definition}, and let $X^{(t)}$ follow \Cref{mod:vp} with profile
$S^{(t)}$. If $\lambda_t\downarrow0$ is given as in \Cref{thm:master} with $n=n_t$, then
\begin{align*}
    \cR_V^{\lambda_t}(X^{(t)})
    -\frac{\sigma_\epsilon^2}{m}
      \sum_{b=1}^k
      \left(\sum_{a=1}^m\widetilde S_{ab}\right)
      \partial_\lambda\widetilde r_b(\lambda_t)
    \xrightarrow{\mathrm{a.s.}}0.
\end{align*}
Consequently, if $\widetilde S$ satisfies the switching condition, then
\begin{align*}
    \cR_V^{\lambda_t}(X^{(t)})
    \xrightarrow{\mathrm{a.s.}}+\infty.
\end{align*}
Otherwise,
\begin{align*}
    \cR_V^{\lambda_t}(X^{(t)})
    \xrightarrow{\mathrm{a.s.}}
    \frac{\sigma_\epsilon^2}{m}
    \sum_{b=1}^k
    \left(\sum_{a=1}^m\widetilde S_{ab}\right)
    \partial_\lambda\widetilde r_b(0)
    <\infty.
\end{align*}
\end{corollary}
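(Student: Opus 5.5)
The plan is to reduce everything to \Cref{thm:master} applied to $S^{(t)}$, and then to the prototype system for $\widetilde S$, via an exact symmetry of the fixed-point equation~\eqref{eq:r-fixedpoint}.

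\emph{Step 1: lift the prototype solution.} For $\lambda>0$, set $r^{(t)}_{(b,s)}(\lambda)\coloneqq\widetilde r_b(\lambda)$ for every replicate $s\le t$ of column $b$. This solves~\eqref{eq:r-fixedpoint} for $S^{(t)}$. Indeed, for a row $(a,u)$,
\[
\msum_{(b',s')}S^{(t)}_{(a,u),(b',s')}\widetilde r_{b'}=\msum_{b'}\widetilde S_{ab'}\widetilde r_{b'} .
\]
Likewise, for column $(b,s)$,
\[
\msum_{(a,u)}\frac{S^{(t)}_{(a,u),(b,s)}\widetilde r_b}{\lambda+\cdots}=\msum_a\frac{\widetilde S_{ab}\widetilde r_b}{\lambda+(\widetilde S\widetilde{\br})_a} .
\]
The factors $t$ and $1/t$ cancel in both sums. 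By the uniqueness of the solution on $\C\setminus(-\infty,0]$ (\Cref{lem:r:eqns}), this lift is \emph{the} solution. Differentiating in $\lambda$ gives $\partial_\lambda r^{(t)}_{(b,s)}=\partial_\lambda\widetilde r_b$.

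We also need \Cref{asst:flat} for $S^{(t)}$ with constants independent of $t$. The flatness bound scales as $1/t$, which is fine since $n_t+p_t=t(m+k)$. For connectivity,
\[
(S^{(t)}S^{(t)\top})^{L}=(\widetilde S\widetilde S^\top)^L\otimes t^{-1}\bone\bone^\top ,
\]
so the lower bound also carries a factor $1/t$. The same computation applies to $S^\top S$. Hence $\alpha_*$ is uniform along the sequence, and $\lambda_t$ is admissible.

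\emph{Step 2: apply the variance equivalent~\eqref{eq:var-limit}.} From~\eqref{eq:Sigmabar-S},
\[
\bar\Sigma_{(b,s)(b,s)}=\msum_{(a,u)}S^{(t)}_{(a,u)(b,s)}=\msum_a\widetilde S_{ab}.
\]
Therefore
\[
\frac{\sigma^2_\epsilon}{n_t}\msum_{(b,s)}\bar\Sigma\,\partial r=\frac{\sigma_\epsilon^2 t}{tm}\msum_b\Big(\msum_a\widetilde S_{ab}\Big)\partial_\lambda\widetilde r_b(\lambda_t),
\]
which is the first claim.

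\emph{Step 3: the dichotomy.} The deterministic term is a fixed $t$-independent function $F(\lambda)$ evaluated at $\lambda_t\downarrow0$. By \Cref{lem:r-spectral}, each
\[
\partial_\lambda\widetilde r_b(\lambda)=\mint t^2(t^2+\lambda)^{-2}\nu_b(dt)
\]
is monotone in $\lambda$, and by monotone convergence it tends to $\partial\widetilde r_b(0)\in[0,\infty]$. The weights $\sum_a\widetilde S_{ab}$ are strictly positive for every $b$, since connectivity forbids empty columns.

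\begin{itemize}
\item If the switching condition fails for $\widetilde S$, \Cref{thm:var-switch} gives $\partial\widetilde{\br}(0)\in\R^k$. Then $F(\lambda_t)\to F(0)<\infty$, and the difference statement gives a.s.\ convergence.
\item If the switching condition holds, some $\partial\widetilde r_b(0)=\infty$, so $F(\lambda_t)\to\infty$. Since $\cR_V-F(\lambda_t)\to0$ a.s., it follows that $\cR_V\to\infty$ a.s.
\end{itemize}

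\emph{Main obstacle.} The delicate point is Step 1's uniformity of the constants in \Cref{asst:flat} and $\alpha_*$ across $t$. \Cref{thm:master} is stated for a sequence of profiles and must be applied with fixed $s_*,\psi_i,L_i$. The Kronecker computation above handles this. One also has to confirm that the switching condition for $\widetilde S$ is the relevant one, which holds because the limit is expressed entirely through $\widetilde{\br}$.
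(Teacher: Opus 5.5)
Your proposal is correct and follows essentially the same route as the paper, which likewise verifies \Cref{asst:flat} for $S^{(t)}$ with unchanged constants via the Kronecker identity (\Cref{lem:prototype-flat}), identifies $\br^{(t)}$ with the replicated $\widetilde{\br}$ by uniqueness (\Cref{lem:prototype-fixed-point}), computes $\bar\Sigma^{(t)}_{ll}=\sum_a\widetilde S_{ab}$, and obtains the dichotomy from \Cref{thm:var-switch} applied directly to $\widetilde S$ together with positivity of the column sums. Your explicit monotone-convergence justification that $\partial_\lambda\widetilde r_b(\lambda_t)\to\partial\widetilde r_b(0)$, including the infinite case, is a small but appropriate addition that the paper leaves implicit.
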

The proof is deferred to \Cref{app:prototype-variance}.
\begin{remark}\label{rem:prototype-general}
For the sequence in \eqref{eqn:prototype_definition}, the asymptotic variance diverges if
and only if $\widetilde S$ satisfies the switching condition. We expect variance to diverge
for more general sequences with $n,p\to\infty$ and $p/n\to\gamma\in(0,\infty)$ that
approach a switching configuration, even if switching never holds exactly at finite $n,p$.
Proving this requires an additional stability analysis of \eqref{eq:r-fixedpoint}. We leave
this for future work. \end{remark}

\section{Perspectives on positive-definite designs} \label{sec:non:sim:diag:but:pos}

The designs of \Cref{sec:setup-main} share an eigenbasis, and their multiple descent comes from
rank-deficient covariances. This section instead considers positive-definite covariances that are
full rank and need not share an eigenbasis. We first set up the
positive-definite variants of the two designs and explain why the reduction to a variance
profile fails.
We then present a conjecture, supported by the simulations of
\Cref{subsec:sim-pd}: no matter how different the covariances are, the risk curve keeps the single
classical peak at $\gamma = 1$.

\begingroup
\renewcommand{\themodel}{1$^\star$}%
\begin{model}[Heterogeneous data, positive definite] \label{mod:het:pd}
$X_1, \ldots, X_n$ are independent $\R^p$ Gaussian vectors with
{
    $
    X_i \;=\; \Sigma_i^{1/2} Z_i$ and $Z_i \overset{\rm i.i.d.}{\sim} \cN(0, I_p)
    $
}
as in \Cref{mod:het}, except that each $\Sigma_i \in \R^{p\times p}$ is positive definite and
no simultaneous diagonalizability is required of $(\Sigma_i)_{i\le n}$.
\end{model}
\endgroup

\begingroup
\renewcommand{\themodel}{2$^\star$}%
\begin{model}[Dependent data, positive definite] \label{mod:dep:pd}
$X = \sum_{m=1}^M \big(\tilde\Sigma^{(m)}\big)^{1/2} Z^{(m)} \big(\Sigma^{(m)}\big)^{1/2}$ as in
\Cref{mod:dep}, except that each $\tilde\Sigma^{(m)}$ is diagonal with at least one positive
entry, each $\Sigma^{(m)}$ is positive definite, and no simultaneous diagonalizability is
required of $(\Sigma^{(m)})_{m\le M}$.
\end{model}
\endgroup

\Cref{mod:het:pd} is natural when data are pooled from heterogeneous sources --- records from
several sites, each full rank but with its own correlation structure. It suffices to treat
\Cref{mod:het:pd}: the regrouping of \Cref{lem:model:equivalence}(ii) in \Cref{appendix:setup} turns
\Cref{mod:dep:pd} into \Cref{mod:het:pd} with row covariances
$\Sigma_i = \sum_{m=1}^M \tilde D^{(m)}_{ii}\,\Sigma^{(m)}$, which are positive definite.

{Unlike \Cref{subsec:reduction}, \Cref{mod:het:pd} cannot be reduced to a variance profile, due to the lack of a shared eigenbasis. Meanwhile, positive definiteness prevents additional degeneracies from being created from the structure of the data alone. To make this precise, s}ince $n$ and $p$ are growing, we first impose that all covariance matrices
remain positive definite in the limit:

\begin{assumption}[Positive-definite covariances] \label{asst:hetero:eigen:bound} For the
$\Sigma_i$'s given in \Cref{mod:het:pd}, there exist some universal constants
$\alpha_{\rm min}, \alpha_{\rm max}$ with $0 < \alpha_{\rm min} \leq \alpha_{\rm max}$ such that
\begin{align*}
     \alpha_{\rm min} 
     \;\leq\; 
      \minf_{1 \leq i \leq n} \, \lambda_{\rm min}(\Sigma_i) 
      \;\leq\; 
      \msup_{1 \leq i \leq n} \, \lambda_{\rm max}(\Sigma_i)
    \;\leq\; 
    \alpha_{\rm max}\;,
\end{align*}
where $\lambda_{\rm min}(\argdot)$ and $\lambda_{\rm max}(\argdot)$ denote the smallest and largest
eigenvalues.
\end{assumption}

{Recall} the risk components $\cR_B^{\lambda}(X)$ and $\cR_V^{\lambda}(X)$ of
\Cref{lem:bias-var-decomp}. The bias term is bounded uniformly in
$\lambda$, with $\cR_B^{\lambda}(X) \le
\|\bar\Sigma\|_{op}\|\beta\|^2$ (\Cref{lem:bias:bounded} in \Cref{appendix:setup}), so the location of any risk peak is decided by the variance term. Our first lemma
shows that we can further ignore $\sigma^2_\epsilon$ and $\bar\Sigma$ and focus on the quantity
\begin{align*}
    \tilde \cR_V^{\lambda}(X)
    \;\coloneqq\;
    \mfrac{1}{n} \, \Tr\big( W_n (W_n + \lambda I_p)^{-2} \big)\;,
    \qquad
    W_n \;=\; \mfrac{1}{n} \msum_{i \leq n} \Sigma_i^{1/2} Z_i Z_i^\top \Sigma_i^{1/2}\;.
\end{align*}

\begin{lemma} \label{lem:var:bounds} Under \Cref{mod:het:pd} and \Cref{asst:hetero:eigen:bound},
for every $\lambda>0$, the following holds: 
\begin{align*}
    \sigma^2_\epsilon \alpha_{\rm min} \tilde \cR_V^{\lambda}(X) 
    \;\leq\; 
    \cR_V^{\lambda}(X)
    \;\leq\; 
    \sigma^2_\epsilon \alpha_{\rm max} \tilde \cR_V^{\lambda}(X)
    \qquad 
    \text{ almost surely\;.}
\end{align*}
\end{lemma}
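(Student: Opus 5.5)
The bounds follow from the standard trace inequality for positive semi-definite matrices together with the eigenvalue bounds on $\bar\Sigma$ inherited from \Cref{asst:hetero:eigen:bound}. By \Cref{lem:bias-var-decomp},
\[
\cR_V^\lambda(X) = \mfrac{\sigma_\epsilon^2}{n}\Tr\big(\bar\Sigma\, M\big), \qquad M \coloneqq W_n(W_n+\lambda I_p)^{-2}.
\]
We first check that $M$ is symmetric positive semi-definite. $W_n$ is symmetric PSD, and $W_n$ and $(W_n+\lambda I_p)^{-2}$ are both functions of $W_n$, so they commute. Writing $W_n = Q\Lambda Q^\top$ with $\Lambda=\diag(s_1,\dots,s_p)$, $s_k\ge 0$, we get
\[
M = Q\,\diag\big(s_k/(s_k+\lambda)^2\big)\,Q^\top \succeq 0 \qquad\text{for every } \lambda>0.
\]
This holds deterministically, hence almost surely.

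Next we bound the spectrum of $\bar\Sigma$. Under \Cref{mod:het:pd}, $\Var[X_i]=\Sigma_i$, so $\bar\Sigma = \frac1n\sum_i \Sigma_i$. Since $\alpha_{\min} I_p \preceq \Sigma_i \preceq \alpha_{\max} I_p$ for every $i$ and the Loewner order is preserved under convex combinations, we obtain $\alpha_{\min} I_p \preceq \bar\Sigma \preceq \alpha_{\max} I_p$.

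Finally, for $A\preceq B$ and $M\succeq 0$ we have
\[
\Tr(BM)-\Tr(AM) = \Tr\big(M^{1/2}(B-A)M^{1/2}\big)\ge 0.
\]
Applying this with the pairs $(\alpha_{\min}I_p,\bar\Sigma)$ and $(\bar\Sigma,\alpha_{\max}I_p)$ gives
\[
\alpha_{\min}\Tr(M) \le \Tr(\bar\Sigma M) \le \alpha_{\max}\Tr(M).
\]
Multiplying through by $\sigma_\epsilon^2/n$ and recognizing $\tilde\cR_V^\lambda(X) = \frac1n\Tr(M)$ yields the claim.

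None of these steps is a real obstacle. The only point that needs care is that $M$ is symmetric PSD, since the trace inequality fails for non-symmetric $M$; commutation of $W_n$ with its own resolvent settles this. Note also that $\bar\Sigma$ is deterministic, so the only randomness sits in $M$, and the inequality holds pointwise for every realization.
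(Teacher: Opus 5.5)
Your proposal is correct and follows the same route as the paper: the eigenvalues of $\bar\Sigma=\frac1n\sum_i\Sigma_i$ lie in $[\alpha_{\rm min},\alpha_{\rm max}]$, and $W_n(W_n+\lambda I_p)^{-2}$ is positive semi-definite, so the trace sandwich follows. You also spell out the details that the paper's proof leaves implicit, namely the Loewner ordering of the average and the trace inequality $\Tr\big(M^{1/2}(B-A)M^{1/2}\big)\ge 0$.
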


\noindent The proof is a one-line trace bound and is deferred to \Cref{appendix:setup}.

Notice that we can re-express $\tilde \cR_V^{\lambda}(X)$ through the resolvent of the sample
covariance $W_n=\frac1n X^\top X$, namely $G_{W_n}(z) \coloneqq (W_n - z I_p)^{-1}$ (distinct
from the {Hermitialized} resolvent $G(z)$ used in the appendix), evaluated at the fixed negative
point $z=-\lambda$:
\begin{align*}
    \tilde \cR_V^{\lambda}(X)
    \;=\;
    \mfrac{1}{n} \Tr\big( W_n \, (W_n + \lambda I_p)^{-2} \big)
    \;=\;
    \partial_z \Big[ \mfrac{z}{n} \Tr\big( G_{W_n}(z) \big) \Big]\Big|_{z=-\lambda}
    \;.
\end{align*}
Since this only concerns the trace of the resolvent and its derivative at $z=-\lambda$, the
study of the location of the risk-curve peaks is reduced to the analysis of the \emph{global} law
of the matrix $\frac{1}{n} X^\top X$, which is substantially simpler than a local law analysis.
Indeed, we expect that a full characterization of $\tilde \cR_V^{\lambda}(X)$, and subsequently
$\cR_V^{\lambda}(X)$, can be established by taking global law results for Gaussian matrices with
independent but non-identically distributed columns, and tracking the dependence on the spectral
parameter $z$ explicitly as $z=-\lambda \equiv - \lambda_n\uparrow0$, as we do for the local law in
\Cref{appendix:local:law}. We do not pursue this in rigorous technical detail, but remark that
a recent work \citep{louart2021spectral} establishes the approximation of $G_{W_n}(z)$ by a
matrix $\Gamma_z$ that solves the deterministic system of equations 
\begin{align*}
    \mfrac{1}{\big( \Gamma_z\big)_i} \;=\; - z + \mfrac{1}{n} \Tr\Big( \Sigma_i \Big(\mfrac{1}{n} \msum_{j \leq n} (\Gamma_z)_j \, \Sigma_j + I_p  \Big)^{-1} \Big) 
    \qquad 
    \text{ for } 1 \leq i \leq n\;.
    \tagaligneq \label{eq:hetero:det:sys}
\end{align*}
The stability of the system \eqref{eq:hetero:det:sys} of equations does not change, whether we
take all $\Sigma_j$'s to be $I_p$ or we take general $\Sigma_j$'s under
\Cref{asst:hetero:eigen:bound}. This leads us to conjecture that under \Cref{mod:het:pd} (and,
by the reduction above, under \Cref{mod:dep:pd}), the location of the peak is exactly the same
as in the isotropic case, i.e.~when
\begin{align*}
    \gamma \;=\; \lim \mfrac{p}{n} \;\text{ equals } \; 1\;.
\end{align*}
We note that this is only a theoretical conjecture: because \citep{louart2021spectral} does not
make the dependence on $z$ explicit, it does not by itself yield a rigorous statement about
$\tilde \cR_V^{\lambda_n}(X)$ as $\lambda_n\downarrow0$. The simulations in \Cref{subsec:sim-pd}
are consistent with the conjecture: with positive-definite, non-commuting covariances the risk
still peaks only at $\gamma=1$.
\section{Simulations} \label{sec:simulations}

{We test our theory in three synthetic and one real designs.} \Cref{subsec:sim-latent} 
{affirms that} a diagonalizable, low-rank heterogeneous design produces several risk peaks, 
{with locations exactly predicted by our theory.}
\Cref{subsec:sim-pd} {checks the negative conjecture in \Cref{sec:non:sim:diag:but:pos}}: {for positive definite covariances}, arbitrary heterogeneity { and non-diagonalizability} leave only the classical
threshold at $p/n = 1$. \Cref{subsec:sim-aug} verifies our theory for 
{a dependent design generated by a data
augmentation.}
{ \Cref{subsec:sim-gan} concludes with multiple descent under a real design, gene expression data pooled
with samples from a generative adversarial network.}
Throughout, $p$ is the ambient dimension, the empirical aspect ratio is $p/n$, and we
estimate the prediction risk $\cR^\lambda(X)$ of the (near-)ridgeless estimator
\(\hat\beta_\lambda\) by Monte Carlo over independent draws of the design and an independent test
point drawn from the training mixture~\eqref{eq:test-gen-main}. The signal is taken flat in the
relevant eigenbasis, $\beta = \bone_p/\sqrt p$, so that no coordinate direction is privileged.

\subsection{Multiple descent from latent covariance support} \label{subsec:sim-latent}

The first experiment is a dense, latent-subspace realization of the two-group
profile of \Cref{ex:two-group-overlapping}. There are two groups of observations, each of size
$n/2$; conditional on the group, the rows are independent centered Gaussian vectors
in $\R^p$. Fix an orthogonal matrix $U_p \in \R^{p\times p}$ whose columns are the
latent factor directions, and choose mutually orthogonal latent subspaces
$A, B, C, O \subset \R^p$, where $A$ is specific  to group~1, $B$ is specific  to
group~2, $C$ is common to both, and $O$ is inactive. Group~1 draws
$X_i \sim \cN(0, \Sigma_1)$ and group~2 draws $X_i \sim \cN(0, \Sigma_2)$, with
\begin{align*}
    \operatorname{range}(\Sigma_1) \;=\; E_1 \;\coloneqq\; A \oplus C\;,
    \qquad
    \operatorname{range}(\Sigma_2) \;=\; E_2 \;\coloneqq\; B \oplus C\;,
\end{align*}
and all nonzero eigenvalues of $\Sigma_1, \Sigma_2$ set to $1$. The signal is flat
in the latent basis, $\beta = U_p\,\bone_p/\sqrt p$.

{
Because $\Sigma_1$ and $\Sigma_2$ share the eigenbasis $U_p$ and are only positive
\emph{semi}-definite, this is exactly the simultaneously diagonalizable heterogeneous
design of \Cref{mod:het}. By \Cref{cor:practical-risk} its risk is that of a
variance profile whose variance graph joins each group to the latent coordinates it
activates; the bias support is then given by \Cref{thm:bias-support} and the
locations of the variance peaks by the switching condition of \Cref{thm:var-switch}.
The inactive subspace $O$ leaves its coordinates isolated in that graph, so the profile is
decomposable and \Cref{cor:practical-risk} is applied componentwise (\Cref{rem:decomposable}).
}

We fix
\begin{align*}
    \dim A \;=\; 0.45\,p\;, \qquad \dim C \;=\; 0.10\,p\;,
    \qquad\text{and sweep}\qquad
    \dim B \;=\; \rho\, p\;,
\end{align*}
so that $\dim E_1 = 0.55\,p$ is held fixed while $\dim E_2 = (\rho + 0.10)\,p$ grows.
The theory then predicts one stationary peak from the block $A$ specific to group~1, at
$p/n = 1/(2\cdot 0.45) = 10/9$, together with a moving peak from the group-2 span at
$p/n = 1/\big(2(\rho + 0.10)\big)$. The two coincide at $\rho = 0.35$ and, beyond
it, merge into a single 
 peak at $p/n = 1/(0.55 + \rho)$, where the
combined active span becomes square with all $n$ samples. In \Cref{fig:sim-latent},
the dashed vertical lines mark these predicted switching locations; the
empirical risk curves exhibit two well-separated peaks that merge exactly as the rule
dictates.

\begin{figure}[t]
    \includegraphics[width=\linewidth]{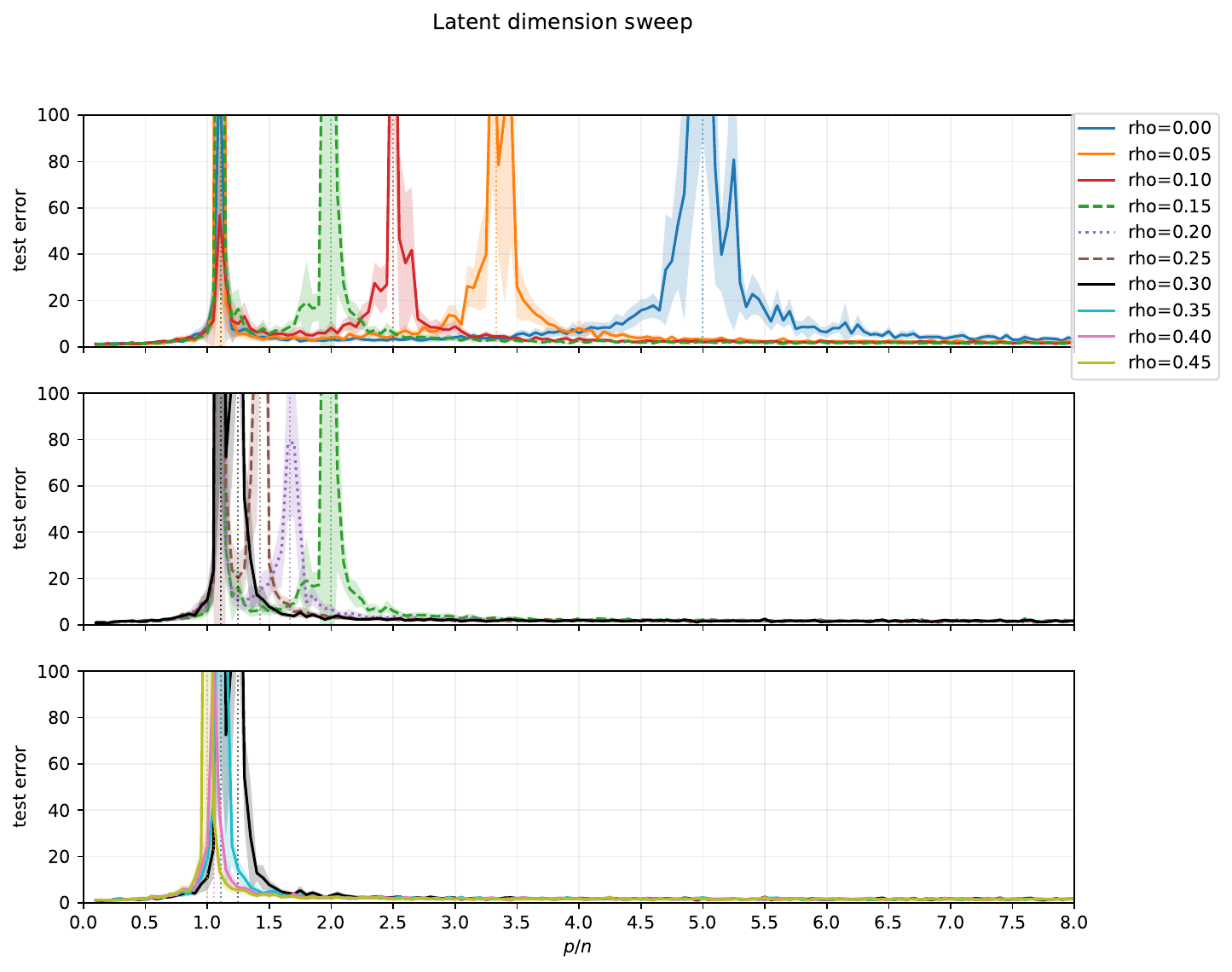}
    \caption{Multiple descent from latent covariance support (\Cref{mod:het}). Two
    groups of $n/2$ observations with low-rank covariances sharing a common latent
    basis. Prediction risk against $p/n$; each colour is one value of the sweep
    parameter $\rho$. Dashed vertical lines mark the switching thresholds predicted by
    \Cref{thm:var-switch}.
    }
    \label{fig:sim-latent}
\end{figure}

\paragraph*{Beyond the Gaussian design.} The reduction to a variance profile in
\Cref{subsec:reduction} is Gaussian, but \Cref{thm:master} itself is not: as noted in
\Cref{rem:non-gaussian}, it holds for any design with independent entries and a matching
variance profile. To probe how far this reaches, we repeat the above experiment of
\Cref{fig:sim-latent} verbatim---identical latent subspaces, group sizes and signal---but
replace the Gaussian entries of the latent coordinates by i.i.d.\ draws from a standardized
Student-$t_3$ law (variance one, obtained as $t_3/\sqrt3$). This law has finite variance,
so the variance profile is unchanged{. However,} its third and higher moments are infinite, so it
falls strictly outside the bounded-moment hypothesis of the local law of \citep{alt2017local}
on which \Cref{thm:master} rests. \Cref{fig:sim-latent-nongauss} shows that the risk curves
are essentially indistinguishable from the Gaussian ones.

\begin{figure}[t]
    \centering
    \includegraphics[width=\linewidth]{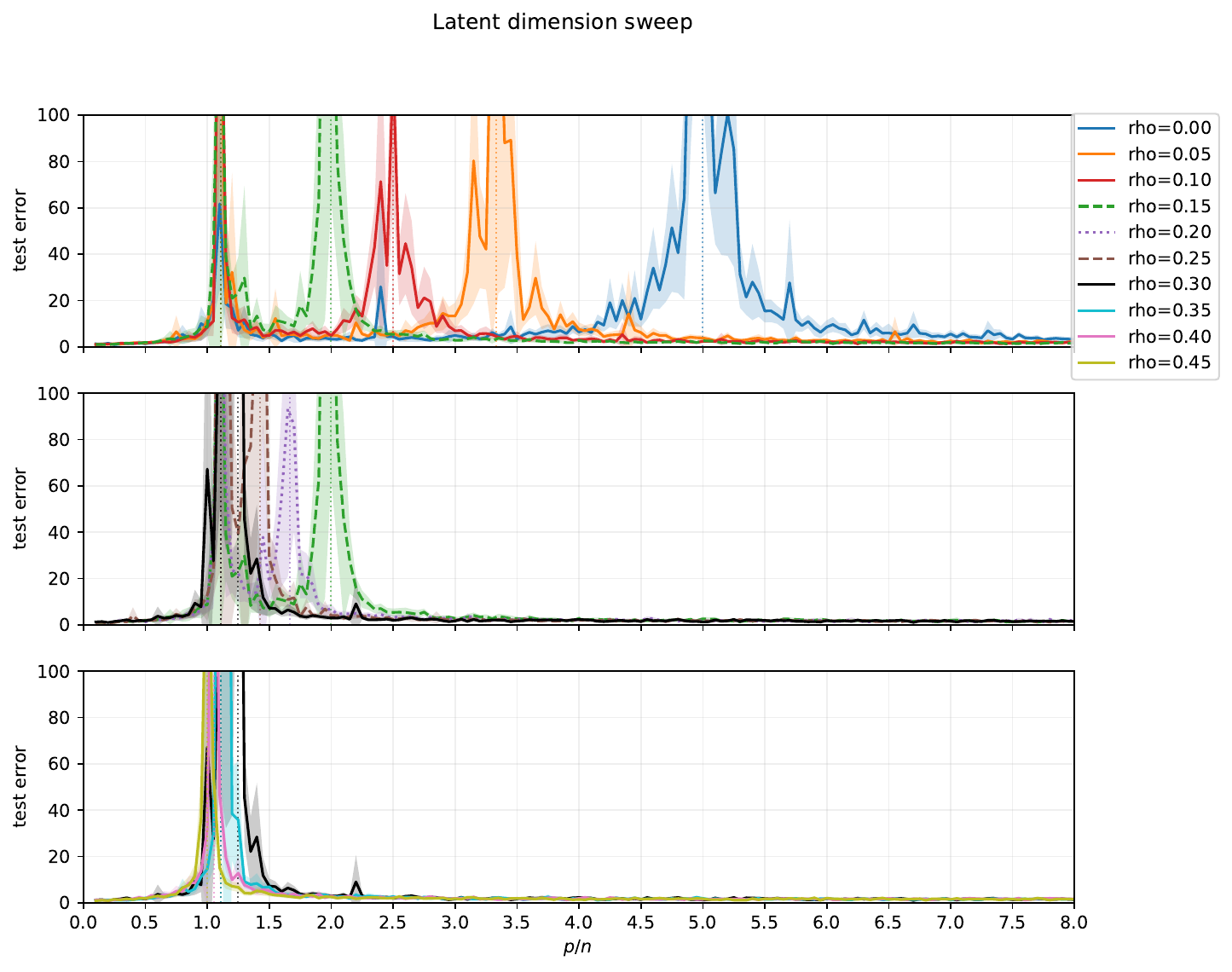}
    \caption{Non-Gaussian universality of multiple descent. The same sweep of
    \Cref{fig:sim-latent} (\Cref{mod:het}) run with i.i.d.\ \emph{standardized
    Student-$t_3$} entries in place of Gaussian ones; the variance profile, group
    sizes ($n/2$ each), latent subspaces and signal are identical. Prediction risk against
    $p/n$, one colour per sweep value $\rho$; dashed vertical lines mark the same switching
    thresholds of \Cref{thm:var-switch} as in \Cref{fig:sim-latent}. Although $t_3$ has
    infinite third moment and hence violates the bounded-moment hypothesis of
    \citep{alt2017local}, the peak locations and merging behaviour match the Gaussian case,
    consistent with \Cref{rem:non-gaussian}.}
    \label{fig:sim-latent-nongauss}
\end{figure}

\subsection{Positive-definite heterogeneity has only the classical peak}
\label{subsec:sim-pd}

The second experiment is the control predicted by \Cref{sec:non:sim:diag:but:pos}:
when every covariance is positive definite, heterogeneity does not add peaks.
 We use three groups of observations with a
fixed total sample size $n = 150$, split evenly as $n_1 = n_2 = n_3 = 50$. For an
even ambient dimension $p$ write $q = p/2$ and set the anisotropic profile and the
block rotation
\begin{align*}
    D \;=\;
    \begin{psmallmatrix}
        I_q & 0 \\ 0 & 0.1 I_q
    \end{psmallmatrix}\;,
    \qquad
    Q(\theta) \;=\;
    \begin{psmallmatrix}
        \cos\theta\, I_q & -\sin\theta\, I_q \\
        \sin\theta\, I_q & \cos\theta\, I_q
    \end{psmallmatrix}\;.
\end{align*}
Group~$g$ has covariance $\Sigma_g = Q(\theta_g)\, D\, Q(\theta_g)^\top$ with the
three angles fixed at $\theta_1 = 0$, $\theta_2 = \pi/12$ and $\theta_3 = \pi/6$.
Each $\Sigma_g$ is positive definite, with eigenvalues $1$ and $0.1$ of common
multiplicity $q$, so this is a full-rank instance of \Cref{mod:het:pd}. The three
covariances are \emph{not} simultaneously diagonalizable: since the eigenvalues of
$D$ are unequal, rotating $D$ by angle differences outside $(\pi/2)\Z$ produces
covariances that do not commute. The design is therefore an instance of
\Cref{mod:het:pd}, strictly outside the
diagonalizable regime of \Cref{subsec:sim-latent}, and the bounded-spectrum
condition of \Cref{asst:hetero:eigen:bound} holds with $\alpha_{\min} = 0.1$ and
$\alpha_{\max} = 1$.

\Cref{fig:sim-pd} plots the empirical risk against $p/n$. As anticipated in
\Cref{sec:non:sim:diag:but:pos}, the curve shows a single interpolation peak at
$p/n = 1$ (dashed line), in the same location as the isotropic baseline of
\Cref{ex:iso}. The rotations change the orientation of the covariance ellipsoids but
neither the number nor the location of the risk singularities: positive-definite
heterogeneity alone does not create additional descents.

\begin{figure}[t]
    \centering
    \includegraphics[width=\linewidth]{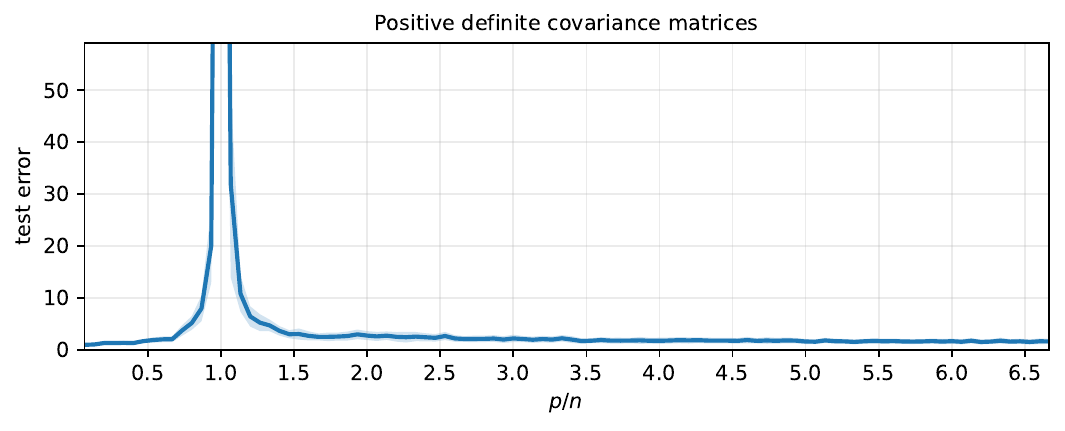}
    \caption{
    Positive-definite heterogeneity has only the classical peak
    (\Cref{mod:het:pd}). Three full-rank groups whose anisotropic covariances are
    rotated by distinct angles $0, \pi/12, \pi/6$ and hence do not commute, with
    $n = 150$. The prediction risk against $p/n$ exhibits a single peak at $p/n = 1$,
     matching the isotropic threshold of \Cref{ex:iso} regardless of the
    covariance orientations.}
    \label{fig:sim-pd}
\end{figure}

\subsection{Dependent designs generated by geometric augmentation}
\label{subsec:sim-aug}

The {next} experiment leaves the independent-row setting: the design is made dependent
\emph{across} observations by a data augmentation, in the spirit of
\citep{huang2022data}. We start from a base sample $Y_i = V_i^\top\beta + \epsilon_i$
with isotropic covariates $V_i \sim \cN(0, I_p)$, noise
$\epsilon_i \sim \cN(0, \sigma_\epsilon^2)$, and flat signal $\beta = \bone_p/\sqrt p$,
and partition the $p$ coordinates into $p/b$ consecutive blocks of size $b$. From each
base sample we form $k$ augmented copies; copy $j$ permutes the coordinates
\emph{within each block} by an independent random permutation $\pi_{ij}$, and attaches
a freshly drawn response,
\begin{align*}
    \big(\pi_{ij} V_i,\; \tau_{ij} Y_i\big)\;,
    \qquad
    \tau_{ij} Y_i \;=\; (\pi_{ij} V_i)^\top \beta + \tilde\epsilon_{ij}\;,
    \qquad \tilde\epsilon_{ij} \overset{\rm i.i.d.}{\sim} \cN(0, \sigma_\epsilon^2)\;.
\end{align*}
Because $\beta$ is flat, permuting coordinates leaves the mean response unchanged,
$(\pi_{ij} V_i)^\top\beta = V_i^\top\beta$, so the augmentation acts on the design
alone and the fresh noise is its only effect on the labels. We fit the (near-)ridgeless estimator \(\hat\beta_\lambda\) on the $nk$ augmented pairs and measure its held-out test error---the
mean squared empirical risk on an independent test set drawn from the same
distribution---as a function of $\gamma = p/n$; here $n = 100$ training points,
$k = 5$ augmented copies, and the error is averaged over $1000$ held-out points.

The $k$ copies of a sample share the same $V_i$, so the $nk$ rows are dependent: this
is an instance of the dependent \Cref{mod:dep}, and by \Cref{lem:model:equivalence} in \Cref{appendix:setup} it has the limiting risk of a heterogeneous variance profile.
The same degeneracy mechanism
therefore applies, and the block-permutation structure produces several effective
thresholds rather than one. \Cref{fig:intro-aug} bears this out: the unaugmented
baseline shows the single classical peak, whereas the augmented designs with $b = 2$
and $b = 3$ each display additional peaks, so that a single augmented design already
exhibits multiple descent. Refreshing the label with independent noise on every
copy---rather than reusing the response, as in the oracle relabeling of
\citep[Figure~13]{huang2022data}---is what brings out the extra peak.

{
\subsection{Multiple descent from pooling real and generated data}
\label{subsec:sim-gan}

The previous experiments used synthetic covariances; this one uses real data, pooled with generated
samples. The real observations are gene expression profiles from The Cancer Genome Atlas
\citep{weinstein2013cancer}, as preprocessed by \citep{lacan2023gan}: $9920$ deduplicated tumor and normal-tissue
samples with expression levels of $20531$ genes. We train a WGAN-GP using the architecture and
training setup of~\citep{lacan2023gan} on these $9920$ observations, and then generate $9920$
synthetic observations. 
Since the final hidden layer of the generator has dimension $1024$ and the
output layer is linear \citep{lacan2023gan}, the covariance of the centered generated observations has rank at most
$1024$. This is consistent with the more concentrated covariance spectrum of the generated samples, as reported in
\citep{lacan2023gan}. Approximately all of the empirical variance of their WGAN-GP samples is
explained by the leading $500$ principal components, whereas the leading $2000$ principal
components explain only about $90\%$ of the variance in the real data. 

The two-group example (\Cref{ex:two-group-overlapping,ex:two-group-peaks}) gives some guidance for choosing the number of genes.
Indeed, in the special case $V_2\subset V_1$, so that $p_2=p_{12}$, the two switching
configurations in \Cref{ex:two-group-peaks} occur at $p_1-p_{2}=n/2$ and $p_2=n/2$. Thus, if
$p_1-p_{2}$ is larger than the number of observations available from the first group, the
first of these configurations cannot be reached. In a similar spirit, here the generated
observations have covariance rank at most $1024$, while only $9920$ real observations are
available, which motivates restricting the number of genes rather than using all $20531$.
We take $p=6144$ and choose the genes once, uniformly at random among those with nonzero
empirical variance in the real data, and then use the same genes for both the real and synthetic groups and for every value
of $n$. This avoids selecting genes according to differences between the real and generated
covariance structures or according to the resulting test-error curve.

We standardize the real and generated samples separately, subtracting the empirical mean and dividing
by the empirical standard deviation of each selected gene within each group. We then combine the two groups
into a pool of $19840$ observations. With a flat signal $\beta=\bone_p/\sqrt{p}$, we generate
$y=X\beta+\epsilon$, and sweep $\gamma=p/n$ by drawing $n$ training observations at random
from this pool. The reported test errors are averaged over $B=60$ random draws for each $n$.

\Cref{fig:sim-gan} shows two distinct test-error peaks. Thus, after pooling the real observations
with the lower-dimensional generated observations, the test error exhibits the same multiple-descent
behavior suggested by the heterogeneous low-rank examples of \Cref{sec:vp-risk-main}, although this experiment
is not an exact instance of the models analyzed there.

\begin{figure}[t]
    \centering
    \includegraphics[width=\linewidth]{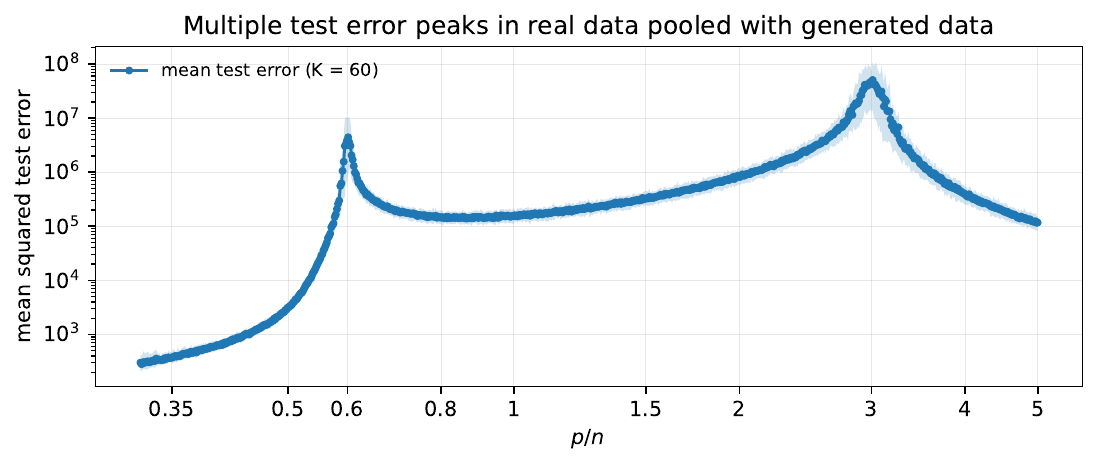}
    \caption{Multiple descent from pooling real and generated data. Test error against
    $\gamma = p/n$, both axes logarithmic. Training sets are drawn from a pool of
    $9920$ TCGA gene expression samples and $9920$ samples from a generative adversarial network trained
    on them, with $p = 6144$ genes. Mean over $60$ random training sets for each $n$; the band covers the
    $5$th to $95$th percentiles.}
    \label{fig:sim-gan}
\end{figure}
}

\medskip

Additional synthetic simulations are collected in \Cref{sec:additional-experiments}, including further diagonal-support examples, approximate-degeneracy experiments, and full-rank non-commuting controls.

\section{Discussion}

We have shown that multiple descent can come from the structure of a single design. In the
simultaneously diagonalizable, possibly rank-deficient designs \Cref{mod:het} and
\Cref{mod:dep}, both the heterogeneous and the finite-rank-dependent Gaussian designs reduce
to a common variance-profile model, and the risk of the vanishing-ridge estimator admits the
deterministic equivalents of \Cref{thm:master}. Their hard-edge behavior is summarized by two
coordinatewise
quantities attached to the fixed-point solution $\br$ of~\eqref{eq:r-fixedpoint}: the value
$r_l(0)=\nu_l(\{0\})$, which fixes the coordinates on which the bias is supported, and the
derivative $\partial r_l(0)=\int_{\R\setminus\{0\}}t^{-2}\,\nu_l(dt)\in[0,\infty]$, which
determines whether the variance equivalent stays bounded or diverges.
Both are read off the support of
$S$: through the Dulmage--Mendelsohn structure of the variance graph, it fixes the bias support
and the locations at which the vanishing-ridge variance becomes singular
(\Cref{thm:bias-support,thm:var-switch}). The mechanism is spectral degeneracy, the zeros of the
covariance; by contrast, in the uniformly positive-definite regimes we conjecture, and our
simulations support, that no threshold appears beyond the classical one at $\gamma=1$
(\Cref{sec:non:sim:diag:but:pos}). Reaching the hard edge also required a random-matrix tool of
independent interest, an extension of the local law of \cite{alt2017local} with explicit
spectral-parameter dependence, sufficient to handle the vanishing ridge levels
$\lambda_n\downarrow0$ (\Cref{appendix:local:law}).

Several questions remain open. Proving the positive-definite conjecture rigorously needs a
global law for non-i.i.d.\ columns tracked to the hard edge, which \cite{louart2021spectral}
does not provide. Our rule also treats exact zeros; the theoretical behavior when these are
replaced by eigenvalues that are small, possibly vanishing with $n$, remains open. Finally, our
results hold for a vanishing ridge $\lambda_n\downarrow0$ rather than the exact
minimum-$l_2$-norm interpolator at $\lambda=0$, which would require controlling the smallest
nonzero singular value uniformly.

 \subsection*{Acknowledgements}
 KHH acknowledges support from the UK Engineering and Physical Sciences Research Council (EPSRC) (Grant No. EP/Y028783/1, Prob AI Hub). MA and SL were supported by NSF CAREER 134258.

\bibliographystyle{imsart-number}
\bibliography{ref.bib}

\newpage
\setcounter{page}{1}
\thispagestyle{empty}
\appendix 

\crefalias{section}{appendix}
\crefalias{subsection}{subappendix}
\crefalias{subsubsection}{subsubappendix}

\numberwithin{theorem}{section}
\numberwithin{lemma}{section}
\numberwithin{proposition}{section}
\numberwithin{corollary}{section}
\numberwithin{definition}{section}
\numberwithin{remark}{section}
\numberwithin{example}{section}
\numberwithin{assumption}{section}

\crefalias{section}{appendix}
\crefalias{subsection}{appendix}
\crefalias{subsubsection}{appendix}

\begin{center}
    \textbf{Appendices}
\end{center}

\vspace{.5em}

The appendices are organized as follows:
\begin{itemize}
    \item \Cref{appendix:setup} restates the setup and proves the equivalence of the models;
    \item \Cref{sec:risk:interpret} proves properties of $\br(0)$ and $\partial \br(0)$, which appear in the deterministic approximation of the risk, 
    together with the two-group and the asymptotic variance result for the prototype sequence in \Cref{sec:vp-risk-main};
    \item \Cref{appendix:local:law} states and proves the local law for the variance profile model with explicit $z$-dependence;
    \item \Cref{appendix:risk} proves the main results, which approximate the bias and the variance of the risks;
    \item \Cref{sec:additional-experiments} includes additional simulations.
\end{itemize}

\vspace{.5em}

\section{Setup} \label{appendix:setup}
\noindent
\textbf{\texorpdfstring{Equivalence of \Cref{mod:het} and \Cref{mod:dep}}{Equivalence of the heterogeneous and dependent models}. } 
We use the notation and models introduced in
\Cref{sec:setup-main,sec:non:sim:diag:but:pos}.

We first observe that \Cref{mod:het} is a special case of \Cref{mod:dep}. Indeed, by choosing $M=n$ and the $\R^{n \times n}$ covariance matrices as
\begin{align*}
    (\tilde \Sigma^{(m)})_{i,i'} \;=\;  \ind_{\{ i = i' = m \}} \qquad \text{ for } 1 \leq i, i' \leq n\;,
\end{align*}
i.e.~all but one entries of $\tilde \Sigma^{(m)}$ are zero, the data matrix $X$ in \Cref{mod:het} has independent row vectors each with covariance $\Sigma^{(m)}$. Then, the assumptions on $\Sigma^{(m)}$ in \Cref{mod:dep} and \Cref{mod:dep:pd} correspond exactly to those on $\Sigma_i$ in \Cref{mod:het} and \Cref{mod:het:pd} respectively. 

\vspace{.5em}

Next we give the proof of the bias--variance decomposition.
\begin{proof}[Proof of \Cref{lem:bias-var-decomp}]
By the test mechanism in \eqref{eq:test-gen-main},
\begin{align*}
    \cR^\lambda(X)
    \;=&\;
    \mfrac{1}{n} \msum_{i \leq n} 
    \mean\big[ (\beta - \hat \beta_\lambda)^\top \, \Var[ X_i]  \,
    (\beta - \hat \beta_\lambda) \,\big|\, (X_i)_{i \leq n} \big]
    \\
    \;=&\;
    \mean\Big[
        \Big\| \bar \Sigma^{1/2} \Big(  
            \Big(  \mfrac{1}{n} \msum_{i \leq n} X_i X_i^\top + \lambda I_p \Big)^{-1}  
            \mfrac{1}{n} \msum_{i \leq n} X_i ( X_i^\top \beta + \epsilon_i)
            - \beta 
        \Big) \Big\|^2
    \,\Big|\, 
        (X_i)_{i \leq n} 
    \Big]
    \\
    \;=&\;
    \underbrace{
        \big\| \bar \Sigma^{1/2}
        \big( (W_n + \lambda I_p)^{-1}  W_n - I_p \big)
        \beta \big\|^2 
    }_{\eqqcolon \cR_B^\lambda(X)}
    +
    \underbrace{
    \mfrac{\sigma^2_\epsilon}{n}
    \Tr\big( 
        \bar \Sigma \, W_n (W_n + \lambda I_p)^{-2}
    \big)
    }_{\eqqcolon \cR_V^\lambda(X)}
    \;.
    \tagaligneq \label{eq:R:formula} 
\end{align*}
\end{proof}
We first note that, regardless of which model we choose, the bias term is always bounded provided that the test data covariance $\bar \Sigma$ and the true signal $\beta$ are both bounded in norm:

\begin{lemma} \label{lem:bias:bounded} For every $\lambda > 0$, $\cR_B^\lambda(X) \;\leq\; \| \bar \Sigma \|_{op} \| \beta\|^2$. 
\end{lemma}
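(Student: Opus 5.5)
The proof is a direct operator-norm bound on the quadratic form defining $\cR_B^\lambda(X)$ in \Cref{lem:bias-var-decomp}. It is deterministic and holds for every realization of $X$, so no model assumption is needed.

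We use the identity already noted after \Cref{lem:bias-var-decomp}: since $W_n$ and $(W_n+\lambda I_p)^{-1}$ commute,
\begin{align*}
    (W_n+\lambda I_p)^{-1}W_n - I_p \;=\; (W_n+\lambda I_p)^{-1}\big(W_n - (W_n+\lambda I_p)\big) \;=\; -\lambda (W_n+\lambda I_p)^{-1}\;.
\end{align*}
Consequently
\begin{align*}
    \cR_B^\lambda(X) \;=\; \lambda^2 \big\| \bar\Sigma^{1/2} (W_n+\lambda I_p)^{-1}\beta \big\|^2 \;\le\; \|\bar\Sigma^{1/2}\|_{op}^2\, \big\|\lambda (W_n+\lambda I_p)^{-1}\big\|_{op}^2\, \|\beta\|^2\;.
\end{align*}
Here $\|\bar\Sigma^{1/2}\|_{op}^2 = \|\bar\Sigma\|_{op}$ because $\bar\Sigma$ is an average of covariance matrices and hence positive semi-definite.

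To bound the middle factor, note that $W_n = \frac1n X^\top X$ is symmetric positive semi-definite, so its eigenvalues satisfy $s \ge 0$. The symmetric matrix $\lambda(W_n+\lambda I_p)^{-1}$ has eigenvalues $\lambda/(s+\lambda) \in (0,1]$, so its operator norm is at most $1$. Combining the three factors gives $\cR_B^\lambda(X) \le \|\bar\Sigma\|_{op}\|\beta\|^2$.

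There is no real obstacle. The only points to check are that $\bar\Sigma$ and $W_n$ are positive semi-definite (so that $\bar\Sigma^{1/2}$ exists and the eigenvalues of $W_n$ are nonnegative) and the commutation used in the first identity.
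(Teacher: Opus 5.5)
Your proof is correct and follows the paper's argument: the paper likewise rewrites $(W_n+\lambda I_p)^{-1}W_n - I_p = -\lambda (W_n+\lambda I_p)^{-1}$, bounds its operator norm by $1$ using $W_n \succeq 0$, and then uses $\|\bar\Sigma^{1/2}\|_{op}^2 = \|\bar\Sigma\|_{op}$. Your version simply spells out the eigenvalue computation and the positive semi-definiteness checks that the paper leaves implicit.
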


\begin{proof}[Proof of \Cref{lem:bias:bounded}] The result follows directly from observing that almost surely 
\begin{align*}
    \big\| (W_n + \lambda I_p)^{-1} W_n - I_p \big\|_{\mathrm{op}} 
    \;=\;
    \lambda \, \big\| (W_n + \lambda I_p)^{-1} \big\|_{\mathrm{op}} 
    \;\leq\; 1\;.
\end{align*}
\end{proof}

As we will see, the variance term $\cR_V^\lambda(X)$ will be the key quantity for characterizing the location of the peak. We next prove the inequality in \Cref{lem:var:bounds}.

\vspace{.5em}

\begin{proof}[Proof of \Cref{lem:var:bounds}] Under \Cref{asst:hetero:eigen:bound}, all eigenvalues of $\bar \Sigma = \frac{1}{n} \sum_{i \leq n} \Sigma_i$ lie in $[\alpha_{\rm min}, \alpha_{\rm max}]$. Since $W_n (W_n + \lambda_n I_p)^{-2}$ is non-negative almost surely, the desired bound holds.
\end{proof}

\vspace{.5em}

Meanwhile, we also observe that \Cref{mod:het} and \Cref{mod:dep} are in fact equivalent:
\begin{lemma} \label{lem:model:equivalence} Let $\cR^\lambda(X) = \cR^\lambda(X; \beta)$ be the test risk defined for \Cref{mod:dep} or
\Cref{mod:dep:pd}, where we have made the dependence on the ground-truth signal $\beta$ explicit. Then the following holds:
\begin{proplist}
    \item Consider \Cref{mod:dep} and express, for every $1 \leq m \leq M$,
    \begin{align*}
        \tilde \Sigma^{(m)}
        \;=&\;
        \tilde U^\top \tilde D^{(m)} \tilde U 
        &\text{ and }&&
        \Sigma^{(m)}
        \;=&\;
        U^\top  D^{(m)} U 
        \tagaligneq \label{eq:sim:diag}
    \end{align*}
    for some orthogonal matrices $\tilde U \in \R^{n \times n}$, $U \in \R^{p \times p}$ and diagonal matrices $\tilde D^{(m)} \in \R^{n \times n}$, $D^{(m)} \in \R^{p \times p}$. Then 
    \begin{align*}
        \cR^\lambda(X ;\beta) \;=\; \cR^\lambda(X_\star; U \beta)\;,
    \end{align*}
    where $X_\star$ is generated as in \Cref{mod:het} such that the covariance of its $i$-th row vector is given by
    \begin{align*}
        \Sigma_i \;=&\; \msum_{m=1}^M \tilde D^{(m)}_{ii} D^{(m)}\;;
    \end{align*}
    \item Consider \Cref{mod:dep:pd}, in which case $ \tilde \Sigma^{(m)} = \tilde D^{(m)}$ for some diagonal matrix for every $1 \leq m \leq M$. Then
    \begin{align*}
        \cR^\lambda(X; \beta) \;=\; \cR^\lambda(X_{\star\star}; \beta) \;,
    \end{align*}
    where 
    $X_{\star\star}$ is generated as in \Cref{mod:het:pd} such that the covariance of its $i$-th row vector is given by
    \begin{align*}
        \Sigma_i \;=&\; \msum_{m=1}^M \tilde D^{(m)}_{ii} \Sigma^{(m)}\;.
    \end{align*}
\end{proplist}
    
\end{lemma}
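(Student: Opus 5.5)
The plan is to rotate on both sides using the simultaneous diagonalization \eqref{eq:sim:diag}, use orthogonal invariance of Gaussian matrices, and then check that the risk transforms correctly under each rotation.

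\textbf{Part (i).} Since $\big(\tilde\Sigma^{(m)}\big)^{1/2}=\tilde U^\top (\tilde D^{(m)})^{1/2}\tilde U$ and $\big(\Sigma^{(m)}\big)^{1/2}=U^\top (D^{(m)})^{1/2}U$, we can write
\begin{align*}
    X \;=\; \tilde U^\top \Big(\msum_{m} (\tilde D^{(m)})^{1/2}\,\tilde U Z^{(m)} U^\top\,(D^{(m)})^{1/2}\Big) U\;.
\end{align*}
The matrices $\tilde U Z^{(m)}U^\top$ are again i.i.d.\ with i.i.d.\ $\cN(0,1)$ entries, by bi-orthogonal invariance of the Gaussian matrix. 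So $X\overset{d}{=}\tilde U^\top Y U$, where $Y=\sum_m (\tilde D^{(m)})^{1/2}Z^{(m)}(D^{(m)})^{1/2}$.

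The entries of $Y$ are independent centered Gaussians with $\Var[Y_{il}]=\sum_m \tilde D^{(m)}_{ii}D^{(m)}_{ll}$. Its rows are therefore independent $\cN(0,\Sigma_i)$ with $\Sigma_i=\sum_m\tilde D^{(m)}_{ii}D^{(m)}$, so $Y\overset{d}{=}X_\star$.

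\textbf{Transforming the risk.} We next check that $\cR^\lambda$ transforms correctly.
\begin{itemize}
    \item $W_n=\frac1n U^\top Y^\top \tilde U\tilde U^\top Y U=U^\top W^\star_n U$, so the left rotation $\tilde U$ disappears.
    \item For the averaged covariance, $\mean[X^\top X]=U^\top\mean[Y^\top Y]U$ by the same computation, giving $\bar\Sigma=U^\top\bar\Sigma_\star U$.
    \item In the bias, $(W_n+\lambda)^{-1}W_n-I=U^\top\big((W^\star_n+\lambda)^{-1}W^\star_n-I\big)U$ and $\bar\Sigma^{1/2}=U^\top\bar\Sigma_\star^{1/2}U$. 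Hence $\cR_B^\lambda(X;\beta)=\cR_B^\lambda(Y;U\beta)$.
    \item The variance trace is invariant under this conjugation.
\end{itemize}

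\textbf{The main obstacle.} For dependent rows, $\bar\Sigma$ must be read as $\frac1n\sum_i\Var[X_i]=\frac1n\mean[X^\top X]$, and the decomposition of \Cref{lem:bias-var-decomp} must still hold. The derivation in \eqref{eq:R:formula} only uses that the noise is independent of $X$ and that the test point is drawn from the marginal mixture. Row dependence therefore does not affect it.

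The risk, viewed as a function of the design, satisfies $\cR^\lambda(X;\beta)=\cR^\lambda(Y;U\beta)$ pathwise, and $Y$ has the law of $X_\star$. Equality is therefore understood in distribution, that is, as an identity of the risk functional under the coupling $X=\tilde U^\top Y U$. I would state explicitly that this is the intended meaning, since everything downstream uses almost-sure limits, which transfer along this coupling.

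\textbf{Part (ii).} Here $\tilde\Sigma^{(m)}=\tilde D^{(m)}$ is diagonal. Row $i$ of $X$ is $\sum_m(\tilde D^{(m)}_{ii})^{1/2}(\Sigma^{(m)})^{1/2}Z^{(m)}_i$, where the $Z^{(m)}_i$ are independent across $i$ and $m$. Rows are thus independent Gaussians with covariance $\sum_m\tilde D^{(m)}_{ii}\Sigma^{(m)}$, which is positive definite since some $\tilde D^{(m)}_{ii}>0$. So $X\overset{d}{=}X_{\star\star}$, and no rotation of $\beta$ is needed.
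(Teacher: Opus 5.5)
Your proposal is correct and follows the paper's proof essentially verbatim: the same two-sided rotation $X\overset{d}{=}\tilde U^\top Y U$ via bi-orthogonal invariance of the $Z^{(m)}$, the same conjugation identities $W_n=U^\top W_\star U$ and $\bar\Sigma=U^\top\bar\Sigma_\star U$, and the same direct row-covariance computation in part (ii); the only cosmetic difference is that you obtain $\bar\Sigma$ from $\frac1n\mean[X^\top X]$, whereas the paper uses $\frac1n\sum_m\Tr(\tilde\Sigma^{(m)})\Sigma^{(m)}$. Your remarks that the decomposition of \Cref{lem:bias-var-decomp} needs no row independence, and that the identity holds in law under the coupling $X=\tilde U^\top YU$, make explicit what the paper leaves implicit in its $\overset{d}{=}$. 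One caveat you share with the paper: the positive-definiteness claim in (ii) tacitly requires every row $i$ to have some $\tilde D^{(m)}_{ii}>0$, which is slightly stronger than the literal wording of \Cref{mod:dep:pd}.
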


\begin{proof}[Proof of \Cref{lem:model:equivalence}] To prove (i), we first use \eqref{eq:sim:diag} to express 
\begin{align*}
    X \;=&\; 
    \msum_{m=1}^M 
    \,
    \tilde U^\top (\tilde D^{(m)})^{1/2} \tilde U 
    \,
    Z^{(m)}
    U^\top  (D^{(m)})^{1/2} U 
    \\
    \;\overset{d}{=}&\;
    \tilde U^\top 
    \Big(
    \underbrace{
        \msum_{m=1}^M 
        (\tilde D^{(m)})^{1/2} 
        Z^{(m)}
        (D^{(m)})^{1/2}
    }_{\eqqcolon X_\star}    
    \Big) U 
     \;,
\end{align*}
where we have used the orthogonal invariance of the standard Gaussian matrix $Z^{(m)}$. In this case 
\begin{align*}
    W_n 
    \;=&\; 
    \mfrac{1}{n} \, X^\top X
    \;=\;
     U^\top \Big( \underbrace{\mfrac{1}{n} X_\star^\top X_\star}_{\eqqcolon W_\star} \Big) U
    \;,
    \\
    \bar \Sigma 
    \;\overset{\eqref{eq:cov-dep-main}}{=}&\;
    \mfrac{1}{n} \msum_{i \leq n} \msum_{m=1}^M \tilde \Sigma_{ii}^{(m)} \Sigma^{(m)}
    \\
    \;=&\;
    U^\top 
    \Big(
            \mfrac{1}{n} \msum_{i \leq n} \msum_{m=1}^M \tilde \Sigma_{ii}^{(m)} D^{(m)}
    \Big) U
    \\
    \;=&\;
    U^\top 
    \Big(
            \mfrac{1}{n}  \msum_{m=1}^M \Tr\big( \tilde \Sigma^{(m)} \big) D^{(m)}
    \Big) U
    \;=\;
    U^\top 
    \Big(
        \underbrace{
            \mfrac{1}{n} \msum_{i \leq n} \msum_{m=1}^M \tilde D_{ii}^{(m)} D^{(m)}
        }_{\eqqcolon \bar \Sigma_\star}
    \Big) U
    \;.
\end{align*}
We can then express
\begin{align*}
    \cR_B^\lambda(X)
    \;=&\;
     \Big\| 
        U^\top
        \bar \Sigma_\star^{1/2}
        U
        \big( 
            U^\top ( W_{\star} + \lambda I_p)^{-1}  W_{\star} U - I_p \big)
        \beta 
    \Big\|^2 
    \\
    \;=&\;
    \Big\| 
        \bar \Sigma_\star^{1/2}
        \big( 
            ( W_{\star} + \lambda I_p)^{-1}  W_{\star} - I_p
        \big)
        (U \beta )
    \Big\|^2 
    \;,
    \\
    \cR_V^\lambda(X)
    \;=&\;
     \mfrac{\sigma^2_\epsilon}{n}
    \Tr\big( 
        U^\top \bar \Sigma_\star U \, U^\top W_{\star} ( W_{\star} + \lambda I_p)^{-2}  U
    \big)
    \;=\;
    \mfrac{\sigma^2_\epsilon}{n}
    \Tr\big( 
        \bar \Sigma_\star \, W_{\star} ( W_{\star} + \lambda I_p)^{-2}
    \big)
    \;.
\end{align*}
We now analyze $X_\star$. Denote the row vectors of $X_\star$ as $(X_{\star;i})_{i \leq n}$. They are independent mean-zero Gaussians by construction, each with covariance 
\begin{align*}
    \Var[X_{\star;i}] \;=\; \msum_{m=1}^M \tilde D^{(m)}_{ii} D^{(m)}
    \;.
\end{align*}
Then we see that $\cR^\lambda(X) = \cR_B^\lambda(X) + \cR_V^\lambda(X)$ under \Cref{mod:dep} is exactly the same as the test risk under \Cref{mod:het} and the test data generation mechanism \eqref{eq:test-gen-main} by identifying 
\begin{align*}
    \Sigma_i \;=&\; \msum_{m=1}^M \tilde D^{(m)}_{ii} D^{(m)}
\end{align*}
and replacing the ground-truth signal $\beta$ by $U \beta$.

\vspace{.5em}

To prove (ii), we note that under \Cref{mod:dep:pd},
\begin{align*}
    X
    \;=&\; 
    \msum_{m=1}^M (\tilde D^{(m)})^{1/2} Z^{(m)} (\Sigma^{(m)})^{1/2}\;.
\end{align*}
The row vectors of $X$ are again independent, each with covariance 
\begin{align*}
    \Var[X_i] \;=&\; \msum_{m=1}^M \tilde D^{(m)}_{ii} \, \Sigma^{(m)} 
    \;,
\end{align*}
which is positive definite since each $\Sigma^{(m)}$ is positive definite and $\tilde D^{(m)}_{ii} \geq 0$ with at least one $\tilde D^{(m)}_{ii} > 0$. This is exactly the setup of \Cref{mod:het:pd}. 
\end{proof}

\noindent
\textbf{\texorpdfstring{Reduction of Models \ref{mod:het} and \ref{mod:dep} to the variance profile model in \cite{hachem2007deterministic,alt2017local}}{Reduction of the heterogeneous and dependent models to a variance profile}.}

We next give the reduction of \Cref{mod:het} and \Cref{mod:dep} to \Cref{mod:vp}.

\begin{lemma} \label{lem:reduction:var:profile} The following statements hold:
\begin{proplist}
    \item Let $\cR^\lambda(X) = \cR^\lambda(X; \beta)$ be the test risk defined under \Cref{mod:het}, where we have made the dependence on the ground-truth signal $\beta$ explicit, and express
    \begin{align*}
        \Sigma_i
        \;=&\;
        U^\top  D^{(i)} U 
        \tagaligneq \label{eq:sim:diag:model:1}
    \end{align*}
    for some orthogonal matrix $U \in \R^{p \times p}$ and diagonal matrices $D^{(i)} \in \R^{p \times p}$. Then
    \begin{align*}
        \cR^\lambda(X;\beta) 
        \;=&\; 
         \cR^\lambda( X_{\rm VP}; U\beta )
        \;,
    \end{align*}
    where $X_{\rm VP}$ is generated as in \Cref{mod:vp} with the variance profile 
    \begin{align*}
        S \;=\; (S_1, \ldots, S_n)^\top \;=\; \mfrac{1}{n} ( D^{(1)} , \ldots, D^{(n)} )^\top   \;;
    \end{align*}
    \item Let $\cR^\lambda(X) = \cR^\lambda(X; \beta)$ be the test risk defined under \Cref{mod:dep}, where we have made the dependence on the ground-truth signal $\beta$ explicit, and inherit the notation $\tilde D^{(m)}$, $U$ and $D^{(m)}$ from \Cref{lem:model:equivalence}(i). Then
    \begin{align*}
        \cR^\lambda(X) 
        \;=&\; 
        \cR^\lambda( X_{\rm VP}'; U\beta )
        \;,
    \end{align*}
    where $X_{\rm VP}'$ is generated as in \Cref{mod:vp} with the variance profile 
    \begin{align*}
        S' \;=\; 
        (S_1', \ldots, S_n')^\top 
        \;=\; 
        \mfrac{1}{n}
        \msum_{m=1}^M 
        \big( 
            \tilde D^{(m)}_{11} D^{(m)}, \ldots, 
            \tilde D^{(m)}_{nn} D^{(m)}  
        \big)^\top  
        \;.
    \end{align*}
\end{proplist}
\end{lemma}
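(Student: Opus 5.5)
We prove both parts of \Cref{lem:reduction:var:profile} by rotating to the common eigenbasis, using the orthogonal invariance of standard Gaussian vectors, and then observing that the rotated design has independent entries.

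\textbf{Part (i).} Write $\Sigma_i^{1/2}=U^\top (D^{(i)})^{1/2}U$ via \eqref{eq:sim:diag:model:1}. Then
\[
X_i=U^\top (D^{(i)})^{1/2}(UZ_i)\overset{d}{=}U^\top (D^{(i)})^{1/2}Z_i,
\]
jointly over $i$, because the $Z_i$ are independent and $UZ_i\overset{d}{=}Z_i$. Hence
\[
X\overset{d}{=}X_{\rm VP}U, \qquad X_{\rm VP}\text{ having rows }(D^{(i)})^{1/2}Z_i .
\]
The entries of $X_{\rm VP}$ are independent centered Gaussians with $\Var[(X_{\rm VP})_{il}]=D^{(i)}_{ll}=nS_{il}$. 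This is exactly \Cref{mod:vp} with the stated profile $S$.

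Next, $W_n=U^\top W_{\rm VP}U$ with $W_{\rm VP}=\frac1n X_{\rm VP}^\top X_{\rm VP}$. Also
\[
\bar\Sigma=U^\top\Big(\tfrac1n\msum_i D^{(i)}\Big)U=U^\top\bar\Sigma_{\rm VP}U,
\]
where by \eqref{eq:Sigmabar-S} $\bar\Sigma_{\rm VP}$ is the averaged covariance of $X_{\rm VP}$. Substitute these into the formulas of \Cref{lem:bias-var-decomp}. Conjugation by $U$ commutes with $(\cdot+\lambda I_p)^{-1}$ and with square roots, and $\|U^\top v\|=\|v\|$, so the bias becomes the bias of $X_{\rm VP}$ with signal $U\beta$. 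The trace in the variance term is invariant under this conjugation. This is the same computation as in the proof of \Cref{lem:model:equivalence}(i), and it gives $\cR^\lambda(X;\beta)=\cR^\lambda(X_{\rm VP};U\beta)$ as an identity in distribution.

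\textbf{Part (ii).} Apply \Cref{lem:model:equivalence}(i) first. This yields $\cR^\lambda(X;\beta)=\cR^\lambda(X_\star;U\beta)$, where $X_\star$ is a \Cref{mod:het} design with diagonal row covariances $\Sigma_i=\sum_m\tilde D^{(m)}_{ii}D^{(m)}$. These covariances are simultaneously diagonalized by the identity, so part (i) applies with $U=I_p$. The induced profile is $S'_{il}=\frac1n\sum_m\tilde D^{(m)}_{ii}D^{(m)}_{ll}$, and no further rotation of the signal occurs, so the signal stays $U\beta$.

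One could argue directly that $X_\star=\sum_m(\tilde D^{(m)})^{1/2}Z^{(m)}(D^{(m)})^{1/2}$ has independent Gaussian entries: each entry is a sum over $m$ of independent Gaussians, and distinct entries involve disjoint sets of the $Z^{(m)}$ entries. Routing through \Cref{lem:model:equivalence}(i) avoids repeating that argument.

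\textbf{Main subtlety.} The point needing care is that the risk equalities hold in distribution, not pathwise. The orthogonal-invariance step replaces $X$ by a random matrix with the same law. Because $\cR^\lambda$ is a deterministic measurable function of the design, equality in law is all that \Cref{cor:practical-risk} requires: almost-sure convergence statements transfer along the sequence $n\to\infty$ once we couple all $n$ on one probability space through the same Gaussian array. It should also be noted that $\tilde U$ drops out entirely, because $X^\top X$ is invariant under left multiplication by an orthogonal matrix.
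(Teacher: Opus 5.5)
Your proposal is correct and follows essentially the same route as the paper: rotate to the common eigenbasis using orthogonal invariance of the Gaussians, read off $W_n=U^\top W_{\rm VP}U$ and $\bar\Sigma=U^\top\bar\Sigma_{\rm VP}U$, and for (ii) reduce through \Cref{lem:model:equivalence}(i) to a design with diagonal row covariances. The paper identifies that design directly as a variance-profile matrix, and your application of part (i) with $U=I_p$ is the same step. Your remark on distributional equality is also fine, and you can make the identity pathwise by building $X_{\rm VP}$ from the rotated Gaussians ($UZ_i$, resp.\ $\tilde U Z^{(m)}U^\top$), which are again i.i.d.\ standard normal.
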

\begin{proof}[Proof of \Cref{lem:reduction:var:profile}] Under \Cref{mod:het}, we use \eqref{eq:sim:diag:model:1} to express 
\begin{align*}
    X 
    \;=\;
    (X_1, \ldots, X_n)^\top
    \;=&\; 
    \big( U^\top  (D^{(1)})^{1/2} U Z_1, \ldots, U^\top  (D^{(n)})^{1/2} U Z_n \big)^\top
    \\
    \;\overset{d}{=}&\;
    \big( (D^{(1)})^{1/2} Z_1, \ldots, (D^{(n)})^{1/2} Z_n \big)^\top U
     \;,
\end{align*}
where we have used the orthogonal invariance of the standard Gaussian matrix $(Z_1, \ldots, Z_n)$. In this case 
\begin{align*}
    W_n \;=&\; 
    \mfrac{1}{n} X^\top X 
    \;=\; 
    U^\top 
    \Big( \underbrace{ \mfrac{1}{n} \msum_{i \leq n} (D^{(i)})^{1/2} Z_i Z_i^\top   (D^{(i)})^{1/2} }_{\eqqcolon W_\#} \Big) U \;,
    \\
    \bar \Sigma
    \;=&\;
    \mfrac{1}{n} \msum_{i \leq n} \Sigma_i
    \;=\;
    U^\top \Big( \underbrace{ \mfrac{1}{n} \msum_{i \leq n} D^{(i)} }_{\eqqcolon \bar \Sigma_\#} \Big) U\;.
\end{align*}
This allows us to express 
\begin{align*}
    \cR_B^\lambda(X;\beta)
    \;=&\;
     \Big\| 
        U^\top
        \bar \Sigma_\#^{1/2}
        U
        \big( 
            U^\top \big( W_{\#} + \lambda I_p \big)^{-1}  W_{\#} U - I_p \big)
        \beta 
    \Big\|^2 
    \\
    \;=&\;
    \Big\| 
        \bar \Sigma_\#^{1/2}
        \big( 
            ( W_{\#} + \lambda I_p)^{-1}  W_{\#} - I_p
        \big)
        (U \beta )
    \Big\|^2 
    \;,
    \\
    \cR_V^\lambda(X)
    \;=&\;
     \mfrac{\sigma^2_\epsilon}{n}
    \Tr\big( 
        U^\top \bar \Sigma_\# U \, U^\top W_{\#} ( W_{\#} + \lambda I_p)^{-2}  U
    \big)
    \\
    \;=&\;
    \mfrac{\sigma^2_\epsilon}{n}
    \Tr\big( 
        \bar \Sigma_\# \, W_{\#} ( W_{\#} + \lambda I_p)^{-2}
    \big)
    \;.
\end{align*}
Consider the (rescaled) data matrix 
\begin{align*}
    \mfrac{1}{\sqrt{n}} \, X_{\rm VP}
    \;=\;
    \mfrac{1}{\sqrt{n}}
     \big( (D^{(1)})^{1/2} Z_1, \ldots, (D^{(n)})^{1/2} Z_n \big)^\top
     \;,
\end{align*}
i.e.~the data matrix generated under \Cref{mod:vp} with the variance profile  $S = \frac{1}{n} ( D^{(1)} , \ldots, D^{(n)} )^\top = (S_1, \ldots, S_n)^\top$. Then we have 
\begin{align*}
    W_\# \;=&\; \mfrac{1}{n} X_{\rm VP}^\top X_{\rm VP} 
    &\text{ and }&&
    \bar \Sigma_\# \;=&\; \mfrac{1}{n} \msum_{i \leq n} (n S_i) \;,
\end{align*}
which allows us to write 
\begin{align*}
    \cR^\lambda(X;\beta) 
    \;=\; 
    \cR_B^\lambda(X;\beta) + \cR_V^\lambda(X)
    \;=\;
    \cR_B^\lambda\big(X_{\rm VP};  U \beta \big) + \cR_V^\lambda(X_{\rm VP})
    \;=\;
    \cR^\lambda(X_{\rm VP}; U\beta) \;.
\end{align*}

\vspace{1em}

Under \Cref{mod:dep}, we first apply \Cref{lem:model:equivalence} to obtain 
\begin{align*}
    \cR^\lambda(X;\beta) \;=\; \cR^\lambda(X_\star; U \beta)\;,
\end{align*}
where $X_\star$ is generated as in \Cref{mod:het} such that the covariance of its $i$-th row vector is given by
\begin{align*}
    \Sigma_i \;=&\; \msum_{m=1}^M \tilde D^{(m)}_{ii} D^{(m)}\;.
\end{align*}
Since $D^{(m)}$'s are diagonal, so is $\Sigma_i$ and therefore $X_\star$ is a Gaussian matrix with independent entries. In particular $X_{\rm VP}' \coloneqq  X_\star$ can be viewed as being generated from \Cref{mod:vp} with the variance profile 
\begin{align*}
    S' 
    \;=&\; 
    \mfrac{1}{n}
    (\Sigma_1, \ldots, \Sigma_n)^\top  
    \;=\;
    \mfrac{1}{n}
    \Big( 
        \msum_{m=1}^M \tilde D^{(m)}_{11} D^{(m)}, \ldots, 
        \msum_{m=1}^M \tilde D^{(m)}_{nn} D^{(m)}  
    \Big)^\top  
    \\
    \;=&\;
    \mfrac{1}{n}
    \msum_{m=1}^M 
    \big( 
        \tilde D^{(m)}_{11} D^{(m)}, \ldots, 
        \tilde D^{(m)}_{nn} D^{(m)}  
    \big)^\top  
    \;.
\end{align*}
A similar argument as the proof for (i) gives the desired statement.
\end{proof}

\section{Properties of the deterministic approximations} \label{sec:risk:interpret}

Our limiting expressions of the risk involve the quantities $r$ and $\partial r$, which are defined implicitly through a system of equations involving the variance profile $S$ and not obviously tractable. We will see that for the purpose of tracking degeneracies and the effect of multiple descents, it suffices to analyze directly the degeneracy structure of the $n \times p$ matrix $S$. 

\vspace{.5em}

\subsection{Analysis of \texorpdfstring{$r(0)$}{r(0)} in the bias term} \label{app:bias-support}

We use the definitions and notation from
\Cref{sec:vp-risk-main}. We first prove the matching characterization
of $\bJ_S$ in \Cref{lem:J-shp}.

\begin{proof}[Proof of \Cref{lem:J-shp}]  We will make use of ideas from the classical tool of alternating paths for handling maximum matchings: Given a matching $\cE$ for the graph $\cG_S$, an $\cE$-alternating path is a path in $\cG_S$ that alternates between edges outside and inside $\cE$; see e.g.~\cite{lovasz2009matching} for an introduction. We will also make use of following classical result:

\begin{theorem}[Berge's theorem \cite{berge1957two}]  \label{thm:berge}
$\cE$ is a maximum matching of $\cG_S$ if and only if there is no $\cE$-alternating path that starts and ends both on unmatched vertices of $\cE$.
\end{theorem}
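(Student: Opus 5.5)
We prove the two directions separately, both by looking at the symmetric difference of two matchings; bipartiteness of $\cG_S$ is not needed. Throughout, an $\cE$-alternating path whose two endpoints are both unmatched by $\cE$ is called $\cE$-augmenting. Such a path has odd length, with one more edge outside $\cE$ than inside, since it starts and ends with non-matching edges.

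For the direction ``maximum $\Rightarrow$ no augmenting path'', we argue by contraposition. Suppose $P$ is an $\cE$-augmenting path, viewed as a set of edges, and set $\cE' \coloneqq \cE \,\triangle\, P$, i.e.\ we swap matched and unmatched edges along $P$. The interior vertices of $P$ are covered exactly once by $\cE'$, since each sees one edge of $P\cap\cE$ and one of $P\setminus\cE$, and after the swap it keeps exactly one. The two endpoints were uncovered by $\cE$ and are now covered by a single edge of $P\setminus\cE$. Vertices off $P$ are unaffected. Hence $\cE'$ is a matching with $|\cE'| = |\cE|+1$, so $\cE$ is not maximum.

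For the direction ``no augmenting path $\Rightarrow$ maximum'', again argue by contraposition. Suppose $\cE$ is not maximum, and fix a maximum matching $\cE^{\rm max}_S$, so $|\cE^{\rm max}_S| > |\cE|$. Consider the subgraph $H$ with edge set $\cE \,\triangle\, \cE^{\rm max}_S$. Every vertex has degree at most $2$ in $H$, at most one edge from each matching. Hence each connected component of $H$ is a path or a cycle, and its edges alternate between $\cE$ and $\cE^{\rm max}_S$. Alternation forces every cycle to be even, so it contains equally many edges of each matching. Since $H$ has strictly more edges from $\cE^{\rm max}_S$ than from $\cE$, some component is a path $P$ with more $\cE^{\rm max}_S$-edges than $\cE$-edges. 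Such a path must begin and end with $\cE^{\rm max}_S$-edges.

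The step needing care is checking that the endpoints of $P$ are unmatched by $\cE$, and not merely that $P$ is alternating. Let $v$ be an endpoint of $P$, incident to an edge $e\in\cE^{\rm max}_S\setminus\cE$. Suppose $v$ were matched by $\cE$ via some edge $f$. Then $f\ne e$, and $f\notin\cE^{\rm max}_S$ because $v$ already uses $e$ in $\cE^{\rm max}_S$. So $f\in H$, and $v$ would have degree $2$ in $H$, contradicting that $v$ is an endpoint of $P$. Thus both endpoints are $\cE$-unmatched, and $P$ is an $\cE$-alternating path starting and ending at unmatched vertices, which completes the equivalence. This is the standard argument found in \cite{lovasz2009matching}, and I expect no real obstacle beyond this degree bookkeeping.
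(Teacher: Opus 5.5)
The paper does not prove this statement. It quotes it from \cite{berge1957two} as a classical black box and invokes it only inside the proof of \Cref{lem:J-shp}. Your argument is the standard symmetric-difference proof, and it is correct.

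In particular, you handle the one step that genuinely needs checking: that the surplus path in $\cE \,\triangle\, \cE^{\rm max}_S$ has both endpoints $\cE$-unmatched. Your degree-two contradiction settles this cleanly.

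Two small remarks:
\begin{itemize}
\item The statement is only true under the usual convention that an alternating path has at least one edge, so that its endpoints are distinct. Your odd-length count uses this convention implicitly, and it is worth stating.
\item The paper itself reuses your symmetric-difference/component reasoning, in the second bullet of the proof of \Cref{lem:J-shp}. There a component of $\cE^{\rm max}_S \triangle \cE'$ is shown to be an alternating path whose endpoints cannot both be unmatched by the same maximum matching.
\end{itemize}
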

    
Now fix a maximum matching $\cE^{\rm max}_S$ of $\cG_S$. We now create a directed graph $\hat \cH$ by orienting every edge outside $\cE^{\rm max}_S$ from $\cV_{\rm col}$ to $\cV_{\rm row}$, and every edge inside $\cE^{\rm max}_S$ from $\cV_{\rm row}$ to $\cV_{\rm col}$. Let $J(\hat \cH)$ be the set of column vertices reachable by traversing along the directed graph $\hat \cH$ starting from the set of column vertices unmatched by $\cE^{\rm max}_S$. We first observe that $\bJ_S = J(\hat \cH)$:
\begin{itemize}
    \item If $j \in J(\hat \cH)$, there is an $\cE^{\rm max}_S$-alternating path $\cP$ starting from some unmatched column vertex by $\cE^{\rm max}_S$. We can toggle the matching $\cE^{\rm max}$ along $\cP$ to create a maximum matching that leaves $j$ unmatched. This shows $J(\hat \cH) \subseteq \bJ_S$;
    \item If $j \in \bJ_S$, some maximum matching $\cE'$ leaves $j \in \cV_{\rm col}$ unmatched. If $\cE^{\rm max}_S$ leaves $j$ unmatched, then automatically we have $j \in J(\hat \cH)$. Otherwise, $\cE^{\rm max}_S$ leaves $j$ matched whereas $\cE'$ does not. In this case, the subgraph $\cA$ of $\cG_S$ formed by the symmetric difference $\cE^{\rm max}_S \triangle \cE'$ contains $j$. Let $\cA_j$ be the connected component of $\cA$ containing $j$, which is an $\cE^{\rm max}_S$-alternating path as well as an $\cE'$-alternating path. Moreover, the endpoints of the path $\cA_j$ cannot be both in $\cE'$ or $\cE^{\rm max}_S$: Otherwise it would create an alternating path between unmatched vertices by one of the maximum matchings and violate \Cref{thm:berge}. Now consider traversing the path $\cA_j$ from a vertex unmatched by $\cE^{\rm max}_S$ to $j$. Since it starts from an edge outside $\cE^{\rm max}_S$, this way of traversing $\cA_j$ follows the same direction as $\hat \cH$, which proves that $j \in J(\hat \cH)$. 
\end{itemize}

\vspace{.5em}

We next show that the induced submatrix of $S$ by $\bJ_S$, $S(\bJ_S)$, satisfies the row-wise strong Hall property.
Observe first that since $\bJ_S = J(\hat \cH)$, every row vertex $i \in I_{\bJ_S}(S)$ is reachable in $\hat \cH$ from the column indices unmatched by $\cE^{\rm max}_S$, and has to be matched by $\cE^{\rm max}_S$ by \Cref{thm:berge}. Moreover, the matched column vertex of every $i \in I_{\bJ_S}(S)$ is also reachable and therefore belongs to $J(\hat \cH) = \bJ(S)$. In other words, $I_{\bJ_S}(S)$ is matched to $\bJ_S$ by $\cE^{\rm max}_S$. Now for every non-empty $I' \subseteq I_{\bJ_S}(S)$, let $J'$ be the set of column indices matched to the row indices in $I'$ by $\cE^{\rm max}_S$, and denote the neighbors of $I'$ as 
\begin{align*}
    J_{I'} \;\coloneqq\; \big\{ j \in \bJ_S \,\big|\, S_{ij} > 0 \text{ for some } i \in I' \big\}\;.
\end{align*}
Then we have
\begin{align*}
    | I' | \;=\; | J' | \;\leq\; | J_{I'}  |\;
    \qquad\text{ and }\qquad 
    J' \;\subseteq\; J_{I'}\;.
\end{align*}
To show the row-wise strong hall property of $S(\bJ_S)$, it suffices to show that the above inequality is strict.  We seek to show the existence of some $j_* \in J_{I'}$ that is not in $J'$:
\begin{itemize}
    \item Consider any row index $i' \in I' \subseteq I_{\bJ_S}(S)$. The property $\bJ_S = J(\hat H)$ implies that there exists an $\cE^{\rm max}_S$-alternating path $\cP_*$ starting from a column index $j'$ unmatched by $\cE^{\rm max}_S$. Let $i_*$ be the first row index along the path $\cP_*$ that lies in $I'$, and $j_*$ be the column index right before $i_*$ on the path $\cP_*$.  Then $j_* \in J_{I'}$ since it is a neighbor of $i_*$. 
    \item Meanwhile, if $j_* \in J'$ also, then there exists some $i_{**} \in I'$ that (i) is matched to $j_*$ by $\cE^{\rm max}_S$, (ii) is distinct from $i_*$, and (iii) has the edge $i_{**} \rightarrow j_*$ in the alternating path $\cP_*$.  This contradicts the assumption that $i_*$ is the first row index in $I'$ encountered in the path $\cP_*$. Hence $j_* \not\in J'$.
\end{itemize}
This proves that $S(\bJ_S)$ satisfies the row-side strong Hall property. 

\vspace{.5em}

It remains to prove uniqueness and maximality. Let $J \subseteq \{1, \ldots, p\}$ be a column set whose induced sub-matrix $S(J)$ satisfies the row-side strong Hall property. It suffices to show that every such $J$ has to be a subset of $\bJ_S$. Suppose for contradiction that $B \coloneqq J \setminus \bJ_S \neq \emptyset$. Since $\bJ_S$ is the set of column indices unmatched by some maximum matching, any column index $b \in B$ needs to be matched by the maximum matching $\cE^{\rm max}_S$ above, say to a row index $i_b$. Now take $I_B = \{i_b \,|\, b \in B\}$, and denote the neighbors of $I_B$ as $J_B \coloneqq \{ j \in J \,|\, S_{ij} > 0 \text{ for some } i \in I_B\}$. Clearly $B \subseteq J_B$. Suppose that $B \subset J_B$. Then there exists some $b \in B$ with $i_b \in I_B$ such that $i_b$ has a neighbor in $\bJ_S$. But since $\bJ_S = J(\hat H)$ is the set of column vertices reachable in $\hat H$ from the column vertices unmatched by $\cE^{\rm max}_S$, this makes $b$ also reachable, which contradicts the starting assumption that $B$ is disjoint from $J_B$. Therefore we must have
\begin{align*}
    B \;=\; J_B\;.
\end{align*}This implies 
\begin{align*}
    | I_B| \;=\; |J_B| \;=\; |B| \;,
\end{align*} 
which violates the row-wise strong Hall property.  Therefore $J \subseteq \bJ_S$ and $\bJ_S$ is the unique maximal column set $J$ whose induced sub-matrix $S(J)$ satisfies the row-side strong Hall property.
\end{proof}

\begin{remark}[Computing $\bJ_S$] \label{rem:compute:JS}
$\bJ_S$ is computed in linear time from any single maximum matching, by the construction in the
proof of \Cref{lem:J-shp}: fix a maximum matching $\cE^{\rm max}_S$ of $\cG_S$, orient every
unmatched edge from columns to rows and every matched edge from rows to columns, and collect all
column vertices reachable by directed alternating paths from the columns left unmatched by
$\cE^{\rm max}_S$. By that proof the reachable column set equals $\bJ_S$. This is the standard
alternating-path form of the Dulmage--Mendelsohn decomposition~\citep{Dulmage_Mendelsohn_1958,
pothen1990computing}, and is available in standard software (e.g.~\texttt{dmperm} in
\textsc{Matlab}, \texttt{scipy.sparse.csgraph} in \textsc{Python}, and the \texttt{Matrix}
package in \textsc{R}).
\end{remark}

\vspace{.5em}

We are ready to characterize the support of $r(0)$ via $\bJ_S$.

\begin{proof}[Proof of \Cref{thm:bias-support}] We seek to study the behavior of the solution $\br(0)$ to the system of equations
\begin{align*}
     1
    \;=\; 
    r_l(0)
    +
    \msum_{i=1}^n \mfrac{S_{il} r_l(0)}{\sum_{j=1}^p S_{ij} r_j(0)}
    \quad 
    \text{ and }
    \quad
    r_l(0)\;\in\; [0,1]
    \qquad 
    \text{for all $1 \leq l \leq p$.  }
    \tagaligneq \label{eq:r:0:system}
\end{align*}
Denote the support of $\br(0)$ as 
\begin{align*}
    J_{\rm pos} \;\coloneqq\; 
    \big\{ l \in \{1, \ldots, p\} \,\big|\, r_l(0) > 0 \big\}\;. 
\end{align*}

\vspace{.5em}

We first seek to show that $\bJ_S \subseteq J_{\rm pos}$, i.e.~for every $l \in \bJ_S$, $r_l(0) > 0$. Let $x_l \coloneqq \log r_l(0) \in [-\infty, 0]$, and rewrite \eqref{eq:r:0:system} as 
\begin{align*}
     1
    \;=\; 
    e^{x_l}
    +
    \msum_{i=1}^n \mfrac{S_{il} e^{x_l}}{\sum_{j=1}^p S_{ij} e^{x_j}}
    \;.
\end{align*}
This is the first-order optimality condition for minimizing the convex function 
\begin{align*}
    \Phi( \bx )
    \;\coloneqq&\;
    \msum_{l=1}^p ( e^{x_l} - x_l  ) 
    +
    \msum_{i=1}^n \log\Big( \msum_{j=1}^p S_{ij} e^{x_j}\Big)
    \;,
\end{align*}
i.e.~$\bx$ is a minimizer of $\Phi(\bx)$. Now consider a modification of $\bx$ with the coordinates at $\bJ_S$ set to zero,
\begin{align*}
    \bx^{-\bJ_S} \;\coloneqq\; \big( x^{-\bJ_S}_l \big)_{l \leq p}
    \qquad 
    \text{ with }
    \quad 
    x^{-\bJ_S}_l  \;\coloneqq\;  x_l \, \ind_{\{ l \not\in \bJ_S\}}\;,
\end{align*}
and recall the notation $I_{\bJ_S}(S)$ and $S(\bJ_S)$ from \Cref{subsec:bias-support}. Given some row index $i \in I_J(\bJ_S)$, we also denote its neighbors in  $S(\bJ_S)$ as 
\begin{align*}
    \cN_i(\bJ_S) \;\coloneqq\; \{ j \in \bJ_S \,|\, S_{ij} > 0\}\;,
\end{align*}
and we fix
\begin{align*}
    j^{(i)} \;\in\; \argmax_{j \in \cN_i(\bJ_S)} x_j\;.
\end{align*}
Then 
\begin{align*}
    &\;
    \Phi( \bx ) 
    -
    \Phi( \bx^{-\bJ_S} )
    \\
    &\;=\;
    \msum_{l \in \bJ_S} 
    ( e^{x_l} - x_l  ) 
    +
    \msum_{i=1}^n \log\Big( 
        \mfrac{
            \sum_{j=1}^p S_{ij} e^{x_j}
        }
        {
            \sum_{j \not \in \bJ_S} S_{ij} e^{x_j}
        }
    \Big)
    \\
    &\;=\;
    \msum_{l \in \bJ_S} 
    ( e^{x_l} - x_l  ) 
    +
    \msum_{i \in I_{\bJ_S}(S)} \log\Big( 
        1
        +
        \mfrac{
            \sum_{j \in \bJ_S} S_{ij} e^{x_j}
        }
        {
            \sum_{j \not \in \bJ_S} S_{ij} e^{x_j}
        }
    \Big)
    \\
    &\;\geq\;
    \msum_{l \in \bJ_S} 
    ( e^{x_l} -  x_l   )
    +
    \msum_{i \in I_{\bJ_S}(S)} \log\bigg( 
        \mfrac{
            S_{ij^{(i)}} \,  e^{\max_{j \in \cN_i(\bJ_S)} x_j}
        }
        {
             \sum_{j \not \in \bJ_S} S_{ij} e^{x_j}
        }
    \bigg)
    \\
    &\;=\;
    \underbrace{\msum_{l \in \bJ_S}  e^{x_l}  
    +
    \msum_{i \in I_{\bJ_S}(S)} \log\Big( 
        \mfrac{
            S_{i j^{(i)}}
        }
        {
            \sum_{j \not \in \bJ_S} S_{ij} e^{x_j}
        }
    \Big)
    }_{\eqqcolon \, C(\bJ_S)}
    +
    \msum_{l \in \bJ_S} (- x_l)
    -
    \msum_{i \in I_{\bJ_S}(S)}
    \min_{j \in \cN_i(\bJ_S)} (- x_j)
    \\
    &\;=\;
     C(\bJ_S)
    +
    \msum_{l \in \bJ_S}  \, \mint_0^\infty \ind_{\{  - x_l \geq t \}}   dt  
    - 
    \msum_{i \in I_{\bJ_S}(S)} \mint_0^\infty \ind_{\{  \min_{j \in \cN_i(\bJ_S)} (-x_j) \geq t \}}   dt  
    \\
    &\;=\;
    C(\bJ_S)
    +
     \mint_0^\infty \# \{ j \in \bJ_S \,|\,  - x_l \geq t \}   dt  
    - 
    \mint_0^\infty  \# \Big\{ i \in I_{\bJ_S}(S) \,\Big|\, \min_{j \in \cN_i(\bJ_S)} (-x_j) \geq t \Big\}   dt  
    \;.
\end{align*}
Denoting the sets
\begin{align*}
    A_t \;\coloneqq&\; \{ j \in \bJ_S \,|\, - x_j \geq t  \}
    &\text{ and }&&
    I_t \;\coloneqq&\; \big\{ i \in I_{\bJ_S}(S) \,\big|\,  \cN_i(\bJ_S) \subseteq A_t  \big\}\;,
\end{align*}
we can rewrite the above as 
\begin{align*}
    \Phi( \bx ) 
    -
    \Phi( \bx^{-\bJ_S} ) 
    \;\geq\; 
    C(\bJ_S)
    +
     \mint_0^\infty \big( |A_t| - |I_t| \big)  dt \;.
\end{align*}
If $A_t = \emptyset$, then $I_t = \emptyset$. If $A_t \neq \emptyset$ and $I_t = \emptyset$, since $|A_t|$ and $|I_t|$ are both integers, $|A_t| - |I_t| \geq 1$. Suppose $I_t$ is non-empty. Recall that by \Cref{lem:J-shp}, the induced sub-matrix $S(\bJ_S)$ satisfies the row-side strong Hall property, which implies 
\begin{align*}
    |I_t| + 1
    \;\leq&\; 
    \big| \big\{ j \in \bJ_S \,\big|\, S_{ij} > 0 \text{ for some } i \in I_t \big\} \big|
    \\
    \;=&\;
    \big| \big\{ j \in \bJ_S \,\big|\, S_{ij} > 0 \text{ for some } i \in I_{\bJ_S}(S) \text{ with } \cN_i(\bJ_S) \subseteq A_t \big\} \big|
    \;\leq\; | A_t|\;,
\end{align*}
in which case $|A_t| - |I_t| \geq 1$ again. In summary, we have that $|A_t| - |I_t| \geq 1$ whenever $A_t \neq \emptyset$. Therefore 
\begin{align*}
    \Phi( \bx ) - \Phi( \bx^{-\bJ_S} )
    \;\geq&\;
    C(\bJ_S)
    +
    \mint_0^\infty  
    \ind_{\{ A_t \neq \emptyset \} } \,  dt   
    \\
    \;=&\;
    C(\bJ_S)
    +
    \mint_0^\infty  
    \ind_{\{ \max_{j \in \bJ_S}(-x_j) \geq t  \} } \,  dt   
    \\
    \;=&\;
    C(\bJ_S) - \min_{j \in \bJ_S} x_j
    \;.
\end{align*}
Since $\bx$ is a minimizer of $\Phi$, we get that
\begin{align*}
  \min_{j \in \bJ_S} \, \log r_j(0) \;=\;  \min_{j \in \bJ_S}\, x_j \;\geq\; C(\bJ_S) + \big( \Phi( \bx^{-\bJ_S} ) - \Phi( \bx ) \big) \;\geq\; C(\bJ_S)\;.
\end{align*}
For all $i \in I_{\bJ_S}(S)$, since $j^{(i)} \in \cN_i(\bJ^S)$, $S_{ij^{(i)}} > 0$ and therefore $C(\bJ_S) > - \infty$. This implies 
\begin{align*}
    \min_{j \in \bJ_S} r_j(0)  \;>\; 0\;.
\end{align*}
This proves that $\bJ_S \subseteq J_{\rm pos}$.

\vspace{.5em}

We now show that $J_{\rm pos} \subseteq \bJ_S$. By the proof of \Cref{lem:J-shp}, it suffices to show that the induced sub-matrix of $S$ by $J_{\rm pos}$, $S(J_{\rm pos})$, satisfies the row-side strong Hall property. Take any non-empty $I' \subseteq I_{J_{\rm pos}}(S)$, and denote its neighbors in $S(J_{\rm pos})$ as
\begin{align*}
    J' \;\coloneqq\; \big\{ j \in J_{\rm pos} \,\big|\, S_{ij} > 0 \text{ for some } i \in I'  \big\}\;.
\end{align*}
Summing \eqref{eq:r:0:system} over $l \in J'$, we have
\begin{align*}
    |J'|
    \;=&\; 
    \msum_{l \in J'} r_l(0)
    +
    \msum_{i=1}^n \mfrac{\sum_{l \in J'} S_{il} r_l(0)}{\sum_{j \in J_{\rm pos}} S_{ij} r_j(0)}
    \\
    \;\geq&\;
    \msum_{l \in J'} r_l(0)
    +
    \msum_{i \in I'} \mfrac{\sum_{l \in J'} S_{il} r_l(0)}{\sum_{j \in J_{\rm pos}} S_{ij} r_j(0)}
    \\
    \;=&\;
    \msum_{l \in J'} r_l(0)
    +
    |I'|
    \;.
\end{align*}
In the last line, we used that for every $i \in I'$, all of its neighbors are in $J'$. Since $J' \subseteq J_{\rm pos}$, the inequality above is strict, and therefore $|I'| \leq |J'|$. This proves that $S(J_{\rm pos})$ has the row-side strong Hall property and therefore $J_{\rm pos} \subseteq \bJ_S$. This then implies $J_{\rm pos} = \bJ_S$.   
\end{proof}

\vspace{.5em}

\subsection{Analysis of \texorpdfstring{$\partial \br(0)$}{dr(0)} in the variance term} \label{app:var-switch}

In the simple case of i.i.d.~isotropic data (\Cref{ex:all-ones}), 
the location of the double-descent peak is $p/n=1$, which is right before $\bJ_S$ switches from $\emptyset$ to the full set. 

It turns out that this intuition generalizes for describing the behavior of $\partial \br(0)$: $\partial \br(0)$ will be finite precisely when $r(0)$ is ``away'' from the boundary at which $\bJ_S$ switches values. 

Roughly speaking, the idea is that there is an unavoidable square sub-matrix that causes the eigenvalues of $X$ to accumulate near the soft edge of zero.  

To formalize this, we use the residual profile $\bS^{\rm res}$, the switching condition,
and the column-side strong Hall property defined in
\Cref{subsec:switching}. 
Recall that $\bI_S = I_{\bJ_S}(S)$. We will use as shorthands 
\begin{align*}
    \bJ_S^c \;\coloneqq&\; \{1,\ldots,p\} \setminus \bJ_S
    &\text{ and }&&
    \bI_S^c \;\coloneqq&\; \{1,\ldots,n\} \setminus \bI_S\;.
\end{align*}

\begin{proof}[Proof of \Cref{lem:switch-shp}] We first show that $\bS^{\rm res}$ satisfies the column-side (weak) Hall property, i.e.~for each $q$ with $1 \leq q \leq p'$, every set of $q$ columns of $\bS^{\rm res}$ contain non-zero entries in at least $q$ different rows. Consider the maximum matching $\cE^{\rm max}_S$ of $\cG_S$ and the directed graph $\hat H$ constructed in the proof of \Cref{lem:J-shp}. By the proof of \Cref{lem:J-shp}, $\bJ_S = J(\hat H)$, the set of column vertices reachable in $\hat H$ from the column vertices unmatched by $\cE^{\rm max}_S$. Then the matched indices of $\bJ_S^c$ by $\cE^{\rm max}_S$ must lie in $\bI_S^c$: Otherwise, some $j \in \bJ_S^c$ is matched to $i \in \bI_S$ and therefore would be reachable in $\hat H$ from the unmatched column vertices, which contradicts the maximality of $\bJ_S$. Moreover, by the definition of $\bJ_S$, every vertex in $\bJ^c_S$ must be matched by $\cE^{\rm max}_S$. Since the graph $\bS^{\rm res}$ is a restriction of $S$ to $\bI^c_S$ and $\bJ^c_S$, we can apply Hall's theorem \cite{hall1935representatives} to  $\bS^{\rm res}$ to obtain that  $\bS^{\rm res}$ satisfies the column-side Hall property, i.e.
\begin{align*}
    |J'| \;\leq\; \big| I_{J'}\big( \bS^{\rm res} \big) \big|
    \;\qquad\;
    \text{ for every non-empty $J' \subseteq \bJ^c_S$\,,}
    \tagaligneq
    \label{eq:switch:HP}
\end{align*}
where we recall that $I_{J'}(\bS^{\rm res}) = \{ i \in \bI^c_S \,|\, \bS^{\rm res}_{ij} > 0 \text{ for some } j \in J'\}$.

\vspace{.5em}

To show the lemma statement, it now suffices to show that the switching condition is equivalent to the existence of some non-empty $J' \subseteq \bJ^c_S$ such that the inequality in \eqref{eq:switch:HP} is an equality. First assume that such an $J'$ exists. Then every row in $I_{J'}( \bS^{\rm res} )$ is used to match $J'$ by every maximum matching of $\cG_{\bS^{\rm res}}$. Taking any $i \in I_{J'}( \bS^{\rm res} )$ gives the switching condition.

\vspace{.5em}

Conversely, suppose the switching condition holds, and let $i \in \bI_S^c$ be the row vertex used by every maximum matching of $\cG_{\bS^{\rm res}}$. Consider the leave-$\{i\}$-out residual graph $\cG_{\bS^{\rm res}} \setminus \{i\}$: Any maximum matching of the leave-$\{i\}$-out graph must not match all of $\bJ_S^c$, because otherwise it gives a maximum matching of the original  $\cG_{\bS^{\rm res}}$ that does not use $\{i\}$. Applying Hall's theorem \cite{hall1935representatives} gives that, there exists some $J' \subseteq \bJ^c_S$ such that 
\begin{align*}
    |J'| \;>\; \big| I_{J'}\big( \bS^{\rm res} \big) \setminus \{i\} \big|\;.
\end{align*}
Combining this with \eqref{eq:switch:HP} shows that we must have $i \in I_{J'}( \bS^{\rm res})$ and $|J'| = \big| I_{J'}\big( \bS^{\rm res} \big)  \big|$. This gives our desired $J'$ and shows that the column-side strong Hall property fails for $\bS^{\rm res}$.
\end{proof}

\begin{remark}[Checking the switching condition] \label{rem:compute:switch}
The switching condition can be checked in a single pass over one maximum matching of the
residual graph $\cG_{\bS^{\rm res}}$, whose rows are $\bI_S^c$ and columns are $\bJ_S^c$; no
matching needs to be recomputed after deleting a row. Call a residual row \emph{forced} if
every maximum matching of $\cG_{\bS^{\rm res}}$ matches it, so that the switching condition is
exactly the existence of a forced row.

\emph{Step 1 (a column-saturating matching).} Compute a maximum matching $M$ of
$\cG_{\bS^{\rm res}}$ that saturates every residual column $j\in\bJ_S^c$; one exists by the
weak column-side Hall inequality~\eqref{eq:switch:HP} in the proof of \Cref{lem:switch-shp}.

\emph{Step 2 (orientation).} Direct each edge according to $M$: an edge \emph{not} in $M$ from
its row endpoint to its column endpoint, and an edge \emph{in} $M$ from its column endpoint to
its row endpoint. (This is the orientation of \Cref{rem:compute:JS} with rows and columns
interchanged, because here we start the search from rows rather than columns.)

\emph{Step 3 (reachability).} Search the directed graph from the residual rows left unmatched
by $M$, and let $R_{\rm alt}\subseteq\bI_S^c$ be the residual rows it reaches. A directed path
from an unmatched row alternates non-matching and matching edges, so reaching a row $i$ exhibits
an $M$-alternating path from an unmatched row to $i$; toggling $M$ along it yields another
matching that still saturates $\bJ_S^c$ but leaves $i$ unmatched. Conversely, if some
column-saturating matching leaves $i$ unmatched, the component of its symmetric difference with
$M$ through $i$ is such a path. Hence a residual row $i$ can be left unmatched by \emph{some}
column-saturating maximum matching if and only if $i\in R_{\rm alt}$.

\emph{Step 4 (read off the answer).} The forced rows are therefore exactly
$\bI_S^c\setminus R_{\rm alt}$, and by \Cref{lem:switch-shp} the switching condition holds if and
only if this set is non-empty. The procedure is one maximum-matching computation plus one
linear-time graph search, and is the standard alternating-path form of the Dulmage--Mendelsohn
decomposition~\citep{Dulmage_Mendelsohn_1958, pothen1990computing}.
\end{remark}

\begin{proof}[Proof of \Cref{thm:var-switch}]
We prove the necessary and sufficient condition, and \eqref{eq:partial-r-residual} and \eqref{eq:partial-r-support} will emerge as a by-product of the proof. By \Cref{lem:switch-shp}, it suffices to show that $\partial \br(0)$ exists in $\R^p$ if and only if $\bS^{\rm res}$ satisfies the column-side strong Hall property. The proof is similar to that of \Cref{thm:bias-support}: One direction proceeds by summing the fixed-point equations corresponding to $\partial_\lambda \br(0)$ over suitable coordinates, whereas the other proceeds by identifying a convex potential function whose minimum can be computed by the fixed point equations in $\partial_\lambda \br(0)$. To this end, we recall from \eqref{eq:r-fixedpoint} that $\br(\lambda)$ is the solution to the system of equations
\begin{align*}
     1
    \;=\; 
    r_l(\lambda)
    +
    \msum_{i=1}^n \mfrac{S_{il} r_l(\lambda)}{\lambda + \sum_{j=1}^p S_{ij} r_j(\lambda)}
    \quad 
    \text{ and }
    \quad
    r_l(0)\;\in\; [0,1]
    \qquad 
    \text{for all $1 \leq l \leq p$.  }
    \tagaligneq \label{eq:r:lamb:system}
\end{align*}
Also recall from \Cref{thm:bias-support} that $\bJ_S$ is exactly the support of $\br(0)$. This motivates us to define two different sets of variables,
\begin{align*}
    q_k(\lambda) \;\coloneqq&\; \mfrac{r_k(\lambda)}{\lambda}
    \quad \text{ for } k \in \bJ_S^c 
    &\text{ and }&&
    u_j(\lambda) \;\coloneqq&\; \mfrac{r_j(\lambda) - r_j(0)}{\lambda}
    \quad \text{ for } j \in \bJ_S\;,
\end{align*} 
both of which will approximate the corresponding coordinates of $\partial_\lambda \br(0)$ as $\lambda \rightarrow 0^+$ provided that they exist. It is also useful to note that 
\begin{align*}
    \msum_{j \in \bJ_S} S_{ij} r_j(0) \;>\; 0
    \qquad 
    \text{ if and only if }
    \qquad 
    i \in \bI_S\;,
\end{align*}
where we have recalled the definition $\bI_S = I_{\bJ_S}(S)$.

\vspace{.5em}

Assume first that $\partial \br(0)$ exists in $\R^p$. Then $q_k(0) = \lim_{\lambda \rightarrow 0^+}q_k(\lambda)$ and $u_j(0) = \lim_{\lambda \rightarrow 0^+}u_j(\lambda)$ exist. We first rewrite \eqref{eq:r:lamb:system} for $l=k \in \bJ^c_S$ as 
\begin{align*}
    1 
    \;=&\; 
    \lambda q_k(\lambda)  
    + 
    \msum_{i \in \bI_S} \mfrac{ \lambda S_{ik} q_k(\lambda)}{
        \lambda 
        + 
        \sum_{j \in \bJ_S} S_{ij} r_j(\lambda)
        +
        \sum_{j \in \bJ_S^c} S_{ij} r_j(\lambda)
    }
    \\
    &\;
    + 
    \msum_{i \in \bI_S^c} \mfrac{ \lambda S_{ik} q_k(\lambda)}{
        \lambda 
        +
        \sum_{j \in \bJ_S} S_{ij} r_j(\lambda)
        +
        \sum_{j \in \bJ_S^c} S_{ij} r_j(\lambda)
    }
    \\
    \;=&\;
    \lambda q_k(\lambda)  
    + 
    \msum_{i \in \bI_S} \mfrac{ S_{ik} q_k(\lambda)}{
        1
        + 
        \sum_{j \in \bJ_S} S_{ij}\big( u_j(\lambda) +  \frac{r_j(0)}{\lambda}\big)
        +
        \sum_{j \in \bJ_S^c} S_{ij} q_j(\lambda)
    }
    \\
    &\;
    + 
    \msum_{i \in \bI_S^c} \mfrac{ S_{ik} q_k(\lambda)}{
        1 
        +
        \sum_{j \in \bJ_S^c} S_{ij} q_j(\lambda)
    }
    \;.
\end{align*}
Taking $\lambda \rightarrow 0^+$ and setting $v^0_k = \partial_\lambda r_k(0)$ gives \eqref{eq:partial-r-residual}. Now fix any non-empty $J' \subseteq \bJ_S^c$ and sum \eqref{eq:partial-r-residual} over $k \in J'$, which gives 
\begin{align*}
    |J'| 
    \;=\; 
    \msum_{i \in \bI_S^c}
        \mfrac{
            \sum_{j \in J'} S_{ij} v_j^0
        }{
            1+\sum_{j'\in \bJ_S^c}S_{ij'}v_{j'}^0
        }
    \;<\; 
    \big| I_{J'}(\bS^{\rm res}) \big| \;,
\end{align*}
where the inequality follows because the summands vanish for
$i\notin I_{J'}(\bS^{\rm res})$ and are smaller than $1$
otherwise. This is exactly the statement that the residual graph $\bS^{\rm res}$ satisfies the column-side strong Hall property.

\vspace{.5em}

Conversely, assume that  $\bS^{\rm res}$ satisfies the column-side strong Hall property. Consider the convex function in $\by = (y_k)_{k \in \bJ^c_S}$:
\begin{align*}
    \Psi(\by) \;\coloneqq\; \msum_{i \in \bI^c_S} \log \Big( 1 + \msum_{k \in \bJ_S^c} S_{ik} e^{y_k} \Big) - \msum_{k \in \bJ_S^c} y_k\;.
\end{align*}
Notice that \eqref{eq:partial-r-residual} is exactly the first-order optimality condition of $\Psi$ by identifying $v^0_k = e^{y_k}$, so if $\Psi$ has a finite minimizer, \eqref{eq:partial-r-residual} has a finite solution, which would show that $\partial r_k(0)$ exists for $k \in \bJ^c_S$. To show that the convex function $\Psi$ has a finite minimizer, it suffices to prove the coercivity of $\Psi$, i.e.~for any $\by = (y_k)_{k \in \bJ^c_S}$, as $\| \by \| \rightarrow \infty$, $\frac{ \Psi(\by) \, \by}{\| \by\|}$ diverges. This indeed holds: 
\begin{itemize}
    \item Sending any block $J' \subseteq \bJ^c_S$ of coordinates of $\by$ to $+\infty$ gives a leading slope of $|I_{J'}(\bS^{\rm res})| - |J'|$, which is positive due to column-side strong Hall property;
    \item Sending any block $J'' \subseteq \bJ^c_S$ of coordinates of $\by$ to $-\infty$ only asymptotically contributes through the $ - \msum_{k \in J''} y_k$ term, which is also sent to $+\infty$. 
\end{itemize}
Therefore $\partial r_k(0)$ exists for $k \in \bJ^c_S$.

\vspace{.5em}

It remains to show that $\partial r_j(0)$ exists for $j \in \bJ_S$. We first show that if $\partial r_j(0)$ exists for every $j \in \bJ_S$, it has to satisfy \eqref{eq:partial-r-support}. For $l=j \in \bJ_S$, we take a difference between the equation \eqref{eq:r:lamb:system} for a general $\lambda$ and the same equation with $\lambda=0$, followed by a division by $\lambda$:
\begin{align*}
    0
    \;=&\; 
    \mfrac{r_j(\lambda) - r_j(0)}{\lambda}
    +
    \msum_{i=1}^n 
    \mfrac{1}{\lambda}
    \bigg(
        \mfrac{S_{ij} r_j(\lambda)}{\lambda + \sum_{j'=1}^p S_{ij'} r_{j'}(\lambda)}
        -
        \mfrac{S_{ij} r_j(0)}{\sum_{j'=1}^p S_{ij'} r_{j'}(0)}
    \bigg) 
    \\
    \;=&\;  
    u_j(\lambda)
    +
    \msum_{i=1}^n 
    \mfrac{1}{\lambda}
    \bigg(
        \mfrac{
            S_{ij} \big(\lambda u_j(\lambda) + r_j(0)\big) \sum_{j'=1}^p S_{ij'} r_{j'}(0)
            -
            S_{ij} r_j(0) \big( \lambda + \sum_{j'=1}^p S_{ij'} r_{j'}(\lambda) \big)
        }{ 
            \big( \lambda + \sum_{j'=1}^p S_{ij'} r_{j'}(\lambda) \big) \, \sum_{j'=1}^p S_{ij'} r_{j'}(0)
        }
    \bigg) 
    \\
    \;=&\;  
    u_j(\lambda)
    +
    \msum_{i=1}^n 
    \mfrac{1}{\lambda}
    \mfrac{1}{ \big( \lambda + \sum_{j'=1}^p S_{ij'} r_{j'}(\lambda) \big) \, \sum_{j'  \in \bJ_S} S_{ij'} r_{j'}(0)}
    \\
    &\hspace{5em}
    \times
    \Big(
        S_{ij} \big(\lambda u_j(\lambda) + r_j(0)\big) \msum_{j' \in \bJ_S} S_{ij'} r_{j'}(0)
    \\
    &\hspace{7em}
        -
        S_{ij} r_j(0) \Big( 
            \lambda 
            + 
            \msum_{j' \in \bJ_S} S_{ij'} \big( \lambda u_{j'}(\lambda) + r_{j'}(0)  \big)
    \\
    &\hspace{7em}
            + 
            \lambda \msum_{j' \in \bJ_S^c} S_{ij'} q_{j'}(\lambda) \Big)
    \Big) 
    \\
    \;=&\;  
    u_j(\lambda)
    +
    \msum_{i=1}^n 
    \mfrac{1}{ \big( \lambda + \sum_{j'=1}^p S_{ij'} r_{j'}(\lambda) \big) \, \sum_{j'  \in \bJ_S} S_{ij'} r_{j'}(0)}
    \\
    &\hspace{5em}
    \times
    \Big(
        S_{ij} u_j(\lambda) \msum_{j' \in \bJ_S} S_{ij'} r_{j'}(0)
    \\
    &\hspace{7em}
        -
        S_{ij} r_j(0) \Big( 
            1
            + 
            \msum_{j' \in \bJ_S} S_{ij'} u_{j'}(\lambda) 
            + 
            \msum_{j' \in \bJ_S^c} S_{ij'} q_{j'}(\lambda) \Big)
    \Big) 
    \;.
\end{align*}
If $u_j(0) = \lim_{\lambda \rightarrow 0^+} u_j(\lambda) = \partial r_j(0)$ exists for all $j \in \bJ_S$, we can take  $\lambda \rightarrow 0^+$ to get
\begin{align*}
    0
    \;=&\;  
    u_j(0)
    +
    \msum_{i=1}^n 
    \mfrac{S_{ij}}{ \big( \sum_{j'=1}^p S_{ij'} r_{j'}(0) \big)^2}
    \\
    &\hspace{5em}
    \times
    \Big(
        u_j(0) \msum_{j' \in \bJ_S} S_{ij'} r_{j'}(0)
    \\
    &\hspace{7em}
        -
         r_j(0) \Big( 
            1
            + 
            \msum_{j' \in \bJ_S} S_{ij'} u_{j'}(0) 
            + 
            \msum_{j' \in \bJ_S^c} S_{ij'} q_{j'}(0) \Big)
    \Big) 
    \;.
\end{align*}
By noting that $\bJ_S$ is the support of $\br(0)$ (\Cref{thm:bias-support}) and recalling the definition of $\bI_S$, the above rearranges to 
\begin{align*}
    &\;
    u_j(0) \Big( 1 + \msum_{i \in \bI_S} \mfrac{S_{ij}}{\sum_{j'=1}^p S_{ij'} r_{j'}(0)  }  \Big)
    \\
    &\;=\;
    \msum_{i \in \bI_S} 
    \mfrac{S_{ij} \, r_j(0)}{ \big( \sum_{j' \in \bJ_S} S_{ij'} r_{j'}(0) \big)^2}
     \Big( 
            1
            + 
            \msum_{j' \in \bJ_S} S_{ij'} u_{j'}(0) 
            + 
            \msum_{j' \in \bJ_S^c} S_{ij'} q_{j'}(0) 
    \Big) 
    \;.
\end{align*}
This is exactly \eqref{eq:partial-r-support} with $u_j^0 = u_j(0)$ by noting that 
\eqref{eq:r:lamb:system} with $\lambda = 0$ implies, for $j \in \bJ_S$,
\begin{align*}
    1 \;=\; r_j(0) \Big( 1 + \msum_{i \in \bI_S} \mfrac{S_{ij}}{\sum_{j' \in \bJ_S} S_{ij'} r_{j'}(0)}\Big)
    \;.
    \tagaligneq \label{eq:r:zero:system}
\end{align*} 

\vspace{.5em}

It suffices to show that \eqref{eq:partial-r-support} has a solution. for all $j \in \bJ_S$, since $r_j(0) > 0$ for all $j \in \bJ_S$, we may consider the variables $h^0_j \coloneqq \frac{u^0_j}{r_j(0)}$ and rewrite \eqref{eq:partial-r-support} into the linear system 
\begin{align*}
    \msum_{j' \in \bJ_S} H_{jj'} h^0_{j'} \;=\;  b_j
    \qquad\qquad 
    \text{ for } j \in \bJ_S\;,
\end{align*}
where 
\begin{align*}
    H_{jj'} 
    \;\coloneqq&\;  
    \ind_{\{j=j'\}} 
    \,-\, 
    \msum_{i \in \bI_S} \mfrac{S_{ij}  r_j(0) \,\times\, S_{ij'} r_{j'}(0) }{\big( \sum_{j'' \in \bJ_S} S_{ij''} r_{j''}(0) \big)^2}
    \;,
    \\
    b_j 
    \;\coloneqq&\;
    \msum_{i \in \bI_S} \mfrac{S_{ij}  r_j(0) }{\big( \sum_{j'' \in \bJ_S} S_{ij''} r_{j''}(0) \big)^2}
    \Big( 
        1 
        +
        \msum_{k \in \bJ_S^c} S_{ik} v^0_k
    \Big)
    \;.
\end{align*}
Now note that, for any vector $\bz = (z_j)_{j \in \bJ_S}$, the operator $\bH =(H_{jj'})_{j,j' \in \bJ_S}$ satisfies
\begin{align*}
    \bz^\top \bH \bz
    \;=&\;
    \msum_{j \in \bJ_S}
    z_j^2
    -
    \msum_{i \in \bI_S} \mfrac{\big(\sum_{j \in \bJ_S} S_{ij}  r_j(0) z_j \big)^2 }{\big( \sum_{j'' \in \bJ_S} S_{ij''} r_{j''}(0) \big)^2}
    \\
    \;\overset{\eqref{eq:r:zero:system}}{=}&\;
    \msum_{j \in \bJ_S}
    z_j^2 r_j(0) \Big( 
        1 + \msum_{i \in \bI_S} \mfrac{S_{ij}}{\sum_{j' \in \bJ_S} S_{ij'} r_{j'}(0)}    
    \Big)
    -
    \msum_{i \in \bI_S} \mfrac{\big(\sum_{j \in \bJ_S} S_{ij}  r_j(0) z_j \big)^2 }{\big( \sum_{j'' \in \bJ_S} S_{ij''} r_{j''}(0) \big)^2}
    \\
    \;=&\;
    \msum_{j \in \bJ_S}
    r_j(0) z_j^2 
    +
    \msum_{i \in \bI_S}
    \underbrace{\Big(
        \msum_{j \in \bJ_S}
        z_j^2   w_{ij}
        -
        \Big(
            \msum_{j \in \bJ_S} z_j w_{ij}
        \Big)^2
    \Big)
    }_{\eqqcolon V_i}
    \;,
\end{align*}
where we have denoted the weights 
\begin{align*}
    w_{ij} \;\coloneqq\; \mfrac{S_{ij} r_j(0)}{\sum_{j' \in \bJ_S} S_{ij'} r_{j'}(0)}    \;.
\end{align*}
Observe that $V_i$ is the variance of a discrete random variable that takes the value $z_j$ with probability $w_{ij}$, and is therefore non-negative. This implies 
\begin{align*}
    \bz^\top \bH \bz \;\geq\; \msum_{j \in \bJ_S}
    r_j(0) z_j^2 
    \;\geq\; 0
\end{align*}
with equality if and only if $\bz = \bzero$ since $\bJ_S$ is the support of $\br(0)$ by \Cref{thm:bias-support}. This proves that $\bH$ is a positive definite operator and therefore invertible. This in turn shows that, if $\bS^{\rm res}$ satisfies the column-side strong Hall property, then \eqref{eq:partial-r-support} has a solution, and therefore $\partial r_j(0)$ exists for $j \in \bJ_S$, and therefore $\partial \br(0)$ exists in $\R^p$. This finishes the proof. 
\end{proof}

\subsection{Two-group profiles}
\label{sec:two-group-alt}

We first give the matching calculations for the profile with overlapping
column sets in \Cref{ex:two-group-overlapping}. We also consider the case in which the two column sets are disjoint.

\emph{The profile with overlapping column sets.}
The coordinates in $\cV_0$ are isolated and therefore belong to $\bJ_S$.
It remains to determine which columns in $\cV_1\cup\cV_2$ can be left
unmatched by a maximum matching. Note that
after discarding $\cV_0$, \Cref{asst:flat}
is straightforward to verify. For any nonempty column set
$T\subseteq\cV_1\cup\cV_2$, its set of neighboring rows satisfies
\begin{align}
    |I_T(S)|
    =
    \begin{cases}
        n_1,
        & T\subseteq\cV_1\setminus\cV_2,\\
        n_2,
        & T\subseteq\cV_2\setminus\cV_1,\\
        n_1+n_2,
        & \text{otherwise}.
    \end{cases}
    \label{eq:two-group-neighborhoods}
\end{align}

We first verify the possible $\bJ_S$  for different cases in \Cref{ex:two-group-overlapping}.

\emph{Case (i).}
Suppose
\begin{align*}
    p_1-p_{12}\le n_1,
    \qquad
    p_2-p_{12}\le n_2,
    \qquad
    p_1+p_2-p_{12}\le n_1+n_2.
\end{align*}
Then \eqref{eq:two-group-neighborhoods} gives
$|T|\le |I_T(S)|$ for every
$T\subseteq\cV_1\cup\cV_2$. Hall's theorem therefore gives a matching
that matches every column in $\cV_1\cup\cV_2$. Since the remaining
columns are isolated, every maximum matching matches every column in
$\cV_1\cup\cV_2$, and
\begin{align*}
    \bJ_S=\cV_0.
\end{align*}

\emph{Case (ii).}
Suppose
\begin{align*}
    p_1-p_{12}>n_1,
    \qquad
    p_2\le n_2.
\end{align*}
The columns in $\cV_1\setminus\cV_2$ are adjacent only to the first
group and therefore cannot all be matched. On the other hand, one can
match $n_1$ columns in $\cV_1\setminus\cV_2$ to the first group and
all $p_2$ columns in $\cV_2$ to the second group. This matching has
size $n_1+p_2$, which is maximal since any matching uses at most
$n_1$ columns from $\cV_1\setminus\cV_2$ and at most $p_2$ columns
from $\cV_2$. The columns in $\cV_1\setminus\cV_2$ have the same
neighborhood, so each can be left unmatched by a maximum matching.
Every maximum matching must match all columns in $\cV_2$, and hence
\begin{align*}
    \bJ_S
    =
    \cV_0\cup(\cV_1\setminus\cV_2).
\end{align*}

\emph{Case (iii).}
The same argument with the two groups exchanged shows that, if
\begin{align*}
    p_2-p_{12}>n_2,
    \qquad
    p_1\le n_1,
\end{align*}
then
\begin{align*}
    \bJ_S
    =
    \cV_0\cup(\cV_2\setminus\cV_1).
\end{align*}

\emph{Case (iv).}
Suppose
\begin{align*}
    p_1>n_1,
    \qquad
    p_2>n_2,
    \qquad
    p_1+p_2-p_{12}>n_1+n_2.
\end{align*}
Hall's condition on the row side holds: a set of rows contained in the
first group has neighborhood $\cV_1$, a set contained in the second
group has neighborhood $\cV_2$, and a set meeting both groups has
neighborhood $\cV_1\cup\cV_2$. Thus there is a matching that matches
all $n_1+n_2$ rows. Moreover, after deleting any one column in
$\cV_1\cup\cV_2$, the remaining neighborhoods of the first group,
the second group, and both groups together still have sizes at least
$n_1$, $n_2$, and $n_1+n_2$, respectively. Thus the resulting graph
still has a matching that matches every row.
Consequently every column in $\cV_1\cup\cV_2$ can be left unmatched by
a maximum matching, and
\begin{align*}
    \bJ_S
    =
    \cV_0\cup\cV_1\cup\cV_2.
\end{align*}

Note that these are all the possible cases. If $p_1-p_{12}>n_1$, then either
$p_2\le n_2$, giving case (ii), or $p_2>n_2$, giving case (iv). The same
argument applies with the two groups exchanged. If neither private column set
exceeds its corresponding row group, then either
$p_1+p_2-p_{12}\le n_1+n_2$, giving case (i), or the reverse inequality holds,
which gives case (iv).

We next verify the switching classification in
\Cref{ex:two-group-peaks}. Suppose $n_1=n_2=n/2$. By
\Cref{rem:decomposable}, the isolated coordinates $\cV_0$ can be
discarded when considering the switching condition.

If $\bJ_S=\cV_0$, the residual graph is the full graph on
$\cV_1\cup\cV_2$. By \eqref{eq:two-group-neighborhoods}, a nonempty
column set has $n/2$ neighboring rows if it is contained in
$\cV_1\setminus\cV_2$ or $\cV_2\setminus\cV_1$, and $n$
neighboring rows otherwise. Since all
columns in $\cV_1\cup\cV_2$ can be matched, the column-side strong Hall
property fails exactly when
\begin{align*}
    |\cV_1\setminus\cV_2|=n/2,
    \qquad\text{or}\qquad
    |\cV_2\setminus\cV_1|=n/2,
    \qquad\text{or}\qquad
    |\cV_1\cup\cV_2|=n.
\end{align*}
In the first case, the inequalities defining
$\bJ_S=\cV_0$ imply $p_2\le n/2$; in the second they imply
$p_1\le n/2$; and in the third they imply
$p_1-p_{12}\le n/2$ and $p_2-p_{12}\le n/2$. These are cases
(i)--(iii) of \Cref{ex:two-group-peaks}.

If
\begin{align*}
    \bJ_S
    =
    \cV_0\cup(\cV_1\setminus\cV_2),
\end{align*}
removing $\bJ_S$ and its neighboring rows removes the first group and
leaves the complete bipartite graph between the second group and
$\cV_2$. In this regime $p_2\le n/2$, so this residual graph satisfies
the switching condition exactly when $p_2=n/2$. This gives case (iv).
Similarly, if
\begin{align*}
    \bJ_S
    =
    \cV_0\cup(\cV_2\setminus\cV_1),
\end{align*}
the switching condition holds exactly when $p_1=n/2$, which gives
case (v). Finally, if
\begin{align*}
    \bJ_S
    =
    \cV_0\cup\cV_1\cup\cV_2,
\end{align*}
the residual graph is empty and the switching condition does not hold.
This proves the five cases in \Cref{ex:two-group-peaks}.

\emph{The profile with disjoint column sets.}
We next consider the case in which the two column sets are disjoint.

\begin{example}[Two-group profile with disjoint column sets]
\label{ex:two-group-disjoint}
Consider
\begin{align*}
    S
    \;=&\;
    \begin{psmallmatrix}
        \bone_{n_1\times p_1} & 0 & 0 \\
        0 & \bone_{n_2\times p_2} & 0
    \end{psmallmatrix}
    \;.
\end{align*}
Let
\begin{align*}
    \cV_1
    &\coloneqq \{1,\ldots,p_1\},\\
    \cV_2
    &\coloneqq \{p_1+1,\ldots,p_1+p_2\},\\
    \cV_0
    &\coloneqq \cV_{\rm col}\setminus(\cV_1\cup\cV_2).
\end{align*}
The variance graph is the disjoint union of the complete bipartite
graphs $K_{n_1,p_1}$ and $K_{n_2,p_2}$, together with the isolated
columns in $\cV_0$. Hence every maximum matching is the union of
maximum matchings in the two complete blocks, and
\begin{align*}
    |\cE_S^{\rm max}|
    =
    \min\{n_1,p_1\}+\min\{n_2,p_2\}.
\end{align*}
Since the columns within each block have the same neighborhood,
\begin{align*}
    \bJ_S
    =
    \cV_0
    \cup
    \big(\cV_1\,\ind_{\{p_1>n_1\}}\big)
    \cup
    \big(\cV_2\,\ind_{\{p_2>n_2\}}\big).
\end{align*}
By \Cref{thm:bias-support}, this is also the support of $\br(0)$.

Now suppose $n_1=n_2=n/2$. By \Cref{rem:decomposable}, the two
complete blocks are treated separately. For the block
$K_{n/2,p_a}$, the column-side strong Hall property holds when
$p_a<n/2$, fails when $p_a=n/2$, and the whole column block belongs to
$\bJ_S$ when $p_a>n/2$. Therefore the switching condition holds
exactly when
\begin{align*}
    p_1=n/2
    \qquad\text{or}\qquad
    p_2=n/2.
\end{align*}
\end{example}

\subsection{Asymptotic variance for replicated profiles}
\label{app:prototype-variance}

We now prove \Cref{cor:prototype-variance}. We first prove two properties of
the sequence $S^{(t)}$ defined in \eqref{eqn:prototype_definition}. We begin by
showing that if \Cref{asst:flat} holds for $\widetilde S$, then it also holds
for $S^{(t)}$ with constants independent of $t$.

\begin{lemma}\label{lem:prototype-flat}
Suppose $\widetilde S\in\R_+^{m\times k}$ satisfies \Cref{asst:flat}
with constants $s_*,\psi_1,\psi_2,L_1,L_2$. Then $S^{(t)}$ satisfies
\Cref{asst:flat} with the same constants for every $t\in\N$.
\end{lemma}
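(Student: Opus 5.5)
The plan is to verify the three inequalities of \Cref{asst:flat} directly from the Kronecker structure $S^{(t)}=\frac1t\,\widetilde S\otimes(\bone_t\bone_t^\top)$, using the mixed-product rule for Kronecker products. Write $J_t\coloneqq\bone_t\bone_t^\top$. The only facts needed are $J_t^\top=J_t$ and $J_t^2=tJ_t$, hence $J_t^L=t^{L-1}J_t$ for $L\ge1$, together with the normalization $n_t+p_t=t(m+k)$.

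\emph{Flatness.} Index a row of $S^{(t)}$ by a pair $(a,\alpha)$ with $a\le m$, $\alpha\le t$, and a column by $(b,\beta)$ with $b\le k$, $\beta\le t$. Then
\begin{align*}
    S^{(t)}_{(a,\alpha),(b,\beta)}=\widetilde S_{ab}/t\le \frac{s_*}{t(m+k)}=\frac{s_*}{n_t+p_t}.
\end{align*}

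\emph{Row connectivity.} By the mixed-product rule,
\begin{align*}
    S^{(t)}S^{(t)\top}=\tfrac1{t^2}(\widetilde S\widetilde S^\top)\otimes J_t^2=\tfrac1t(\widetilde S\widetilde S^\top)\otimes J_t .
\end{align*}
Raising this to the power $L_1$ and applying the rule again gives
\begin{align*}
    \big(S^{(t)}S^{(t)\top}\big)^{L_1}=t^{-L_1}(\widetilde S\widetilde S^\top)^{L_1}\otimes J_t^{L_1}=\tfrac1t(\widetilde S\widetilde S^\top)^{L_1}\otimes J_t .
\end{align*}
Every entry of $J_t$ equals one. Hence the $((a,\alpha),(a',\alpha'))$ entry equals $\frac1t\big[(\widetilde S\widetilde S^\top)^{L_1}\big]_{aa'}$. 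By the hypothesis on $\widetilde S$ this is at least $\frac{\psi_1}{t(m+k)}=\frac{\psi_1}{n_t+p_t}$.

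\emph{Column connectivity.} The identical computation with $S^{(t)\top}S^{(t)}=\frac1t(\widetilde S^\top\widetilde S)\otimes J_t$ gives the bound $\psi_2/(n_t+p_t)$ with the same $L_2$.

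There is no real obstacle. The one point requiring care is the bookkeeping of powers of $t$: the $t^{-L}$ from the scalar factor must cancel against the $t^{L-1}$ from $J_t^L$, leaving exactly the factor $1/t$ that matches $1/(n_t+p_t)=1/(t(m+k))$. This cancellation is why the constants $s_*,\psi_1,\psi_2,L_1,L_2$ do not depend on $t$.
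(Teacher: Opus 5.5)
Your proposal is correct and follows the paper's proof essentially verbatim: flatness from the entrywise bound $\widetilde S_{ab}/t \le s_*/(n_t+p_t)$, and connectivity from the mixed-product rule together with $(\bone_t\bone_t^\top)^L = t^{L-1}\bone_t\bone_t^\top$, which yields $\big(S^{(t)}(S^{(t)})^\top\big)^{L_1} = \tfrac1t(\widetilde S\widetilde S^\top)^{L_1}\otimes(\bone_t\bone_t^\top)$ and its column analogue. Your bookkeeping of the powers of $t$ matches the paper's.
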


\begin{proof}
We verify the three conditions in \Cref{asst:flat}. Since $\widetilde S$
satisfies \Cref{asst:flat},
\begin{align*}
    \widetilde S_{ab}\leq \frac{s_*}{m+k},
    \qquad a\in[m],\ b\in[k].
\end{align*}
Thus every entry of $S^{(t)}$ satisfies
\begin{align*}
    S^{(t)}_{ij}
    \leq \frac{1}{t}\frac{s_*}{m+k}
    = \frac{s_*}{n_t+p_t}.
\end{align*}

For the lower bounds, the mixed-product property of the Kronecker product and
$(\bone_t\bone_t^\top)^L=t^{L-1}\bone_t\bone_t^\top$ give
\begin{align*}
    \big(S^{(t)}(S^{(t)})^\top\big)^L
    &=\frac1t
      \big(\widetilde S\widetilde S^\top\big)^L
      \otimes(\bone_t\bone_t^\top),\\
    \big((S^{(t)})^\top S^{(t)}\big)^L
    &=\frac1t
      \big(\widetilde S^\top\widetilde S\big)^L
      \otimes(\bone_t\bone_t^\top).
\end{align*}
Hence
\begin{align*}
    \big[\big(S^{(t)}(S^{(t)})^\top\big)^{L_1}\big]_{ii'}
    &\geq
      \frac1t\frac{\psi_1}{m+k}
      =\frac{\psi_1}{n_t+p_t},\\
    \big[\big((S^{(t)})^\top S^{(t)}\big)^{L_2}\big]_{jj'}
    &\geq
      \frac1t\frac{\psi_2}{m+k}
      =\frac{\psi_2}{n_t+p_t}.
\end{align*}
This proves the claim.
\end{proof}

We next show that the fixed-point solution for $S^{(t)}$ is obtained by
replicating the fixed-point solution for $\widetilde S$.
\begin{lemma}\label{lem:prototype-fixed-point}
For $\lambda>0$, let
$\widetilde{\br}(\lambda)
=(\widetilde r_b(\lambda))_{b\in[k]}$
be the unique solution to
\begin{align}
    1
    &=
    \widetilde r_b(\lambda)
    +
    \sum_{a=1}^m
    \frac{
        \widetilde S_{ab}\widetilde r_b(\lambda)
    }{
        \lambda+\sum_{c=1}^k
        \widetilde S_{ac}\widetilde r_c(\lambda)
    },
    \qquad b\in[k].
    \label{eq:r:prototype}
\end{align}
For every $t\in\N$, let
$\br^{(t)}(\lambda)\in\R^{p_t}$ denote the unique solution to
\eqref{eq:r-fixedpoint} corresponding to $S^{(t)}$. Then
\begin{align*}
    r^{(t)}_{(b-1)t+v}(\lambda)
    =
    \widetilde r_b(\lambda),
    \qquad b\in[k],\ v\in[t].
\end{align*}
\end{lemma}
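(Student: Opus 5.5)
The proof is by uniqueness. We build a candidate solution for $S^{(t)}$ by replicating $\widetilde{\br}(\lambda)$, check that it solves \eqref{eq:r-fixedpoint} for $S^{(t)}$, and then invoke uniqueness of the solution of the fixed-point system for $\lambda>0$. Uniqueness is taken from the quadratic-vector-equation theory quoted after \eqref{eq:r-fixedpoint} (\Cref{lem:r:eqns,lem:r:values}). Concretely, fix $\lambda>0$ and define $\hat\br\in\R^{p_t}$ by
\[
\hat r_{(b-1)t+v}\coloneqq\widetilde r_b(\lambda), \qquad b\in[k],\ v\in[t].
\]
We index rows of $S^{(t)}$ as $(a-1)t+u$ with $a\in[m]$, $u\in[t]$. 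By \eqref{eqn:prototype_definition},
\[
S^{(t)}_{(a-1)t+u,\,(b-1)t+v}=\tfrac1t\,\widetilde S_{ab}.
\]

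The key computation is the denominator for row $i=(a-1)t+u$:
\[
\lambda+\msum_{j}S^{(t)}_{ij}\hat r_j=\lambda+\msum_{c=1}^k\msum_{v'=1}^t\tfrac1t\widetilde S_{ac}\widetilde r_c(\lambda)=\lambda+\msum_{c}\widetilde S_{ac}\widetilde r_c(\lambda).
\]
This does not depend on $u$. For column $l=(b-1)t+v$, the sum over rows in \eqref{eq:r-fixedpoint} becomes
\[
\msum_{a=1}^m\msum_{u=1}^t \frac{\tfrac1t\widetilde S_{ab}\,\widetilde r_b(\lambda)}{\lambda+\sum_c\widetilde S_{ac}\widetilde r_c(\lambda)} .
\]
Summing over $u$ cancels the factor $1/t$, and what remains is exactly the sum in \eqref{eq:r:prototype}. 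Hence $\hat r_l$ satisfies the $l$-th equation of \eqref{eq:r-fixedpoint} for $S^{(t)}$.

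It remains to check that $\hat\br$ lies in the class where uniqueness is asserted. That class consists of the solutions with the admissible sign/range constraints, namely $r_l(\lambda)\in(0,1]$, or equivalently the branch given by the spectral representation of \Cref{lem:r-spectral}. Since $\widetilde{\br}(\lambda)$ is the admissible solution for $\widetilde S$, its entries lie in this range, and so do the entries of $\hat\br$. By uniqueness, $\br^{(t)}(\lambda)=\hat\br$.

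The main obstacle is the precise form of the uniqueness statement. If it is proved only for $\lambda\in\C\setminus(-\infty,0]$ among solutions with positive imaginary-part-type constraints, the replication argument should be run in that complex form. The computation above is purely algebraic and carries over verbatim. One then restricts to real $\lambda>0$. Alternatively, if uniqueness for real $\lambda>0$ within $(0,1]^p$ is needed directly, it can be obtained from the convex-potential view used in the proof of \Cref{thm:bias-support}. The function
\[
\msum_l(e^{x_l}-x_l)+\msum_i\log\big(\lambda+\msum_j S_{ij}e^{x_j}\big)
\]
is strictly convex, and its critical points are exactly the positive solutions. It therefore has at most one critical point, which gives uniqueness.
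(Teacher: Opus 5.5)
Your proof is correct and follows the paper's argument. You replicate $\widetilde{\br}(\lambda)$ across the blocks, note that each row denominator is independent of $u$ so the factor $1/t$ cancels against the sum over $u$, and conclude from the uniqueness in \Cref{lem:r:eqns}; your extra remark that uniqueness among positive solutions at real $\lambda>0$ follows from strict convexity of $\sum_l(e^{x_l}-x_l)+\sum_i\log(\lambda+\sum_j S_{ij}e^{x_j})$ is also valid, and it makes the uniqueness step self-contained where the paper simply cites it.
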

\begin{proof}
The existence and uniqueness of $\widetilde{\br}(\lambda)$ follow from
\Cref{lem:r:eqns}, applied to $\widetilde S$. For fixed $t$, define
$\widehat{\br}^{(t)}(\lambda)\in\R^{p_t}$ by
\begin{align*}
    \widehat r^{(t)}_{(b-1)t+v}(\lambda)
    \coloneqq
    \widetilde r_b(\lambda),
    \qquad b\in[k],\ v\in[t].
\end{align*}
Index the rows of $S^{(t)}$ by $(a-1)t+u$, with $a\in[m]$ and $u\in[t]$. By
\eqref{eqn:prototype_definition},
\begin{align*}
    S^{(t)}_{(a-1)t+u,\,(b-1)t+v}
    =\frac1t\widetilde S_{ab}.
\end{align*}
Hence
\begin{align*}
    \sum_{c=1}^k\sum_{w=1}^t
    S^{(t)}_{(a-1)t+u,\,(c-1)t+w}
    \widehat r^{(t)}_{(c-1)t+w}(\lambda)
    =
    \sum_{c=1}^k
    \widetilde S_{ac}\widetilde r_c(\lambda).
\end{align*}
Therefore, for every $b\in[k]$ and $v\in[t]$,
\begin{align*}
    &\widehat r^{(t)}_{(b-1)t+v}(\lambda)
    +
    \sum_{a=1}^m\sum_{u=1}^t
    \frac{
        S^{(t)}_{(a-1)t+u,\,(b-1)t+v}
        \widehat r^{(t)}_{(b-1)t+v}(\lambda)
    }{
        \lambda+
        \sum_{c=1}^k\sum_{w=1}^t
        S^{(t)}_{(a-1)t+u,\,(c-1)t+w}
        \widehat r^{(t)}_{(c-1)t+w}(\lambda)
    }\\
    &\qquad=
    \widetilde r_b(\lambda)
    +
    \sum_{a=1}^m
    \frac{
        \widetilde S_{ab}\widetilde r_b(\lambda)
    }{
        \lambda+
        \sum_{c=1}^k
        \widetilde S_{ac}\widetilde r_c(\lambda)
    }
    =1.
\end{align*}
Thus $\widehat{\br}^{(t)}(\lambda)$ solves \eqref{eq:r-fixedpoint} for
$S^{(t)}$. The result follows from uniqueness in \Cref{lem:r:eqns}.
\end{proof}

Next, we express the degenerate column set and the switching condition for $S^{(t)}$ in terms of those of the prototype $\widetilde S$.
\begin{corollary}\label{cor:prototype-graph}
Let $\bJ_{\widetilde S}$ and $\bJ_{S^{(t)}}$ denote the degenerate column
sets of $\widetilde S$ and $S^{(t)}$, respectively. Then
\begin{align*}
    \bJ_{S^{(t)}}
    =
    \{(b-1)t+v:b\in\bJ_{\widetilde S},\ v\in[t]\}.
\end{align*}
Moreover, $S^{(t)}$ satisfies the switching condition if and only if
$\widetilde S$ does.
\end{corollary}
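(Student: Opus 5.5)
The plan is to avoid redoing any matching combinatorics on the blown-up graph. Instead I would transfer both statements through the analytic characterizations already proved, namely \Cref{thm:bias-support} and \Cref{thm:var-switch}, using the replication identity of \Cref{lem:prototype-fixed-point}. By \Cref{lem:prototype-flat}, $S^{(t)}$ satisfies \Cref{asst:flat} whenever $\widetilde S$ does. The standing hypotheses of \Cref{sec:vp-risk-main} therefore hold for both profiles, and both theorems apply at every finite size.

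For the degenerate set, fix $t$. \Cref{lem:prototype-fixed-point} gives $r^{(t)}_{(b-1)t+v}(\lambda)=\widetilde r_b(\lambda)$ for every $\lambda>0$. Letting $\lambda\downarrow0$, with the limits existing by \Cref{lem:r:values}, we get $r^{(t)}_{(b-1)t+v}(0)=\widetilde r_b(0)$. Hence the support of $\br^{(t)}(0)$ is exactly $\{(b-1)t+v: \widetilde r_b(0)>0,\ v\in[t]\}$. Applying \Cref{thm:bias-support} once to $S^{(t)}$ and once to $\widetilde S$ identifies these supports with $\bJ_{S^{(t)}}$ and $\bJ_{\widetilde S}$ respectively, which proves the first claim.

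For the switching condition, I would differentiate the same identity. Since it holds for all $\lambda>0$, we have $\partial_\lambda r^{(t)}_{(b-1)t+v}(\lambda)=\partial_\lambda\widetilde r_b(\lambda)$. Passing to the limit in $[0,\infty]$, which \Cref{lem:r-spectral} guarantees, gives $\partial r^{(t)}_{(b-1)t+v}(0)=\partial\widetilde r_b(0)$. Consequently $\partial\br^{(t)}(0)\in\R^{p_t}$ if and only if $\partial\widetilde{\br}(0)\in\R^{k}$. By \Cref{thm:var-switch} applied to each profile, the first is equivalent to the failure of switching for $S^{(t)}$ and the second to its failure for $\widetilde S$, so the two switching conditions are equivalent.

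The main point to check is that \Cref{thm:var-switch} and \Cref{thm:bias-support} are genuinely finite-$n,p$ statements that apply to the fixed small matrix $\widetilde S$ (size $m\times k$) as well as to $S^{(t)}$. They are, since the paper states they hold for every $n$ and $p$ under \Cref{asst:flat}, and \Cref{asst:flat} for $\widetilde S$ is assumed. The other point is that the derivative limits are interchangeable with the coordinatewise identity, which is immediate because the identity holds on all of $(0,\infty)$.

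If one preferred a purely combinatorial route as a backup, one could note that columns in the same replica class have identical neighbourhoods in $\cG_{S^{(t)}}$. Hall deficiencies then scale by exactly the factor $t$: a column set $J\subseteq[k]$ with neighbourhood $I$ lifts to sets of sizes $t|J|$ and $t|I|$. This would let \Cref{lem:J-shp} and \Cref{lem:switch-shp} be transferred directly. The difficulty on that route is handling subsets that are not unions of whole replica classes, so I would use the analytic argument above.
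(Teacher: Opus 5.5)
Your proposal is correct and follows the paper's own proof essentially step for step. The paper likewise passes the replication identity of \Cref{lem:prototype-fixed-point} to the limit $\lambda\downarrow0$ and applies \Cref{thm:bias-support} to identify the degenerate sets, then uses $\partial_\lambda r^{(t)}_{(b-1)t+v}(0)=\partial_\lambda\widetilde r_b(0)$ together with \Cref{thm:var-switch} to obtain the equivalence of the switching conditions, so the combinatorial backup you sketch is not needed.
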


\begin{proof}
By \Cref{lem:prototype-fixed-point},
\begin{align*}
    r^{(t)}_{(b-1)t+v}(\lambda)
    =
    \widetilde r_b(\lambda),
    \qquad
    b\in[k],\ v\in[t],\ \lambda>0.
\end{align*}
Taking $\lambda\downarrow0$ and by \Cref{thm:bias-support},
$(b-1)t+v\in\bJ_{S^{(t)}}$ exactly when
$r^{(t)}_{(b-1)t+v}(0)>0$,
which is exactly when
$\widetilde r_b(0)>0$, or equivalently, when
$b\in\bJ_{\widetilde S}$. Therefore,
\begin{align*}
    \bJ_{S^{(t)}}
    =
    \{(b-1)t+v:b\in\bJ_{\widetilde S},\ v\in[t]\}.
\end{align*}
Similarly,
\begin{align*}
    \partial_\lambda r^{(t)}_{(b-1)t+v}(0)
    =
    \partial_\lambda\widetilde r_b(0),
\end{align*}
so $\partial\br^{(t)}(0)$ is finite if and only if
$\partial\widetilde{\br}(0)$ is finite. The statement about switching
follows from \Cref{thm:var-switch}.
\end{proof}

We can now combine these results with \Cref{thm:master} to prove
\Cref{cor:prototype-variance}.
\begin{proof}[Proof of \Cref{cor:prototype-variance}]
By \Cref{lem:prototype-flat}, $S^{(t)}$ satisfies \Cref{asst:flat}
uniformly in $t$, and
\begin{align*}
    \frac{p_t}{n_t}=\frac{k}{m}.
\end{align*}
Hence \Cref{thm:master} gives
\begin{align*}
    \cR_V^{\lambda_t}(X^{(t)})
    -
    \frac{\sigma_\epsilon^2}{n_t}
    \sum_{l=1}^{p_t}
    \bar\Sigma^{(t)}_{ll}
    \partial_\lambda r_l^{(t)}(\lambda_t)
    \xrightarrow{\mathrm{a.s.}}0.
\end{align*}

Indexing the columns by $(b-1)t+v$, with $b\in[k]$ and $v\in[t]$,
\begin{align*}
    \bar\Sigma^{(t)}_{(b-1)t+v,\,(b-1)t+v}
    &=
    \sum_{a=1}^m\sum_{u=1}^t
    S^{(t)}_{(a-1)t+u,\,(b-1)t+v}\\
    &=
    \sum_{a=1}^m\widetilde S_{ab}.
\end{align*}
Together with \Cref{lem:prototype-fixed-point}, this gives
\begin{align*}
    \frac1{n_t}
    \sum_{l=1}^{p_t}
    \bar\Sigma^{(t)}_{ll}
    \partial_\lambda r_l^{(t)}(\lambda_t)
    &=
    \frac1m
    \sum_{b=1}^k
    \left(\sum_{a=1}^m\widetilde S_{ab}\right)
    \partial_\lambda\widetilde r_b(\lambda_t).
\end{align*}

If the switching condition fails, \Cref{thm:var-switch} gives
$\partial_\lambda\widetilde{\br}(0)\in\R^k$, so the right-hand side
converges to the finite limit in \Cref{cor:prototype-variance}. If the
switching condition holds, then
$\partial_\lambda\widetilde r_b(0)=+\infty$ for some $b\in[k]$.
Since
\begin{align*}
    \sum_{a=1}^m\widetilde S_{ab}>0,
\end{align*}
the right-hand side diverges. The result follows from
\Cref{thm:master}.
\end{proof}

\section{Local law for the variance profile model with explicit \texorpdfstring{$z$}{z}-dependence} \label{appendix:local:law}

The main goal of this section is to adapt the local law results in \cite{alt2017local} to
\begin{itemize}
    \item have an explicit tracking of $z$-dependence, and
    \item derive a bound that allows for $z$ to be taken close to zero. 
\end{itemize}
We write as shorthand 
\begin{align*}
    \Sf 
    \;\coloneqq\; 
    \begin{psmallmatrix} 
        0 & S \\ S^\top & 0 
    \end{psmallmatrix}
    \;\in\; 
    \R^{(n+p)\times(n+p)}
    \;.
\end{align*}
Denote the upper half-plane of the complex plane as $\bbH \coloneqq \{ z \in \C \,|\, \Im(z) > 0\}$. Under \Cref{mod:vp,asst:VP}, \cite{alt2017local} shows that the limiting spectrum of $X / \sqrt{n}$ can be completely described by a deterministic function $\fM: \bbH \rightarrow \bbH^{n+p}$ that is the unique solution of the quadratic vector equation (QVE),
\begin{align*}
    - \diag\{ \fM(z) \}^{-1} \;=\; z  \, I_{n+p} + 
    \diag\{
    \Sf \, \fM(z)
    \}
    \;,
    \qquad 
    z \in \bbH\;,
    \tagaligneq
    \label{eq:combined_QVE}
\end{align*}
where, for any generic $\R^b$ vector $\bv=(v_l)_{1 \leq l \leq b}$, we denote the diagonal matrix
\begin{align}
    \diag\{\bv\} \;=\; \begin{psmallmatrix}
        v_1 & & \\ 
        & \ddots & \\
        & & v_b
    \end{psmallmatrix}
    \;.
\end{align}
The existence and uniqueness of $\fM$ are guaranteed by Theorem 2.1 and (3.6) of  \cite{alt2017local}, which follows from applying Theorem 2.1 of \cite{Ajanki_2019} to quadratic vector equations of the form \eqref{eq:combined_QVE}. We also define the resolvent of the Hermitialized empirical data matrix as
\begin{align*}
    G(z) \;\coloneqq&\; (H - z I_{n+p})^{-1}\;,
    \qquad 
    \text{ where }
    H \;\coloneqq\; \begin{psmallmatrix}
        0 & \frac{1}{\sqrt{n}} X \\
        \frac{1}{\sqrt{n}} X^\top & 0
    \end{psmallmatrix}
    \;\text{ and }\;
    z \in \bbH\;,
    \tagaligneq \label{eq:empirical:resolvent}
\end{align*}
and define $\gf$ as the random $\Hb \to (\C\setminus\{0\})^{n+p}$ function (where the stochasticity comes from the data) as 
\begin{align*}
    \gf(z) \;\coloneqq\; \big( G_{11}(z) \;, \ldots\;, G_{(n+p)(n+p)}(z) \big)\;.
\end{align*}

\vspace{.5em}

We seek to approximate the random resolvent matrix $G(z)$ by the deterministic matrix $\diag\{\fM(z)\}$. Our results rely on adapting the analyses of the QVE \eqref{eq:combined_QVE} in \cite{alt2017local}. One main difference is that most of their tools hold for $z$ bounded away from $0$, whereas our risk analysis requires taking $z$ to $0$ as a suitable function of $n$ and $p$ to approximate the pseudo-inverse $(X^\top X)^\dagger$. On the other hand, we are allowed to take $z$ to $0$ along any chosen trajectory and at a much slower rate in $n$, which allows us to have looser bounds than those established in \cite{alt2017local}. 

\vspace{.5em}

To state our variant of their local law result, we follow the notation of \cite{ajanki2017universalitygeneralwignertypematrices} --- whose tools are heavily used in \cite{alt2017local} --- and consider the following definition of stochastic domination:

\begin{definition}[Stochastic domination, Definition 1.6 of \cite{ajanki2017universalitygeneralwignertypematrices}] \label{defn:sto:dom} For two sequences of $n$-indexed non-negative random variables $\varphi_n$ and $\psi_n$, we say that $\varphi_n$ is stochastically dominated by $\psi_n$, denoted as 
\begin{align*}
    \varphi_n \;\prec\; \psi_n\;,
\end{align*}
if for every $\xi >0$ and $\kappa > 0$, there exists $n_{\xi, \kappa} > 0$ that depends  only on the constants $s_*$, $L_1$, $L_2$, $\psi_1$ and $\psi_2$ in \Cref{asst:VP}, such that for all $n \geq n_{\xi, \kappa}$,
\begin{align*}
    \P\big( \varphi_n \;\geq\; n^\xi \psi_n \big) \;\leq\; n^{-\kappa}\;.
\end{align*}
We also write $\varphi_n \preceq \psi_n$ if the above holds also with $\xi = 0$.
\end{definition}
 
 Note that the definition of stochastic domination in \cite{ajanki2017universalitygeneralwignertypematrices} additionally requires $n_{\xi, \kappa}$ to depend on a small and fixed tolerance parameter $\tilde \gamma \in (0,1)$ under the additional constraint that $\Im(z) \geq \frac{1}{(n+p)^{1-\tilde \gamma}}$, and their result holds for any fixed $\tilde \gamma$. We do introduce a condition $\Im(z) \geq \frac{1}{n^\upsilon}$ with $\upsilon > 0$, similar to \cite{ajanki2017universalitygeneralwignertypematrices}, but we will make the dependence of $\upsilon$ explicit in our bounds. Specifically we focus on the domain
\begin{align*}
    \D_{\upsilon, K} \;\coloneqq\; 
    \Big\{ 
        z \in \bbH 
        \;\Big|\;
        \Im(z) \;\geq\; \mfrac{1}{n^\upsilon}\;,
        \quad 
        | z| \;\leq\; K
    \Big\}    
    \;.
\end{align*}

\begin{theorem}[Local law for $H$] \label{thm:local:law:VP} Under \Cref{mod:vp} and \Cref{asst:VP}, there exist some $c_L > 0$ and some sufficiently large $K > \sqrt{s_*}$, both of which depend only on the constants $s_*$, $L_1$, $L_2$, $\psi_1$ and $\psi_2$ in \Cref{asst:VP}, such that provided that $\upsilon \in (0, \frac{1}{4(c_L+2)})$, we have
\begin{align*}
    \max_{1 \leq l, j \leq n+p}
    \big| 
        G_{l,j}(z)
        -
        \fM_l(z) \, \ind_{\{l = j\}}
    \big|
    \;\prec\;
     n^{- (c_L+1) \upsilon }
    \;\qquad\;
    \text{ for } z \in \D_{\upsilon, K}\;.
\end{align*}
Moreover, the off-diagonal approximation error satisfies a sharper bound that 
\begin{align*}
    \max_{l \neq j} | G_{lj}(z) | 
    \;\prec\; 
    n^{ 3\upsilon - \frac{1}{2} }
    \;\qquad\;
    \text{ for } z \in \D_{\upsilon, K}\;.
\end{align*}
\end{theorem}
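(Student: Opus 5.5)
The plan follows the standard three-part scheme of \cite{alt2017local, ajanki2017universalitygeneralwignertypematrices}, with every $z$-dependent constant made explicit. First, a \emph{perturbed QVE}: by Schur complement on $H-z$, each diagonal entry satisfies
\begin{align*}
    -\mfrac{1}{G_{ll}(z)} \;=\; z + (\Sf\,\gf(z))_l + d_l(z)\;,
\end{align*}
where the error $d_l$ collects the fluctuation of the quadratic form $\sum_{a,b} h_{la}h_{lb}G^{(l)}_{ab}$ around its partial expectation $\sum_a \Sf_{la} G^{(l)}_{aa}$, together with the replacement $G^{(l)}_{aa}\to G_{aa}$. Since entries are Gaussian with variance $\le s_*/(n+p)$, large-deviation bounds for quadratic forms give
\begin{align*}
    |d_l| \;\prec\; \Big(\mfrac{1}{n}\msum_{a,b}|G^{(l)}_{ab}|^2\Big)^{1/2} + \max_{a\ne b}|G_{ab}|\;.
\end{align*}
The Ward identity gives $\sum_b |G_{ab}|^2 = \Im G_{aa}/\Im z$. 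The off-diagonal entries are handled by the standard expansion $G_{ab} = -G_{aa}\sum_c h_{ac}G^{(a)}_{cb}$, which yields $|G_{ab}|\prec \|G\|\,(\Im z)^{-1/2} n^{-1/2}$ up to the bound on $\max|G_{aa}|$. With $\|G\|\le (\Im z)^{-1}\le n^{\upsilon}$ and $\Im z\ge n^{-\upsilon}$, this crude route already gives the sharper off-diagonal claim $\max_{l\ne j}|G_{lj}|\prec n^{3\upsilon-1/2}$, and likewise $\max_l|d_l|\prec n^{3\upsilon-1/2}$, once an a priori bound $|G_{ll}|\prec n^{\upsilon}$ is in hand. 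That bound is trivial from $\|G\|\le 1/\Im z$.

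Second, \emph{stability of the QVE with explicit $z$-dependence}. Write $\gf = \fM + \Delta$. Subtracting \eqref{eq:combined_QVE} yields the standard quadratic relation
\begin{align*}
    (I - \diag\{\fM\}^2\Sf)\,\Delta \;=\; \diag\{\fM\}^2\, d + \text{quadratic terms in }(\Delta,d)\;.
\end{align*}
I would redo the estimates of \cite{Ajanki_2019, alt2017local} on $\|\fM(z)\|_\infty$, $\|\fM(z)^{-1}\|_\infty$ and $\|(I-\diag\{\fM\}^2\Sf)^{-1}\|_{\infty\to\infty}$ over $\D_{\upsilon,K}$, keeping track of powers of $1/\Im z$ and $1/|z|$ rather than invoking bounds uniform away from $0$. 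The flatness and connectivity of \Cref{asst:VP} (through $L_1,L_2,\psi_1,\psi_2$) give, via the usual $\ell^2\to\ell^\infty$ bootstrapping with powers of $\Sf$, polynomial bounds of the form $\|\fM\|_\infty + \|\fM^{-1}\|_\infty + \|(I-\fM^2\Sf)^{-1}\|_{\infty} \lesssim (\Im z)^{-c_L}$ for some $c_L$ depending only on the model constants. The main obstacle is here: near $z=0$, where the hard-edge atom lives, the stability operator genuinely degenerates. Its spectral gap, obtained from the Perron–Frobenius analysis of the symmetrized operator $F=|\fM|\Sf|\fM|$, must be controlled by $\Im z$ only through the identity $\Im\fM/|\fM| = F(\Im \fM/|\fM|) + \Im z\,|\fM|$. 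I would first try to show $1-\|F\|\gtrsim \Im z\cdot\langle|\fM|\rangle^{c}$ and lower-bound $\langle |\fM|\rangle$ polynomially in $\Im z$.

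Third, \emph{a continuity (bootstrap) argument}. On the event $\max_l|\Delta_l|\le n^{-(c_L+1)\upsilon}$, the quadratic terms are absorbed, so $\|\Delta\|_\infty\lesssim (\Im z)^{-c_L}\|d\|_\infty\prec n^{(c_L+3)\upsilon-1/2}$. This is at most $n^{-(c_L+1)\upsilon}$ precisely when $(2c_L+4)\upsilon<1/2$, i.e.\ $\upsilon<\frac{1}{4(c_L+2)}$, which matches the stated range. To propagate the event, start at $\Im z = K$, where everything is bounded trivially, and decrease $\Im z$ along a grid of mesh $n^{-3}$. Lipschitz continuity of $G$ and $\fM$ (with constants $\le(\Im z)^{-2}\le n^{2\upsilon}$) and a union bound over polynomially many grid points then upgrade the pointwise statement to all of $\D_{\upsilon,K}$. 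The diagonal bound combined with the off-diagonal bound from step one gives the theorem.
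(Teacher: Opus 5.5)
Your plan is essentially the paper's proof. It has the same three parts: a Schur-complement perturbed QVE with Ward-identity large-deviation bounds for $\df$ and the off-diagonal entries; a $z$-explicit QVE stability bound built on a polynomial-in-$\Im z$ lower bound for $1-\normtwo{\Ff(z)}$ (\Cref{lem:fF,lem:bound_B_inverse}); and a continuity/bootstrap argument started at large $\Im z$ (\Cref{lem:bound:propagate}), with the same exponent bookkeeping giving $\upsilon<\frac{1}{4(c_L+2)}$.

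Two details need tightening. First, the quadratic-form fluctuation is of order $\frac1n\big(\sum_{a,b}|G^{(l)}_{ab}|^2\big)^{1/2}\lesssim n^{-1/2}(\Im z)^{-1}$, not $\big(\frac1n\sum_{a,b}|G^{(l)}_{ab}|^2\big)^{1/2}$, which is not small. Second, the replacement term $\sum_a \Sf_{la}G_{al}G_{la}/G_{ll}$ carries a factor $1/G_{ll}$ that your upper bound on $|G_{ll}|$ does not control. The paper removes it with the expansion $G_{la}=-G_{ll}\sum_c h_{lc}G^{(l)}_{ca}$, which you already use for the off-diagonal entries, rewriting the term as $\sum_a \Sf_{la}\,G_{ll}\big(\sum_c h_{lc}G^{(l)}_{ca}\big)^2\prec n^{-1}(\Im z)^{-3}$.
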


The rest of this section is devoted to the proof of \Cref{thm:local:law:VP}. To state the intermediate results,  we first note that \cite{alt2017local} applies the technique of matrix Dyson equation and uses \eqref{eq:combined_QVE} to approximate the self-consistent equations satisfied by the resolvent of the empirical data matrix. Specifically, (3.4a) and (3.4b) of \cite{alt2017local} show that $\gf$ satisfies a perturbed QVE
\begin{align*} 
    - 
    \diag \{ \gf(z) \}^{-1} 
    \;=\; 
    z \, I_{n+p}
    + 
    \diag\{ \Sf\, \gf(z) \} 
    + 
    \diag\{ \df(z) \}, 
    \qquad z \in \bbH,
    \tagaligneq \label{eq:perturbed_combined_QVE}
\end{align*}
where we refer to \cite{alt2017local} as well as our \Cref{sec:diag:error:to:perturb} below for the definition of the random  $\Hb \to \C^{n+p}$ perturbation function $\df$ (where the stochasticity comes from the data). In the analyses of both the QVE \eqref{eq:combined_QVE} and the perturbed QVE \eqref{eq:perturbed_combined_QVE}, one needs to study the stability of the equations. Observe that \eqref{eq:combined_QVE} can be rearranged to the vector equation
\begin{align*}
    \bF(z) \;=\; 0\;,
    \qquad
    \text{ where }
    \bF(z)
    \;\coloneqq&\;
    \Big( \mfrac{1}{\fM_1(z)} \,,\, \cdots \,,\, \mfrac{1}{\fM_{n+p}(z)}  \Big)^\top
    +
    z \, \bone_{n+p}
    +
    \fG
    \, \fM(z)
    \;.
    \tagaligneq \label{eq:bF:defn}
\end{align*}
The stability of $\fM(z)$ under a perturbation of the equation $\bF(z)$ is measured by the vector derivative
\begin{align*}
    \mfrac{\partial \fM(z)}{\partial \bF(z)}
    \;=&\;
    \Big( \mfrac{\partial \bF(z)}{\partial \fM(z)} \Big)^{-1}
    \;=\;
    \big( 
        - \diag\{ \fM(z) \}^{-2}
        + \fG 
    \big)^{-1}
    \\
    \;=&\;
    -
    \diag\{ | \fM(z)| \}
    \,
    \fB^{-1}(z)
    \,
    \diag\{ | \fM(z)| \}
    \;,
\end{align*}
where we have denoted 
\begin{align*}
    | \fM(z)|
    \;\coloneqq&\;
        \big( 
            |\fM_1(z)|
            \,,\, \ldots \,,\,
            |\fM_{n+p}(z)|
        \big)^\top
    \;,
    \\
    \fB(z)
    \;\coloneqq&\;
    \diag\Big\{ 
            \mfrac{|\fM_1(z)|^2}{\fM_1(z)^2}
            \,,\, \ldots \,,\,
            \mfrac{|\fM_{n+p}(z)|^2}{\fM_{n+p}(z)^2}
    \Big\}
    -
    \diag\{ | \fM(z)| \} \, \fG(z)  \, \diag\{ | \fM(z)| \}
    \;.
    \tagaligneq \label{eq:defn:fB}
\end{align*}
The analysis of the inverse $\fB^{-1}$ plays a crucial role in the proof of \cite{alt2017local}, and we will track the $z$-dependence in this analysis explicitly. The results in the rest of this section are organized as follows:
\begin{itemize}
    \item The results in \Cref{sec:control:fM} control the size of $\fM(z)$ as well as the size $\partial_z \fM(z)$ through $\fB^{-1}(z)$;
    \item \Cref{sec:control:stability} provides a result that controls $\fB^{-1}(z)$, which in turn control stability of $\fM(z)$ under a perturbation of the QVE. As a corollary, this gives an explicit control of $\partial_z \fM(z)$; 
    \item \Cref{sec:diag:error:to:perturb} provides a result that approximate the diagonal of the resolvent, $\gf(z)$, by the QVE solution $\fM(z)$, with errors given in terms of the QVE perturbation $\df(z)$;
    \item \Cref{sec:perturb:offdiag:bound} provides results that bound the size of the QVE perturbation $\df(z)$ as well as the size of the off-diagonal entries of the resolvent $G(z)$;
    \item \Cref{sec:proof:local:law:VP} combines the intermediate results and proves \Cref{thm:local:law:VP}.
\end{itemize}
\color{black}

\vspace{1em}

\noindent 
\textbf{Additional notation in this section. } Given a vector $v \in \C^b$, we denote its coordinate-wise average and coordinate-wise inverse as 
\begin{align*}
    \langle v \rangle 
    \;\coloneqq&\; 
    \mfrac{1}{b} \msum_{l \leq b} v_l   
    &\text{ and }&&
    v^{-1} 
    \;\coloneqq&\; 
    \big(v_1^{-1}, \ldots, v_b^{-1} \big)^\top
    \;.
\end{align*}
For $u, v\in \C^b$, we also denote the coordinate-wise product as 
\begin{align*}
    u \odot v \;=\; ( u_1^* v_1, \ldots, u_b^* v_b  )^\top\;,
\end{align*}
where $*$ indicates complex conjugate. We also denote the averaged $l_2$ inner product as 
\begin{align*}
    \langle u, v \rangle_2 \;\coloneqq\; \mfrac{1}{b} \msum_{l=1}^b u^*_l v_l\;.
\end{align*}
For a matrix $A \in \C^{b \times b}$, we follow the convention of \cite{alt2017local} to denote the operator norm induced by the Euclidean vector norm as
\begin{align*}
    \normtwo{A} 
    \;\coloneqq\;
    \sup_{v \neq 0} \mfrac{\|A v\|}{\| v\|}  
    \;,
\end{align*}
the operator norm induced by the $\infty$-norm as
\begin{align*}
    \norminf{A} 
    \;\coloneqq\;
    \sup_{v \neq 0} \mfrac{\|A v\|_\infty}{\| v\|_\infty}  
    \;,
\end{align*}
and similarly 
\begin{align*}
    \normtwoinf{A} 
    \;\coloneqq\;
    \sup_{v \neq 0} \mfrac{\|A v\|_\infty}{\| v\|_2}  \;.
\end{align*}
We also use $\lesssim$ and $\gtrsim$ to indicate inequalities up to a multiplicative positive and bounded constant, which only depends on $\gamma = \lim \frac{p}{n}$ and the constants $s_*$, $L_1$, $L_2$, $\psi_1$ and $\psi_2$ in \Cref{asst:VP}.

\subsection{Controls on \texorpdfstring{$\fM(z)$}{M(z)}} \label{sec:control:fM}

This subsection first provides controls on the entries of $|\fM(z)|$ and $\Im \fM(z)$ in \Cref{lem:abs:fM:bound,lem:im:fM:bound}, which are analogous to Lemma 3.1 of \cite{alt2017local} and Lemma 5.4 of \cite{Ajanki_2019}, which will hold under \Cref{asst:VP} on the variance profile $S$. The main difference is that we simplify the bounds to get rid of the dependency on the limiting spectral measure $\rho$ while seeking explicit $z$-dependence. We also provide a component-wise control in \Cref{lem:dz:fM:bound} on the derivative by bounding $\| \partial_z\fM(z)\|_\infty$, which is needed for both the proof of the local laws and the proof of our risk formula. This is in contrast to Lemma 3.8 of \cite{alt2017local}, which only controls that of the averaged component $\partial_z \langle \fM(z)\rangle$.

\vspace{.5em}

The first result concerns $|\fM(z)|$.

\begin{lemma}[Control on $|\fM(z)|$] \label{lem:abs:fM:bound} Suppose $\sup_{i \leq n, j \leq p} S_{ij} \leq \frac{s_*}{p+n}$. Then for any $z \in \bbH$,
    \begin{align*}
        \mfrac{\Im(z)}{\Im(z) \, |z|
        + s_*
        } 
        \;\leq\;
        \inf_{1 \leq l \leq n+p} \,\big| \fM_l(z) \big|
        \;\leq&\;
        \sup_{1 \leq l \leq n+p} \,\big| \fM_l(z) \big|  
        \;\leq\;  \mfrac{1}{\Im(z)}
        \;.
    \end{align*}
\end{lemma}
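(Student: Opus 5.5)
The plan is to read both bounds off the QVE \eqref{eq:combined_QVE} coordinatewise, using only that $\fM(z)\in\bbH^{n+p}$ and that $\Sf$ has nonnegative entries. The $l$-th coordinate of \eqref{eq:combined_QVE} reads
\begin{align*}
    -\mfrac{1}{\fM_l(z)} \;=\; z + (\Sf\,\fM(z))_l\;.
\end{align*}
Since $\Sf$ is entrywise nonnegative and every $\fM_j(z)$ has positive imaginary part, $\Im\,(\Sf\fM(z))_l\ge 0$. Hence $\Im\big(-1/\fM_l(z)\big)\ge \Im(z)$. Because $|1/\fM_l|\ge \Im(-1/\fM_l)$, we get $|\fM_l(z)|^{-1}\ge \Im(z)$, which is the upper bound $|\fM_l(z)|\le 1/\Im(z)$ for every $l$.

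For the lower bound I would take absolute values in the same identity and use the triangle inequality, $|\fM_l(z)|^{-1}\le |z| + \sum_j \Sf_{lj}|\fM_j(z)|$. Next I insert the upper bound just obtained for each $|\fM_j|$. I then need a row-sum bound for $\Sf$: the flatness hypothesis $S_{ij}\le s_*/(n+p)$ gives that every row of $\Sf$ has at most $n+p$ nonzero entries, each at most $s_*/(n+p)$, so $\sum_j \Sf_{lj}\le s_*$. This yields
\begin{align*}
    |\fM_l(z)|^{-1}\;\le\; |z| + \mfrac{s_*}{\Im(z)} \;=\; \mfrac{\Im(z)\,|z| + s_*}{\Im(z)}\;,
\end{align*}
and inverting gives the claimed infimum bound. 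The middle inequality, $\inf\le\sup$, is trivial.

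I do not expect a genuine obstacle here. The one point to check is that $\fM$ maps into $\bbH^{n+p}$, so that each $\Im\fM_j>0$. This is part of the existence and uniqueness statement quoted from \cite{alt2017local,Ajanki_2019}, and it is what makes the sign argument for $\Im(\Sf\fM)_l$ valid. The row-sum count is crude: it overcounts, since a row of $\Sf$ has only $p$ or $n$ nonzero entries rather than $n+p$. That looseness only helps, and it is consistent with the stated constant $s_*$.
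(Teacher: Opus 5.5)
Your proof is correct and follows essentially the paper's argument. The upper bound comes from $\Im\big(-1/\fM_l(z)\big)\ge \Im(z)$, using $\fM(z)\in\bbH^{n+p}$ and the nonnegativity of $\Sf$. The lower bound comes from the triangle inequality together with $|\fM_j(z)|\le 1/\Im(z)$ and the row-sum bound $\sum_j \Sf_{lj}\le s_*$.

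The only difference is cosmetic. The paper works with the block-eliminated equations \eqref{eq:fM:n}--\eqref{eq:fM:p} and proves the lower bound first. It bounds $|z+\sum_j S_{ij}\fM_{n+j}(z)|^{-1}\le 1/\Im(z)$ inline, rather than quoting the upper bound as you do.
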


\begin{proof}[Proof of \Cref{lem:abs:fM:bound}] We first rewrite \eqref{eq:combined_QVE} as a system $n+p$ of equations: For every $1 \leq l \leq n$,
\begin{align*}
    - \mfrac{1}{\fM_l(z) }
    \;=\;
    z 
    -
    \msum_{j=1}^p 
    \mfrac{ 
        S_{lj}
    }{
        z + \sum_{i=1}^n S_{ij} \fM_i(z)
    }
    \;,
    \tagaligneq \label{eq:fM:n}
\end{align*}
and that for every $1 \leq l \leq p$,
\begin{align*}
    - \mfrac{1}{\fM_{n+l}(z) }
    \;=\;
    z 
    -
    \msum_{i=1}^n
    \mfrac{ 
        S_{il} 
    }{
        z + \sum_{j=1}^p S_{ij}  \fM_{n+j}(z)
    }
    \;.
    \tagaligneq \label{eq:fM:p}
\end{align*}    
We handle \eqref{eq:fM:p} first since the argument for \eqref{eq:fM:n} is similar. Taking the norm on both sides of \eqref{eq:fM:p} and applying the triangle inequality, we have that 
\begin{align*}
    \mfrac{1}{|\fM_{n+l}(z)|} 
    \;\leq&\;
    |z|
    +
    \msum_{i=1}^n
    \Big|
        \mfrac{ 
            S_{il} 
        }{
            z + \sum_{j=1}^p S_{ij}  \fM_{n+j}(z)
        }
    \Big|
    \\
    \;\overset{(a)}{\leq}&\;
    |z|
    +
    \msum_{i=1}^n
        \mfrac{ 
            S_{il} 
        }{
            \Im(z) 
            + 
            \sum_{j=1}^p S_{ij} 
            \, \Im(\fM_{n+j}(z))
        }
    \\
    \;\overset{(b)}{\leq}&\;
    |z| + \mfrac{\sum_{i \leq n} S_{il}}{\Im(z)}
    \;\overset{(c)}{\leq}\; 
       |z| + \mfrac{n s_*}{(n+p)\Im(z)}
    \;\leq\; 
    | z| + \mfrac{s_*}{\Im(z)}
    \;.
\end{align*}
In both $(a)$ and $(b)$, we have used that $S_{ij} \geq 0$ and that both $z$ and $\fM_{n+j}(z)$ lie in $\bbH$, the complex upper-half plane; in $(c)$, we used the condition that $\sup_{i \leq n, j \leq p} S_{ij} \leq \frac{s_*}{p+n}$. This implies the lower bound 
\begin{align*}
    |\fM_{n+l}(z)| \;\geq\;  \mfrac{\Im(z)}{\Im(z) \, |z| + s_* }
    \qquad 
    \text{ for every }
    \quad 
    1 \leq l \leq p\;.
\end{align*}
By a similar argument applied to \eqref{eq:fM:n}, we obtain that
\begin{align*}
    \inf_{1 \leq l \leq n+p} \,\big| \fM_l(z) \big| 
    \;\geq\;
     \mfrac{\Im(z)}{\Im(z) \, |z|
     + s_*
    } 
    \;.
\end{align*}

\vspace{.5em}

To provide the upper bound on the largest entry of $\fM(z)$, we take a multiplicative inverse of both sides of \eqref{eq:fM:p} to obtain 
\begin{align*}
    | \fM_{n+l}(z)|
    \;=&\;
    \Big|
    \Big( 
        - 
        z     
        +
        \msum_{i=1}^n
        \mfrac{ 
            S_{il}
        }{
            z + \sum_{j=1}^p S_{ij} \fM_{n+j}(z)
        }
    \Big)^{-1}
    \Big|
    \\
    \;\leq&\;
    \Big| 
        - 
        \Im(z)     
        +
        \msum_{i=1}^n
        \Im\Big(
        \mfrac{ 
            S_{il} 
        }{
            z + \sum_{j=1}^p S_{ij} \fM_{n+j}(z)
        }
        \Big)
    \Big|^{-1}
    \\
    \;=&\;
    \bigg| 
        - 
        \Im(z)     
        -
        \msum_{i=1}^n
        \mfrac{ 
            S_{il} ( 1 + \eta n \bar \Sigma_{ll} )^{-1} \,\times\,
            \Im\big(  z + \sum_{j=1}^p S_{ij} \fM_{n+j}(z) \big)
        }{
            \big| 
                z + \sum_{j=1}^p S_{ij} \fM_{n+j}(z)
            \big|^2
        }
    \bigg|^{-1}
    \\
    \;\leq&\;
    \mfrac{1}{\Im(z)}
    \;,
\end{align*}
where we have again noted that the imaginary parts of $z$ and $\fM_{n+j}(z)$ are all positive. Applying the same argument to \eqref{eq:fM:n} gives 
\begin{align*}
    \sup_{1 \leq l \leq n+p}
    \,\big| \fM_l(z) \big| 
    \;\leq\; \mfrac{1}{\Im(z)}
    \;,
\end{align*}
which proves the required upper bound.
\end{proof}

Note that \Cref{lem:abs:fM:bound} directly implies an upper bound on $\Im(\fM_l(z))$. The next result provides a lower bound on $\Im(\fM_l(z))$. 

\begin{lemma}[Control on $\Im(\fM(z))$] \label{lem:im:fM:bound} Under \Cref{asst:VP}, we have that for any $z \in \bbH$,
\begin{align*}
    \inf_{1 \leq l \leq n+p} \Im(\fM_l(z)) 
    \;\geq\;
    \Big(\mfrac{\Im(z)}{\Im(z)|z|+s_*} \Big)^{4L_1L_2}
    \,
    \min\bigg\{ 
        \Big( \mfrac{n \psi_1}{n+p} \Big)^{L_2}
        \,,\,
        \Big( \mfrac{p \psi_2}{n+p} \Big)^{L_1}
    \bigg\} 
    \,\times\,
    \langle \Im(\fM(z)) \rangle
    \;.
\end{align*}
\end{lemma}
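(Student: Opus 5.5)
The plan is to take imaginary parts in the QVE \eqref{eq:combined_QVE}, which gives a linear inequality for $\Im\fM$. We then iterate it $L_1$ or $L_2$ times through the powers $(SS^\top)^{L_1}$ and $(S^\top S)^{L_2}$ that \Cref{asst:VP} bounds from below.

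\textbf{Step 1: the basic inequality.} Writing $-1/\fM_l = z + (\Sf\fM)_l$ and taking imaginary parts gives
\[
\Im\fM_l/|\fM_l|^2 = \Im z + (\Sf\,\Im\fM)_l \;\ge\; (\Sf\,\Im\fM)_l .
\]
By \Cref{lem:abs:fM:bound}, $|\fM_l|\ge c_z\coloneqq \Im(z)/(\Im(z)|z|+s_*)$, so
\[
\Im\fM \;\ge\; c_z^2\,\Sf\,\Im\fM
\]
entrywise. All entries involved are nonnegative, so this inequality can be iterated. Applying it twice gives
\[
\Im\fM \;\ge\; c_z^4\,\Sf^2\,\Im\fM, \qquad \Sf^2=\diag(SS^\top,\,S^\top S).
\]
Iterating, for the row block $\Im\fM_{1:n}\ge c_z^{4L}(SS^\top)^{L}\Im\fM_{1:n}$, and similarly for the column block with $S^\top S$.

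\textbf{Step 2: reach the average.} For a row index $i$, use $L=L_1$ and the lower bound $[(SS^\top)^{L_1}]_{ii'}\ge\psi_1/(n+p)$ to get
\[
\Im\fM_i \;\ge\; c_z^{4L_1}\,\mfrac{\psi_1}{n+p}\,\msum_{i'\le n}\Im\fM_{i'} .
\]
This is a multiple of the row-block sum. The row and column blocks must then be linked to the full average $\langle\Im\fM\rangle$. Apply one more step of $\Im\fM\ge c_z^2\Sf\Im\fM$ to write the row sum in terms of column entries, or the column sum in terms of row entries. Then use the column-side bound with $L_2$ to spread over all column indices. Combining the row and column estimates bounds each entry below by $c_z^{4L_1L_2}$ times either $\big(n\psi_1/(n+p)\big)^{L_2}$ or $\big(p\psi_2/(n+p)\big)^{L_1}$ times a block average.

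At that point $\langle\Im\fM\rangle$ is a convex combination of the two block averages, with weights $n/(n+p)$ and $p/(n+p)$. One of the blocks carries at least a proportionate share, so taking the minimum of the two constants yields the stated bound. The exponent $4L_1L_2$ is deliberately generous: since $c_z\le 1$ whenever $s_*\ge \Im z$, it absorbs the extra iterations. In the other regime one uses the trivial $c_z\le 1/|z|$ together with $K$-boundedness, or one notes that the power only weakens the bound.

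\textbf{Main obstacle.} The hard part is the bookkeeping in Step 2: converting bounds relative to a single block's sum into a bound relative to $\langle\Im\fM\rangle$ over both blocks, while matching the precise form of the constants. My first attempt would be to iterate row$\to$row $L_1$ times and col$\to$col $L_2$ times, nesting one iteration inside the other so the exponents multiply. Each of the $L_1$ row steps uses a $\Sf^2$ block, and I expect the nested route to produce the product $L_1L_2$. I would also have to check carefully that $c_z\le1$, or otherwise handle that case, so that raising the factor to a larger power only weakens the bound.
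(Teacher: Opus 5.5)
Your Step 1 is exactly the paper's starting point, and your guess about where the constants come from is right. The paper applies the iterated inequality with $L=2L_1L_2$ and uses $\fS^{2L_1L_2}=\diag\{(SS^\top)^{L_1L_2},(S^\top S)^{L_1L_2}\}$. It factors $(SS^\top)^{L_1L_2}=(SS^\top)^{L_1(L_2-1)}(SS^\top)^{L_1}$, pulls out $\min_{i'}[(SS^\top)^{L_1}v']_{i'}$, and bounds $(SS^\top)^{L_1(L_2-1)}\bone_n$ via $(SS^\top)^{L_1}\bone_n\ge n\psi_1/(n+p)$. This gives $c_z^{4L_1L_2}(n\psi_1/(n+p))^{L_2}$ times the \emph{row-block} sum over $n+p$, and symmetrically for columns.

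The gap is the passage from block sums to $\langle\Im\fM\rangle$, and your closing argument runs the wrong way. Suppose row entries are at least $K_1A_{\mathrm{row}}$ and column entries at least $K_2A_{\mathrm{col}}$. Then $\min_l\Im\fM_l\ge\min\{K_1A_{\mathrm{row}},K_2A_{\mathrm{col}}\}$, which is controlled by the \emph{smaller} block average. But $\langle\Im\fM\rangle=\frac{n}{n+p}A_{\mathrm{row}}+\frac{p}{n+p}A_{\mathrm{col}}$ is controlled by the larger one, so knowing that one block carries a proportionate share gives nothing. You need a genuinely cross-block bound, and this is not bookkeeping. Summing $-1=\fM_l\,(z+(\fS\fM)_l)$ over each block gives $\sum_{j\le p}\Im\fM_{n+j}-\sum_{i\le n}\Im\fM_i=(p-n)\Im z/|z|^2$, which is unbounded as $z\to0$ when $p\neq n$. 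Nesting $(SS^\top)^{L_1}$ inside $(S^\top S)^{L_2}$ cannot produce such a bound, because every even power of $\fS$ is block-diagonal.

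Your ``one more step'' $\Im\fM_{1:n}\ge c_z^2\,S\,\Im\fM_{n+1:n+p}$ does cross the blocks. However, turning $\sum_j S_{ij}\Im\fM_{n+j}$ (or the row sum $\sum_i\Im\fM_i$) into a multiple of $\sum_j\Im\fM_{n+j}$ needs a uniform lower bound on the row (or column) sums of $S$, which \Cref{asst:VP} does not state. Such a bound can be extracted from flatness plus connectivity; for $L_1=1$, $\psi_1/(n+p)\le(SS^\top)_{ii}\le s_*\sum_jS_{ij}/(n+p)$. But the outcome is a constant like $c_z^{2+4L_2}\psi_1\psi_2/s_*$, not the displayed one.

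An alternative that avoids row sums combines the identity above with $\Im\fM_l\ge|\fM_l|^2\Im z\ge c_z^2\Im z$ and $|z|\ge\Im z\ge s_*c_z$. For $p\ge n$ this gives $\inf_l\Im\fM_l\ge\frac14\min\{s_*^2c_z^4,\,c_z^{4L_1}\psi_1,\,c_z^{4L_2}\psi_2\}\langle\Im\fM\rangle$, which has the right shape but again not the stated constants.

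Your plan to absorb extra iterations into the exponent $4L_1L_2$ also needs $c_z\le1$. That fails, e.g., at $z=i\sqrt{s_*}$ when $s_*<1/4$, and the lemma imposes no restriction $|z|\le K$.

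The paper's own proof does not resolve this step either. After bounding each block of $\fS^{2L_1L_2}v$ by its own block sum, it states the bound with $\sum_{l=1}^{n+p}v_l$ in one line. That inequality fails for general $v=(v',u')$: let $v'\to0$ with $u'$ fixed. So the obstacle you flagged is real, but your sketch does not close it.
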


\begin{proof}[Proof of \Cref{lem:im:fM:bound}] Taking the imaginary part of the QVE \eqref{eq:combined_QVE}, we obtain that for $1 \leq l \leq n+p$,
\begin{align*}
    \mfrac{\Im(\fM_l(z))}{|\fM_l(z)|^2}
    \;=\;
    \Im(z)
    + \,
    \be_l^\top \fG \Im(\fM(z))
    \;\geq\;
    \be_l^\top \fG \, \Im(\fM(z))
    \;,
\end{align*}
where $\be_l$ is the $l$-th standard basis vector in $\R^{n+p}$, and we have used that $z \in \bbH$ has positive imaginary parts. By the lower bound from \Cref{lem:abs:fM:bound}, we obtain that for $1 \leq l \leq n+p$,
\begin{align*}
    \be_l^\top \Im(\fM(z)) 
    \;\geq&\; 
    \Big(\inf_{1 \leq l' \leq n+p} |\fM_{l'}(z)|^2\Big)\,
    \Big(\be_l^\top \fG \, \Im(\fM(z))\Big) 
    \\
    \;\geq&\; 
   \Big(\mfrac{\Im(z)}{\Im(z)|z|+s_*} \Big)^2\, \be_l^\top \fG \, \Im(\fM(z)) \;.
\end{align*}
Since $\fG$ has non-negative entries, we get that for any $L \in \N$, the coordinate-wise inequality is preserved under a left-multiplication of $\fG^{L-1}$:
\begin{align*}
    \be_l^\top \fG^{L-1} \Im(\fM(z)) 
    \;\geq\; 
    \Big(\mfrac{\Im(z)}{\Im(z)|z|+s_*} \Big)^2\, \be_l^\top \fG^L \, \Im(\fM(z)) \;.
\end{align*}
Therefore by an induction, for any $L \in \N$ and $1 \leq l \leq n+p$,
\begin{align*}
    \be_l^\top \Im(\fM(z)) 
    \;\geq&\; 
    \Big(\mfrac{\Im(z)}{\Im(z)|z|+s_*} \Big)^{2L} \, \be_l^\top \fG^L \, \Im(\fM(z)) \;.
   \tagaligneq \label{eq:Im:to:G:Im}
\end{align*}
Now let $o_l$ be the $l$-th standard basis vector in $\R^n$ and $e_l$ be the $l$-th standard basis vector in $\R^p$, and recall that by \Cref{asst:VP},
\begin{align*}
    [(SS^\top)^{L_1}]_{ii'}
    \geq 
    \mfrac{\psi_1}{n}\;,
    \qquad 
    [(S^\top S)^{L_2}]_{jj'}
    \geq 
    \mfrac{\psi_2}{p}\;.
\end{align*}
Let $\bone_n$ be the all-one vector in $\R^n$. Since $S$ has non-negative entries, for any $v' \in \bbH^n$,
\begin{align*}
    \inf_{l \leq n} \big( o_l^\top (SS^\top)^{L_1 L_2} v'\big)
    \;\geq&\;
    \inf_{l \leq n} \big( o_l^\top (SS^\top)^{L_1 (L_2-1)} \bone_n \big)
    \inf_{l' \leq n} \big( o_{l'}^\top (SS^\top)^{L_1} v' \big)
    \\
    \;\geq&\;
    \inf_{l \leq n} \big( o_l^\top (SS^\top)^{L_1 (L_2-1)} \bone_n \big)
    \, \psi_1  \Big(\mfrac{\sum_{l=1}^n v'_l}{n}\Big)
    \\
    \;\geq&\;
    \Big( \mfrac{n \psi_1}{n+p} \Big)^{L_2} \Big(\mfrac{\sum_{l=1}^n v'_l}{n+p}\Big)\;.
\end{align*}
Similarly for $u' \in \bbH^p$,
\begin{align*}
    \inf_{l \leq p} \big( e_l^\top (S^\top S)^{L_1 L_2} u'\big)
    \;\geq\; 
    \Big( \mfrac{p \psi_2}{n+p} \Big)^{L_1} \Big(\mfrac{\sum_{l=1}^p u'_l}{n+p}\Big)\;.
\end{align*}
Since $\fG = \begin{psmallmatrix} 0 & S \\ S^\top & 0 \end{psmallmatrix}$, consider the matrix
\begin{align*}
    \fG^{2 L_1 L_2} 
    \;=\; 
     \begin{psmallmatrix} 
        S S^\top & 0 
        \\ 
        0 & S^\top S 
    \end{psmallmatrix}^{L_1 L_2}
    \;=\;
    \begin{psmallmatrix} 
        ((S S^\top)^{L_1} )^{L_2} & 0 
        \\ 
        0 &  ((S^\top S)^{L_2})^{L_1}
    \end{psmallmatrix}
    \;.
\end{align*}
Then for any $v = ( (v')^\top, (u')^\top)^\top$ with $v' \in \bbH^n$ and $u' \in \bbH^p$,
\begin{align*}
    \inf_{l \leq n+p} \be_l^\top \fG^{2 L_1 L_2}  \, v 
    \;\geq&\;
    \min\bigg\{ 
        \Big( \mfrac{n \psi_1}{n+p} \Big)^{L_2}
        \,,\,
        \Big( \mfrac{p \psi_2}{n+p} \Big)^{L_1}
    \bigg\} 
    \,\times\,
    \mfrac{\msum_{l=1}^{n+p} v_l}{n+p}
    \;.
\end{align*}
Substituting this into \eqref{eq:Im:to:G:Im} with $L=2L_1L_2$, we obtain the desired bound
\begin{align*}
    \inf_{1 \leq l \leq n+p} \Im(\fM_l(z)) 
    \;\geq\;
    \Big(\mfrac{\Im(z)}{\Im(z)|z|+s_*} \Big)^{4L_1L_2}
    \,
    \min\bigg\{ 
        \Big( \mfrac{n \psi_1}{n+p} \Big)^{L_2}
        \,,\,
        \Big( \mfrac{p \psi_2}{n+p} \Big)^{L_1}
    \bigg\} 
    \,
    \langle \Im(\fM(z)) \rangle
    \;.
\end{align*}
\end{proof}

\vspace{.5em}

The last result concerns the control on $\| \partial_z \fM(z) \|_\infty$.

\begin{lemma} \label{lem:dz:fM:bound}  Suppose $\sup_{i \leq n, j \leq p} S_{ij} \leq \frac{s_*}{p+n}$. Then for any $z \in \bbH$, provided that $\fB^{-1}(z)$ exists,
\begin{align*}
    \big\| \partial_z  \fM(z) \big\|_\infty  
    \;\leq\;
    \mfrac{n+p}{(\Im (z))^2} \, \| \fB^{-1}(z) \|_\infty
    \;.
\end{align*}
\end{lemma}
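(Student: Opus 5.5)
Differentiate the QVE \eqref{eq:combined_QVE} (equivalently $\bF(z)=0$ in \eqref{eq:bF:defn}) in $z$ and solve the resulting linear equation through the stability operator $\fB(z)$ of \eqref{eq:defn:fB}, estimating each factor in $\|\cdot\|_\infty$ with \Cref{lem:abs:fM:bound}.

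Differentiating $-1/\fM_l(z) = z + (\fG\fM(z))_l$ coordinatewise gives
\begin{align*}
    \big( \diag\{\fM(z)\}^{-2} - \fG \big)\, \partial_z \fM(z) \;=\; \bone_{n+p}\;.
\end{align*}
Differentiability of $\fM$ is standard from analyticity of the QVE solution \citep{Ajanki_2019}. Next, factor the operator using $|\fM_l|$. Since $\fM_l^{-2} = |\fM_l|^{-2}\cdot |\fM_l|^2/\fM_l^2$, we have
\[
\diag\{\fM\}^{-2}-\fG \;=\; \diag\{|\fM|\}^{-1}\,\fB(z)\,\diag\{|\fM|\}^{-1},
\]
with $\fB$ exactly as in \eqref{eq:defn:fB}. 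Provided $\fB^{-1}(z)$ exists, this yields
\begin{align*}
    \partial_z \fM(z) \;=\; \diag\{|\fM(z)|\}\, \fB^{-1}(z)\, \diag\{|\fM(z)|\}\, \bone_{n+p}\;,
\end{align*}
which is consistent, up to sign convention, with the displayed expression for $\partial\fM/\partial\bF$.

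For the bound, take $\|\cdot\|_\infty$ and use submultiplicativity of the induced $\infty$-norm. The diagonal factors satisfy $\|\diag\{|\fM|\}\|_\infty = \sup_l|\fM_l(z)| \le 1/\Im(z)$ by \Cref{lem:abs:fM:bound}, which needs only the flatness condition assumed here. This gives
\[
\|\partial_z\fM(z)\|_\infty \;\le\; \Im(z)^{-2}\,\|\fB^{-1}(z)\|_\infty\,\|\bone_{n+p}\|_\infty.
\]
Here $\|\bone_{n+p}\|_\infty = 1$, so the estimate is at least as strong as claimed. The stated factor $n+p$ presumably comes from a cruder route: bounding $\|\bone\|_2=\sqrt{n+p}$ and passing between $\normtwoinf{\cdot}$ and $\|\cdot\|_\infty$ costs at most $n+p$. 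Since $n+p\ge1$, the sharper bound implies the stated one.

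The only real obstacle is the algebraic factorization: one must check the sign and conjugation convention in $\fB$ so that the phase matrix $|\fM|^2/\fM^2$ appears exactly as written. Any sign discrepancy does not affect norms.
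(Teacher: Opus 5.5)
Your proposal is correct and follows the paper's own proof. Differentiating the QVE and using $\diag\{\fM\}^{-2}-\fS=\diag\{|\fM|\}^{-1}\fB\,\diag\{|\fM|\}^{-1}$ gives $\partial_z\fM=\diag\{|\fM|\}\,\fB^{-1}|\fM|$, which is exactly the paper's implicit-function-theorem identity, and both arguments then bound $\sup_l|\fM_l(z)|\le 1/\Im(z)$ via \Cref{lem:abs:fM:bound}. Since the paper's $\|\cdot\|_\infty$ on matrices is the induced $\ell^\infty$ operator norm, the factor $n+p$ in its last step is slack (it would only be needed if $\|\fB^{-1}(z)\|_\infty$ meant the largest entry), so your sharper bound $\Im(z)^{-2}\|\fB^{-1}(z)\|_\infty$ is valid and implies the statement.
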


\begin{proof}[Proof of \Cref{lem:dz:fM:bound}] Recall from the notation of \eqref{eq:bF:defn} that the QVE can be rearranged to 
\begin{align*}
    \bF(z) \;=\; 0\;,
    \qquad
    \text{ where }
    \bF(z)
    \;\coloneqq&\;
    \Big( \mfrac{1}{\fM_1(z)} \,,\, \cdots \,,\, \mfrac{1}{\fM_{n+p}(z)}  \Big)^\top
    +
    z \, \bone_{n+p}
    +
    \fG
    \, \fM(z)
    \;,
\end{align*}
and
\begin{align*}
    \mfrac{\partial \fM(z)}{\partial \bF(z)}
    \;=&\;
    -
    \diag\{ | \fM(z)| \}
    \,
    \fB^{-1}(z)
    \,
    \diag\{ | \fM(z)| \}
    \;.
\end{align*}
By the implicit function theorem followed by the inverse function theorem, 
\begin{align*}
    \partial_z  \fM(z)
    \;=&\;
    - \Big( \mfrac{\partial \bF(z)}{\partial \fM(z)} \Big)^{-1} 
    \mfrac{\partial \bF(z)}{\partial z} 
    \;=\;
    - \mfrac{\partial \fM(z)}{\partial \bF(z)} \bone_{n+p}
    \\
    \;=&\;
    \diag\{ | \fM(z)| \}
    \,
    \fB^{-1}(z)
    \,
    | \fM(z)| 
    \;.
\end{align*}
This implies that 
\begin{align*}
    \big\| \partial_z  \fM(z) \big\|_\infty  
    \;\leq&\; 
    \| \fM(z) \|_\infty \, \| \fB^{-1}(z) \|_\infty \, \| \fM(z) \|_\infty \, (n+p)
    \;\leq\;
    \mfrac{n+p}{(\Im (z))^2} \, \| \fB^{-1}(z) \|_\infty\;.
    \;,
\end{align*}
where we have applied \Cref{lem:abs:fM:bound} in the second inequality.

\end{proof}

\subsection{Stability control} \label{sec:control:stability}

This subsection concerns the operator $\fB^{-1}(z)$, which measures the stability of $\fM(z)$ under a perturbation of the QVE. The main result is the following, which is essentially an adaptation of Lemma 3.5 of \cite{alt2017local} with explicit tracking of the dependence of $z$ and our bounds from \Cref{lem:abs:fM:bound,lem:im:fM:bound} above.

\begin{lemma}[$z$-explicit version of Lemma 3.5 of \cite{alt2017local}]\label{lem:bound_B_inverse}
Under  \Cref{asst:VP}, there is some constant $c_L\in \N$ that depends only on $L_1, L_2$ from \Cref{asst:VP} such that for all $z \in \bbH$,
\begin{align}
    \normtwo{ \fB^{-1}(z) } 
    \;\lesssim&\;
    \max\Big\{ 1 \,,\, 
    \Big( \mfrac{\Im(z) \, |z|
        + s_*
        }{\Im(z)}  \Big)^{c_L} \Big\}
    \,
    (\Im(z))^{14}
    \;, 
    \label{eq:estimate_norm_B_inverse}
    \\
    \norminf{\Bf^{-1}(z)}
    \;\lesssim&\;
    1 
    + 
    \max\Big\{ 1 \,,\, 
    \Big( \mfrac{\Im(z) \, |z|
        + s_*
        }{\Im(z)}  \Big)^{c_L} \Big\}
    \,
    (\Im(z))^{12}
    \;.
    \label{eq:estimate_norm_B_inverse_infty} 
\end{align}
\end{lemma}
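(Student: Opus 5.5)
The plan is to follow the proof of Lemma 3.5 of \cite{alt2017local}, which in turn rests on the spectral analysis of the saturated self-energy operator of \cite{Ajanki_2019}, keeping every constant that depends on $z$ explicit. Decompose $\fB(z) = \fU(z) - \fF(z)$, where $\fU(z)\coloneqq\diag\{|\fM|^2/\fM^2\}$ is a diagonal unitary and $\fF(z) \coloneqq \diag\{|\fM|\}\,\fG\,\diag\{|\fM|\}$ is a real symmetric matrix with non-negative entries. The first step is to control the Perron--Frobenius structure of $\fF$. Take the imaginary part of \eqref{eq:combined_QVE} and divide by $|\fM|$. This gives the identity
\begin{align*}
    \fF\,\mfrac{\Im \fM}{|\fM|} \;=\; \mfrac{\Im\fM}{|\fM|} - \Im(z)\,|\fM|\;.
\end{align*}
It yields $\normtwo{\fF}\le 1$ and a spectral gap $1-\normtwo{\fF}\gtrsim \Im(z)\,\langle |\fM|\,\Im\fM/|\fM|\rangle / \langle (\Im\fM/|\fM|)^2\rangle$. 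Both quantities are then bounded using \Cref{lem:abs:fM:bound,lem:im:fM:bound}. The ratio $\Im\fM_l/\langle\Im\fM\rangle$ is pinned above and below by powers of $(\Im(z)|z|+s_*)/\Im(z)$ with exponent $4L_1L_2$. For the gap between the top eigenvalue and the rest of the spectrum of $\fF$, use the connectivity bound $[(SS^\top)^{L_1}]_{ii'}\ge\psi_1/(n+p)$, together with the bipartite structure giving eigenvalues $\pm\normtwo{\fF}$. This is the argument of Lemma 5.6 of \cite{Ajanki_2019}, which gives a gap polynomial in $\inf|\fM|/\sup|\fM|$.

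The second step reproduces the rotation--inversion argument (Lemma 5.8 of \cite{Ajanki_2019}). Write $\bv$ for the Perron eigenvector of $\fF$. Then
\begin{align*}
    \normtwo{(\fU-\fF)^{-1}} \;\lesssim\; \mfrac{1}{\mathrm{Gap}(\fF)\,\big|1-\normtwo{\fF}\langle \bv,\fU\bv\rangle_2\big|}\;.
\end{align*}
The denominator is bounded below by $\mathrm{Gap}(\fF)$ times a quantity like $\max\{1-\normtwo{\fF},\ \langle \bv^2 (\Im\fM/|\fM|)^2\rangle\}$. Because $\fU = 1 - 2(\Im\fM/|\fM|)^2 - 2\mathrm{i}\,\Re\fM\Im\fM/|\fM|^2$, the real part $1-\Re\langle\bv,\fU\bv\rangle_2$ equals $2\langle\bv^2(\Im\fM/|\fM|)^2\rangle$, which is again polynomially controlled by the two lemmas. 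Collecting all factors gives \eqref{eq:estimate_norm_B_inverse}. The constant $c_L$ is the total exponent accumulated from $L_1,L_2$, and the explicit power of $\Im(z)$ is what remains after the $\Im(z)$ factors of the lemmas are separated out.

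The third step converts the bound to the $\infty$-norm, as in \cite{alt2017local}. Use $\fB^{-1} = \fU^{-1} + \fU^{-1}\fF\fB^{-1}$ and iterate once more. This gives $\norminf{\fB^{-1}}\le 1 + \norminf{\fF}\,\norminf{\fF}_{2\to\infty}$-type terms times $\normtwo{\fB^{-1}}$. Here $\normtwoinf{\fF}\lesssim \sup|\fM|^2\,\sqrt{n+p}\,\max S_{ij}\lesssim (\Im z)^{-2}$ by flatness and \Cref{lem:abs:fM:bound}, which produces \eqref{eq:estimate_norm_B_inverse_infty} with the modified power of $\Im(z)$.

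The main obstacle is the first step: obtaining a $z$-explicit lower bound on the spectral gap of $\fF$ and on $|1-\normtwo{\fF}\langle\bv,\fU\bv\rangle_2|$. This is hardest near $z\to0$, where $\Re\fM$ may dominate and the hard-edge singularity appears. I would first try to reduce everything to the ratio $\sup|\fM|/\inf|\fM|$ and to $\inf\Im\fM/\sup|\fM|$, and then accept crude polynomial losses, which the statement allows through the unspecified $c_L$.
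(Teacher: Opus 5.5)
The gap is in your second step. You use the estimate $\normtwo{(\fU-\fF)^{-1}}\lesssim\big(\Gap(\fF)\,|1-\normtwo{\fF}\langle\bv,\fU\bv\rangle_2|\big)^{-1}$ with a single Perron vector $\bv$. That is the rotation--inversion bound for operators whose top eigenvalue is isolated in $|\fF|$, and it does not apply here. The matrix $\fF=\begin{psmallmatrix}0&F\\F^\top&0\end{psmallmatrix}$ is chiral: if $(v_1,v_2)$ is the Perron vector, then $(v_1,-v_2)$ is an eigenvector for $-\normtwo{\fF}$. If $\Gap(\fF)$ means the gap of $|\fF|$, it equals $0$ and the bound is vacuous. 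If it means a one-sided gap, the bound is false.

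It fails exactly at the points the paper needs. At $z=\mathrm{i}\eta$ the symmetry of the $\rho_l$ makes $\fM$ purely imaginary, so $\fU=-I$, $\fB=-(I+\fF)$ and $\normtwo{\fB^{-1}}=(1-\normtwo{\fF})^{-1}$. Your denominator there is $1+\normtwo{\fF}$, and your extra term is $\langle\bv^2(\Im\fM/|\fM|)^2\rangle=1$. For instance, take $S\equiv\sigma/n$ with $n=p$. Then $1-\normtwo{\fF}\approx\eta/\sqrt\sigma$, while $F$ has rank one, so both $\Gap(FF^\top)$ and the one-sided gap are $\approx1$. Your inequality would give $\normtwo{\fB^{-1}}=O(1)$ instead of the true order $\eta^{-1}$. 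Your closing heuristic is inverted for the same reason: on the imaginary axis $\Re\fM\equiv0$, and the hard-edge instability comes from $\fU=-I$ meeting the eigenvalue $-\normtwo{\fF}$.

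The missing ingredient is the block rotation--inversion lemma (\Cref{lem:bulk_stability}, i.e.\ Lemma 3.6 of \cite{alt2017local}). The paper applies it to $\fB=\begin{psmallmatrix}U_1&-F\\-F^\top&U_2\end{psmallmatrix}$ with $\Gap(FF^\top)$. Its denominator is $|1-\normtwo{F^\top F}\langle v_1,U_1v_1\rangle_2\langle v_2,U_2v_2\rangle_2|$. This involves the \emph{product} of the two block averages, i.e.\ the phase sums $\psi_I+\psi_{n+J}$ in \eqref{eq:fF:intermediate}. The $\sin^2$ term there vanishes on the imaginary axis, so the only usable lower bound is $1-\normtwo{F}^2\ge1-\normtwo{\fF}$.

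You must therefore drop your $\Im\fM/|\fM|$ improvement. Instead, combine the bound on $1-\normtwo{\fF}$ with the $\Gap(FF^\top)$ bound of \Cref{lem:hat_F}; your $(SS^\top)^{L_1}$ connectivity argument supplies the latter. Your third step is then just \Cref{lem:norm_hat_F_inverse}, $\norminf{\fB^{-1}}\le1+\normtwoinf{\fF}\normtwo{\fB^{-1}}$, with no second iteration. With these changes the argument coincides with the paper's.

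There are two smaller corrections.
\begin{itemize}
\item Inserting $w=\Im\fM/|\fM|$ into a Rayleigh quotient bounds $1-\normtwo{\fF}$ from \emph{above}. A lower bound needs the Perron vector, as in \eqref{eq:normtwo_F_explicit}, or Collatz--Wielandt: $\normtwo{\fF}\le\max_l(\fF w)_l/w_l\le1-\Im(z)\inf_l|\fM_l|$.
\item Honest bookkeeping gives a \emph{negative} power of $\Im(z)$, because $(\sup_l|\fM_l|)^{-16}\ge(\Im z)^{16}$ enters the lower bound on $\Gap(FF^\top)$. The displayed form with $(\Im z)^{14}$ follows only after absorbing this into the free exponent $c_L$, via $(\Im z)^{-1}\le s_*^{-1}(\Im(z)|z|+s_*)/\Im(z)$ for $\Im z\le1$. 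For $\Im z\ge1$ there is nothing to absorb.
\end{itemize}
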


\vspace{.5em}

Plugging the bound \eqref{eq:estimate_norm_B_inverse_infty}  into \Cref{lem:dz:fM:bound} gives the following immediate corollary:

\begin{corollary} \label{cor:dz:fM:bound}  Under  \Cref{asst:VP}, there is some constant $c_L\in \N$ that depends only on $L_1, L_2$ from \Cref{asst:VP} such that for all $z \in \bbH$,
\begin{align*}
    \big\| \partial_z  \fM(z) \big\|_\infty  
    \;\lesssim\;
    \mfrac{n+p}{(\Im (z))^2} \,
    \Big(
    1 
    + 
    \max\Big\{ 1 \,,\, 
    \Big( \mfrac{\Im(z) \, |z|
        + s_*
        }{\Im(z)}  \Big)^{c_L} \Big\}
    \,
    (\Im(z))^{12}
    \Big)
    \;.
\end{align*}
\end{corollary}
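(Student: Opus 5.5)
The plan is a direct combination of \Cref{lem:dz:fM:bound} with the $\norminf{\cdot}$ estimate \eqref{eq:estimate_norm_B_inverse_infty} of \Cref{lem:bound_B_inverse}; no new analysis of the QVE is needed.

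\textbf{Step 1: $\fB^{-1}(z)$ exists.} Fix $z\in\bbH$. \Cref{asst:VP} contains the flatness bound $\sup_{i,j}S_{ij}\le s_*/(p+n)$, so the hypothesis of \Cref{lem:dz:fM:bound} holds. That lemma also requires $\fB^{-1}(z)$ to exist. This follows from \Cref{lem:bound_B_inverse}, which asserts a finite bound on $\normtwo{\fB^{-1}(z)}$ for every $z\in\bbH$ and hence presupposes that $\fB(z)$ is invertible. If one wants this independently, note that the QVE solution $\fM$ is analytic in $z$ (by \cite{Ajanki_2019,alt2017local}), so the implicit-function computation in the proof of \Cref{lem:dz:fM:bound} is legitimate. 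I would take \Cref{lem:bound_B_inverse} as supplying invertibility, exactly as the statement intends.

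\textbf{Step 2: substitute.} \Cref{lem:dz:fM:bound} gives
\[
\|\partial_z\fM(z)\|_\infty \le \mfrac{n+p}{(\Im z)^2}\,\norminf{\fB^{-1}(z)} .
\]
Replacing $\norminf{\fB^{-1}(z)}$ by the right-hand side of \eqref{eq:estimate_norm_B_inverse_infty} yields the claimed bound. The implicit constant in $\lesssim$ is unchanged, since the prefactor $(n+p)/(\Im z)^2$ is carried along exactly.

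\textbf{Step 3: constants.} The constant $c_L$ is the one produced by \Cref{lem:bound_B_inverse}, so it depends only on $L_1,L_2$. The $\lesssim$ constant depends only on $\gamma$ and the constants of \Cref{asst:VP}, as the notation convention requires.

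There is no real obstacle here: all the substance sits in \Cref{lem:bound_B_inverse}, which is taken as given. The only care needed is to record that invertibility of $\fB(z)$ on all of $\bbH$ comes from that lemma, so that \Cref{lem:dz:fM:bound} applies unconditionally.
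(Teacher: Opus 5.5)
Your proposal is correct and is exactly the paper's argument: the corollary follows by plugging the $\norminf{\cdot}$ bound \eqref{eq:estimate_norm_B_inverse_infty} of \Cref{lem:bound_B_inverse} into \Cref{lem:dz:fM:bound}. Your remark on the invertibility of $\fB(z)$ is a harmless addition that the paper leaves implicit; it follows from the finiteness of the right-hand side in \Cref{lem:bulk_stability}.
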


\vspace{.5em}

The rest of this subsection is devoted to the proof of \Cref{lem:bound_B_inverse}, where we heavily reuse results already proved in \cite{alt2017local}. We simplify one result with our \Cref{lem:abs:fM:bound,lem:im:fM:bound} (i.e.~\Cref{lem:fF} below) and quote three other essential tools (i.e.~\Cref{lem:hat_F,lem:bulk_stability,lem:norm_hat_F_inverse}). To facilitate the presentation, we follow \cite{alt2017local} to introduce 
\begin{align*}
    \fF(z)
    \;\coloneq&\;
    \diag\{ | \fM(z)| \} \, \fG(z)  \, \diag\{ | \fM(z)| \}\;,
    \tagaligneq \label{eq:defn:fF}
\end{align*}
such that by \eqref{eq:defn:fB},
\begin{align*}
    \fB(z)
    \;=&\;
    \diag\Big\{ 
            \mfrac{|\fM_1(z)|^2}{\fM_1(z)^2}
            \,,\, \ldots \,,\,
            \mfrac{|\fM_{n+p}(z)|^2}{\fM_{n+p}(z)^2}
    \Big\}
    -
    \fF(z)
    \;.
\end{align*}
We also recall that the spectral gap ${\rm Gap}(T)$ of a compact self-adjoint operator $T$ on a Hilbert space is the difference between the two largest eigenvalues of $|T|$ (see Definition 3.2 of \cite{alt2017local}).

\begin{lemma} \label{lem:fF}
We can compute
\begin{align*} 
    \normtwo{ \Ff(z)} = 1- \frac{\Im(z) \; \langle \ff(z) \odot \abs{\Mf(z)} \rangle}{ \left\langle \ff(z) \odot \Im \Mf(z) \odot \abs{\Mf(z)}^{-1} \right\rangle },
    \tagaligneq \label{eq:normtwo_F_explicit}
\end{align*}
where $\ff(z)$ is the unique eigenvector of $\Ff(z)$ associated with $\normtwo{\Ff(z)}$. In particular, under \Cref{asst:VP}, for any $z \in \bbH$, 
\begin{align*}
    0\;\leq\;
    \big(1- \normtwo{\Ff(z)}\big)^{-1} 
    \;\leq&\;
    \mfrac{ (\Im(z) |z| + s_*)^2 }{\Im(z)^4}
    \;.
    \tagaligneq \label{eq:estimate_norm_F}
\end{align*}
\end{lemma}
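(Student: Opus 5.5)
The plan is to derive \eqref{eq:normtwo_F_explicit} from the imaginary part of the QVE \eqref{eq:combined_QVE}, paired against the Perron--Frobenius eigenvector of $\Ff(z)$, and then to bound the resulting ratio using \Cref{lem:abs:fM:bound}.

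\emph{Step 1: rewrite the QVE.} As in the proof of \Cref{lem:im:fM:bound}, taking imaginary parts of \eqref{eq:combined_QVE} coordinatewise gives
\begin{align*}
    \mfrac{\Im \fM(z)}{|\fM(z)|^2} \;=\; \Im(z)\,\bone_{n+p} + \fG\,\Im\fM(z).
\end{align*}
Divide the $l$-th coordinate by $|\fM_l(z)|^{-1}$, that is, multiply by $|\fM_l(z)|$. Set $\bv(z) \coloneqq \Im\fM(z)\odot|\fM(z)|^{-1}$, which is entrywise positive. Writing $\Im \fM = |\fM|\odot \bv$ inside the $\fG$ term, the identity becomes
\begin{align*}
    \bv(z) \;=\; \Im(z)\,|\fM(z)| + \Ff(z)\,\bv(z),
\end{align*}
with $\Ff$ as in \eqref{eq:defn:fF}.

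\emph{Step 2: Perron--Frobenius.} The matrix $\Ff(z)$ is real symmetric with nonnegative entries, and it has the same zero pattern as $\fG$. By the connectivity bounds in \Cref{asst:VP}, $\fG^{2L_1L_2}$ is entrywise positive on each diagonal block, and $\fG$ has positive entries linking the blocks. Hence $\Ff(z)$ is irreducible. Perron--Frobenius then gives the following:
\begin{itemize}
    \item $\normtwo{\Ff(z)}$ is an eigenvalue of $\Ff(z)$;
    \item up to scaling, its eigenvector $\ff(z)$ is unique and can be taken entrywise positive.
\end{itemize}
Because the graph is bipartite, $-\normtwo{\Ff}$ is also an eigenvalue, but this does not affect the positive eigenvector. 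Now take the averaged inner product of the identity in Step 1 with $\ff$ and use symmetry of $\Ff$:
\begin{align*}
    \langle \ff,\bv\rangle_2 \;=\; \Im(z)\langle \ff,|\fM|\rangle_2 + \normtwo{\Ff}\,\langle \ff,\bv\rangle_2 .
\end{align*}
All quantities here are real and positive, so rearranging gives exactly \eqref{eq:normtwo_F_explicit}. In particular $1-\normtwo{\Ff(z)}\in(0,1]$, which yields the lower bound $0\le(1-\normtwo{\Ff})^{-1}$ and also shows that $(1-\normtwo{\Ff})^{-1}\ge 1$.

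\emph{Step 3: quantitative bound.} Use $\Im\fM_l\le|\fM_l|$, so $\bv\le\bone$ entrywise. Use also $|\fM_l(z)|\ge \Im(z)/(\Im(z)|z|+s_*)$ from \Cref{lem:abs:fM:bound}. Since $\ff>0$, these give
\begin{align*}
    1-\normtwo{\Ff(z)} \;\ge\; \mfrac{\Im(z)^2}{\Im(z)|z|+s_*}\cdot\mfrac{\langle \ff\rangle}{\langle\ff\rangle} \;=\; \mfrac{\Im(z)^2}{\Im(z)|z|+s_*}.
\end{align*}
Call $a\coloneqq(\Im(z)|z|+s_*)/\Im(z)^2$. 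Then $a\ge(1-\normtwo{\Ff})^{-1}\ge1$, so $a\le a^2$, which is precisely \eqref{eq:estimate_norm_F}.

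The main point needing care is the irreducibility argument in Step 2, which ensures that the top eigenvector is unique and strictly positive. That step uses only the connectivity part of \Cref{asst:VP}; everything else is direct algebra.
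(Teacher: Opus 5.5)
Your proof is correct, and it is more self-contained than the paper's. The paper does not derive \eqref{eq:normtwo_F_explicit} at all. It quotes the identity verbatim from Lemma~3.4 of \cite{alt2017local}, and says the bound then follows from \Cref{lem:abs:fM:bound,lem:im:fM:bound} together with nonnegativity of $\ff(z)$.

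Your Steps~1--2 are the standard argument behind that cited lemma: take the imaginary part of the QVE, rewrite it as $\bv=\Im(z)\abs{\Mf}+\Ff\bv$, and pair it with the Perron--Frobenius vector using symmetry of $\Ff$. So in those steps you are supplying what the paper outsources.

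The genuine difference is in Step~3. You use the trivial bound $\Im\Mf_l\le\abs{\Mf_l}$, and this gives the sharper estimate $(1-\normtwo{\Ff(z)})^{-1}\le(\Im(z)\abs{z}+s_*)/\Im(z)^2$. You never need \Cref{lem:im:fM:bound}. The squared quantity in the statement is what one obtains by controlling $\Im\Mf_l/\abs{\Mf_l}$ only through $\sup_l\abs{\Mf_l}/\inf_l\abs{\Mf_l}$, which \Cref{lem:abs:fM:bound} bounds by $(\Im(z)\abs{z}+s_*)/\Im(z)^2$; that costs an extra factor. You then recover the stated square correctly, because $0<1-\normtwo{\Ff}\le 1$ forces your quantity to be at least $1$.

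A minor remark: irreducibility is needed only for the word ``unique'' in the statement. Both the identity and the bound go through with any nonzero nonnegative eigenvector, since $\Im\Mf_l>0$ already makes the denominator $\langle \ff,\bv\rangle_2$ strictly positive.
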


\begin{proof}[Proof of \Cref{lem:fF}] \eqref{eq:normtwo_F_explicit} is verbatim from Lemma 3.4 of \cite{alt2017local}. To obtain \eqref{eq:estimate_norm_F}, we apply our \Cref{lem:abs:fM:bound,lem:im:fM:bound} instead of Lemma 3.1 of \cite{alt2017local}, as well as the non-negativity of $\ff(z)$ by Lemma 3.3 of \cite{alt2017local} (see \Cref{lem:hat_F} below, where we identify $\fF(z) = \hat \fF(r)$ with $r = |\fM(z)|$). This yields a simpler bound without requiring $|z| \leq 10$.
\end{proof}

The next result of \cite{alt2017local} concerns an additional quantity $\hat \fF(r)$; note that our $\fF(z) = \hat\fF(r)$ when $r= |\fM(z)|$.

\begin{lemma}[Properties of a perturbed $\fF(z)$, Lemma 3.3 of \cite{alt2017local}] \label{lem:hat_F} Suppose \Cref{asst:VP} holds. For a vector $r \in (0,\infty)^{n+p}$ which is bounded by constants $r_+ \in (0,\infty)$ and $r_- \in (0,1]$, i.e., 
\[  r_-  \leq r_x \leq r_+  \]
for all $x= 1, \ldots, n+p$, we define the matrix $\widehat \Ff(r)$ with entries $\widehat \Ff(r)_{ij} \defeq r_i \fS_{ij} r_j $ for $i, j = 1, \ldots, n+p$. Then the eigenspace corresponding to 
$\widehat \lambda(r) \defeq \norm{\widehat \Ff(r)}_2$ is one-dimensional and $\wh \lambda(r)$ satisfies the estimates 
\begin{align}
r_-^2 \lesssim \widehat \lambda(r) \lesssim r_+^2. 
\label{eq:estimates_hat_lambda}
\end{align}
There is a unique eigenvector $\wh \ff=\wh \ff(r)$ corresponding to $\wh\lambda(r)$ satisfying $\wh \ff_x \geq 0$ and $\frac{1}{\sqrt{n+p}} \|\wh \ff\| =1$. Its  components satisfy that, for some constant $L \in \N$ that depends only on $L_1$ and $L_2$ from \Cref{asst:VP},
\begin{align}
\label{eq:estimates_wh_f}
 \mfrac{r_-^{2L}}{r_+^4} \min\left\{ \wh \lambda(r), \wh \lambda(r)^{-L+2}\right\} 
 \;\lesssim\; 
 \wh \ff_x 
 \;\lesssim\; 
 \mfrac{r_+^4}{\wh \lambda(r)^2}, \quad \text{ for all }x = 1, \ldots, n+p. 
\end{align}
Moreover,  $\wh F(r) \wh F(r)^\top$ has a spectral gap 
\begin{align} \label{eq:Gap_FF^t}
\Gap\left(\wh F(r) \wh F(r)^\top\right) 
\;\gtrsim\; 
\mfrac{r_-^{8L}}{r_+^{16}} \min\left \{ \widehat \lambda(r)^6, \widehat \lambda(r)^{-8L +10}\right\}.
\end{align}
\end{lemma}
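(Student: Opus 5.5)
This lemma is quoted from Lemma 3.3 of \cite{alt2017local}. I would therefore re-derive it along their Perron--Frobenius route, keeping the constants explicit in $r_\pm$.

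\emph{Step 1: norm bounds.} Entrywise $r_-^2\fS\le\widehat\Ff(r)\le r_+^2\fS$, and $\widehat\Ff(r)$ is nonnegative and symmetric. Monotonicity of the Perron root gives
\[
r_-^2\normtwo{\fS}\;\le\;\widehat\lambda(r)\;\le\;r_+^2\normtwo{\fS}.
\]
So \eqref{eq:estimates_hat_lambda} reduces to showing $\normtwo{\fS}\asymp1$.
\begin{itemize}
\item Upper bound: each row sum of $\fS$ is at most $s_*$ by the flatness bound.
\item Lower bound: test the quadratic form against the flat vector. Assumption~\ref{asst:VP} gives $\sum_{ij}S_{ij}\gtrsim1$, because the connectivity lower bounds force the row sums to be of order one.
\end{itemize}

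\emph{Step 2: Perron vector.} The matrix $\fS^{2L}$ is block diagonal with blocks $(SS^\top)^{L_1L_2}$ and $(S^\top S)^{L_1L_2}$, taking $L$ a multiple of $L_1L_2$. The connectivity bounds, exactly as in the proof of Lemma~\ref{lem:im:fM:bound}, make each block entrywise at least $c/(n+p)$. The matrix $\widehat\Ff^2$ is therefore irreducible on each block, and the bipartite structure ties the two blocks together: the top eigenvalue of $\widehat\Ff$ is simple, and the eigenvector can be chosen with $\wh\ff\ge0$. This gives one-dimensionality and uniqueness after normalization.

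For the component bounds, use $\wh\ff=\widehat\lambda^{-1}\widehat\Ff\wh\ff$.
\begin{itemize}
\item Upper bound: Cauchy--Schwarz gives $\wh\ff_x\le\widehat\lambda^{-1}r_+^2\|\fS e_x\|_2\|\wh\ff\|_2\lesssim r_+^2/\widehat\lambda$. Iterating once more, with the $\ell^\infty$ row sums, yields the stated $r_+^4/\widehat\lambda^2$.
\item Lower bound: write $\wh\ff=\widehat\lambda^{-2L}\widehat\Ff^{2L}\wh\ff$ and use $\widehat\Ff^{2L}\ge r_-^{4L}\fS^{2L}$ entrywise (up to $r_+$ factors from the two ends). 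Since $\fS^{2L}$ dominates $c\,\bone\bone^\top/(n+p)$ blockwise, this gives $\wh\ff_x\gtrsim r_-^{2L}\widehat\lambda^{-2L}\langle\wh\ff\rangle$. Bounding $\langle\wh\ff\rangle$ below through the normalization $\|\wh\ff\|^2=n+p$ together with the upper bound gives $\langle\wh\ff\rangle\ge\widehat\lambda^2/r_+^4$. Tracking the cases $\widehat\lambda\le1$ and $\widehat\lambda\ge1$ yields the $\min\{\widehat\lambda,\widehat\lambda^{-L+2}\}$ form.
\end{itemize}

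\emph{Step 3: spectral gap (the main obstacle).} For a symmetric matrix $T=\widehat\Ff\widehat\Ff^\top$ with positive Perron vector $\ff$, I would use the standard Perron-vector gap lemma (Lemma 5.6 of \cite{Ajanki_2019}). It states that if $T_{xy}\ge\varepsilon/(n+p)$ in a suitable power and $\ff$ is bounded above and below, then
\[
\Gap(T)\;\gtrsim\;\varepsilon\,\frac{\min_x\ff_x^2}{\max_x\ff_x^2}.
\]
I would apply it to $T^{L}$ on each diagonal block (the blocks are the two Perron components of $\widehat\Ff^2$, which share the top eigenvalue). The entry bound comes from Step 2 and contributes $r_-^{4L}$; the ratio $\min\wh\ff/\max\wh\ff$, squared, contributes $r_-^{4L}/r_+^{16}$ up to powers of $\widehat\lambda$. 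The difficulty is making the gap statement precise. With the bipartite symmetry, the spectrum of $\widehat\Ff$ is symmetric, so $\widehat\Ff^2$ has top eigenvalue of multiplicity two, one per block. The definition of $\Gap$ via $|T|$ must therefore be read in the sense of \cite{alt2017local}, where $\widehat F$ denotes the off-diagonal $n\times p$ block, so that $\widehat F\widehat F^\top$ acts only on $\R^n$. I would carry out the argument for that block and take the appropriate $L$-th root of the gap of $T^L$. This is where the exponents $6$ and $-8L+10$ on $\widehat\lambda$ arise.
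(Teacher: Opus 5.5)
The paper does not prove this lemma. It imports Lemma~3.3 of \cite{alt2017local} verbatim, and the Perron--Frobenius plus Perron-vector gap-lemma route you sketch is the standard argument behind that citation. Your reading of $\wh F(r)$ as the $n\times p$ off-diagonal block of $\widehat\Ff(r)$ is correct, and it is how the lemma is used in the proof of \Cref{lem:bound_B_inverse}. For the full matrix $\widehat\Ff\widehat\Ff^\top=\widehat\Ff^2=\diag(\wh F\wh F^\top,\wh F^\top\wh F)$ the eigenvalue $\widehat\lambda^2$ is double, so the gap would be zero. Two steps, however, fail as written.

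\textbf{Step 3: the $L$-th root goes the wrong way.} Let $T=\wh F\wh F^\top\succeq0$ have top eigenvalues $\mu_1=\widehat\lambda^2>\mu_2\ge0$. Then $(\mu_1^L-\mu_2^L)^{1/L}\ge\mu_1-\mu_2$, so a lower bound on $\Gap(T^L)$ gives no lower bound on $\Gap(T)$ by taking roots. For example, $\mu_1=1$, $\mu_2=1-\delta$ gives $\Gap(T)=\delta$ but $\Gap(T^L)^{1/L}\approx(L\delta)^{1/L}\gg\delta$. The correct conversion is $\mu_1^L-\mu_2^L=(\mu_1-\mu_2)\sum_{k=0}^{L-1}\mu_1^k\mu_2^{L-1-k}\le L\mu_1^{L-1}(\mu_1-\mu_2)$, i.e.\ $\Gap(T)\ge\Gap(T^L)/(L\widehat\lambda^{2L-2})$. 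This division is what produces $\widehat\lambda$-exponents linear in $L$ such as $-8L+10$; a root would give fractional ones. With it the bookkeeping closes:
\begin{itemize}
\item Taking $L$ a multiple of $L_1L_2$ as you do, $T^L\ge r_-^{4L}(SS^\top)^L\gtrsim r_-^{4L}/(n+p)$ entrywise.
\item The gap lemma with the stated component bounds for $f_1$ gives $\Gap(T^L)\gtrsim\frac{r_-^{8L}}{r_+^{16}}\min\{\widehat\lambda^{6},\widehat\lambda^{8-2L}\}$.
\item Dividing by $L\widehat\lambda^{2L-2}$ yields $\frac{r_-^{8L}}{r_+^{16}}\min\{\widehat\lambda^{8-2L},\widehat\lambda^{10-4L}\}$, which dominates the stated bound for $L\ge1$.
\end{itemize}

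\textbf{Step 2: the lower bound on $\wh\ff_x$ uses the wrong average.} $\fS^{2L}=\diag((SS^\top)^L,(S^\top S)^L)$ is block diagonal. Hence $(\widehat\Ff^{2L}\wh\ff)_x$ only sees the sum of $\wh\ff$ over the block containing $x$, not $\langle\wh\ff\rangle$. A lower bound on the global average does not by itself rule out one block carrying almost all of the mass. The missing input is the bipartite balance of the Perron vector. Write $\wh\ff=(f_1,f_2)$; then $\wh F f_2=\widehat\lambda f_1$ and $\wh F^\top f_1=\widehat\lambda f_2$ give $\widehat\lambda\|f_1\|^2=f_1^\top\wh F f_2=\widehat\lambda\|f_2\|^2$. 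So $\|f_1\|^2=\|f_2\|^2=(n+p)/2$, and $\sum_{x\in\text{block }k}\wh\ff_x\ge\|f_k\|^2/\max_x\wh\ff_x$ gives each block average the bound you wanted.

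Two smaller corrections:
\begin{itemize}
\item $\widehat\Ff^{2L}\ge r_-^{4L}\fS^{2L}$ produces $r_-^{4L}$, not $r_-^{2L}$, so $L$ must be renamed. This is harmless since $L$ is unspecified.
\item In Step~1 the flat-vector Rayleigh quotient is $2\sum_{ij}S_{ij}/(n+p)$, so you need $\sum_{ij}S_{ij}\gtrsim n$, not $\gtrsim1$. This follows from $\frac{n\psi_1}{n+p}\le[(SS^\top)^{L_1}\bone_n]_i\le s_*^{2L_1-1}\sum_jS_{ij}$.
\end{itemize}
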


\begin{lemma}[Rotation-Inversion Lemma, Lemma 3.6 of \cite{alt2017local}] \label{lem:bulk_stability}
There exists a positive constant $C$ such that for all $n,p \in \N$, unitary matrices $U_1 \in \C^{p \times p}$, $U_2 \in \C^{n \times n}$ and $A \in \R^{p \times n}$ with nonnegative entries such that $A^*A$ and 
$AA^*$ are irreducible and $\normtwo{A^*A}\in (0,1]$, the following bound holds:
\begin{equation} \label{bound on block inverse}
\normbb{
 \left(
\begin{array}{cc}
U_1	&	A
\\
A^*	&	U_2
\end{array}
\right)^{-1}
}_2
\,\leq\, 
\frac{C}{\Gap(AA^*)\abs{1-\normtwo{A^{*}A}\scalar{v_1}{U_1v_1}_2 \, \scalar{v_2}{U_2v_2}_2}}\,,
\end{equation}
where $v_1\in \C^p$ and $v_2 \in \C^n$ are the unique positive, normalized eigenvectors with $AA^*v_1=\normtwo{A^*A} v_1$ and $A^*Av_2=\normtwo{A^*A}v_2$.
The norm on the left hand side of  \eqref{bound on block inverse} is infinite if and only if the right hand side of \eqref{bound on block inverse} is infinite, i.e., in this case the inverse does not exist.
\end{lemma}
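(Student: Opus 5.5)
We would prove the Rotation-Inversion Lemma by eliminating one block with a Schur complement, so that everything reduces to a lower bound for a "unitary minus contraction" operator on $\C^n$. Write $\lambda\coloneqq\normtwo{A^*A}\in(0,1]$ and $B\coloneqq\begin{psmallmatrix}U_1 & A\\ A^* & U_2\end{psmallmatrix}$. Since $U_1$ is unitary it is invertible, so $B$ is invertible iff the Schur complement
\[
T\coloneqq U_2 - A^*U_1^{*}A
\]
is invertible. Moreover $B^{-1}$ is expressed blockwise through $U_1^{*}$, $A$, $A^*$ and $T^{-1}$, all of the non-$T^{-1}$ factors having operator norm at most $1$. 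Hence $\normtwo{B^{-1}}\lesssim 1+\normtwo{T^{-1}}$. Since the right-hand side of the claimed bound is at least $C/2$ (because $\Gap(AA^*)\le\lambda\le1$ and the modulus term is at most $2$), the additive constant is harmless. It therefore suffices to show
\[
\|Tx\|\;\gtrsim\;\Gap(AA^*)\,\big|1-\lambda\scalar{v_1}{U_1v_1}_2\scalar{v_2}{U_2v_2}_2\big|\,\|x\|
\qquad\text{for all } x\in\C^n .
\]
Here we use that $AA^*$ and $A^*A$ share their nonzero spectrum, so $\Gap(A^*A)=\Gap(AA^*)$.

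\textbf{Step 1 (two one-sided estimates).} Decompose $x=a v_2+x_\perp$ with $x_\perp\perp v_2$, and normalize $\|v_1\|=\|v_2\|=1$, so that $Av_2=\sqrt\lambda\,v_1$ (up to a positive constant depending on the normalization convention). For the orthogonal part, the spectral gap and irreducibility (Perron--Frobenius gives a simple top eigenvalue with positive eigenvector) yield $\|Ax_\perp\|^2\le(\lambda-\Gap)\|x_\perp\|^2\le(1-\Gap)\|x_\perp\|^2$. Since $U_2$ is unitary, this gives
\[
\|Tx_\perp\|\ \ge\ \|x_\perp\|-\|A^*U_1^*Ax_\perp\|\ \ge\ \tfrac{\Gap}{2}\|x_\perp\| .
\]
For the top direction we test against $U_2v_2$:
\[
\|Tv_2\|\ \ge\ |\scalar{U_2v_2}{Tv_2}_2|\ =\ \big|1-\lambda\,\scalar{U_2v_2}{A^*U_1^*v_1}_2\big| .
\]
Using $A^*v_1=\sqrt\lambda\,v_2$, the inner product reduces to a product of $\overline{\scalar{v_2}{U_2v_2}_2}$ and $\overline{\scalar{v_1}{U_1v_1}_2}$ (possibly with one conjugated, depending on convention). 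Its modulus is exactly the phase quantity $\big|1-\lambda\scalar{v_1}{U_1v_1}_2\scalar{v_2}{U_2v_2}_2\big|$, up to complex conjugation, which does not change the modulus.

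\textbf{Step 2 (gluing; the main obstacle).} The difficulty is that $T$ is not normal, so the two estimates do not simply add; cross terms $Tv_2$ versus $Tx_\perp$ can cancel. We would argue by cases with threshold $\delta\coloneqq c\,|1-\lambda\scalar{v_1}{U_1v_1}\scalar{v_2}{U_2v_2}|$.

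\emph{Case 1: $\|x_\perp\|\ge\delta\|x\|$.} We split $Tx$ into its component along $U_2v_2$ and its orthogonal complement. The orthogonal part of $Tx_\perp$ still has size $\gtrsim\Gap\|x_\perp\|$, because the contraction $A^*U_1^*A$ restricted to $v_2^\perp$ has norm at most $1-\Gap$. The orthogonal part of $T(av_2)$ is controlled by $|a|\,\|T v_2-\scalar{U_2v_2}{Tv_2}_2U_2v_2\|$, which we bound by an expression involving $1-|\scalar{v_i}{U_iv_i}|^2$ and hence by a constant times the phase term.

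\emph{Case 2: $\|x_\perp\|<\delta\|x\|$.} Here $\|Tx\|\ge|a|\,\|Tv_2\|-2\|x_\perp\|$, and the phase bound of Step 1 dominates.

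Balancing the two cases gives the product $\Gap\cdot|1-\lambda\langle\cdot\rangle\langle\cdot\rangle|$, possibly at first with a worse power that must then be sharpened. This bookkeeping, and in particular the control of the off-diagonal leakage of $Tv_2$ through the phase quantity (the elementary inequality $1-|w|^2\le 2|1-w|$ for $|w|\le1$), is where we expect the real work to lie.

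\textbf{Step 3 (equivalence of infinities).} The right-hand side is infinite iff $\lambda=1$ and $\scalar{v_1}{U_1v_1}_2\scalar{v_2}{U_2v_2}_2=1$. Since $|\scalar{v_i}{U_iv_i}|\le1$, this forces $U_iv_i=\theta_iv_i$ with $|\theta_i|=1$ and $\theta_1\theta_2=1$. In that case $Tv_2=\theta_2v_2-\overline{\theta_1}v_2=0$ up to the conjugation convention, so $T$, and hence $B$, is singular. Conversely, if the right-hand side is finite then Steps 1--2 show $T$ is injective, hence invertible, so $B^{-1}$ exists.
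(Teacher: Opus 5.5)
The paper does not prove this lemma; it quotes it from \cite{alt2017local}. Your argument therefore has to stand on its own, and it does not yet. The Schur-complement reduction is fine, and so is the ``infinite $\Rightarrow$ singular'' half of Step~3. But the core estimate $\|Tx\|\gtrsim\Gap\cdot\phi\,\|x\|$ is never established, and two of the ingredients you sketch for it are false. Here $\phi\coloneqq\abs{1-\lambda\alpha_1\alpha_2}$, $\alpha_i\coloneqq\langle v_i,U_iv_i\rangle$ and $\|v_i\|=1$.

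\textbf{The top-direction identity fails.} Write $U_1^*v_1=\bar\alpha_1v_1+\tilde w_1$ and $U_2v_2=\alpha_2v_2+w_2$, and use that $A$ maps $v_2^\perp$ into $v_1^\perp$. Then $\langle U_2v_2,Tv_2\rangle=1-\lambda\bar\alpha_1\bar\alpha_2-\sqrt\lambda\,\langle Aw_2,\tilde w_1\rangle$, and the cross term is only bounded by $\sqrt{1-\Gap}\,\phi$. This bound is attained. Take $n=p=2$, $A=\frac12\begin{psmallmatrix}1+\mu&1-\mu\\1-\mu&1+\mu\end{psmallmatrix}$ with $0<\mu<1$, and $U_1=U_2=\mathrm{diag}(1,-1)$. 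Then $\lambda=1$, $\alpha_1=\alpha_2=0$ and $\phi=1$, yet $Tv_2=(1-\mu)\,U_2v_2$. So $\|Tv_2\|=1-\mu\approx\Gap/2$, not $\ge\phi$. The lemma itself survives, since $\|T^{-1}\|=(1+\mu)/\Gap$. On $\C^n$ the single direction $v_2$ only yields $\|Tv_2\|\gtrsim\Gap\,\phi$.

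\textbf{The Case~1 leakage is of order $\sqrt\phi$, not $\phi$.} The quantity $\|Tv_2-\langle U_2v_2,Tv_2\rangle U_2v_2\|$ involves $\|U_iv_i-\alpha_iv_i\|=(1-\abs{\alpha_i}^2)^{1/2}$. The inequality $1-|w|^2\le2|1-w|$ controls only its square. For example, take $U_1=I$, $\lambda=1$, and $U_2$ a rotation by $\epsilon$ in a plane containing $v_2$: then $\phi=1-\cos\epsilon$ while the leakage is $\sin\epsilon$.

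\textbf{Consequently the two cases do not overlap.} Against your linear gain $\frac{\Gap}{2}\|x_\perp\|$, Case~1 needs $\|x_\perp\|\gtrsim\Gap^{-1}\sqrt\phi\,\|x\|$. Case~2 needs $\|x_\perp\|\lesssim\phi\|x\|$, or even $\lesssim\Gap\,\phi\|x\|$ once Step~1 is corrected. For small $\phi$ or small $\Gap$ a range of $\|x_\perp\|$ is left uncovered, and no choice of $\delta$ fixes this.

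Two ideas are missing.

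(a) Use the norm deficit, which is quadratic. Write $B=D+K$ with $D\coloneqq\mathrm{diag}(U_1,U_2)$ and $K\coloneqq\begin{psmallmatrix}0&A\\A^*&0\end{psmallmatrix}$, so that $\|Bw\|\ge\|w\|-\|Kw\|$.

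(b) Stay on the full space. The top eigenspace of $K^2=\mathrm{diag}(AA^*,A^*A)$ is two-dimensional, $E=\mathrm{span}\{(v_1,0),(0,v_2)\}$, and $K^2\le\lambda-\Gap$ on $E^\perp$, because the second eigenvalues of $AA^*$ and $A^*A$ agree.

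For a unit vector $w=w_E+w_\perp$, idea (a) gives $\|Bw\|\ge1-(1-\Gap\|w_\perp\|^2)^{1/2}\ge\frac{\Gap}{2}\|w_\perp\|^2$.

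For idea (b), compress $D^*B$ to $E$ in the basis $(v_1,0),(0,v_2)$. The result is $\begin{psmallmatrix}1&\sqrt\lambda\bar\alpha_1\\\sqrt\lambda\bar\alpha_2&1\end{psmallmatrix}$, whose determinant $1-\lambda\bar\alpha_1\bar\alpha_2$ has modulus $\phi$. Since its norm is at most $2$, its smallest singular value is at least $\phi/2$. Because $K$ preserves $E^\perp$, the leakage satisfies $\|Q_ED^*Kw_\perp\|\le(\sum_i(1-\abs{\alpha_i}^2))^{1/2}\|w_\perp\|\le2\sqrt\phi\,\|w_\perp\|$. Hence $\|Bw\|\ge\|Q_ED^*Bw\|\ge\frac\phi2\|w_E\|-2\sqrt\phi\,\|w_\perp\|$.

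Splitting at $\|w_\perp\|=\sqrt\phi/8$ gives $\|Bw\|\ge\Gap\,\phi/128$ in both cases. This is exactly the claimed bound. Combined with $\Gap>0$ from Perron--Frobenius and your Step~3, it also gives the ``infinite iff infinite'' statement.

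In this formulation the phase term appears only as a $2\times2$ determinant, and the quadratic deficit is what balances the $\sqrt\phi$ leakage. Folding everything onto $\C^n$ via the Schur complement hides this rank-two structure. That is why the single direction $v_2$ misses the phase term and your bookkeeping loses powers of $\Gap$.
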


\begin{lemma}[Lemma 3.7 of \cite{alt2017local}] \label{lem:norm_hat_F_inverse}
Let $R\colon \C^{n+p} \to \C^{n+p}$ be a linear operator and $D \colon \C^{n+p} \to \C^{n+p}$ a diagonal operator. 
If $R - D$ is invertible and $D_{xx} \neq 0$ for all $x =1, \ldots, n+p$ then
\begin{align*}
\norminf{(R - D )^{-1} } \leq \left(\inf_{x=1}^{n+p} \abs{D_{xx}}\right)^{-1} \left( 1 + \normtwoinf{R}\normtwo{(R - D )^{-1}} \right). 
\end{align*}
\end{lemma}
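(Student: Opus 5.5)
The plan is to prove the bound through a purely algebraic resolvent-type identity, with no spectral input. Write $B \coloneqq R - D$, which is invertible by assumption. Since $D_{xx}\neq 0$ for every $x$, $D$ is invertible as well, and $D = R - B$. This gives
\begin{align*}
    D^{-1} R\, B^{-1} - D^{-1}
    \;=\;
    D^{-1}(R - B)\,B^{-1}
    \;=\;
    D^{-1} D\, B^{-1}
    \;=\;
    B^{-1}\;.
\end{align*}
So the key identity to use is $(R-D)^{-1} = D^{-1}\big(R\,(R-D)^{-1} - I\big)$. It expresses the inverse in terms of a diagonal factor, which is cheap in $\norminf{\argdot}$, and a factor to which the $\ell_2$ control applies.

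Next, fix $v\in\C^{n+p}$ and apply the identity to $v$. Because $D^{-1}$ is diagonal, $\norminf{D^{-1}} = (\inf_x |D_{xx}|)^{-1}$, and therefore
\begin{align*}
    \|B^{-1}v\|_\infty
    \;\le\;
    \Big(\inf_{x} |D_{xx}|\Big)^{-1}
    \Big( \|v\|_\infty + \|R\,B^{-1}v\|_\infty \Big)\;.
\end{align*}
For the second term, route through the mixed norm: by definition of $\normtwoinf{\argdot}$ and $\normtwo{\argdot}$, $\|R B^{-1} v\|_\infty \le \normtwoinf{R}\,\|B^{-1}v\|_2 \le \normtwoinf{R}\,\normtwo{B^{-1}}\,\|v\|_2$.

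The last step is the comparison $\|v\|_2 \le \|v\|_\infty$. This holds when $\|\cdot\|_2$ is the averaged Euclidean norm $\langle v,v\rangle_2^{1/2}$, which is the convention of \cite{alt2017local} and matches the averaged inner product introduced above. Dividing by $\|v\|_\infty$ and taking the supremum over $v\neq 0$ then yields the claim.

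The only delicate point is this normalization issue rather than any analytic step. I would state explicitly that $\normtwo{\argdot}$ and $\normtwoinf{\argdot}$ are taken with respect to the averaged norm. If instead the unnormalized Euclidean norm is used, the same argument goes through with an extra factor $\sqrt{n+p}$ multiplying $\normtwoinf{R}\normtwo{(R-D)^{-1}}$. One would then have to track that factor when \Cref{lem:norm_hat_F_inverse} is applied to prove \eqref{eq:estimate_norm_B_inverse_infty}.
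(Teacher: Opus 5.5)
Your argument is correct. The identity $(R-D)^{-1} = D^{-1}\big(R(R-D)^{-1} - I\big)$, combined with $\|v\|_2 \le \|v\|_\infty$ for the averaged norm, is the standard one-line proof of this lemma, which the paper does not reprove but simply imports from \cite{alt2017local}. Your normalization caveat is also right and agrees with the paper's convention: the averaged norm is what the paper uses when bounding $\normtwoinf{\fS}$ in \Cref{lem:fS:norm}, while $\normtwo{\argdot}$, being an operator norm, is unaffected by the choice of normalization.
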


\vspace{.5em}

We also state a small lemma for controlling the norms of $\fS$.

\begin{lemma} \label{lem:fS:norm} Suppose $\sup_{i \leq n, j \leq p} S_{ij} \leq \frac{s_*}{p+n}$. Then 
\begin{align*}
    \max\big\{ \normtwoinf{\fS} \,,\, \norminf{\fS} \big\}
    \;\leq\; s_*\;.
\end{align*}
\end{lemma}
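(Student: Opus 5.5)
The claim is that $\fS=\begin{psmallmatrix}0&S\\S^\top&0\end{psmallmatrix}$ satisfies $\max\{\normtwoinf{\fS},\norminf{\fS}\}\le s_*$ whenever every entry of $S$ is at most $s_*/(p+n)$. Both norms are controlled row by row, so the plan is to fix one row of $\fS$ and bound the relevant norm of that row. The only inputs are the entrywise flatness bound on $S$ and the nonnegativity of $S$. Each row of $\fS$ has at most $\max\{n,p\}\le n+p$ nonzero entries.

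For $\norminf{\cdot}$, recall that the operator norm induced by the $\infty$-norm equals the maximal absolute row sum. A row of $\fS$ indexed by $i\le n$ has sum $\sum_{j\le p}S_{ij}\le p\,s_*/(p+n)\le s_*$. A row indexed by $n+l$ has sum $\sum_{i\le n}S_{il}\le n\,s_*/(p+n)\le s_*$. Hence $\norminf{\fS}\le s_*$.

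For $\normtwoinf{\cdot}$, note that $\sup_{v\ne0}\|Av\|_\infty/\|v\|_2=\max_x\|A_{x\cdot}\|_2$ by Cauchy--Schwarz, with equality attained at $v=\overline{A_{x\cdot}}$. For a row $i\le n$ we get $\|\fS_{i\cdot}\|_2^2=\sum_{j\le p}S_{ij}^2\le p\,s_*^2/(p+n)^2\le s_*^2/(p+n)$. The rows indexed by $n+l$ satisfy the same bound. This gives $\normtwoinf{\fS}\le s_*/\sqrt{p+n}\le s_*$.

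Both parts follow directly from the flatness bound, so there is no real obstacle. The only point needing care is the convention: $\normtwoinf{\cdot}$ here uses the unnormalized Euclidean norm. If one instead used the averaged norm $\langle\cdot,\cdot\rangle_2$, the bound would pick up a factor $\sqrt{n+p}$ and would still equal exactly $s_*$, so the claim holds under either convention.
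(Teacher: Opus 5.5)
Your proof is correct and follows essentially the same route as the paper: the entrywise flatness bound combined with Cauchy--Schwarz row by row for $\normtwoinf{\fS}$, and the maximal absolute row sum for $\norminf{\fS}$. The paper works directly with the averaged norm $\|v\|_2^2=\frac{1}{n+p}\sum_l |v_l|^2$ (the convention of \cite{alt2017local}, which is the one used downstream), where the same computation gives $s_*\sqrt{\max\{n,p\}/(n+p)}\le s_*$; this is exactly the case covered by your closing remark on conventions.
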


\begin{proof}[Proof of \Cref{lem:fS:norm}] Since $\fS = \begin{psmallmatrix}
        0 & S \\ S^\top & 0
    \end{psmallmatrix}$, we can write 
\begin{align*}
    \normtwoinf{\fS} 
    \;=&\;
    \sup_{v \neq 0} \mfrac{\| \fS v \|_\infty }{\| v \|_2}
    \;=\;
    \sup_{(v_1^\top, v_2^\top) \neq 0} \mfrac{ \max\{ \|S v_1 \|_\infty \,,\, \|S^\top v_2 \|_\infty \}  }{\big\| (v_1^\top, v_2^\top) \big\|_2}
    \\
    \;\overset{(a)}{\leq}&\;
    \sup_{(v_1^\top, v_2^\top) \neq 0}  \mfrac{ \max\{ \frac{s_*}{p+n} \sum_{j \leq p} v_{1,j}  \,,\, \frac{s_*}{p+n} \sum_{i \leq n} v_{2,i} \}  }{
        \big( 
            \frac{1}{n+p}
            \big( 
                \sum_{j \leq p} v_{1,j}^2 
                +
                \sum_{i \leq n} v_{2,i}^2
            \big)
        \big)^{1/2}
    }
    \;\overset{(b)}{\leq}\;   s_*\;.
\end{align*}
In $(a)$, we have applied the entrywise bound on $S$; $(b)$ follows by noting that 
\begin{align*}
    \mfrac{1}{p} \msum_{j \leq p} v_{1,j}
    \;\leq&\;
    \Big(\mfrac{1}{p} \msum_{j \leq p} v_{1,j}^2\Big)^{1/2}
    &\text{ and }&&
    \mfrac{1}{n} \msum_{i \leq n} v_{2,i}
    \;\leq&\;
    \Big(\mfrac{1}{n} \msum_{i \leq n} v_{2,i}^2\Big)^{1/2}
    \;.
\end{align*}
The second bound follows from a similar calculation:
\begin{align*}
    \| \fG \|_\infty
    \;\leq&\;
    \sup_{(v_1^\top, v_2^\top) \neq 0}  \mfrac{ \max\{ \frac{s_*}{p+n} \sum_{j \leq p} v_{1,j}  \,,\, \frac{s_*}{p+n} \sum_{i \leq n} v_{2,i} \}  }{
        \max\{ 
            \sup_{j' \leq p} |v_{1,j'}| 
            \,,\,  
            \sup_{i' \leq p} |v_{1,i'}| 
        \}
    }
    \;\leq\; s_*\;,
\end{align*}
where we now use that 
\begin{align*}
    \mfrac{1}{p} \msum_{j \leq p} v_{1,j}
    \;\leq&\;
    \sup_{j' \leq p} |v_{1,j'}| 
    &\text{ and }&&
    \mfrac{1}{n} \msum_{i \leq n} v_{2,i}
    \;\leq&\;
    \sup_{i' \leq p} |v_{1,i'}| 
    \;.
\end{align*}
\end{proof}

We are now ready to present the proof of \Cref{lem:bound_B_inverse}.

\begin{proof}[Proof of \Cref{lem:bound_B_inverse}] We first prove the $\normtwo{\argdot}$ bound on \eqref{eq:estimate_norm_B_inverse}. The application of \Cref{lem:bulk_stability} is identical to the proof of Lemma 3.5 in \cite{alt2017local}, and we record the argument for completeness. For $l_1 \leq l_2$, we write 
\begin{align*}
    \fM_{l_1:l_2}(z) \;\coloneqq\; (\fM_{l_1}(z), \ldots, \fM_{l_2}(z))^\top \;\in\; \bbH^{l_2 - l_1+1}\;.
\end{align*}    
Recall from \eqref{eq:defn:fF} that we can express 
\begin{align*}
    \fF(z)
    \;=&\;
    \diag\{ | \fM(z)| \} \, \fG(z)  \, \diag\{ | \fM(z)| \}
    \\
    \;=&\;
    \begin{psmallmatrix}
        | \fM_{1:n}(z)| & \\
        & | \fM_{(n+1):(n+p)}(z)| 
    \end{psmallmatrix}
    \begin{psmallmatrix}
        0 & S \\
        S^\top & 0 \\
    \end{psmallmatrix}
    \begin{psmallmatrix}
        | \fM_{1:n}(z)| & \\
        & | \fM_{(n+1):(n+p)}(z)| 
    \end{psmallmatrix}
    \\
    \;=&\;
    \begin{psmallmatrix}
        0 &  F(z)   \\
        F(z)^\top & 0 \\
    \end{psmallmatrix}
    \qquad\qquad
    \text{ where }
    F(z) \;\coloneqq\; 
    | \fM_{1:n}(z)| \, S \, | \fM_{(n+1):(n+p)}(z)| \;.
\end{align*}
We also use \eqref{eq:defn:fB} to express  
\begin{align*}
    &\;\fB(z)
    \;=\;
    \fU(z)
    - 
    \fF(z)
    \;=\;
    \begin{psmallmatrix}
        U_1(z) & - F(z) 
        \\
        - F(z)^\top & U_2(z)
    \end{psmallmatrix}
    \;,
    \\
    &\text{where we denote }\quad
    \fU(z) 
    \;=\; 
    \begin{psmallmatrix}
        U_1(z) & 0 
        \\
        0 & U_2(z)
    \end{psmallmatrix}
    \;\coloneqq\; 
    \diag\Big\{ 
            \mfrac{|\fM_1(z)|^2}{\fM_1(z)^2}
            \,,\, \ldots \,,\,
            \mfrac{|\fM_{n+p}(z)|^2}{\fM_{n+p}(z)^2}
    \Big\}
    \;.
\end{align*}
This allows us to apply \Cref{lem:bulk_stability} by identifying $A = -F(z)$ and $U_1$, $U_2$ as above: As with \cite{alt2017local}, by noting that $\fF(z)$ equals $\widehat \fF(r)$ in \Cref{lem:hat_F} with $r = |\fM(z)|$, we write $\ff(z) = (f_1(z),f_2(z)) \in \C^{p+n}$ as the eigenvector of $\Ff(z)$ given in \Cref{lem:hat_F}, and take the normalized vectors 
\begin{align*}
    v_1 \;\coloneqq&\; \mfrac{f_1(z)}{\sqrt{ \langle f_1(z), f_1(z)\rangle_2 }}
    &\text{ and }&&
    v_2 \;\coloneqq&\; \mfrac{f_2(z)}{\sqrt{ \langle f_2(z), f_2(z)\rangle_2 }}\;.
    \tagaligneq \label{eq:defn:v1:v2}
\end{align*}
Then \Cref{lem:bulk_stability} gives that 
\begin{align*}
    \normtwo{ \fB^{-1}(z) } 
    \;\lesssim\;
    \mfrac{1}
    {\Gap(F(z) F(z)^\top)\,\times\, \abs{1-\normtwo{F(z)^\top F(z)}\scalar{v_1}{U_1(z) v_1}_2\scalar{v_2}{U_2(z) v_2}_2}}
    \;.
    \tagaligneq \label{eq:B:2:bound:intermediate}
\end{align*}
To compute the above further, we first note that since $\langle v_1, v_1 \rangle_2 = 1$,  $\frac{1}{n} |v_1|^{\odot 2}$ induces a probability distribution $\mu_1$ supported on $\{1, \ldots, n\}$, and so does  $\frac{1}{p} |v_2|^{\odot 2}$ with a distribution $\mu_2$ supported on $\{1, \ldots, p\}$. We can then express 
\begin{align*}
    \scalar{v_1}{U_1(z) v_1}_2 \, \scalar{v_2}{U_2(z) v_2}_2
    \;=&\;
    \Big(
        \mfrac{1}{n} \msum_{l=1}^n |(v_1)_l|^2 \mfrac{|\fM_l(z)|^2}{\fM_l(z)^2}
    \Big)
    \,
    \Big(
        \mfrac{1}{p} \msum_{j=1}^p |(v_2)_j|^2 \mfrac{|\fM_{n+j}(z)|^2}{\fM_{n+j}(z)^2}
    \Big)
    \\
    \;=&\;
    \mean_{I \sim \mu_1, J \sim \mu_2}\Big[ \mfrac{|\fM_I(z)|^2}{\fM_I(z)^2} \mfrac{|\fM_{n+J}(z)|^2}{\fM_{n+J}(z)^2} \Big]
    \;.
\end{align*}
Now follow the notation of \cite{alt2017local} and let the $\C^{n+p}$ vector $\psi$ be such that $e^{i \psi_l} = \frac{\fM_l(z)}{|\fM_l(z)|}$ for $1 \leq l \leq n+p$. Then
\begin{align*}
    \scalar{v_1}{U_1(z) v_1}_2 \, \scalar{v_2}{U_2(z) v_2}_2
    \;=&\;
    \mean_{I \sim \mu_1, J \sim \mu_2}\big[ \exp\big( - 2i (\psi_I + \psi_{n+J} ) \big) \big]
    \;,
\end{align*}
and we can control the following quantity that appears in the denominator of \eqref{eq:B:2:bound:intermediate}:
\begin{align*}
    (\star)
    &\;\coloneqq\;
    \abs{1-\normtwo{F(z)^\top F(z)}\scalar{v_1}{U_1(z) v_1}_2\scalar{v_2}{U_2(z) v_2}_2}
    \\
    &\;\geq\;
    1 - 
    \normtwo{F(z)^\top F(z)} \,\times\, \Re\big( \scalar{v_1}{U_1(z) v_1}_2 \, \scalar{v_2}{U_2(z) v_2}_2 \big)
    \\
    &\;=\;
    1 - 
    \normtwo{F(z)^\top F(z)}\,\times\,
    \mean_{I \sim \mu_1, J \sim \mu_2}\big[ \cos\big( 2 \psi_I + 2 \psi_{n+J} \big) \big]
    \\
    &\;=\;
    1 - 
    \normtwo{F(z)^\top F(z)}
    +
    2 \normtwo{F(z)^\top F(z)}
    \,\times\,
        \mean_{I \sim \mu_1, J \sim \mu_2}\big[  \sin^2\big( \psi_I +\psi_{n+J}  \big) \big]
    \;.
    \tagaligneq \label{eq:fF:intermediate}
\end{align*}
Now note that under \Cref{asst:VP}, by inverting the bound \eqref{eq:estimate_norm_F} in \Cref{lem:fF}, we have 
\begin{align*}
    (\star) 
    \;\geq\;
    1 - \normtwo{F(z)^\top F(z)}
    \;=\;
    1 - \normtwo{ \Ff(z)} 
    \;\geq\;
    \mfrac{\Im(z)^4}{ (\Im(z) |z| + s_*)^2 }
    \;.
\end{align*}
Plugging this back into \eqref{eq:B:2:bound:intermediate} gives
\begin{align*}
    \normtwo{ \fB^{-1}(z) } 
    \;\lesssim\;
    \mfrac{(\Im(z) |z| + s_*)^2}
    {\Gap(F(z) F(z)^\top)\,\times\, \Im(z)^4}
    \;.
\end{align*}
Now note that by \eqref{eq:Gap_FF^t} of \Cref{lem:hat_F},
\begin{align*}
    \Gap\left( F(z) F(z)^\top\right) 
    \;\gtrsim&\; 
    \mfrac{\big( \inf_l |\fM_l(z)| \big)^{8L}}
    {\big( \sup_l |\fM_l(z)| \big)^{16}} 
    \,
    \min\left \{ \| F(z)^\top F(z) \|_2^6 \,,\, \| F(z)^\top F(z) \|_2^{-8L +10}\right\}
    \\
    \;\overset{\eqref{eq:estimates_hat_lambda}}{\gtrsim}&\;
    \mfrac{\big( \inf_l |\fM_l(z)| \big)^{8L}}
    {\big( \sup_l |\fM_l(z)| \big)^{16}} 
    \,
    \min\left \{ \big( \inf_l |\fM_l(z)| \big)^{12} \,,\, \big( \inf_l |\fM_l(z)| \big)^{-16L +20}\right\}
    \\
    \;\gtrsim&\;
    \mfrac{1}{(\Im(z))^{16}}
    \min\Big\{ 1 \,,\, \Big( \mfrac{\Im(z)}{\Im(z) \, |z|
        + s_*
        }  \Big)^{c'_L}  \Big\}
\end{align*}
for some constant $c'_L > 0$ that only depends on $L$; in the last line, we have applied our \Cref{lem:abs:fM:bound} again. This implies 
\begin{align*}
    \normtwo{ \fB^{-1}(z) } 
    \;\lesssim\;
    \max\Big\{ 1 \,,\, 
    \Big( \mfrac{\Im(z) \, |z|
        + s_*
        }{\Im(z)}  \Big)^{c_L} \Big\}
    \,
    (\Im(z))^{14}
    \;,
\end{align*}
where we have taken $c_L = c'_L +2$. This proves the first desired bound \eqref{eq:estimate_norm_B_inverse}.

\vspace{.5em}

To derive the second bound \eqref{eq:estimate_norm_B_inverse_infty}, we apply \Cref{lem:norm_hat_F_inverse} with 
\begin{align*}
    R \;=&\; \fF(z)
    &\text{ and }&&
    D \;=&\; \fF(z) + \fB(z) \;=\; \diag\big\{ \fM(z)^{-2} \, |\fM(z)|^2 \big\}
    \;,
\end{align*}
which gives 
\begin{align*}
    \norminf{\Bf^{-1}(z)}
    \;=&\;
    \norminf{(R - D  )^{-1}}
    \\
    \;\leq&\;
    \Big(  \inf_{l \leq n+p} \big| \fM_l(z)^{-2} / |\fM_l(z)|^2 \big| \Big)^{-1}
    \,
    \Big( 
        1 
        +
        \| \fF(z) \|_{2 \rightarrow \infty}
        \,
        \| \fB^{-1}(z) \|_2
    \Big)
    \\
    \;=&\;
    1 
    +
    \| \fF(z) \|_{2 \rightarrow \infty}
    \,
    \| \fB^{-1}(z) \|_2
    \;.
\end{align*}
Recall that $\fF(z) = \diag\{|\fM(z)|\} \, \fS(z) \, \diag\{|\fM(z)\|\}$, which allows us to control
\begin{align*}
    \normtwoinf{\Ff(z)} 
    \;\leq\; 
    \norm{\Mf(z)}_\infty^2 \normtwoinf{\Sf} 
    \;\overset{(a)}{\lesssim}\; 
    \norm{\Mf(z)}_\infty^2
    \;\overset{(b)}{\leq}\;
    \mfrac{1}{(\Im(z))^2}
    \;.
\end{align*}
In $(a)$, we have used that $\normtwoinf{\Sf} \lesssim 1$ by applying \Cref{lem:fS:norm} under \Cref{asst:VP}; in $(b)$, we have applied our \Cref{lem:abs:fM:bound}. Combining this with our bound on $\| \fB^{-1}(z)\|_2$ above gives 
\begin{align*}
    \norminf{\Bf^{-1}(z)}
    \;\lesssim&\;
    1 + \mfrac{1}{(\Im(z))^2} \,
    \| \fB^{-1}(z) \|_2
    \;\leq\;
    1 
    + 
    \max\Big\{ 1 \,,\, 
    \Big( \mfrac{\Im(z) \, |z|
        + s_*
        }{\Im(z)}  \Big)^{c_L} \Big\}
    \,
    (\Im(z))^{12}
    \;.
\end{align*}
\end{proof}

\subsection{Controlling the diagonal resolvent error by QVE perturbation} \label{sec:diag:error:to:perturb}

We seek to approximate $\gf(z)$, the diagonal of the resolvent $G(z)$ of the data matrix, by the solution $\fM(z)$ to the QVE \eqref{eq:combined_QVE}. As mentioned in \eqref{eq:perturbed_combined_QVE}, the random $ \Hb \to (\C\setminus\{0\})^{n+p}$ function  $\gf$ satisfies a perturbed QVE
\begin{align*} 
    - 
    \gf(z)^{-1} 
    \;=\; 
    z \, \bone_{n+p}
    + 
    \Sf\, \gf(z)
    + 
    \df(z) \;, 
    \qquad z \in \bbH,
\end{align*}
where the random perturbation vector $\df(z) = (\df_1(z), \ldots, \df_{n+p}(z))$ reads, for $1 \leq m \leq n$ and $1 \leq r \leq p$,
\begin{align*}
    \df_m(z) 
    \;\coloneqq&\;
    \msum_{k,l=1, k \neq l}^p \mfrac{X_{mk}}{\sqrt{n}} \, G^{(m)}_{n+k, n+l}(z) \,\mfrac{X_{ml} }{\sqrt{n}}
    +
    \msum_{k=1}^p \Big( \mfrac{| X_{mk}|^2}{n} - S_{mk} \Big) G^{(m)}_{n+k, n+k}(z)
    \\
    &\;
    -
    \msum_{k=1}^p S_{mk} \mfrac{G_{n+k,m}(z) \, G_{m,n+k}(z)}{\gf_m(z)}
    \;, 
    \\
    \df_{n+r}(z)
    \;\coloneqq&\;
    \msum_{j,t = 1, j \neq t}^n
    \mfrac{X_{jr}}{\sqrt{n}} \, G^{(n+r)}_{jt}(z) \, \mfrac{X_{tr}}{\sqrt{n}}
    +
    \msum_{j=1}^n
    \Big( \mfrac{| X_{jr}|^2}{n} - S_{jr} \Big) G^{(n+r)(z)}_{jj}
    \\
    &\;
    - 
    \msum_{j=1}^n S_{jr} \mfrac{G_{j,n+r}(z) \, G_{n+r,j}(z)}{\gf_{n+r}(z)}
    \;,
    \tagaligneq \label{eq:defn:perturb}
\end{align*}
and we have denoted the modification of $G$ after removal of the $k$-th row and $k$-th column of $H = \begin{psmallmatrix}
    0 & X /\sqrt{n} \\ X^\top / \sqrt{n} & 0
\end{psmallmatrix} $ as, for $1 \leq k \leq n+p$,
\begin{align*}
    G^{(k)}(z)
    \;\coloneqq\;
    \big( H^{(k)} - z I_{n+p} \big)^{-1}
    \;,
    \qquad 
    \text{ where }
    H^{(k)}_{lj} \;\coloneqq\;  H_{lj} \, \ind_{\{ l \neq k \,,\, j \neq k \}}
    \;.
\end{align*}
The formulas follow from Schur complement formula; see (3.4a) and (3.4b) of \cite{alt2017local} for a derivation, noting two notational differences that 
\begin{itemize}
    \item they handle a variance profile of dimension $p \times n$ whereas our variance profile $S$ is of dimension $n \times p$, and
    \item our data matrix $X$ have entries with unit variances, and therefore shows up in the formulas with an explicit rescaling factor of $1/\sqrt{n}$.
\end{itemize}
The main result of this section is stated next: Roughly speaking, it says that provided that the approximation error $\|\gf(z) - \fM(z)\|_\infty$ is bounded in some way, we can further sharpen the bound on $\|\gf(z) - \fM(z)\|_\infty$ to be a function of the size of the perturbation $\|\df(z)\|_\infty$ (which is subsequently controlled in \Cref{sec:perturb:offdiag:bound}). It also provides an additional result to control the size of $\big| \langle w, \gf(z) - \Mf(z) \rangle_2 \big| $ for any fixed vector $w \in \C^{n+p}$.

\begin{lemma}[$z$-explicit version of the $\|\argdot\|_\infty$ bound in Lemma 3.9 of \cite{alt2017local}]
\label{lem:diag:error:to:perturb} Suppose $\Im(z) \leq \min\{1, s_*\}$. Let $c_L > 0$ be the constant from \Cref{lem:bound_B_inverse}. There exist some constants $C_B \geq 1$ and $C_0 > 0$, depending only on the constants $s_*$, $L_1$, $L_2$, $\psi_1$ and $\psi_2$ in \Cref{asst:VP}, such that almost surely
\begin{align*}
    &\;
    \| \gf(z) - \fM(z) \|_\infty \,\times\,\ind{\Big\{ \| \gf(z) - \fM(z) \|_\infty \,\leq\,\mfrac{C_0 \, (\Im(z))^{c_L+1}}{(\Im(z) |z| + s_*)^{c_L} } \Big\}}
    \\
    &\;\leq\;
    \mfrac{3 C_B }{(\Im(z))^2} \, 
    \Big( \mfrac{\Im(z) \, |z|
        + s_*
        }{\Im(z)}  \Big)^{c_L}
    \, 
    \| \df(z) \|_\infty
    \;.
\end{align*}
\end{lemma}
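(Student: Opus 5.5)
The plan is to follow the standard stability argument for the perturbed QVE (as in Lemma 3.9 of \cite{alt2017local}), tracking $z$ throughout via \Cref{lem:abs:fM:bound} and \Cref{lem:bound_B_inverse}.

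\textbf{Step 1: reduce to a quadratic equation for the error.} Write $\mathbf{u}\coloneqq(\gf(z)-\fM(z))/|\fM(z)|$, with coordinatewise division. Subtract the QVE \eqref{eq:combined_QVE} from the perturbed QVE \eqref{eq:perturbed_combined_QVE}. Then use the identity $-1/\gf_l+1/\fM_l=(\gf_l-\fM_l)/(\fM_l\gf_l)$. Multiplying through by $\fM_l\gf_l$ and expanding $\gf_l=\fM_l+(\gf_l-\fM_l)$ gives the standard form
\begin{align*}
    \fB(z)\,\mathbf{u} \;=\; e^{-i\psi}\odot\mathbf{u}\odot\fF(z)\mathbf{u} \;+\; |\fM(z)|\odot\df(z) \;+\; (\text{product of }\mathbf{u}\text{ and }\df\text{ terms}),
\end{align*}
with $\fB$ as in \eqref{eq:defn:fB}. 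I would take the precise form from (3.17)--(3.18) of \cite{alt2017local}. The only point to check is that the rescaling by $1/\sqrt n$ does not change it.

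\textbf{Step 2: invert $\fB$ and bound coefficients.} Apply $\fB^{-1}(z)$ and take $\|\cdot\|_\infty$. By \eqref{eq:estimate_norm_B_inverse_infty} and $\Im z\le\min\{1,s_*\}$, the max term dominates the constant 1. This gives $\|\fB^{-1}\|_\infty\lesssim((\Im z|z|+s_*)/\Im z)^{c_L}$, since $(\Im z)^{12}\le1$. By \Cref{lem:fS:norm} and \Cref{lem:abs:fM:bound}, we have $\|\fF\|_\infty\le s_*/(\Im z)^2$ and $|\fM|\le 1/\Im z$. These give
\begin{align*}
    \|\mathbf{u}\|_\infty \;\le\; C\Big(\mfrac{\Im z|z|+s_*}{\Im z}\Big)^{c_L}\Big(\mfrac{\|\mathbf{u}\|_\infty^2}{(\Im z)^2}+\mfrac{\|\df\|_\infty}{\Im z}+\mfrac{\|\mathbf{u}\|_\infty\|\df\|_\infty}{(\Im z)^{a}}\Big)
\end{align*}
for a fixed power $a$. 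The cross term is absorbed in the same way as the quadratic term.

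\textbf{Step 3: absorb the quadratic term on the good event.} Convert between $\mathbf{u}$ and $\gf-\fM$ using the lower and upper bounds on $|\fM|$ from \Cref{lem:abs:fM:bound}. These cost factors $(\Im z|z|+s_*)/\Im z$ and $1/\Im z$, which are absorbed by enlarging $c_L$ where needed or by choosing $C_0$ small. On the event $\|\gf-\fM\|_\infty\le C_0(\Im z)^{c_L+1}/(\Im z|z|+s_*)^{c_L}$, the coefficient of the quadratic term, multiplied by $\|\mathbf{u}\|_\infty$, is at most $\tfrac12$ if $C_0$ is small enough. Moving that term to the left-hand side yields $\|\gf-\fM\|_\infty\le 3C_B(\Im z)^{-2}((\Im z|z|+s_*)/\Im z)^{c_L}\|\df\|_\infty$. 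The factor 3 comes from $2\cdot(1+\text{slack})$. Off the event the inequality holds trivially, since the left-hand side is zero.

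\textbf{Main obstacle.} The hardest step is the bookkeeping of powers of $\Im z$. The exponent $c_L+1$ in the event must be exactly what allows absorption, which requires carefully matching the conversion factors between $\mathbf{u}$ and $\gf-\fM$ against the $(\Im z)^{-2}$ in the prefactor. If the powers fail to match, I would redefine $c_L$ upward, which is permitted since it is only an existential constant depending on $L_1,L_2$.
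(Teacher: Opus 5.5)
Your mechanism (quadratic error equation, invert $\fB$, absorb the quadratic term on the good event) is the one behind Lemma 5.11 of \cite{Ajanki_2019}. The paper simply invokes that lemma as \Cref{lem:gf:fM:diff}, with $C'_\fM=1/\Im(z)$, $C'_\fB=C_B\big((\Im(z)|z|+s_*)/\Im(z)\big)^{c_L}$ and $\|\fS\|_\infty\le s_*$; its threshold $1/(2\max\{1,\|\fS\|_\infty\}C'_\fM C'_\fB)$ and its conclusion $3C'_\fB(C'_\fM)^2\|\df\|_\infty$ give the statement verbatim. Your Steps 2--3, however, do not close as written. You bound the quadratic term by $\|\fF\|_\infty\|\mathbf u\|_\infty^2\le s_*(\Im z)^{-2}\|\mathbf u\|_\infty^2$ and pass back to $\gf-\fM$ through $\min_l|\fM_l|\ge \Im(z)/(\Im(z)|z|+s_*)$. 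With that bookkeeping, absorption needs
\begin{align*}
    \|\gf-\fM\|_\infty \;\lesssim\; \mfrac{(\Im z)^{c_L+3}}{(\Im(z)|z|+s_*)^{c_L+1}}\;.
\end{align*}
This is smaller than the stated threshold $C_0(\Im z)^{c_L+1}/(\Im(z)|z|+s_*)^{c_L}$ by a factor of order $(\Im z)^2/(\Im(z)|z|+s_*)$. That factor tends to $0$ as $\Im z\to0$, which is exactly the regime the local law needs. No choice of $C_0$ repairs this. ``Enlarging $c_L$'' works only if the exponent in the event is decoupled from the one used for $\|\fB^{-1}\|_\infty$, so you would be proving the lemma with $c_L+2$ rather than with the constant of \Cref{lem:bound_B_inverse}.

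The loss disappears if you use the exact identities already implicit in your Step 1, rather than bounding $\fF$ and $\mathbf u$ separately. Coordinatewise, $u_l(\fF\mathbf u)_l=(\gf_l-\fM_l)\,(\fS(\gf-\fM))_l$ and $|\fM_l|\df_l+e^{-i\psi_l}|\fM_l|u_l\df_l=e^{-i\psi_l}\gf_l\df_l$. Hence $\|\mathbf u\|_\infty\le C'_\fB\big(s_*\|\gf-\fM\|_\infty^2+\|\gf\|_\infty\|\df\|_\infty\big)$. Multiplying once by $\|\fM\|_\infty\le C'_\fM$ gives
\begin{align*}
    \|\gf-\fM\|_\infty \;\le\; C'_\fM C'_\fB\big(s_*\|\gf-\fM\|_\infty^2+\|\gf\|_\infty\|\df\|_\infty\big)\;,
\end{align*}
with no lower bound on $|\fM|$ used anywhere. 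Take $C_0=1/(2\max\{1,s_*\}C_B)$, so the event threshold equals $1/(2\max\{1,s_*\}C'_\fM C'_\fB)\le\frac12$. On the event the quadratic term is then at most $\frac12\|\gf-\fM\|_\infty$, and $\|\gf\|_\infty\le C'_\fM+\frac12\le\frac32C'_\fM$. Together these yield exactly $3C'_\fB(C'_\fM)^2\|\df\|_\infty$. In this form the cross term needs no separate treatment: it is folded into the linear term through the bound on $\|\gf\|_\infty$. It is never absorbed into the left-hand side the way the quadratic term is, because its extra factor is $\|\df\|_\infty$, which the event does not control.
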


As with the proof of Lemma 3.9 of \cite{alt2017local}, we will employ the following tool developed by \cite{Ajanki_2019}, which enables a refined control on $\|\gf(z) - \fM(z)\|_\infty$ whenever a crude control is already available:

\begin{lemma}[$\|\argdot\|_\infty$ bound of Lemma~5.11 of \cite{Ajanki_2019}]
\label{lem:gf:fM:diff}
Suppose $\gf', \df' \in \C^{n+p}$ with $\inf_{l \leq n+p} \abs{\gf'_l} > 0$ satisfy the perturbed QVE \eqref{eq:perturbed_combined_QVE} at some $z \in \Hb$, and assume
\begin{align*} 
    \norm{\Mf(z)}_\infty \le C'_\fM, \qquad \norminf{\fB^{-1}(z)} \le C'_\fB,
    \tagaligneq
    \label{eq:quantitative_bulk_stability_assumptions}
\end{align*}
for some constants $C'_\fM,C'_\fB \ge 1$. If
\begin{equation*}
    \norm{\gf' - \Mf(z)}_\infty \le \frac{1}{2 \max\{1, \norminf{\Sf}\} C'_\fM C'_\fB},
    \tagaligneq \label{eq:gf:fM:diff:init:threshold}
\end{equation*}
then we have
\begin{align*}
    \tagaligneq\label{eq:bulk_perturbations_inf}
    \norm{\gf' - \Mf(z)}_\infty &\le 3 C'_\fB (C'_\fM)^2 \norm{\df'}_\infty\;.
\end{align*}
\end{lemma}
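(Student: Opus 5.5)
We will prove the bound by a direct linearization of the perturbed QVE around $\Mf(z)$. The quadratic remainder will be absorbed using the a priori smallness \eqref{eq:gf:fM:diff:init:threshold}. Throughout we fix $z$, write $\fM=\Mf(z)$, and set $\Delta \coloneqq \gf' - \fM$. All products and inverses of vectors are taken entrywise.

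\emph{Step 1 (exact error identity).} Subtract the unperturbed QVE $-\fM^{-1} = z\bone + \Sf\fM$ from the perturbed one $-(\gf')^{-1} = z\bone + \Sf\gf' + \df'$. This gives
\begin{align*}
\fM^{-1} - (\gf')^{-1} \;=\; \Sf\Delta + \df'.
\end{align*}
Since $\fM^{-1} - (\gf')^{-1} = \Delta/(\fM\gf')$, and since $\inf_l|\gf'_l|>0$ makes this well defined, multiplying by $\fM\gf' = \fM(\fM+\Delta)$ yields
\begin{align*}
(1 - \fM^2\Sf)\Delta \;=\; \fM^2\df' + \fM\Delta\,(\Sf\Delta + \df').
\end{align*}
Here the multiplications by $\fM^2$ and $\fM\Delta$ on the left are by diagonal matrices.

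\emph{Step 2 (relating $1-\fM^2\Sf$ to $\fB$).} From the definition \eqref{eq:defn:fB}, a direct computation gives
\begin{align*}
1 - \fM^2\Sf \;=\; \diag\{\fM^2/|\fM|^2\}\,\diag\{|\fM|\}\,\fB\,\diag\{|\fM|\}^{-1}.
\end{align*}
Note that $\fM$ has no zero entries, since it lies in $\bbH^{n+p}$. Inverting this factorization, we get
\begin{align*}
\Delta \;=\; \diag\{|\fM|\}\,\fB^{-1}\Big( |\fM|\,\df' + \tfrac{|\fM|}{\fM}\,\Delta\,(\Sf\Delta+\df')\Big).
\end{align*}
To check this, note that $|\fM|^{-1}\cdot(|\fM|^2/\fM^2)\cdot\fM^2 = |\fM|$, and $|\fM|^{-1}\cdot(|\fM|^2/\fM^2)\cdot \fM = |\fM|/\fM$, which has unit modulus. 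This step is the key point. No lower bound on $|\fM|$ or $|\gf'|$ enters, and this is why the hypotheses only need $C'_\fM$ and $C'_\fB$.

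\emph{Step 3 (absorbing the quadratic term).} Take $\|\cdot\|_\infty$ and use $\|\fM\|_\infty\le C'_\fM$, $\norminf{\fB^{-1}}\le C'_\fB$ and $\norminf{\Sf\Delta}\le \norminf{\Sf}\|\Delta\|_\infty$. This gives
\begin{align*}
\|\Delta\|_\infty \;\le\; C'_\fB (C'_\fM)^2\|\df'\|_\infty + C'_\fM C'_\fB\|\Delta\|_\infty\big(\norminf{\Sf}\|\Delta\|_\infty + \|\df'\|_\infty\big).
\end{align*}
By \eqref{eq:gf:fM:diff:init:threshold} we have $C'_\fM C'_\fB \norminf{\Sf}\|\Delta\|_\infty \le \tfrac12$, and also $C'_\fM C'_\fB\|\Delta\|_\infty\le \tfrac12$. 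Hence
\begin{align*}
\|\Delta\|_\infty \le C'_\fB(C'_\fM)^2\|\df'\|_\infty + \tfrac12\|\Delta\|_\infty + \tfrac12\|\df'\|_\infty.
\end{align*}
Rearranging gives
\begin{align*}
\|\Delta\|_\infty \le 2C'_\fB(C'_\fM)^2\|\df'\|_\infty + \|\df'\|_\infty \le 3C'_\fB(C'_\fM)^2\|\df'\|_\infty,
\end{align*}
where the last inequality uses $C'_\fM, C'_\fB\ge1$. This is \eqref{eq:bulk_perturbations_inf}.

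The only delicate point is Step 2. One must arrange the algebra so that the quadratic remainder carries the unimodular factor $|\fM|/\fM$ rather than $1/\fM$ or $1/\gf'$. If instead we expand $\Delta/(\fM\gf')$ as $\Delta/\fM^2$ plus a remainder, the resulting bound requires $\inf|\fM|$, which is not among the hypotheses. I would therefore first verify the factorization identity for $1-\fM^2\Sf$ carefully. Once it holds, the rest is a routine absorption argument.
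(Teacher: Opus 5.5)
Your proof is correct. The paper does not prove this lemma itself; it imports it from \cite{Ajanki_2019}. Your argument is a self-contained version of the standard linearization behind that result. To see the match, set $u=\Delta/|\fM|$ and $e^{i\psi}=\fM/|\fM|$. Your Step~2 identity then becomes the stability equation
\[
\fB u = e^{-i\psi}u\,\fF u + (1+e^{-i\psi}u)\,|\fM|\,\df', \qquad \fF=\diag\{|\fM|\}\,\Sf\,\diag\{|\fM|\},
\]
and Step~3 is the usual absorption of the quadratic term under the a~priori smallness assumption.

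What your write-up adds is a direct check of the constants in the exact form the paper uses them. The threshold $\big(2\max\{1,\norminf{\Sf}\}C'_\fM C'_\fB\big)^{-1}$ does yield the factor $3C'_\fB(C'_\fM)^2$. No lower bound on $|\fM|$ is needed, since only the unimodular factor $|\fM|/\fM$ multiplies the remainder. This is precisely what the proof of Lemma~\ref{lem:diag:error:to:perturb} relies on.

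One notational slip: in Step~3 you mean $\|\Sf\Delta\|_\infty\le\norminf{\Sf}\,\|\Delta\|_\infty$, not $\norminf{\Sf\Delta}$.
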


\begin{proof}[Proof of \Cref{lem:diag:error:to:perturb}] The goal is to apply \Cref{lem:gf:fM:diff} with $\gf'=\gf(z)$ and $\df' = \df(z)$. We first note that by our \Cref{lem:abs:fM:bound} and the condition that $\Im(z) \leq \min\{1, s_*\}$, we can take the bound on $\norminf{ \fM(z) }$ in  \Cref{lem:gf:fM:diff} as 
\begin{align*}
    C'_\fM \;\coloneqq\;  \mfrac{1}{\Im(z)} 
    \;\geq\; 1 \;.
\end{align*}
Meanwhile by \eqref{eq:estimate_norm_B_inverse_infty} in our \Cref{lem:bound_B_inverse}, there is some constant $C_B \geq 1$ that depends only on the constants $s_*$, $L_1$, $L_2$, $\psi_1$ and $\psi_2$ in \Cref{asst:VP} and some constant $c_L > 0$ that depends only on $L_1$ and $L_2$  such that
\begin{align*}
    \norminf{\fB^{-1}(z)}
    \;\leq&\;
    \mfrac{C_B}{2} \Big(
    1 
    + 
    \max\Big\{ 1 \,,\, 
    \Big( \mfrac{\Im(z) \, |z|
        + s_*
        }{\Im(z)}  \Big)^{c_L} \Big\}
    \,
    (\Im(z))^{12}
    \Big)
    \\
    \;\overset{(a)}{=}&\;
    \mfrac{C_B}{2} \Big(
    1 
    + 
    \Big( \mfrac{\Im(z) \, |z|
        + s_*
        }{\Im(z)}  \Big)^{c_L}
    \,
    (\Im(z))^{12}
    \Big)
    \\
    \;\overset{(b)}{\leq}&\;
    C_B
    \Big( \mfrac{\Im(z) \, |z|
        + s_*
        }{\Im(z)}  \Big)^{c_L}
    \tagaligneq \label{eq:Binv:bound}
    \;.
\end{align*}
In $(a)$, we have used $s_* \geq \Im(z)$; in $(b)$ we have used $\Im(z) \leq 1$ and that the quantity $\big(\frac{\Im(z) \, |z| + s_*}{\Im(z)}\big)^{c_L} \geq 1$. In other words, we can take the bound on $\norminf{\fB^{-1}(z)}$ in  \Cref{lem:gf:fM:diff} as 
\begin{align*}
    C'_\fB \;\coloneqq\;  C_B 
    \Big( \mfrac{\Im(z) \, |z|
        + s_*
        }{\Im(z)}  \Big)^{c_L}\;,
\end{align*}
which is $\geq 1$ since $\Im(z) \leq s_*$ and $C_B \geq 1$. We also note that by \Cref{lem:fS:norm}, 
\begin{align*}
    \| \fS \|_\infty \;\leq\; s_* \;.
\end{align*}
In this case, the threshold in \eqref{eq:gf:fM:diff:init:threshold} satisfies
\begin{align*}
    \mfrac{1}{2 \, \max\{1, \norminf{\Sf}\} \, C'_\fM C'_\fB }
    \;\geq\;
    \mfrac{C_0 \, (\Im(z))^{c_L+1}}{(\Im(z) |z| + s_*)^{c_L} }
\end{align*}
for some constant $C_0 > 0$ that depends only on the constants $s_*$, $L_1$, $L_2$, $\psi_1$ and $\psi_2$ in \Cref{asst:VP}. This allows us to apply \Cref{lem:gf:fM:diff} to give that, almost surely,
\begin{align*}
    &\;
    \| \gf(z) - \fM(z) \|_\infty \,\times\,\ind{\Big\{ \| \gf(z) - \fM(z) \|_\infty \,\leq\,\mfrac{C_0 \, (\Im(z))^{c_L+1}}{(\Im(z) |z| + s_*)^{c_L} } \Big\}}
    \\
    &\;\leq\;
    3 C_\fB' (C_\fM')^2 \, \| \df(z) \|_\infty
    \\
    &\;=\;
    \mfrac{3 C_B }{(\Im(z))^2} \, 
    \Big( \mfrac{\Im(z) \, |z|
        + s_*
        }{\Im(z)}  \Big)^{c_L}
    \, 
    \| \df(z) \|_\infty
    \;.
\end{align*}
\end{proof}

\subsection{Bounding the QVE perturbation and the off-diagonal terms of the resolvent} \label{sec:perturb:offdiag:bound} 

\Cref{lem:gf:fM:diff} allows us to control the error of approximating the diagonal of the resolvent, $\|\gf(z) - \fM(z)\|_\infty$, through the random perturbation $\df(z)$ (where the stochasticity comes from the data). One of the goals in this section is to control $\df(z)$, which is a $\bbH \rightarrow \C^{n+p}$ function, as its output dimension 
\begin{align*}
    n+p \;\rightarrow\; \infty\;.
\end{align*}
The other goal of this section is to control the off-diagonal terms of the resolvent $G(z)$, i.e.~to provide a bound on 
\begin{align*}
    \max_{l \neq j} | G_{lj}(z)|\;.
\end{align*}
This term appears alongside the diagonal resolvent approximation term in the quantity we ultimately seek to control:
\begin{align*}
    \max_{1 \leq l,l' \leq n+p} \big| G_{ll'}(z) - \fM_l(z) \, \ind_{\{ l = l' \} } \big|
    \;=&\;
    \max\Big\{ \| \gf(z) - \fM(z)\|_\infty \,,\; \max_{l \neq j} | G_{lj}(z)|  \Big\}
    \;.
    \tagaligneq \label{eq:separate:diag:offdiag:err}
\end{align*}

\vspace{.5em}

The main results of this section are the following stochastic bounds on the QVE perturbation $\|\df(z)\|_\infty$ --- which controls $\| \gf(z) -\fM(z) \|_\infty$ --- and the off-diagonal resolvent term $\max_{l \neq j} | G_{lj}(z)|$. Here, not only do we need to track the dependence of $z$, we also need to develop results that allow $z$ to be taken small. This leads to bounds that are slightly different from \cite{ajanki2017universalitygeneralwignertypematrices} and \cite{alt2017local}.

\begin{lemma}[Perturbation terms, variant of Lemma 2.1 of \cite{ajanki2017universalitygeneralwignertypematrices} / Lemma 4.6 of \cite{alt2017local}] \label{lem:perturb:bound} Under \Cref{asst:VP},
\begin{align*}
    \| \df(z)\|_\infty \;\prec\; \mfrac{1}{\sqrt{n} \, \Im(z)} + \mfrac{1}{n \, (\Im(z))^3}\;.
\end{align*}
\end{lemma}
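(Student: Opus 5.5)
We bound each of the three pieces of $\df_m(z)$ in \eqref{eq:defn:perturb} separately; the case $\df_{n+r}(z)$ is identical with rows and columns interchanged. A union bound over the $n+p$ coordinates costs only a polynomial factor, which stochastic domination absorbs. Throughout we use the trivial deterministic bounds $\|G(z)\|_2,\|G^{(m)}(z)\|_2\le 1/\Im(z)$. We also use that $G^{(m)}$ is independent of the $m$-th row $(X_{mk})_{k\le p}$, because $H^{(m)}$ has the $m$-th row and column removed.

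\emph{Quadratic-form terms.} Conditionally on $G^{(m)}$, the first two terms of $\df_m(z)$ are an off-diagonal and a diagonal centred quadratic form in the independent entries $Y_k\coloneqq X_{mk}/\sqrt n$, with $\Var[Y_k]=S_{mk}\le s_*/(n+p)$. Since the entries are Gaussian, their normalized moments are uniformly bounded. The standard large-deviation bounds for linear and quadratic forms (Lemma 5.3 of \cite{Ajanki_2019}, as used in \cite{ajanki2017universalitygeneralwignertypematrices,alt2017local}) then give
\begin{align*}
    \Big|\msum_{k\neq l} Y_k G^{(m)}_{n+k,n+l} Y_l\Big| \;\prec\; \Big(\msum_{k,l} S_{mk}S_{ml}|G^{(m)}_{n+k,n+l}|^2\Big)^{1/2}
    \;\le\; \mfrac{s_*}{n+p}\,\|G^{(m)}\|_{F}^{\rm (block)},
\end{align*}
and an analogous bound for the diagonal term with $(\sum_k S_{mk}^2|G^{(m)}_{n+k,n+k}|^2)^{1/2}$.

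For the Frobenius norm we use the Ward identity $\sum_l |G^{(m)}_{kl}|^2=\Im G^{(m)}_{kk}/\Im(z)\le 1/\Im(z)^2$. This gives $\|G^{(m)}\|_F\le \sqrt{n+p}/\Im(z)$, so both terms are $\prec (n+p)^{-1/2}\Im(z)^{-1}\lesssim 1/(\sqrt n\,\Im(z))$. The diagonal term is even smaller. This produces the first summand of the claimed bound. Note that we deliberately avoid the a priori bound on $\Im G$ via $\Im\fM$ used in \cite{alt2017local}; the crude Ward bound suffices here and keeps the $z$-dependence explicit.

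\emph{Correction term.} The third term is $\sum_k S_{mk}G_{n+k,m}G_{m,n+k}/\gf_m$. We need a lower bound on $|\gf_m(z)|$ that does not rely on the local law we are trying to prove. The Schur complement formula gives
\begin{align*}
    -1/\gf_m \;=\; z+\msum_{k,l}Y_kG^{(m)}_{n+k,n+l}Y_l .
\end{align*}
Its modulus is at most $|z|+\|G^{(m)}\|_2\sum_k Y_k^2\prec K+ s_*/\Im(z)$, so $1/|\gf_m|\prec 1/\Im(z)$ on $\D_{\upsilon,K}$ when $\Im(z)\le 1$.

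The off-diagonal entries are handled by the other Schur expansion, $G_{m,n+k}=-\gf_m\sum_l Y_l G^{(m)}_{n+l,n+k}$. Combining this with $S_{mk}\le s_*/(n+p)$ gives
\begin{align*}
    \msum_k S_{mk}|G_{n+k,m}||G_{m,n+k}|\;\le\;\mfrac{s_*}{n+p}\msum_k|G_{m,n+k}|^2\;\le\;\mfrac{s_*}{(n+p)\Im(z)^2},
\end{align*}
where the last step is again the Ward identity. Dividing by $|\gf_m|$ yields $\prec 1/(n\,\Im(z)^3)$, the second summand. A finer argument using the large-deviation bound on $G_{m,n+k}$ would improve this, but it is not needed.

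\emph{Main obstacle.} The delicate point is the correction term. In \cite{alt2017local} it is controlled using $|\gf_m|\gtrsim 1$, which is available only once the local law holds. We must instead use the deterministic-type lower bound on $|\gf_m|$ derived above, and combine it with the Ward identity without circularity. Checking that the large-deviation lemma applies uniformly in $z\in\D_{\upsilon,K}$ is routine. For this we use a grid of mesh $n^{-3}$ together with the Lipschitz continuity of $G$ in $z$, with constant $\Im(z)^{-2}\le n^{2\upsilon}$.
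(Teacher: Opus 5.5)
\paragraph{Comparison with the paper's proof.} Your argument is correct on the domain where the lemma is actually used. For the two quadratic-form terms it is the same as the paper's: a large-deviation bound for the quadratic forms in the $m$-th row, then the Ward identity on $G^{(m)}$ together with $\|G^{(m)}\|\le 1/\Im(z)$.

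You treat the correction term $\df_m^{(3)}$ by a different route. The paper substitutes the expansion $G_{m,n+k}=G_{n+k,m}=-G_{mm}Q^{(m)}_{n+k}$, where $Q^{(m)}_{n+k}=\sum_l G^{(m)}_{n+k,n+l}X_{ml}/\sqrt n$, into the numerator. The denominator then cancels, giving $\df_m^{(3)}=\sum_k S_{mk}\,G_{mm}\,(Q^{(m)}_{n+k})^2$. From there it needs only the upper bound $|G_{mm}|\le 1/\Im(z)$ and a linear-form large-deviation bound $Q^{(m)}_{n+k}\prec 1/(\sqrt n\,\Im(z))$.

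You instead bound the numerator deterministically via the Ward identity for $G$ itself. You then control $1/|\gf_m|$ through the Schur complement and the concentration of $\sum_k Y_k^2$.

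\paragraph{What each route buys.} Your route uses one scalar concentration estimate instead of $p$ linear-form bounds. However, $1/|\gf_m|\le |z|+\|Y\|^2/\Im(z)$ brings in a $|z|$-dependence, so you obtain the stated form only for bounded $|z|$. That covers $\D_{\upsilon,K}$, and also the point $z_0$ with large $\Im(z_0)$ used later, since there $1/|\gf_m|\lesssim K+n^{\xi}$. The paper's cancellation gives the bound for every $z\in\bbH$ and never needs a lower bound on $|\gf_m|$.

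The expansion you quote, $G_{m,n+k}=-\gf_m\sum_l Y_l G^{(m)}_{n+l,n+k}$, is exactly the paper's identity. Had you substituted it, your ``main obstacle'' would have vanished.

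\paragraph{A simpler fix.} The obstacle vanishes even more simply, because $\Im G_{mm}\le |G_{mm}|$. The Ward identity then gives
\[
\frac{1}{|\gf_m|}\sum_k S_{mk}\,|G_{m,n+k}|^2\;\le\;\frac{s_*}{n+p}\,\frac{\Im G_{mm}}{\Im(z)\,|G_{mm}|}\;\le\;\frac{s_*}{(n+p)\,\Im(z)} .
\]
So the correction term is deterministically $O(1/(n\,\Im(z)))$ for all $z\in\bbH$. This is sharper than the $1/(n\,\Im(z)^3)$ obtained by either argument.
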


\begin{lemma}[Off-diagonal terms, variant of Lemma 2.1 of \cite{ajanki2017universalitygeneralwignertypematrices} / Lemma 4.6 of \cite{alt2017local}] \label{lem:offdiag:bound} Under \Cref{asst:VP},
\begin{align*}
    \max_{l \neq j} | G_{lj}(z) | 
    \;\prec\; 
    \mfrac{1}{\sqrt{n}} \,\mfrac{1}{ \min\{1, \Im(z) \}^3 }\;.
\end{align*}
\end{lemma}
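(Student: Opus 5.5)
The plan is to use the standard two-step Schur complement expansion of an off-diagonal resolvent entry, bound the two diagonal prefactors trivially by $1/\Im(z)$, and control the remaining linear and bilinear forms in the independent Gaussian entries by a large deviation estimate combined with the Ward identity. Throughout, write $h_{lk}\coloneqq H_{lk}$ for the entries of $H$ in \eqref{eq:empirical:resolvent}. Each $h_{lk}$ is either $0$ or a centered Gaussian with variance $S_{ik}\le s_*/(n+p)$, and the entries are independent up to the symmetry $h_{lk}=h_{kl}$.

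Fix $l\neq j$. Applying the Schur complement formula twice (first removing index $l$, then index $j$) gives the identity
\begin{align*}
    G_{lj}(z)
    \;=\;
    -\,G_{ll}(z)\,G^{(l)}_{jj}(z)\,
    \Big( h_{lj} \;-\; \msum_{k,k'\notin\{l,j\}} h_{lk}\, G^{(lj)}_{kk'}(z)\, h_{k'j} \Big)\;,
\end{align*}
where $G^{(lj)}$ denotes the resolvent with both rows and columns $l,j$ removed, defined as for $G^{(k)}$ in \Cref{sec:diag:error:to:perturb}. Every resolvent of a real symmetric matrix satisfies $\|(\cdot-z)^{-1}\|\le 1/\Im(z)$, so $|G_{ll}|,|G^{(l)}_{jj}|\le 1/\Im(z)$ deterministically. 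For the first term in the bracket, Gaussianity and the flatness bound give $|h_{lj}|\prec n^{-1/2}$. This term vanishes when $l,j$ lie on the same side of the bipartite structure.

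The key step is the bilinear form $\sum_{k,k'} h_{lk} G^{(lj)}_{kk'} h_{k'j}$. The vectors $(h_{lk})_{k\notin\{l,j\}}$ and $(h_{k'j})_{k'\notin\{l,j\}}$ are independent of $G^{(lj)}$. They are also independent of each other: when $l,j$ are both row indices they are distinct rows of $X$; when both are column indices they are distinct columns; when $l\le n<j$ the first vector lives on column indices and the second on row indices, so no entry is shared. Conditioning on $G^{(lj)}$, I apply the standard large deviation bound for bilinear forms in independent centered variables with finite moments of all orders (as in \cite{ajanki2017universalitygeneralwignertypematrices}). This yields
\begin{align*}
    \Big|\msum_{k,k'} h_{lk} G^{(lj)}_{kk'} h_{k'j}\Big|
    \;\prec\;
    \Big(\msum_{k,k'} \Var[h_{lk}]\,\Var[h_{k'j}]\,|G^{(lj)}_{kk'}|^2\Big)^{1/2}
    \;\le\;
    \mfrac{s_*}{n+p}\Big(\msum_{k} \mfrac{\Im G^{(lj)}_{kk}}{\Im(z)}\Big)^{1/2}
    \;\lesssim\; \mfrac{1}{\sqrt{n}\,\Im(z)}\;,
\end{align*}
using the Ward identity $\sum_{k'}|G_{kk'}|^2=\Im G_{kk}/\Im(z)\le 1/\Im(z)^2$ and $p/n\to\gamma$.

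Combining the three estimates gives $|G_{lj}|\prec \Im(z)^{-2}\big(n^{-1/2}+n^{-1/2}\Im(z)^{-1}\big)\lesssim n^{-1/2}\min\{1,\Im(z)\}^{-3}$. A union bound over the at most $(n+p)^2$ pairs costs only a polynomial factor, which is absorbed by the definition of $\prec$, so the bound holds for the maximum over $l\neq j$.

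The main obstacle is making the large deviation step uniform. It must be applied conditionally on the minor, with constants depending only on the moment bounds of the standardized Gaussian entries and on the constants of \Cref{asst:VP}, and not on $z$. All $z$-dependence must enter only through the deterministic factors $1/\Im(z)$. Since the bound is stated pointwise in $z$ rather than uniformly over $\D_{\upsilon,K}$, no lattice or continuity argument is needed here.
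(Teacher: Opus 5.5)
Your proposal is correct and follows the paper's proof essentially step for step. Both use the two-index Schur complement formula and the trivial $1/\Im(z)$ bounds on $G_{ll}$ and $G^{(l)}_{jj}$. Both then combine a Gaussian bound on $h_{lj}$ with the large deviation estimate for the bilinear form against $G^{(lj)}$, and use the Ward identity $\sum_{k,k'}|G^{(lj)}_{kk'}|^2\le (n+p)/\Im(z)^2$. Your explicit check that the two vectors are independent of each other and of the minor is a detail the paper leaves implicit.
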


We present the proof of the two results one after another, since many of the calculations are shared.

\begin{proof}[Proof of \Cref{lem:perturb:bound}]  We recall the definition of the resolvent matrix \eqref{eq:empirical:resolvent} that 
\begin{align*}
    G(z) \;\coloneqq&\; (H - z I_N)^{-1}\;,
    \qquad 
    \text{ where }
    H \;\coloneqq\; \begin{psmallmatrix}
        0 & \frac{1}{\sqrt{n}} X \\
        \frac{1}{\sqrt{n}} X^\top & 0
    \end{psmallmatrix}
    \;\text{ and }\;
    z \in \bbH\;.
\end{align*}
Also recall that the definition \eqref{eq:defn:perturb} of the perturbation vector $\df(z)$ involves the following matrix minor 
of $G$ after removal of the $k$-th row and $k$-th column of $H$, 
\begin{align*}
    G^{(k)}(z)
    \;\coloneqq\;
    \big( H^{(k)} - z I_N \big)^{-1}
    \;,
    \qquad 
    \text{ where }
    H^{(k)}_{lj} \;\coloneqq\;  H_{lj} \, \ind_{\{ l \neq k \,,\, j \neq k \}}
    \;.
\end{align*}
Recall from \eqref{eq:defn:perturb} that the perturbation vector $\df(z) = (\df_1(z), \ldots, \df_{n+p}(z))$ reads, for $1 \leq m \leq n$ and $1 \leq r \leq p$,
\begin{align*}
    \df_m(z) 
    \;\coloneqq&\;
    \msum_{k,l=1, k \neq l}^p \mfrac{X_{mk}}{\sqrt{n}} \, G^{(m)}_{n+k, n+l}(z) \,\mfrac{X_{ml} }{\sqrt{n}}
    +
    \msum_{k=1}^p \Big( \mfrac{| X_{mk}|^2}{n} - S_{mk} \Big) G^{(m)}_{n+k, n+k}(z)
    \\
    &\;
    -
    \msum_{k=1}^p S_{mk} \mfrac{G_{n+k,m}(z) \, G_{m,n+k}(z)}{G_{m,m}(z)}
    \;, 
    \\
    \df_{n+r}(z)
    \;\coloneqq&\;
    \msum_{j,t = 1, j \neq t}^n
    \mfrac{X_{jr}}{\sqrt{n}} \, G^{(n+r)}_{jt}(z) \, \mfrac{X_{tr}}{\sqrt{n}}
    +
    \msum_{j=1}^n
    \Big( \mfrac{| X_{jr}|^2}{n} - S_{jr} \Big) G^{(n+r)(z)}_{jj}
    \\
    &\;
    - 
    \msum_{j=1}^n S_{jr} \mfrac{G_{j,n+r}(z) \, G_{n+r,j}(z)}{G_{n+r,n+r}(z)}
    \;.
\end{align*}
We focus on controlling $\df_m(z)$ for $m \leq n$, as the argument for $\df_{n+r}(z)$ is similar. Notice that the random variables $(X_{mk})_{k \leq p}$ are independent of the matrix minor $G^{(m)}(z)$. By applying the same large deviation estimate as \cite{alt2017local,ajanki2017universalitygeneralwignertypematrices} (see, e.g.~Appendix C of \cite{erdos2013local}) to $(X_{mk})_{k \leq p}$, we can control the first term of $\df_m(z)$ as 
\begin{align*}
    \big|\df_m^{(1)}(z)\big|
    \;\coloneqq&\;
    \Big| \msum_{k,l=1, k \neq l}^p \mfrac{X_{mk}}{\sqrt{n}} \, G^{(m)}_{n+k, n+l}(z) \,\mfrac{X_{ml} }{\sqrt{n}}
    \Big|
    \\
    \;\prec&\;
    \Big( 
        \msum_{k,l=1, k \neq l}^p S_{mk} S_{ml} \big| G^{(m)}_{n+k, n+l}(z) \big|^2 
    \Big)^{1/2}
    \\
    \;\overset{(a)}{\preceq}&\;
    \mfrac{1}{n}
    \Big( 
        \msum_{k,l=1, k \neq l}^p \big| G^{(m)}_{n+k, n+l}(z) \big|^2 
    \Big)^{1/2}
    \\
    \;\overset{(b)}{=}&\;
    \mfrac{1}{n}
    \bigg( 
        \msum_{l=1}^p 
        \mfrac{\Im\Big( G^{(m)}_{n+l,n+l}(z)  \Big)}{\Im(z)}
    \bigg)^{1/2}
    \\
    \;\overset{(c)}{\preceq}&\;
    \mfrac{1}{\sqrt{n} \; \Im(z)}
    \;.
    \tagaligneq \label{eq:df:1}
\end{align*}
In $(a)$ above, we have recalled that $\|S\|_\infty \leq \frac{s_*}{n+p}$ by \Cref{asst:VP}, that $\lim n/p \in(0,\infty)$ and  that the \Cref{defn:sto:dom} of stochastic domination allows us to absorb the factor of $s_*^{1/2}$; in $(b)$, we have noted that by applying the Wald identity to the resolvent $G^{(k)}$ (see e.g.~Lemma 2.3 of \cite{he2018isotropic}),
\begin{align*}
    \msum_{r \leq n+p} 
    \big| G^{(m)}_{rl}(z) \big|^2
    \;=\;
    \mfrac{\Im G^{(m)}_{ll}(z)}{\Im(z)}\;;
    \tagaligneq \label{eq:Wald}
\end{align*}
in $(c)$,  we have noted that since $G^{(k)}(z)$ is the resolvent of a real Hermitian matrix, its diagonal entries are bounded by its largest eigenvalues in norm, and therefore 
\begin{align*}
    \sup_{1 \leq l \leq N} 
     \,
     \big| G^{(k)}_{ll}(z) \big|
     \;\leq\; 
     \mfrac{1}{\Im(z)}
     \;.
     \tagaligneq \label{eq:resolvent:bound}
\end{align*}
Note that the main difference of our proof from that of \cite{alt2017local,ajanki2017universalitygeneralwignertypematrices} is the use of \eqref{eq:Wald} to compute $| G^{(m)}_{n+k, n+l}(z)|^2$ directly, which allows us to bound $\big|\df_m^{(1)}(z)\big|$ directly without imposing an additional indicator function. Now by a similar argument and using the Gaussianity of $X_{mk}$, the second term of $\df_m(z)$ can be bounded as 
\begin{align*}
    \big|\df_m^{(2)}(z)\big|
    \;\coloneqq&\;
    \Big| 
        \msum_{k=1}^p \Big( \mfrac{| X_{mk}|^2}{n} - S_{mk} \Big) G^{(m)}_{n+k, n+k}(z)
    \Big| 
    \\
    \;\prec&\;
    \Big( 
        \msum_{k=1}^p    
        S_{mk}^2 \,
        \big| G^{(m)}_{n+k, n+k}(z) \big|^2
    \Big)^{1/2}
    \\
    \;\preceq&\;
    \mfrac{1}{n} \, \Big( 
        \msum_{k=1}^p    
        \big| G^{(m)}_{n+k, n+k}(z) \big|^2
    \Big)^{1/2} 
    \\
    \;\overset{\eqref{eq:resolvent:bound}}{\preceq}&\;
    \mfrac{1}{\sqrt{n} \; \Im(z)}
    \;.
    \tagaligneq \label{eq:df:2}
\end{align*}
To handle the last term of $\df_m(z)$, 
\begin{align*}
    \big|\df_m^{(3)}(z)\big|
    \;\coloneqq&\;
    \Big| 
        \msum_{k=1}^p S_{mk} \mfrac{G_{n+k,m}(z) \, G_{m,n+k}(z)}{G_{m,m}(z)}
    \Big| 
    \;,
\end{align*}
we again deviate from the proof of \cite{alt2017local,ajanki2017universalitygeneralwignertypematrices} to use the second resolvent identity in Lemma 4.5 of \cite{erdos2013local}: For $l, j \neq k$, we have that
\begin{align*}
    G_{lk}(z) 
    \;=&\; 
    - G_{kk}(z) \msum_{1 \leq r \leq n+p; r \neq k} G_{lr}^{(k)}(z)\, H_{rk}
    \;,
    \\
    G_{kj}(z) 
    \;=&\; 
    - G_{kk}(z) \msum_{1 \leq r \leq n+p; r \neq k} H_{kr}(z)\, G_{rj}^{(k)}
    \;.
\end{align*}
Note that since $G(z)$ and $H$ are both Hermitian, we can define
\begin{align*}
    Q_l^{(k)}(z)
    \;\coloneqq&\;
    \msum_{1 \leq r \leq n+p; r \neq k} G_{lr}^{(k)}(z)\, H_{rk}
    \;=\;
    \msum_{1 \leq r \leq n+p; r \neq k} H_{kr} \,G_{rl}^{(k)}(z)
    \;,
\end{align*}
and obtain 
\begin{align*}
    \big|\df_m^{(3)}(z)\big|
    \;=&\;
    \Big| 
        \msum_{k=1}^p 
        S_{mk}
        \, G_{m,m}(z)  \,
        \big( Q^{(m)}_{n+k}(z) \big)^2
    \Big| 
    \\
    \;\prec&\;
    \mfrac{1}{n}
        \msum_{k=1}^p 
        \, | G_{m,m}(z)|  \,
        \big( Q^{(m)}_{n+k}(z) \big)^2
    \;.
\end{align*}
Now by a standard large deviation bound from Appendix C of \cite{erdos2013local} again,
\begin{align*}
    Q^{(m)}_{n+k}(z)
    \;\prec&\;
    \Big(
        \msum_{1 \leq r \leq n+p; r \neq m} 
        \| S \|_\infty 
        \,\big| G^{(m)}_{r,n+k}(z) \big|^2
    \Big)^{1/2}
    \\
    \;\preceq&\;
    \mfrac{1}{\sqrt{n}}
    \Big(
        \msum_{1 \leq r \leq n+p; r \neq m} 
        \big| G^{(m)}_{r,n+k}(z) \big|^2
    \Big)^{1/2}
    \\
    \;\overset{\eqref{eq:Wald}}{\preceq}&\;
    \mfrac{1}{\sqrt{n}}
    \Bigg(
        \mfrac{\Im G_{n+k,n+k}^{(m)}(z)}{\Im(z)}
    \Bigg)^{1/2}
    \\
    \;\overset{\eqref{eq:resolvent:bound}}{\preceq}&\;
    \mfrac{1}{\sqrt{n} \; \Im(z)}
    \;.
\end{align*}
This implies 
\begin{align*}
     \big|\df_m^{(3)}(z)\big|
    \;\prec&\;
    \mfrac{1}{n}
        \msum_{k=1}^p 
        \, | G_{m,m}(z)|  \,
        \mfrac{1}{n \,(\Im(z))^2}
    \;\overset{(d)}{\preceq}\;
    \mfrac{1}{n \, (\Im(z))^3}\;.
    \tagaligneq \label{eq:df:3}
\end{align*}
In $(d)$ above, we have noted that the bound \eqref{eq:resolvent:bound} applies also to $|G_{m,m}(z)|$ since $G(z)$ is also the resolvent of a real Hermitian matrix. Combining the three bounds \eqref{eq:df:1}, \eqref{eq:df:2} and \eqref{eq:df:3} by the triangle inequality, we obtain that for $1 \leq m \leq n$,
\begin{align*}
    \big|\df_m(z)\big|
    \;\prec\; \mfrac{1}{\sqrt{n} \, \Im(z)} + \mfrac{1}{n \, (\Im(z))^3}\;.
\end{align*}
Also note that none of the bounds nor the large standard deviation bounds above depend on the index $m$, and note that the exact same calculation applies to $|\df_{n+r}(z)|$ for $1 \leq r \leq p$. Therefore we obtain the desired control that
\begin{align*}
    \| \df(z)\|_\infty \;\prec\; \mfrac{1}{\sqrt{n} \, \Im(z)} + \mfrac{1}{n \, (\Im(z))^3}\;.
\end{align*}
\end{proof}

\vspace{.5em}

\begin{proof}[Proof of \Cref{lem:offdiag:bound}] For $l \neq j$, we start with the off-diagonal Schur formula (see (2.3) of \cite{ajanki2017universalitygeneralwignertypematrices}) to obtain 
\begin{align*}
    G_{ll}(z) 
    \;=\;
    G_{ll}(z) \, G^{(l)}_{jj}(z) \,
    \Bigg( 
        - 
        H_{lj}
        +
        \sum_{\substack{1 \leq k_1, k_2 \leq n+p \\ k_1, k_2 \,\not\in\, \{l,j\}}}
        H_{lk_1} \, G^{(lj)}_{k_1k_2} \, H_{k_2j}
    \Bigg)
    \;,
\end{align*}
where we have denoted the analogue of $G^{(k)}(z)$ but with two indices $k_1,k_2$ removed as 
\begin{align*}
    G^{(k_1k_2)}(z)
    \;\coloneqq\;
    \big( H^{(k_1k_2)} - z I_N \big)^{-1}
    \;,
    \qquad 
    \text{ where }
    H^{(k_1k_2)}_{lj} \;\coloneqq\;  H_{lj} \, \ind_{\{ l, j \not\in \{k_1,k_2\} \}}
    \;.
\end{align*}
We first use the same argument as (2.20) of \cite{ajanki2017universalitygeneralwignertypematrices}: By noting that $H_{lj}$ in the first term is either zero or zero-mean Gaussian with variance $\leq s_*/(n+p)$, and applying the large deviation bound of Appendix C of \cite{erdos2013local} to the sum in the second term, we obtain 
\begin{align*}
    | G_{lj}(z)|^2 
    \;\prec\;
    | G_{ll}(z) |^2 \, | G^{(l)}_{jj}(z)|^2 \,
    \Bigg( 
        \mfrac{1}{n} 
        +
        \mfrac{1}{n^2} 
        \sum_{\substack{1 \leq k_1, k_2 \leq n+p \\ k_1, k_2 \not\in \{l,j\}}}
        \big| G^{(lj)}_{k_1k_2}  \big|^2
    \Bigg)
    \;.
\end{align*}
We now control this by observing that the Wald identity \eqref{eq:Wald} and the bound \eqref{eq:resolvent:bound} both apply to the resolvent $G^{(k_1k_2)}$, which gives that almost surely
\begin{align*}
    \sum_{\substack{1 \leq k_1, k_2 \leq n+p \\ k_1, k_2 \not\in \{l,j\}}}
        \big| G^{(lj)}_{k_1 k_2}  \big|^2
    \;\leq&\;
    \sum_{\substack{1 \leq k_1 \leq n+p \\ k_1 \not\in \{l,j\}}}
    \mfrac{\Im  G^{(lj)}_{k_1 k_1}(z) }{\Im(z)}
    \;\leq\;
    \mfrac{n+p}{(\Im(z))^2}\;.
\end{align*}
This implies 
\begin{align*}
    | G_{lj}(z)|^2 
    \;\prec&\;
    | G_{ll}(z) |^2 \, | G^{(l)}_{jj}(z)|^2 \,
    \Big( 
        \mfrac{1}{n} 
        +
        \mfrac{1}{n \, (\Im(z))^2} 
    \Big)
    \\
    \;\overset{\eqref{eq:resolvent:bound}}{\preceq}&\;
    \mfrac{1}{n} \mfrac{1}{ \min\{1, \Im(z) \}^6 }
    \;.
\end{align*}
Again, the bounds above are independent of the indices $l$ and $j$, which imply that 
\begin{align*}
    \max_{l \neq j} | G_{lj}(z) | 
    \;\prec\; 
    \mfrac{1}{\sqrt{n}} \,\mfrac{1}{ \min\{1, \Im(z) \}^3 }\;.
\end{align*}
\end{proof}

\subsection{Proof for the local law under the variance profile model} \label{sec:proof:local:law:VP}

We now have most of the key ingredients to prove the local laws in \Cref{thm:local:law:VP}:
\begin{itemize}
    \item \Cref{lem:gf:fM:diff} says that whenever the diagonal resolvent approximation error $\| \gf(z) - \fM(z)\|_\infty$ satisfies some crude bound, we can improve the bound to depend on the perturbation $\| \df(z)\|_\infty$. It also provides an analogous bound for  $\big| \langle w, \gf(z) - \Mf(z) \rangle_2 \big| $ for any fixed vector $w \in \C^{n+p}$;
    \item \Cref{lem:perturb:bound} controls  $\| \df(z)\|_\infty$;
    \item \Cref{lem:offdiag:bound} controls the off-diagonal terms $\max_{l \neq j} | G_{lj}(z) | $.
\end{itemize}
We adapt the proof strategy of \cite{alt2017local} (see their proof of Theorem 4.2) to control the difference 
\begin{align*}
    \Lambda(z) \;\coloneqq\; \| \gf(z) - \fM(z)\|_\infty\;.
\end{align*}
The main difference from \cite{alt2017local} is that, in their proof of Theorem 4.2, they focus on $z$ with $|z| \geq \delta$ for some fixed $\delta > 0$, whereas we need to allow for values of $z$ with vanishing $|z|$. Specifically, recall that we focus on the domain of $z$ given by 
\begin{align*}
    \D_{\upsilon, K} \;\coloneqq\; 
    \Big\{ 
        z \in \bbH 
        \;\Big|\;
        \Im(z) \;\geq\; \mfrac{1}{n^\upsilon}\;,
        \quad 
        | z| \;\leq\; K
    \Big\}    
    \;.
\end{align*}
Roughly speaking, our strategy is:

\begin{proplist}
    \item We first use \Cref{lem:gf:fM:diff} to identify some $\epsilon_n = o(1)$ such that for all $z \in \D_{\upsilon,K}$, whenever $\Lambda(z) \leq \epsilon_n$, we can sharpen the bound to $\Lambda(z) \prec \delta_n$ with overwhelming probability (see \Cref{defn:overwhelming:prob} below), where $\delta_n \ll \epsilon_n$. This step will use the lower bound on $\Im(z)$ in the definition of $\D_{\upsilon, K}$, and restricts the choice of $\upsilon$;
    \item Observe that (i) implies that, for every $z \in \D_{\upsilon,K}$, $\Lambda(z)$ lies outside $(\delta_n, \epsilon_n)$ with overwhelming probability;
    \item We find some $z_0 \in \D_{\upsilon,K}$ with $|z_0| = \Theta(1)$ and $\Lambda(z_0) \leq \frac{C_0 (\Im(z_0))^{c_L+1}}{(\Im(z_0) |z_0| + s_*)^{c_L}}$, such that  \Cref{lem:gf:fM:diff} can be applied to sharpen the bound on $\Lambda(z_0)$ such that $\Lambda(z_0) \leq \epsilon_n$ and, by (i), gives $\Lambda(z_0) \leq \delta_n$;
    \item We use the continuity of the function $\Lambda$ to note that, since $\Lambda$ cannot cross the interval $(\delta_n, \epsilon_n)$, and we have found one $z_0 \in \D_{\upsilon,K}$ such that $\Lambda(z_0) \leq \delta_n$, then necessarily $\Lambda(z) \leq \delta_n$ for all values of $z \in \D_{\upsilon,K}$. 
\end{proplist} 
Step (iv) needs to be handled with care, so that the continuity argument survives under taking the union bound of different events with overwhelming probability. To make this argument formal, we use the same technical tool used in \cite{alt2017local}, i.e.~Lemma A.2 of \cite{ajanki2017universalitygeneralwignertypematrices}. We first make precise our notion of overwhelming probability:

\begin{definition}[Overwhelming probability, Definition 1.9 of \cite{ajanki2017universalitygeneralwignertypematrices}] \label{defn:overwhelming:prob}
For a sequence of $n$-indexed events $A_n$, we say that $A_n$ holds asymptotically with overwhelming probability (a.w.o.p) if for every $\kappa > 0$, there exists $n_\kappa > 0$ that depends only on the constants $s_*$, $L_1$, $L_2$, $\psi_1$ and $\psi_2$ in \Cref{asst:VP}, such that for all $n \geq n_\kappa$,
\begin{align*}
    \P\big( A_n \big) \;\geq\; 1 - n^{-\kappa}\;.
\end{align*}
\end{definition}

\begin{remark} Note that, similar to \Cref{defn:sto:dom}, our definition here does not let $n_\kappa$ depend on the tolerance parameter $\upsilon$ (as opposed to \cite{ajanki2017universalitygeneralwignertypematrices}). This is because we will make the dependence of $\upsilon$ explicit in the bounds.
\end{remark}

We restate Lemma A.2 of \cite{ajanki2017universalitygeneralwignertypematrices} in our notation:

\begin{lemma}[Bound propagation, Lemma A.2 of \cite{ajanki2017universalitygeneralwignertypematrices}] \label{lem:bound:propagate}
Suppose $C_1,D_1,D_2,D_3$ and $\eps_1$ are positive constants, depending only on the constants $s_*$, $L_1$, $L_2$, $\psi_1$ and $\psi_2$ in \Cref{asst:VP}. Let $\D^{(n)} \subseteq \C$ be a sequence of connected subsets of the complex upper half plane with only polynomially growing diameter, 
\begin{align*}
    \sup\{|z_1-z_2|: z_1,z_2 \in\D^{(n)} \}\leq N^{D_1} \;.
\end{align*}
Let $\varphi = (\varphi^{(n)}(z): z \in  \D^{(n)})_{n \in \N}$ be a sequence of non-negative random functions and $\Phi^{(n)}:\D^{(n)} \to (N^{-D_3},\infty)$ a sequence of deterministic functions on these sets. Suppose they satisfy the following conditions:
\begin{itemize}
\item Uniformly for all $z_1,z_2 \in \D^{(n)}$,
\begin{align*}
    |\varphi^{(n)}(z_1)-\varphi^{(n)}(z_2)|+|\Phi^{(n)}(z_1)-\Phi^{(n)}(z_2)|\,\leq\, C_1\2 n^{D_2}|z_1-z_2|^{\eps_1} \,;
    \tagaligneq \label{eq:weak:continuity}
\end{align*}
\item Uniformly for all $z \in \D^{(n)}$ 
\begin{align*}
    \text{a.w.o.p.}\qquad 
    \varphi^{(n)} \,\notin\, \bigl[\2\Phi^{(n)}(z)-n^{-D_3},\2\Phi^{(n)}(z)\2\bigr]
    \,;
    \tagaligneq \label{eq:gap:cond}
\end{align*}
\item There is a sequence $z_0^{\2(n)} \in \D^{(n)}$ such that
\begin{align*}
    \text{a.w.o.p.}\qquad 
    \varphi^{(n)}(\2z_0^{\2(n)})\,\leq\, \Phi^{(n)}(\2z_0^{\2(n)})
    \,;
    \tagaligneq \label{eq:init:cond}
\end{align*}
\end{itemize}
Then the sequence $\varphi$ satisfies the bound
\begin{align*}
    \text{a.w.o.p.}\qquad \text{for all }\; z \in \D^{(n)}\;:\quad
\varphi^{(n)}(z)\,\leq\, \Phi^{(n)}(z)\,.    
\end{align*}
\end{lemma}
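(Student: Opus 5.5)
The plan is to reduce the statement to a deterministic continuity argument on a single high-probability event, obtained by discretizing $\D^{(n)}$ at a polynomially fine scale and taking a union bound. Fix $\kappa>0$. First I choose a mesh size $\eta_n \coloneqq n^{-D_4}$ with $D_4$ large enough that the Hölder bound \eqref{eq:weak:continuity} gives $C_1 n^{D_2}\eta_n^{\eps_1}\le \tfrac14 n^{-D_3}$ for all large $n$; taking $D_4>(D_2+D_3+1)/\eps_1$ suffices. Since $\D^{(n)}$ has diameter at most $N^{D_1}$, and $N$ is polynomial in $n$ (recall $N=n+p$ with $p/n\to\gamma$), it is contained in a square of side $N^{D_1}$. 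Hence there is an $\eta_n$-net $\cN_n\subseteq\D^{(n)}$ with $|\cN_n|\lesssim n^{D_5}$ for some constant $D_5$, and with $z_0^{(n)}\in\cN_n$. To build it, pick one point of $\D^{(n)}$ in each grid cell of side $\eta_n/\sqrt2$ that meets $\D^{(n)}$.

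Next, I apply the gap condition \eqref{eq:gap:cond} at each point of $\cN_n$ with exponent $\kappa+D_5$, and the initial condition \eqref{eq:init:cond} with exponent $\kappa$. The phrase ``uniformly for all $z$'' means that $n_\kappa$ does not depend on $z$, so a union bound gives an event $\Omega_n$ with $\P(\Omega_n)\ge 1-2n^{-\kappa}$ on which two things hold. On $\Omega_n$, $\varphi^{(n)}(w)\notin[\Phi^{(n)}(w)-n^{-D_3},\Phi^{(n)}(w)]$ for all $w\in\cN_n$, and $\varphi^{(n)}(z_0^{(n)})\le\Phi^{(n)}(z_0^{(n)})$. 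Relabelling $\kappa$ then yields the a.w.o.p.\ conclusion, provided the remaining argument on $\Omega_n$ is deterministic.

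On $\Omega_n$ I consider the difference $h(z)\coloneqq \varphi^{(n)}(z)-\Phi^{(n)}(z)$. By \eqref{eq:weak:continuity}, $h$ is continuous on $\D^{(n)}$, and $|h(z)-h(w)|\le \tfrac14 n^{-D_3}$ whenever $|z-w|\le\eta_n$. Take any $z\in\D^{(n)}$ and a net point $w$ with $|z-w|\le \eta_n$. Either $h(w)>0$ or $h(w)<-n^{-D_3}$. In the first case $h(z)>-\tfrac14 n^{-D_3}$; in the second $h(z)<-\tfrac34 n^{-D_3}$. Hence $h(\D^{(n)})$ avoids the interval $[-\tfrac34 n^{-D_3},-\tfrac14 n^{-D_3}]$. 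Define $A\coloneqq\{z: h(z)<-\tfrac12 n^{-D_3}\}$ and $B\coloneqq\{z:h(z)>-\tfrac12 n^{-D_3}\}$. These are disjoint, relatively open by continuity, and cover $\D^{(n)}$ because of the excluded interval.

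Since $\D^{(n)}$ is connected, one of $A$ and $B$ is empty. The initial condition puts $z_0^{(n)}$ in a set where $h\le 0$, but that alone does not decide between them, so I would strengthen it. Here I expect the main (if minor) obstacle: the initial condition gives only $h(z_0)\le0$, whereas the gap condition at $z_0\in\cN_n$ then forces $h(z_0)<-n^{-D_3}$, so $z_0\in A$. Thus $B=\emptyset$, and on $\Omega_n$ we get $h(z)<-\tfrac34 n^{-D_3}<0$, that is, $\varphi^{(n)}(z)\le\Phi^{(n)}(z)$ for all $z$.

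The only other care needed is bookkeeping. The constants $D_4,D_5$ and the threshold for $n$ must depend only on $C_1,D_1,D_2,D_3,\eps_1$ and $\gamma$, so that the resulting $n_\kappa$ depends only on the constants of \Cref{asst:VP}, as \Cref{defn:overwhelming:prob} requires. The lower bound $\Phi^{(n)}>N^{-D_3}$ is not needed beyond making the interval nondegenerate.
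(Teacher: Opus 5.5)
Your argument is correct. The $n^{-D_4}$-net has polynomially many points and includes $z_0^{(n)}$; a union bound gives the gap condition on the whole net. The Hölder bound then removes the interval $[-\tfrac34 n^{-D_3},-\tfrac14 n^{-D_3}]$ from the range of $\varphi^{(n)}-\Phi^{(n)}$, and connectedness, anchored at $z_0^{(n)}$ (where the gap forces the value below $-n^{-D_3}$), gives the conclusion on one event of probability $1-O(n^{-\kappa})$. The paper does not reprove this lemma but imports it from \cite{ajanki2017universalitygeneralwignertypematrices}, whose proof is essentially this same discretize, union-bound, continuity and connectedness argument.
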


The next lemma provides the continuity required for step (iv) of the proof sketch above, i.e.~the continuity condition required for \eqref{eq:weak:continuity}. Note that as with \cite{alt2017local}, the continuity argument is not required to be uniform in $n$ and $p$, and the Lipschitz constant is allowed to grow with $n$ and $p$.

\begin{lemma}[Continuity] \label{lem:lipschitz} Suppose \Cref{asst:VP} holds and let $c_L \in \N$ be given as in \Cref{lem:bound_B_inverse}. Then for any $z_1, z_2 \in \D_{\upsilon,K}$ with $K \geq \sqrt{s_*}$, we have that almost surely
\begin{align*}
    \big| \Lambda(z_1) - \Lambda(z_2) \big| 
    \;\lesssim&\;
    n^{1 + 2\upsilon + \max\{0, (c_L - 12) \upsilon \} }
    \,
    K^{c_L}
    \,
    |z_1 - z_2| 
    \;.
\end{align*}
\end{lemma}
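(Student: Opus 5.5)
We split $\Lambda(z)=\|\gf(z)-\fM(z)\|_\infty$ into its two ingredients and use $\big|\Lambda(z_1)-\Lambda(z_2)\big|\le \|\gf(z_1)-\gf(z_2)\|_\infty+\|\fM(z_1)-\fM(z_2)\|_\infty$. Each of the two terms is then bounded by a Lipschitz estimate in $z$, taken along the segment joining $z_1$ and $z_2$. Since $\D_{\upsilon,K}$ need not be convex but is contained in the convex set $\{\Im z\ge n^{-\upsilon}\}\cap\{|z|\le K\}$, the segment stays in a region where all pointwise bounds below hold with $\Im(z)\ge n^{-\upsilon}$ and $|z|\le K$. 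It therefore suffices to bound $\sup_z\|\partial_z\gf(z)\|_\infty$ and $\sup_z\|\partial_z\fM(z)\|_\infty$ over that region.

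For the random part we use the resolvent identity $\partial_z G(z)=G(z)^2$. This gives $|\partial_z G_{ll}(z)|\le\|G(z)\|_{\rm op}^2\le(\Im z)^{-2}\le n^{2\upsilon}$ almost surely, deterministically in the data. Hence $\|\gf(z_1)-\gf(z_2)\|_\infty\le n^{2\upsilon}|z_1-z_2|$, which is comfortably within the claimed bound.

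For the deterministic part we invoke \Cref{cor:dz:fM:bound}:
\begin{align*}
\|\partial_z\fM(z)\|_\infty\lesssim\frac{n+p}{(\Im z)^2}\Big(1+\max\Big\{1,\Big(\frac{\Im(z)|z|+s_*}{\Im(z)}\Big)^{c_L}\Big\}(\Im z)^{12}\Big).
\end{align*}
On the region we have $\Im(z)|z|+s_*\le K^2+s_*\lesssim K^2$ (using $K\ge\sqrt{s_*}$). Note, however, that the stated bound has only $K^{c_L}$, so some care is needed here. Keeping $|z|\le K$, we get $\Im(z)|z|+s_*\le K\Im(z)+s_*$, so that
\begin{align*}
\frac{\Im(z)|z|+s_*}{\Im(z)}\le K+\frac{s_*}{\Im(z)}\lesssim K\,n^{\upsilon},
\end{align*}
using $\Im z\ge n^{-\upsilon}$ and treating $s_*$ as a constant. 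The bracket is then $\lesssim 1+K^{c_L}n^{c_L\upsilon}(\Im z)^{12}$. Here $(\Im z)^{12}\le K^{12}$ when $\Im z\ge1$, while for $\Im z\le1$ we bound $(\Im z)^{12}\le1$ and cancel $12$ powers of $1/\Im z\le n^\upsilon$ against the $n^{c_L\upsilon}$ factor, which yields $n^{\max\{0,(c_L-12)\upsilon\}}$. To see the cancellation, write $(\Im z)^{-c_L}(\Im z)^{12}=(\Im z)^{-(c_L-12)}\le n^{\max\{0,c_L-12\}\upsilon}$. Absorbing powers of $K$ into $K^{c_L}$ (possibly after enlarging $c_L$, or by noting that $K$ is a fixed constant), and multiplying by $(n+p)(\Im z)^{-2}\lesssim n^{1+2\upsilon}$ using $p/n\to\gamma$, gives
\begin{align*}
\|\partial_z\fM\|_\infty\lesssim n^{1+2\upsilon+\max\{0,(c_L-12)\upsilon\}}K^{c_L}.
\end{align*}

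Integrating along the segment and adding the two contributions proves the lemma, since $n^{2\upsilon}$ is dominated by the deterministic term. The main obstacle is the bookkeeping in the $\fM$ term: making sure the case split $\Im z\le1$ versus $\Im z>1$ produces exactly the exponent $\max\{0,(c_L-12)\upsilon\}$ and the power $K^{c_L}$. A second point to verify is that \Cref{cor:dz:fM:bound} applies along the whole segment, which holds because it is valid for all $z\in\bbH$, since $\fB^{-1}(z)$ exists by \Cref{lem:bound_B_inverse}.
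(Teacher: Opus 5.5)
Your proposal is correct and follows the paper's argument. The reverse triangle inequality splits $\Lambda$ into a resolvent part and a QVE part. The resolvent part is at most $n^{2\upsilon}|z_1-z_2|$: you integrate $\partial_z G=G^2$, while the paper writes $G(z_1)-G(z_2)=(z_1-z_2)G(z_1)G(z_2)$, which is the same estimate. The $\fM$ part comes from integrating \Cref{cor:dz:fM:bound} along $[z_1,z_2]$, using $|z|\le K$ and $\Im z\ge n^{-\upsilon}$.

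Your case split $\Im z\le 1$ versus $\Im z>1$ is more careful than the paper's step, which silently absorbs the extra powers of $K$ that appear when $\Im z>1$. Both you and the paper rely on $K$ being a fixed constant for this. Also note that $\D_{\upsilon,K}$ is in fact convex (a half-plane intersected with a disk), so the segment automatically stays in the domain.
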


\begin{remark} \Cref{lem:lipschitz} provides a Lipschitz bound in contrast to the H\"older continuity result used in the proof of Theorem 4.2 of Lemma 3.8 of \cite{alt2017local}. This comes at the cost of a looser dependence on $n$.
\end{remark}

\begin{proof}[Proof of \Cref{lem:lipschitz}] By the first resolvent identity (see Lemma 6, p.578 of \cite{dunford1988linear}),
\begin{align*}
    G(z_1) - G(z_2)  \;=\; (z_1 - z_2) \, G(z_1) \, G(z_2)\;.
\end{align*}
Since  $\| G(z) \|_\infty \leq \frac{1}{\Im(z)}$ by \eqref{eq:resolvent:bound}, we have that almost surely
\begin{align*}
    \big\| \gf(z_1) - \gf(z_2) \big\|_\infty
    \;=&\;
    \max_{1\leq l \leq n+p} \big| G_{ll}(z_1) - G_{ll}(z_2) \big| 
     \\
     \;\leq&\; 
     \mfrac{|z_1 - z_2|}{\Im(z_1) \, \Im(z_2)} 
     \;\leq\; 
     n^{2\upsilon} \, |z_1 - z_2| 
    \;.
\end{align*}
Meanwhile by \Cref{cor:dz:fM:bound},
\begin{align*}
    \big\| \partial_z  \fM(z) \big\|_\infty  
    \;\lesssim\;
    \mfrac{n+p}{(\Im (z))^2} \,
    \Big(
    1 
    + 
    \max\Big\{ 1 \,,\, 
    \Big( \mfrac{\Im(z) \, |z|
        + s_*
        }{\Im(z)}  \Big)^{c_L} \Big\}
    \,
    (\Im(z))^{12}
    \Big)
    \;.
\end{align*}
Denoting $[z_1,z_2]$ as the linear path connecting $z_1$ and $z_2$, we have 
\begin{align*}
    &\;\big\| \fM(z_1) - \fM(z_2) \big\|_\infty 
     \\
     &\;\lesssim\; 
     |z_1 - z_2| 
     \, 
     \sup_{z \in [z_1, z_2]}
     \Big|
     \mfrac{n+p}{(\Im (z))^2} \,
    \Big(
    1 
    + 
    \max\Big\{ 1 \,,\, 
    \Big( \mfrac{\Im(z) \, |z|
        + s_*
        }{\Im(z)}  \Big)^{c_L} \Big\}
    \,
    (\Im(z))^{12}
    \Big)
    \Big|
    \\
    &\;\overset{(a)}{\lesssim}\;
    K^{c_L}
    \,
    |z_1 - z_2| 
     \, 
     \sup_{z \in [z_1, z_2]}
     \Big|
     \mfrac{n+p}{(\Im (z))^2} \,
    \Big(
    1 
    + 
    \mfrac{1}{(\Im(z))^{c_L}}
    \,
    (\Im(z))^{12}
    \Big)
    \Big|
    \\
    &\;\overset{(c)}{\lesssim}\;
    n^{1 + 2\upsilon + \max\{0, (c_L - 12) \upsilon \} }
    \,
    K^{c_L}
    \,
    |z_1 - z_2| 
\end{align*}
In $(a)$, we have used that for any $z \in [z_1, z_2]$, $|z| \leq K$; in $(b)$, we have used that $\Im(z) \geq (n+p)^{-\upsilon}$; in $(c)$ we used that $\lim n/p \in (0,\infty)$. To obtain the desired bound, we use the reverse triangle inequality to compute 
\begin{align*}
    \big| \Lambda(z_1) - \Lambda(z_2) \big| 
    \;=&\;
    \Big| 
        \big\| \gf(z_1) - \fM(z_1) \big\|_\infty 
        \,-\,
        \big\| \gf(z_2) - \fM(z_2) \big\|_\infty 
    \Big| 
    \\
    \;\leq&\;
    \big\| \big(\gf(z_1) - \fM(z_1)\big) - \big(\gf(z_2) - \fM(z_2) \big) \big\|_\infty 
    \\
    \;\leq&\;
    \| \gf(z_1) - \gf(z_2) \|_\infty 
    \,+\,
    \| \fM(z_1) - \fM(z_2) \|_\infty 
    \\
    \;\lesssim&\;
    n^{1 + 2\upsilon + \max\{0, (c_L - 12) \upsilon \} }
    \,
    K^{c_L}
    \,
    |z_1 - z_2| 
    \;.
\end{align*}
\end{proof}

\vspace{.5em}

We are now ready to prove the local laws.

\begin{proof}[Proof of \Cref{thm:local:law:VP}] The proof relies on applying \Cref{lem:bound:propagate}. We let $K \geq \sqrt{s_*}$ be some sufficiently large constant that depends only on $s_*$, $L_1$, $L_2$, $\psi_1$ and $\psi_2$ in \Cref{asst:VP}, and whose lower bound is specified in the proof. To apply \Cref{lem:diag:error:to:perturb}, we first note that for $z \in \D_{\upsilon, K}$, since $K \geq s_*$, the RHS of the indicator function in \Cref{lem:diag:error:to:perturb} can be bounded from below as 
\begin{align*}
    \mfrac{C_0 \, (\Im(z))^{c_L+1}}{(\Im(z) |z| + s_*)^{c_L} } 
    \;\geq\;
    \mfrac{C_0}{(K + s_*)^{c_L} } \, n^{- (c_L+1) \upsilon}
    \;.
    \tagaligneq \label{eq:ind:RHS:bound}
\end{align*}
The first bound of \Cref{lem:diag:error:to:perturb} together with the notation $\Lambda(z) = \| \gf(z) - \fM(z)\|_\infty$ implies
\begin{align*}
    \Lambda(z) \,\times\,\ind{\Big\{ \Lambda(z) \,\leq\, \mfrac{C_0}{(K + s_*)^{c_L} } \, n^{- (c_L+1) \upsilon} \Big\}}
    &\;\leq\;
    \Lambda(z)  \,\times\,\ind{\Big\{ \Lambda(z)  \,\leq\,\mfrac{C_0 \, (\Im(z))^{c_L+1}}{(\Im(z) |z| + s_*)^{c_L} } \Big\}}
    \\
    &\;\leq\;
    \mfrac{3 C_B }{(\Im(z))^2} \, 
    \Big( \mfrac{\Im(z) \, |z|
        + s_*
        }{\Im(z)}  \Big)^{c_L}
    \, 
    \| \df(z) \|_\infty
    \\
    &\;\overset{(a)}{\lesssim}\;
    K^{c_L} \, n^{(2+c_L) \upsilon } \,\| \df(z) \|_\infty
    \\
    &\;\overset{(b)}{\prec}\;
    K^{c_L} \, n^{(2+c_L) \upsilon } \, 
    \Big( 
        \mfrac{1}{\sqrt{n} \, \Im(z)} + \mfrac{1}{n \, (\Im(z))^3}
    \Big)
    \\
    &\;\overset{(c)}{\preceq}\;
    K^{c_L} \, n^{(2+c_L) \upsilon } \, 
    \Big( 
        n^{\upsilon - \frac{1}{2} }
        +
        n^{3\upsilon - 1}
    \Big)
    \\
    &\;=\;
    K^{c_L} \, n^{(3+c_L) \upsilon - \frac{1}{2} } \, 
    \big( 
        1
        +
        n^{2\upsilon - \frac{1}{2} }
    \big)
    \\
    &\;\overset{(d)}{\preceq}\;
    K^{c_L} \, n^{(3+c_L) \upsilon - \frac{1}{2} } 
    \;.
\end{align*}
In $(a)$ and $(c)$ above, we have used that $z \in \D_{\nu, K}$, and in $(b)$ we have applied \Cref{lem:perturb:bound}; in $(d)$, we have used that $\upsilon < \frac{1}{4(c_L+2)} \leq \frac{1}{4}$ by assumption. We now define 
\begin{align*}
    \epsilon_n 
    \;\coloneqq&\; 
     \mfrac{C_0}{(K + s_*)^{c_L} } \, n^{- (c_L+1) \upsilon} 
    &\text{ and }&&
    \delta_n 
    \;\coloneqq&\;  
    K^{c_L} \, n^{(3+c_L) \upsilon - \frac{1}{2} }
    \;,
\end{align*}
and note that since  $\upsilon < \frac{1}{4(c_L+2)}$, we have
\begin{align*}
    \epsilon_n \;\gg\; \delta_n\;.
    \tagaligneq \label{eq:K:restrict}
\end{align*}
This implies that we have that uniformly over all $z \in \D_{\upsilon, K}$, $\Lambda(z) \leq \epsilon_n$ implies $\Lambda(z) \prec \delta_n$, and therefore asymptotically with overwhelming probability, we can identify for example a forbidden set for $\Lambda(z)$ as
\begin{align*}
    \Lambda(z) \;\not\in\; \big( \epsilon_n - n^{(3+c_L) \upsilon - \frac{1}{2}}  \,,\, \epsilon_n \big)  \;.
\end{align*}
This shows that \eqref{eq:gap:cond} in \Cref{lem:bound:propagate} is satisfied with 
\begin{align*}
    \varphi^{(n)}(z)
    \;=\;
    \Lambda(z)
    \;,
    \qquad
    \Phi^{(n)}(z) \;=\; \epsilon_n\;,
    \qquad 
    D_3 \;=\; (3+c_L) \upsilon - \frac{1}{2}\;,
    \qquad 
    \D^{(n)} \;=\; \D_{\upsilon, K} \;.
\end{align*}
Meanwhile under the same choice of $\varphi^{(n)}$, $\Phi^{(n)}$ and $\D^{(n)}$, the continuity condition \eqref{eq:weak:continuity}  is satisfied because of \Cref{lem:lipschitz}, with 
\begin{align*}
    D_2 \;=\; 1 + 2 \upsilon + \max\{0, (c_L - 12) \upsilon\}\;,
    \qquad 
    \varepsilon_1
    \;=\;
    1\;.
\end{align*}
Finally to check \eqref{eq:init:cond}, it suffices to find some $z_0 \in \D_{\upsilon, K}$ with $\Lambda(z_0) \ll \epsilon_n$. Consider the crude bound 
\begin{align*}
    \Lambda(z)
    \;\leq\;
    \| \gf(z) \|_\infty
    \,+\,
    \| \fM(z) \|_\infty
    \;\leq\; 
    \mfrac{2}{\Im(z)}\;,
\end{align*}
where we have combined the generic resolvent bound \eqref{eq:resolvent:bound} and \Cref{lem:abs:fM:bound} by the triangle inequality. Then there exists some $z_0 \in \bbH$ with a sufficiently large $\Im(z_0) > 0$ that depends only on $s_*$, $L_1$, $L_2$, $\psi_1$ and $\psi_2$ in \Cref{asst:VP}, such that 
\begin{align*}
    \Lambda(z_0)
    \;\leq\; 
    \mfrac{2}{\Im(z_0)}
    \;\leq\;
    \mfrac{C_0 \, (\Im(z_0))^{c_L+1}}{(\Im(z_0) |z_0| + s_*)^{c_L} }
    \;.
\end{align*}
We take $K \geq |z_0|$ such that $z_0 \in \D_{\upsilon,K}$. Now applying \Cref{lem:diag:error:to:perturb} again,
\begin{align*}
    \Lambda(z_0)
    \;=&\;
    \Lambda(z_0)
     \,\times\,
     \ind{\Big\{ \Lambda(z_0)  \,\leq\,\mfrac{C_0 \, (\Im(z_0))^{c_L+1}}{(\Im(z_0) |z_0| + s_*)^{c_L} } \Big\}}
     \\
    \;\lesssim&\;
    \mfrac{1}{(\Im(z_0))^2} \, 
    \Big( \mfrac{\Im(z_0) \, |z_0|
        + s_*
        }{\Im(z_0)}  \Big)^{c_L}
    \, 
    \| \df(z_0) \|_\infty
    \\
    \;\overset{(a)}{\lesssim}&\;
    \| \df(z_0) \|_\infty
    \\
    \;\overset{(b)}{\prec}&\; \mfrac{1}{\sqrt{n}}
    \;.
\end{align*}
In $(a)$, we have noted that $\Im(z_0)$ can be absorbed into $\lesssim$ by construction; in $(b)$ we have applied \Cref{lem:perturb:bound} and absorbed the $\Im(z_0)$ terms again. In particular since $\upsilon > 0$, 
\begin{align*}
    \mfrac{1}{\sqrt{n}} \;\ll\; \delta_n \;=\; K^{c_L} n^{(3+c_L) \upsilon - \frac{1}{2}}
    \;.
\end{align*}
which then implies  \eqref{eq:init:cond} is fulfilled since
\begin{align*}
    \Lambda(z_0) \;\prec\; \delta_n \;\ll\; \epsilon_n\;.
\end{align*}
Applying \Cref{lem:bound:propagate} gives that for all $z \in \D_{\upsilon, K}$, 
\begin{align*}
    \text{a.w.o.p.}
    \qquad
    \|\gf(z) - \fM(z) \|_\infty \;=\; \Lambda(z) \;\leq\; C_2
    \epsilon_n 
    \;=\;
    \mfrac{C_2'}{(K + s_*)^{c_L}} \, n^{- (c_L+1) \upsilon }
    \tagaligneq \label{eq:awop:propagated:bound}
\end{align*}
for some constants $C_2, C_2' > 0$ that depends only on $s_*$, $L_1$, $L_2$, $\psi_1$ and $\psi_2$ in \Cref{asst:VP}, and therefore 
\begin{align*}
    \max_{1 \leq l \leq n+p}
    \big| 
        G_{l,l}(z)
        -
        \fM_l(z) 
    \big|
    \;=&\;
    \|\gf(z) - \fM(z) \|_\infty 
    \\
    \;\prec&\;
    \mfrac{1}{(K + s_*)^{c_L}} \, n^{- (c_L+1) \upsilon }
    \;\prec\;
     n^{- (c_L+1) \upsilon }
    \;,
\end{align*}
where we have noted that $K$ and $c_L$ are both constants that depend only on $s_*$, $L_1$, $L_2$, $\psi_1$ and $\psi_2$ in \Cref{asst:VP}. Now note that the off-diagonal bound follows by \Cref{lem:offdiag:bound} and that $\Im(z) \geq \frac{1}{n^\upsilon}$ for $z \in \D_{\upsilon, K}$:
\begin{align*}
    \max_{l \neq j} | G_{lj}(z) | 
    \;\prec\; 
    \mfrac{1}{\sqrt{n}} \,\mfrac{1}{ \min\{1, \Im(z) \}^3 }
    \;\preceq\;
    n^{ 3\upsilon - \frac{1}{2} }
    \;\ll\;
    \delta_n \;\ll\; \epsilon_n
    \;.
\end{align*}
Therefore by the triangle inequality, we obtain the desired bound that 
\begin{align*}
    \max_{1 \leq l, j \leq n+p}
    \big| 
        G_{l,j}(z)
        -
        \fM_l(z) \, \ind_{\{l = j\}}
    \big|
    \;=&\;
    \max\Big\{ 
        \|\gf(z) - \fM(z) \|_\infty
        \,,\,
        \max_{l \neq j} | G_{lj}(z) | 
    \Big\}
    \\
    \;\prec&\;
    n^{- (c_L+1) \upsilon }
    \;.
\end{align*}
\end{proof}

\section{Risk control for the variance profile model} \label{appendix:risk}
    
Recall from \eqref{eq:R:formula} that the risk of ridge regression at penalty $\lambda > 0$ is given by $\cR^\lambda(X) = \cR_B^\lambda(X) + \cR_V^\lambda(X)$, where, for $W_n = \frac{1}{n} X^\top X$ and $\bar \Sigma = \frac{1}{n} \sum_{i \leq n} \Var[X_i]$, we have expressed
\begin{align*}
    \cR_B^\lambda(X)
    \;=&\;
        \big\| \bar \Sigma^{1/2}
        \big( (W_n + \lambda I_p)^{-1}  W_n - I_p \big)
        \beta \big\|^2 \,,
    \\
    \cR_V^\lambda(X)
    \;=&\;
    \mfrac{\sigma^2_\epsilon}{n}
    \Tr\big( 
        \bar \Sigma \, W_n ( W_n + \lambda I_p)^{-2}
    \big)
    \;.
\end{align*}
In this section, the data matrix $X$ is given as in \Cref{mod:vp} with the variance profile $S$. We use the standard Hermitization trick to form 
\begin{align*}
    H \;\coloneqq&\; \begin{psmallmatrix}
        0 & \frac{1}{\sqrt{n}} X \\
        \frac{1}{\sqrt{n}} X^\top & 0
    \end{psmallmatrix}
    &\text{ and }&&
    \fS \;\coloneqq&\; \begin{psmallmatrix}
        0 & S \\
        S^\top & 0
    \end{psmallmatrix}
    \;.
\end{align*}
We also consider the assumption on $S$ used in \cite{huang2022data} for \Cref{mod:vp}.

\setcounter{assumption}{0}
\begin{assumption}[Variance profile] \label{asst:VP} For $S$ given in \Cref{mod:vp}, there exist $s_*, \psi_1, \psi_2 > 0$ and $L_1, L_2 \in \N$ such that 
\begin{align*}
    S_{ij} \leq \mfrac{s_*}{p+n}\;,
    \qquad 
    [(SS^\top)^{L_1}]_{ii'}
    \geq 
    \mfrac{\psi_1}{n+p}\;,
    \qquad 
    [(S^\top S)^{L_2}]_{jj'}
    \geq 
    \mfrac{\psi_2}{n+p}\;,
\end{align*}
for all $1 \leq i, i' \leq n$ and $1 \leq j,j' \leq p$, where $(\argdot)^{L}$ denotes the $L$-th matrix power above.
\end{assumption}

Recall from \eqref{eq:empirical:resolvent} the notation for the resolvent
\begin{align*}
    G(z) \;\coloneqq&\; (H - z I_{n+p})^{-1}\;.
\end{align*}
Notice that 
\begin{align*}
    G(z) 
    \;=&\; 
    (H - z I_{n+p})^{-1} 
    \;=\; 
    (H + z I_{n+p}) (H^2 - z^2 I_{n+p})^{-1}
    \\
    \;=&\;
    \begin{psmallmatrix}
        z I_n & \frac{1}{\sqrt{n}}  X \\
       \frac{1}{\sqrt{n}}  X^\top & z I_p
    \end{psmallmatrix}
    \begin{psmallmatrix}
        \frac{1}{n}  X X^\top - z^2 I_n & 0 \\
        0 & \frac{1}{n} X^\top X - z^2 I_p
    \end{psmallmatrix}^{-1}
    \\
    \;=&\;
    \begin{psmallmatrix}
        z ( \frac{1}{n} X X^\top - z^2 I_n)^{-1} & \frac{1}{\sqrt{n}} X ( \frac{1}{n} X^\top X - z^2 I_p)^{-1} \\
        \frac{1}{\sqrt{n}} X^\top ( \frac{1}{n} X X^\top - z^2 I_n)^{-1} & z ( \frac{1}{n} X^\top X - z^2 I_p)^{-1}
    \end{psmallmatrix}
    \;,
\end{align*}
which implies that we may compute the resolvent of $W_n = \frac{1}{n} X^\top X$ that appears in the risk above as
\begin{align*}
    \big( W_n + \lambda I_p \big)^{-1}
    \;=\;
    \mfrac{G_2(\sqrt{-\lambda})}{\sqrt{-\lambda}}\;,
    \tagaligneq \label{eq:resolvent:Wn}
\end{align*}
where we have defined the $p \times p$ matrix $G_2(z)$ by 
\begin{align*}
    (G_2(z))_{l,l'}  \;\coloneqq\; G_{n+l,n+l'}(z) 
    \text{ for } 1 \leq l,l' \leq p\;.
\end{align*}

\vspace{.5em}

Our results in \Cref{appendix:local:law} allow us to approximate $G(z)$ by $\diag\{\fM(z)\}$, where the deterministic function $\fM: \bbH \rightarrow \bbH^{n+p}$ satisfies the quadratic vector equation (QVE)
\begin{align*}
    - \diag\{ \fM(z) \}^{-1} \;=\; z  \, I_{n+p} + 
    \diag\big\{
    \begin{psmallmatrix}
        0 & S \\
        S^\top & 0
    \end{psmallmatrix}  \, \fM(z)
    \big\}
    \;,
    \qquad 
    z \in \bbH
    \;,
    \tagaligneq \label{eq:QVE}
\end{align*}
and where, for any generic $\R^b$ vector $\bv=(v_l)_{1 \leq l \leq b}$, we denote the diagonal matrix
\begin{align}
    \diag\{\bv\} \;=\; \begin{psmallmatrix}
        v_1 & & \\ 
        & \ddots & \\
        & & v_b
    \end{psmallmatrix}
    \;.
\end{align}
The existence and uniqueness of $\fM$ are guaranteed by Theorem 2.1 and (3.6) of  \cite{alt2017local}. Theorem 2.1 of \cite{Ajanki_2019} additionally guarantees that, for each $1 \leq l \leq n+p$, there exists a symmetric probability measure $\rho_l$ on $\R$ such that 
\begin{align*}
    \fM_l(z) \;=\; \mint_\R \mfrac{1}{t-z} \rho_l(dt)\;,
    \tagaligneq \label{eq:coordinatewise:stieltjes:rep}
\end{align*}

\vspace{.5em}

The results in the main text are given in terms of a function $\br: [0,\infty) \rightarrow \C^p$ that is explicitly defined as, for $1 \leq l \leq p$,
\begin{align*}
    r_l(\lambda) \;\coloneqq\; -\sqrt{-\lambda} \; \fM_{n+l}(\sqrt{-\lambda})
    \;.
    \tagaligneq \label{defn:r}
\end{align*}
Part of the proof lies in showing that $\br$ is the unique solution to the system of equations that, for every $1 \leq l \leq p$,
\begin{align*} 
    1
    \;=\; 
    r_l(\lambda)
    +
    \msum_{i=1}^n \mfrac{S_{il} r_l(\lambda)}{\lambda + \sum_{j=1}^p S_{ij} r_j(\lambda)}
    \;,
\end{align*}
subject also to the constraint that
\begin{align*}
    r_l(0) \;\coloneqq\; \lim_{\lambda \downarrow 0} r_l(\lambda) \;\in\; [0,1]
    \qquad 
    \text{for every $1 \leq l \leq p$. }
\end{align*}

\vspace{.5em}

Our main theoretical results are \Cref{thm:bias} and \Cref{thm:var} below, which provide the limiting expressions on the bias and the variance respectively.

\begin{theorem}[Bias term] \label{thm:bias} Consider \Cref{mod:vp} with \Cref{asst:VP}, where $s_*$, $L_1$, $L_2$, $\psi_1$ and $\psi_2$ as well as $\| \beta \|^2$ are universal constants. Suppose in addition that $\| \beta \|_{l_1} \leq n^{\frac{1}{4} - \delta}$ and $\lambda \in (n^{-\alpha} , \lambda_0)$ for some fixed $\delta, \alpha, \lambda_0 > 0$. Then there exists some universal constant $\alpha_0 > 0$ such that, as long as $\alpha < \alpha_0$, 
\begin{align*}
    &\;
    \Bigg| 
        \,
        \cR_B^\lambda(X)  - 
        \sum_{l=1}^p \beta_l^2 e_l^\top
        \Big(
            \diag\{ \br(\lambda)\}^{-1} 
            -
                \diag\{ \br(\lambda)\}
                \,
                S^\top 
                \diag\{ \lambda \bone_n + S \br(\lambda) \}^{-2}
                S
        \Big)^{-1} 
        \bar \Sigma \br(\lambda) 
    \Bigg| 
    \\
    &\;\xrightarrow{\rm a.s.}\; 0\;.
\end{align*}
$e_l$ is the $l$-th standard basis vector in $\R^p$ and $\br(\lambda) = (r_l(\lambda))_{l \leq p}$ is the solution to the system of equations that, for every $1 \leq l \leq p$,
\begin{align*} 
    1
    \;=\; 
    r_l(\lambda)
    +
    \msum_{i=1}^n \mfrac{S_{il} r_l(\lambda)}{\lambda + \sum_{j=1}^p S_{ij} r_j(\lambda)}
    \;,
    \tagaligneq \label{eq:r:main}
\end{align*}
defined on $\lambda > 0$. Moreover for every $1 \leq l \leq p$, $r_l(0) \coloneqq \lim_{\lambda \downarrow 0} r_l(\lambda)$ exists with 
\begin{align*}
    r_l(0) \;\in\; [0,1]\;,
\end{align*}
and there exists a symmetric probability measure $\nu_l$ on $\R$ such that for every $\lambda > 0$,
\begin{align*}
    r_l(\lambda) 
    \;=&\;
    \mint_\R 
        \mfrac{\lambda}{t^2 + \lambda} 
        \, 
    \nu_l(dt)
    &\text{ and thus }&&
    r_l(0) \;=&\; \nu_l(\{0\})
    \;.
\end{align*}
\end{theorem}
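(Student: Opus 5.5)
The plan has three parts: rewrite the bias through the resolvent, identify its deterministic limit via the local law and a fixed-point equation for a second-order quantity, and separately establish the properties of $\br$. We start from $(W_n+\lambda I_p)^{-1}W_n - I_p = -\lambda (W_n+\lambda I_p)^{-1}$, so
\begin{align*}
    \cR_B^\lambda(X) \;=\; \lambda^2\, \beta^\top (W_n+\lambda I_p)^{-1}\bar\Sigma (W_n+\lambda I_p)^{-1}\beta\;.
\end{align*}
By \eqref{eq:resolvent:Wn}, $(W_n+\lambda I_p)^{-1} = G_2(z)/z$ with $z=\sqrt{-\lambda}=i\sqrt\lambda\in\bbH$. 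Hence $\Im z = \sqrt\lambda \ge n^{-\alpha/2}$, and $z\in\D_{\upsilon,K}$ with $\upsilon=\alpha/2$ once $\alpha_0 \le 1/(2(c_L+2))$. The bias is therefore a quadratic form $\beta^\top G_2 \bar\Sigma G_2 \beta$ in two copies of the resolvent. Expanding it as $\sum_{l,l'}\beta_l\beta_{l'}(G_2\bar\Sigma G_2)_{ll'}$, the off-diagonal pairs are controlled by \Cref{thm:local:law:VP}. Indeed, $(G_2\bar\Sigma G_2)_{ll'}$ for $l\neq l'$ is small entrywise, and with $\|\beta\|_{\ell_1}^2\le n^{1/2-2\delta}$ against the off-diagonal bound $n^{3\upsilon-1/2}$ (after a fluctuation-averaging or second-moment argument), these pairs vanish. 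The remaining diagonal contribution is $\sum_l\beta_l^2\,(G_2\bar\Sigma G_2)_{ll}$.

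The core step is to identify $(G_2\bar\Sigma G_2)_{ll}$ with a deterministic quantity. I would write $G\,D\,G = \partial_\eta (H - z - \eta D)^{-1}|_{\eta=0}$, where $D=\diag(0,\bar\Sigma)$ is a diagonal deformation. The deformed resolvent satisfies a perturbed QVE with $z$ replaced by $z+\eta D$, and the local law argument of \Cref{appendix:local:law} applies verbatim for small complex $\eta$. Differentiating the QVE \eqref{eq:QVE} in $\eta$ then gives
\begin{align*}
    (\diag\{\fM\}^{-2} - \fS)\,\partial_\eta\fM \;=\; D\fM\;,
\end{align*}
that is, $\partial_\eta\fM = -\diag|\fM|\,\fB^{-1}\,\diag|\fM|\,D\fM$, in the same way as in \Cref{lem:dz:fM:bound}. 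To go from the deformed resolvent to its $\eta$-derivative, I would apply Cauchy's integral formula on a small $\eta$-circle, which transfers the local-law error bound to the derivative. Finally, I would substitute $r_l=-z\fM_{n+l}$ and eliminate the row block $\fM_i = -1/(z+\sum_j S_{ij}\fM_{n+j})$, which is $z/(\lambda+ (S\br)_i)$ up to sign. The linear system then collapses to the $p\times p$ operator $\bA(\lambda)$, and the limit becomes $\lambda^2 z^{-2}\,\partial_\eta\fM_{n+l} = e_l^\top \bA^{-1}\bar\Sigma\br$. Checking this algebra, including the signs from $z^2=-\lambda$, is routine.

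Separately, I would establish the facts about $\br$. Substituting \eqref{defn:r} into \eqref{eq:fM:p} gives \eqref{eq:r:main} directly. The spectral representation follows from \eqref{eq:coordinatewise:stieltjes:rep} with $\nu_l=\rho_{n+l}$: since $\rho_{n+l}$ is symmetric,
\begin{align*}
    -z\int\frac{\rho(dt)}{t-z} \;=\; \int\frac{-z^2}{t^2-z^2}\,\rho(dt) \;=\; \int\frac{\lambda}{t^2+\lambda}\,\rho(dt)\;.
\end{align*}
Monotone convergence then gives $r_l(0)=\nu_l(\{0\})\in[0,1]$.

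The main obstacle is quantitative. All error bounds must beat the factors $\lambda^{-1}$ and $\fB^{-1}$ growth, which are polynomial in $n^{\alpha}$, as well as $\|\beta\|_{\ell_1}^2$. I would choose $\alpha_0$ small enough that $n^{-(c_L+1)\upsilon}$ times the $\fB^{-1}$ bound of \Cref{lem:bound_B_inverse} still tends to zero. I would then upgrade convergence in probability to almost sure convergence via the overwhelming-probability bounds and Borel--Cantelli.
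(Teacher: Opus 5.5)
\textbf{Main gap: the local law for complex $\eta$.} Your derivation of the properties of $\br$ matches \Cref{lem:r:values}: the fixed-point system from \eqref{defn:r}, the symmetrized Stieltjes representation, and $r_l(0)=\nu_l(\{0\})\in[0,1]$. Your endpoint $\bA\,\partial\br=-\bar\Sigma\br$ is the paper's \eqref{eq:partial:eta:r}. One slip: the linearized QVE has right-hand side $D\bone$, not $D\fM$, just as $\bone$ appears in \Cref{lem:dz:fM:bound}.

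The gap is in the probabilistic core. You deform to $(H-z-\eta D)^{-1}$, pass to $\partial_\eta$ by Cauchy's formula on a small $\eta$-circle, and assert that the local law applies verbatim for complex $\eta$. It does not.

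The $(2,2)$ block of $(H-z-\eta D)^{-1}$ is $z(W_n+\lambda I_p-z\eta\bar\Sigma)^{-1}$. Only for $\eta$ on the imaginary axis, so that $-z\eta=c\lambda$ with $c$ real, can you write $H-z-\eta D=P(P^{-1}HP^{-1}-z)P$ with $P=\diag(I_n,(I_p+c\bar\Sigma)^{1/2})$. That is an ordinary resolvent at scalar $z$ of the Hermitization of $X(I_p+c\bar\Sigma)^{-1/2}$, which is again \Cref{mod:vp}. Elsewhere on the circle the rescaled matrix is complex and non-Hermitian, and the spectral parameter becomes a non-scalar diagonal matrix. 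There \Cref{thm:local:law:VP} gives no input, since it is proved for scalar $z\in\bbH$ using the stability bounds of \Cref{lem:bound_B_inverse}. You would have to redo the whole local law (QVE solution, $\fB^{-1}$ bounds, bootstrapping) for matrix-valued spectral parameters, uniformly on the circle.

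The paper is built to avoid exactly this. It uses real $\eta\in[0,1/s_*)$ and $\cF^\lambda(\eta)=\lambda\beta^\top(W_n+\lambda I_p+\lambda\eta\bar\Sigma)^{-1}\beta=\lambda\beta_\eta^\top(W_\eta+\lambda I_p)^{-1}\beta_\eta$ with $X_\eta=X(I_p+\eta\bar\Sigma)^{-1/2}$. This is your imaginary-axis direction, and $X_\eta$ satisfies \Cref{asst:VP} with $\eta$-uniform constants (\Cref{lem:asst:VP}). So the local law applies unchanged, and $\cR_B^\lambda=-\partial\cF^\lambda(0)$. Because closeness is then known only for real $\eta$, the derivative is transferred by the real-variable interpolation \Cref{lemma:ConvDerivative} rather than by Cauchy. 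The price is third-derivative bounds: trivial on the random side, $|\partial^3\cF^\lambda|\lesssim\|\beta\|^2s_*^3$, and supplied by \Cref{lem:Meta:derivatives} on the deterministic side. Cauchy's formula would spare you those bounds, but only after proving a local law the paper does not have.

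\textbf{Second issue: off-diagonal entries of $G_2\bar\Sigma G_2$.} \Cref{thm:local:law:VP} controls entries of $G$, not of $G_2\bar\Sigma G_2$. The entry $(G_2\bar\Sigma G_2)_{ll'}=\sum_k G_{n+l,n+k}\,\bar\Sigma_{kk}\,G_{n+k,n+l'}$ is a sum over $p$ indices, and Cauchy--Schwarz with the Ward identity only bounds it by $s_*/\lambda$. After the prefactor $\lambda^2|z|^{-2}=\lambda$ and the weight $\|\beta\|_{\ell_1}^2$, this bound is useless, and the paper contains no fluctuation-averaging result for such products.

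So do not expand first. Approximate the whole quadratic form $\beta_\eta^\top G_2^\eta\beta_\eta$ by $\sum_l(\beta_\eta)_l^2\fM^\eta_{n+l}$, using both the diagonal and the off-diagonal bounds for the legitimately deformed resolvent. This is where $\|\beta\|_{\ell_1}\le n^{1/4-\delta}$ enters, as in \eqref{eq:bias:local:law:err}. Differentiate in $\eta$ only afterwards.
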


\begin{theorem}[Variance term] \label{thm:var} Consider \Cref{mod:vp} with \Cref{asst:VP}, where $s_*$, $L_1$, $L_2$, $\psi_1$ and $\psi_2$ are universal constants. Suppose in addition that $\lambda \in (n^{-\alpha} , \lambda_0)$ for some fixed $\alpha, \lambda_0 > 0$. Then there exists some universal constant $\alpha_1 > 0$ such that, as long as $\alpha < \alpha_1$, 
\begin{align*}
    \Big| 
        \cR_V^\lambda(X) 
        - 
        \mfrac{\sigma^2_\epsilon}{n}
        \,
        \msum_{l=1}^p
        \,\bar \Sigma_{ll}\,
        \partial r_l(\lambda)
    \Big| 
    \;\xrightarrow{\rm a.s.}\; 0\;.
\end{align*}
Moreover, $\partial r_l(0) \coloneqq \lim_{\lambda \downarrow 0} \partial r_l(\lambda)$ exists, with 
\begin{align*}
    \partial r_l(0)
    \;=\;
    \mint_{\R\setminus\{0\}}
        \mfrac{1}{t^2}
        \,
        \nu_l(dt)
    \;\in\; [0,\infty]\;.
\end{align*}
\end{theorem}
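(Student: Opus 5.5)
The plan is to read the variance term off the resolvent through \eqref{eq:resolvent:Wn} and \Cref{thm:local:law:VP}, and then pass to the $\lambda\downarrow0$ limit using the Stieltjes representation \eqref{eq:coordinatewise:stieltjes:rep}. Since $W_n(W_n+\lambda I_p)^{-2}=\partial_\lambda\big[\lambda (W_n+\lambda I_p)^{-1}\big]$, and $\bar\Sigma$ is diagonal under \Cref{mod:vp},
\begin{align*}
    \cR_V^\lambda(X)\;=\;\mfrac{\sigma_\epsilon^2}{n}\msum_{l\le p}\bar\Sigma_{ll}\,\partial_\lambda\big[\lambda\,((W_n+\lambda I_p)^{-1})_{ll}\big]\;.
\end{align*}
By \eqref{eq:resolvent:Wn}, $\lambda((W_n+\lambda)^{-1})_{ll}=-\sqrt{-\lambda}\,G_{n+l,n+l}(\sqrt{-\lambda})$, with $\sqrt{-\lambda}=i\sqrt\lambda\in\bbH$. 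This is the empirical counterpart of $r_l(\lambda)$ in \eqref{defn:r}. So the first step is to show that the random function $\hat r_l(\lambda)\coloneqq -\sqrt{-\lambda}\,G_{n+l,n+l}(\sqrt{-\lambda})$ is uniformly close to $r_l(\lambda)$. For $\lambda\ge n^{-\alpha}$ we have $\Im\sqrt{-\lambda}=\sqrt\lambda\ge n^{-\alpha/2}$, so $z=i\sqrt\lambda\in\D_{\upsilon,K}$ with $\upsilon=\alpha/2$. \Cref{thm:local:law:VP} then gives $\max_l|\hat r_l(\lambda)-r_l(\lambda)|\prec n^{-(c_L+1)\upsilon}$ whenever $\alpha<\alpha_1\coloneqq 1/(2(c_L+2))$.

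The second step passes from values to $\lambda$-derivatives. Both $\hat r_l$ and $r_l$ are Stieltjes-type functions of $\lambda$: $\hat r_l(\lambda)=\int \lambda/(t^2+\lambda)\,\hat\nu_l(dt)$ with $\hat\nu_l$ the spectral measure of $H$ at $e_{n+l}$, and similarly for $r_l$ with $\nu_l=\rho_{n+l}$ by \eqref{eq:coordinatewise:stieltjes:rep}. I would convert the sup-norm control into derivative control by a Cauchy estimate: $z\mapsto G_{n+l,n+l}(z)-\fM_{n+l}(z)$ is analytic on $\bbH$, so I apply the local law on a circle of radius $\asymp\Im z/2$ around $z=i\sqrt\lambda$. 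This circle still lies in $\D_{\upsilon',K}$ for a slightly larger $\upsilon'$. A union bound over a polynomial grid plus the Lipschitz estimate \Cref{lem:lipschitz} makes the bound uniform on the circle. The derivative error is then $\prec n^{-(c_L+1)\upsilon'}\cdot n^{\upsilon'}$, and chaining with $d z/d\lambda=O(\lambda^{-1/2})$ costs another factor $n^{\alpha/2}$. Hence $|\partial_\lambda\hat r_l-\partial_\lambda r_l|\prec n^{-c\alpha}$ uniformly in $l$, after shrinking $\alpha_1$ if needed.

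Summing with weights $\bar\Sigma_{ll}/n\lesssim 1/n$ (by flatness) over $p\asymp n$ coordinates bounds the total error by $n^{-c\alpha+\xi}$ with overwhelming probability. Borel--Cantelli then upgrades this to almost sure convergence, because the overwhelming-probability bounds hold with $n^{-\kappa}$ for all $\kappa$. For the ``moreover'' part, I differentiate the representation: $\partial_\lambda r_l(\lambda)=\int t^2/(t^2+\lambda)^2\,\nu_l(dt)$. The integrand vanishes at $t=0$ and increases monotonically to $t^{-2}$ on $t\ne0$ as $\lambda\downarrow0$, so monotone convergence gives $\partial r_l(0)=\int_{\R\setminus\{0\}}t^{-2}\nu_l(dt)\in[0,\infty]$.

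I expect the main obstacle to be the derivative step. The local law is only stated pointwise and the Cauchy contour approaches the real axis, so the exponents must be balanced carefully: the loss from the contour radius and from $dz/d\lambda$ has to be beaten by $n^{-(c_L+1)\upsilon}$. If that fails, my fallback is to bound $\partial_\lambda$ of the difference directly through $\partial_z\fM$ (\Cref{cor:dz:fM:bound}) together with a resolvent identity for $\partial_z G=G^2$, controlled via the Wald identity \eqref{eq:Wald}.
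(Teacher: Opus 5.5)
Your proposal is correct and follows essentially the same route as the paper: express $\cR_V^\lambda(X)$ as the $\lambda$-derivative of $\frac{\sigma_\epsilon^2}{n}\sum_l\bar\Sigma_{ll}\,(-\sqrt{-\lambda}\,G_{n+l,n+l}(\sqrt{-\lambda}))$, compare it with $r_l(\lambda)$ through the local law at $z=i\sqrt{\lambda}$, transfer the bound to derivatives by analyticity, and finish with Borel--Cantelli and the Stieltjes representation of \Cref{lem:r:values}. Your Cauchy-estimate step is a quantitative version of the analyticity argument the paper imports from Appendix A.3 of \citep{hastie2022surprises}, and monotone (rather than dominated) convergence is the right justification for $\partial r_l(0)$ when the limit is $+\infty$.
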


\vspace{.5em}

The rest of this section is dedicated to the proofs of \Cref{thm:bias} and \Cref{thm:var}, and is organized as follows:
\begin{itemize}
    \item \Cref{appendix:properties:r} includes properties of $
    \br$;
    \item \Cref{appendix:proof:stability:z:eta} controls the stability of the variance profile $S$ and the matrix resolvent $\fM(z)$ under a small perturbation of the data matrix;
    \item \Cref{appendix:proof:bias} proves the result on the bias term of the risk (\Cref{thm:bias});
    \item  \Cref{appendix:proof:var} proves the result on the variance term of the risk (\Cref{thm:var}).
\end{itemize}
The proof strategy adapts some ideas from the i.i.d.~case in \cite{hastie2022surprises}, although the general variance profile model introduces several technical difficulties not present in the i.i.d.~model. These are handled by the local laws for \Cref{mod:vp}, which we derive in \Cref{thm:local:law:VP} in \Cref{appendix:local:law}.

\subsection{Properties of \texorpdfstring{$\br$}{r}} \label{appendix:properties:r}

In this section, we extend the definition \eqref{defn:r} of $\br(\lambda)$, i.e.
\begin{align*}
    r_l(\lambda) \;\coloneqq\; -\sqrt{-\lambda} \, \fM_{n+l}\big(\sqrt{-\lambda} \big)
    \;,
\end{align*}
to all $\lambda \in \C \setminus (-\infty, 0]$; note in particular that for this range of $\lambda$, $z=\sqrt{-\lambda} \,\in\, \bbH$.

\vspace{.5em}

Our first result shows that $\br(\lambda)$ is the unique solution to the system of equations in \Cref{thm:bias} over the domain $\lambda \in \C \setminus (-\infty, 0]$, which implies that it is a solution to the system of equations over the domain $\lambda > 0$.

\begin{lemma} \label{lem:r:eqns} $\br(\lambda)$ is the unique solution to the system of equations \eqref{eq:r:main} extended to the domain $\lambda \in \C \setminus (-\infty,0]$, i.e.~that for every $1 \leq l \leq p$,
\begin{align*} 
    1
    \;=\; 
    r_l(\lambda)
    +
    \msum_{i=1}^n \mfrac{S_{il} r_l(\lambda)}{\lambda + \sum_{j=1}^p S_{ij} r_j(\lambda)}
    \;.
\end{align*}
    
\end{lemma}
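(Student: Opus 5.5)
The plan has two parts: first derive the system for $\br$ from the QVE \eqref{eq:QVE}, then obtain uniqueness from the uniqueness theory for the QVE.

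\emph{Existence.} Fix $\lambda\in\C\setminus(-\infty,0]$ and set $z=\sqrt{-\lambda}\in\bbH$, so that $z^2=-\lambda$. Write the QVE in the coordinate form \eqref{eq:fM:n}--\eqref{eq:fM:p}. For the row block $1\le i\le n$, \eqref{eq:QVE} gives
\begin{align*}
    \fM_i(z) \;=\; -\big(z+\textstyle\sum_j S_{ij}\fM_{n+j}(z)\big)^{-1}.
\end{align*}
Multiplying numerator and denominator by $-z$ and using $r_j=-z\fM_{n+j}$, this becomes
\begin{align*}
    -z\,\fM_i(z) \;=\; \frac{z^2}{-z^2+\sum_j S_{ij}(-z\fM_{n+j})} \;=\; \frac{-\lambda}{\lambda+\sum_j S_{ij}r_j(\lambda)}.
\end{align*}
For the column block, \eqref{eq:QVE} reads $-1/\fM_{n+l}=z+\sum_i S_{il}\fM_i$. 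Multiplying by $r_l=-z\fM_{n+l}$ gives
\begin{align*}
    z^2 \;=\; -z^2\fM_{n+l}\;-\;z\fM_{n+l}\textstyle\sum_i S_{il}\,z\fM_i .
\end{align*}
Substituting the row-block identity and dividing by $-\lambda$ (which is legitimate since $\lambda\neq 0$) should give exactly $1=r_l+\sum_i S_{il}r_l/(\lambda+\sum_j S_{ij}r_j)$. I will double-check the signs here. The denominators do not vanish, because $\fM_i(z)$ is finite and nonzero (\Cref{lem:abs:fM:bound}).

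\emph{Uniqueness.} Uniqueness does not follow from the algebra above, because the $r$-system is only the $p$-dimensional reduction and we must specify which solutions are allowed. The reduction is invertible. Given any solution $\tilde\br$, define $\tilde\fM_{n+l}\coloneqq-\tilde r_l/z$ and $\tilde\fM_i\coloneqq z^{-1}\cdot\lambda/(\lambda+\sum_j S_{ij}\tilde r_j)$. Reversing the computation above shows that $\tilde\fM$ solves \eqref{eq:QVE}. The QVE has a unique solution in $\bbH^{n+p}$ (Theorem 2.1 of \cite{Ajanki_2019}, (3.6) of \cite{alt2017local}), so $\tilde\fM=\fM$ and hence $\tilde\br=\br$, provided $\tilde\fM\in\bbH^{n+p}$.

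\emph{Main obstacle.} The key step is therefore to state the admissible class for $\tilde\br$ so that the associated $\tilde\fM$ lies in $\bbH^{n+p}$. I would state uniqueness within the class of solutions $\tilde\br$ arising from Stieltjes-type functions, i.e.\ those for which $-\tilde r_l(\lambda)/\sqrt{-\lambda}$ has positive imaginary part. This is equivalent to requiring the analytic-in-$\lambda$ family of solutions with $\tilde r_l(\lambda)\to1$ as $\lambda\to\infty$, which mirrors the branch selection in \Cref{ex:iso}. Under this class, $\tilde\fM_{n+l}\in\bbH$ by definition. It then remains to check that $\tilde\fM_i\in\bbH$ as well. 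For this I would use the column-block relation $-1/\tilde\fM_i = z+\sum_j S_{ij}\tilde\fM_{n+j}$: its right-hand side lies in $\bbH$, since $z\in\bbH$ and $S_{ij}\ge 0$, so $\tilde\fM_i\in\bbH$ too. With both blocks in $\bbH$, QVE uniqueness applies.

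For real $\lambda>0$ the same argument covers positive solutions: $z=i\sqrt\lambda$, and $\tilde r_l>0$ gives $\tilde\fM_{n+l}=i\tilde r_l/\sqrt\lambda\in\bbH$.
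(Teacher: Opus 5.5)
Your route is the paper's: set $z=\sqrt{-\lambda}$, substitute $r_l=-z\fM_{n+l}(z)$ into the QVE, and inherit uniqueness from pointwise uniqueness of the QVE solution in $\bbH^{n+p}$. Your uniqueness paragraph is more careful than the paper's one-line remark. The $\br$-system alone has spurious roots: in the isotropic case of \Cref{ex:iso} it is a quadratic with a negative root for $\lambda>0$. So uniqueness has to be stated in a class such as $-\tilde r_l/z\in\bbH$, which means positive solutions when $\lambda>0$. Your check that $\tilde\fM_i=-(z+\sum_j S_{ij}\tilde\fM_{n+j})^{-1}\in\bbH$ is exactly what is needed to invoke QVE uniqueness.

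The algebra, however, has a sign error, and it already appears in the row identity, not only in the step you flagged. Multiplying numerator and denominator of $-z\fM_i=z/(z+\sum_j S_{ij}\fM_{n+j})$ by $-z$ gives numerator $-z^2=\lambda$, so
\[ -z\,\fM_i(z)=\frac{\lambda}{\lambda+\sum_j S_{ij}r_j(\lambda)}\,, \]
not $-\lambda/(\cdots)$. As a check, for $\lambda>0$ both sides must be positive, because $\fM_i(\mathrm{i}\sqrt\lambda)$ is purely imaginary with positive imaginary part by symmetry of $\rho_i$.

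Your displayed column identity is also not what multiplying by $r_l$ produces. The correct identity is $z=z r_l+r_l\sum_i S_{il}\fM_i$. Dividing by $z$ and using $\fM_i/z=1/(\lambda+\sum_j S_{ij}r_j)$ gives the system immediately, whereas your sign would give $1=r_l-\sum_i(\cdots)$.

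The same error carries into your inverse map. Your $\tilde\fM_i=z^{-1}\lambda/(\lambda+\sum_j S_{ij}\tilde r_j)$ equals $+(z+\sum_j S_{ij}\tilde\fM_{n+j})^{-1}$, which lies in the lower half-plane and contradicts the relation you later use to place $\tilde\fM_i$ in $\bbH$. It should be $\tilde\fM_i=z/(\lambda+\sum_j S_{ij}\tilde r_j)$.

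Finally, you assert without proof that your class is equivalent to the analytic branch normalized by $\tilde r_l\to1$ as $\lambda\to\infty$. The argument does not need this, so drop it.
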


\begin{proof}[Proof of \Cref{lem:r:eqns}] Write $\fM^{(2)}(z) \coloneqq (\fM_{n+l}(z))_{1 \leq l \leq p}$. We first rewrite the QVE \eqref{eq:QVE} as 
\begin{align*}
    -
    \diag\{ \fM^{(2)}( z) \}^{-1} 
    \;=\; 
    z  \, I_p
    -
     S^\top 
     \Big( z I_n + \diag\big\{ S \, \fM^{(2)}(z) \big\} \Big)^{-1}
    \;,
    \qquad 
    z \in \bbH
    \;.
\end{align*}
Since $r_l(\lambda) = -\sqrt{-\lambda} \, \fM^{(2)}_l(\sqrt{-\lambda})$, the above is equivalent to 
\begin{align*}
    \diag\{ \br(\lambda) \}^{-1} 
    \;=\; 
     I_p 
    +
     S^\top \,
     \big( 
        \lambda \, I_n 
        +  
        \diag\{ S \, \br(\lambda) \} 
    \big)^{-1}
    \;,
\end{align*}
which is equivalent to the system of equations \eqref{eq:r:main} that, for every $1 \leq l \leq p$,
\begin{align*} 
    1
    \;=\; 
    r_l(\lambda)
    +
    \msum_{i=1}^n \mfrac{S_{il} r_l(\lambda)}{\lambda + \sum_{j=1}^p S_{ij} r_j(\lambda)}
    \;.
\end{align*}
Uniqueness follows from the uniqueness of $\fM(z)$ as a solution to the QVE over $z \in \bbH$.
\end{proof}

The next result concerns the range of values $\br$ and $\partial \br$ can take. We will denote 
\begin{align*}
    \nu_l \;\coloneqq\; \rho_{n+l}   \;,
\end{align*}
where $\rho_l$ is the symmetric probability measure in the Stieltjes transform representation \eqref{eq:coordinatewise:stieltjes:rep} of $\fM_l(z)$.

\begin{lemma} \label{lem:r:values} For $\lambda \in \C \setminus (-\infty,0]$ and for all $1 \leq l \leq p$, the following statements hold:
\begin{proplist}
    \item $r_l(\lambda)  = \int_\R \frac{\lambda}{t^2 + \lambda} \, \nu_l(dt)$;
    \item $r_l(0) \coloneqq \lim_{\lambda \rightarrow 0} r_l(\lambda)$ exists and $r_l(0) = \nu_l(\{0\}) \in [0,1]$;
    \item $\partial r_l(\lambda) =  \int_{\R} \frac{t^2}{(t^2 + \lambda)^2} \, \nu_l(dt)$;
    \item $\partial r_l(0) =\lim_{\lambda\rightarrow 0}\partial r_l(\lambda)$ exists and $ \partial r_l(0) = \int_{\R\setminus\{0\}}
        \frac{1}{t^2}
        \,
        \nu_l(dt) \in [0,\infty]$.
\end{proplist}
    
\end{lemma}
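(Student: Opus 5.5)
The plan is to derive everything from the Stieltjes representation \eqref{eq:coordinatewise:stieltjes:rep} of $\fM_{n+l}$ together with the symmetry of $\nu_l=\rho_{n+l}$. For $\lambda\in\C\setminus(-\infty,0]$, set $z=\sqrt{-\lambda}\in\bbH$. Since $\nu_l$ is symmetric, we can symmetrize the integrand:
\begin{align*}
    \fM_{n+l}(z)\;=\;\mint_\R \mfrac{1}{t-z}\,\nu_l(dt)\;=\;\mfrac12\mint_\R\Big(\mfrac{1}{t-z}+\mfrac{1}{-t-z}\Big)\nu_l(dt)\;=\;\mint_\R \mfrac{z}{t^2-z^2}\,\nu_l(dt)\;.
\end{align*}
Multiplying by $-z$ and using $z^2=-\lambda$ gives $r_l(\lambda)=\int_\R \frac{\lambda}{t^2+\lambda}\,\nu_l(dt)$, which is (i). In this step I will check that the branch of $\sqrt{-\lambda}$ lands in $\bbH$, and that $t^2+\lambda\neq0$ whenever $t\in\R$ and $\lambda\notin(-\infty,0]$, so the integrand is well defined.

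For (ii), I will let $\lambda\to0$ within a sector $|\arg\lambda|\le\pi-\epsilon$; on the positive reals this is immediate. In such a sector, $|t^2+\lambda|\ge c_\epsilon(t^2+|\lambda|)$, so $\big|\frac{\lambda}{t^2+\lambda}\big|\le C_\epsilon$. The integrand converges pointwise to $\ind_{\{t=0\}}$. Dominated convergence then gives $r_l(0)=\nu_l(\{0\})\in[0,1]$, with the range following because $\nu_l$ is a probability measure. Since the main text only uses $\lambda\downarrow0$, I will state the limit along the positive axis, or more generally non-tangentially.

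For (iii), I will differentiate under the integral sign:
\begin{align*}
    \partial_\lambda\mfrac{\lambda}{t^2+\lambda}\;=\;\mfrac{t^2}{(t^2+\lambda)^2}\;.
\end{align*}
This is justified by a local uniform bound: on a compact neighbourhood of $\lambda$ away from $(-\infty,0]$, we have $\frac{t^2}{|t^2+\lambda|^2}\le C$ uniformly in $t$, because $|t^2+\lambda|\ge c(t^2+1)$ there. Alternatively, one can argue by analyticity of parameter-dependent integrals.

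For (iv), I take $\lambda\downarrow0$ along the positive reals. The integrand $\frac{t^2}{(t^2+\lambda)^2}$ vanishes at $t=0$. For $t\neq0$ it increases monotonically to $t^{-2}$ as $\lambda\downarrow0$, since $(t^2+\lambda)^2$ decreases. Monotone convergence then yields $\partial r_l(0)=\int_{\R\setminus\{0\}}t^{-2}\,\nu_l(dt)\in[0,\infty]$, and this holds including the value $+\infty$.

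The only point needing care is the complex-$\lambda$ limits in (ii) and (iv). There I would restrict to non-tangential or real approach, which suffices for the paper's use. The main obstacle is therefore just confirming that the representation \eqref{eq:coordinatewise:stieltjes:rep} with a \emph{symmetric} $\rho_{n+l}$ is available from \cite{Ajanki_2019}, since symmetry is what makes the even-in-$t$ form possible. If symmetry were not directly quoted, I would derive it from the spectral symmetry of $H$, namely the chiral structure $\fM(-\bar z)=-\overline{\fM(z)}$ that follows from uniqueness of the QVE solution.
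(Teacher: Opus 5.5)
Your proposal is correct and follows the paper's route: the Stieltjes representation of $\fM_{n+l}$, symmetrization via the symmetry of $\nu_l$, differentiation under the integral sign, and a limit as $\lambda\downarrow 0$. The differences are small improvements rather than a new route:
\begin{itemize}
\item You obtain $r_l(0)\in[0,1]$ directly from $\nu_l$ being a probability measure, whereas the paper passes to $\lambda=0$ in the fixed-point equation.
\item You use monotone convergence in (iv). This is the right tool, because the paper's appeal to dominated convergence does not cover the case $\int_{\R\setminus\{0\}} t^{-2}\,\nu_l(dt)=\infty$.
\item You read the limits in (ii) and (iv) as real or non-tangential approach $\lambda\to 0$. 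This is the appropriate reading, and it is all the paper uses.
\end{itemize}
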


\begin{proof}[Proof of \Cref{lem:r:values}] We first use the Stieltjes transform representation \eqref{eq:coordinatewise:stieltjes:rep} to obtain
\begin{align*}
    r_l(\lambda) 
    \;=\;
    - \sqrt{-\lambda} \; \fM_{n+l}\big(\sqrt{-\lambda}\big) 
    \;=\;
    \mint_\R 
        \mfrac{- \sqrt{-\lambda}}{t - \sqrt{-\lambda}} \, \nu_l(dt)
    \;.
\end{align*}
Since $\nu_l$ is symmetric, we may replace the integrand by its even part in $t$:
\begin{align*}
    r_l(\lambda) 
    \;=&\;
    \mfrac{1}{2}
    \mint_\R 
          \mfrac{- \sqrt{-\lambda}}{t - \sqrt{-\lambda}}  + \mfrac{- \sqrt{-\lambda}}{- t - \sqrt{-\lambda}} 
        \, 
    \nu_l(dt)
    \;=\;
    \mint_\R 
        \mfrac{\lambda}{t^2 + \lambda} 
        \, 
    \nu_l(dt)
    \;.
\end{align*} 
Meanwhile, $r_l(0) = \lim_{\lambda \rightarrow 0} r_l(\lambda)$ exists by noting that
\begin{align*}
    r_l(0) 
    \;=\; 
    \nu_l( \{0 \}) \;\in\; \R^+_0\;.
\end{align*}
Moreover by taking $\lambda \rightarrow 0$ in the equation in \Cref{lem:r:eqns}, we get that $r_l(0)$ must satisfy the equation 
\begin{align*}
    1
    \;=\; 
    r_l(0)
    +
    \msum_{i=1}^n \mfrac{S_{il} r_l(0)}{ \sum_{j=1}^p S_{ij} r_j(0)}
    \;=\;
    r_l(0)
    \Big( \msum_{i=1}^n \mfrac{S_{il}}{ \sum_{j=1}^p S_{ij} r_j(0)} \Big)
    \;.
\end{align*}
Since $S_{il}$'s and $r_j(0)$'s are all non-negative, this implies $r_l(0) \leq 1$ for all $1 \leq l \leq p$. This gives 
\begin{align*}
    r_l(0) \in [0,1] \qquad \text{ for all } 1 \leq l \leq p\;.
\end{align*}
For results concerning the derivative, we can differentiate under the integral sign to obtain 
\begin{align*}
    \partial r_l(\lambda)
    \;=&\;
    \partial \Big( 
        \mint_\R 
            \mfrac{\lambda}{t^2 + \lambda} 
            \, 
        \nu_l(dt)
    \Big)
    \;=\;
    \mint_{\R}
        \mfrac{t^2}{(t^2 + \lambda)^2} 
        \,
        \nu_l(dt)\;.
\end{align*}
As $\lambda \rightarrow 0$, the integrand converges pointwise to $\ind_{\{t\neq 0\}}t^{-2}$. Therefore dominated convergence gives
\begin{align*}
    \partial r_l(0)
    \;\coloneqq\;
    \lim_{\lambda\rightarrow 0}\partial r_l(\lambda)
    \;=\;
    \mint_{\R\setminus\{0\}}
        \mfrac{1}{t^2}
        \,
        \nu_l(dt)
    \;\in\; [0,\infty]\;.
\end{align*}
\end{proof}

\subsection{Perturbation bounds on the variance profile \texorpdfstring{$S$}{S} and the matrix resolvent limit \texorpdfstring{$\fM$}{M}} \label{appendix:proof:stability:z:eta}

A key proof ingredient of Hastie et al.~\cite{hastie2022surprises} in the i.i.d.~case is to study the matrix resolvent under a suitably perturbed data model. Our proof also uses this idea. As such, we need to verify that the local law (\Cref{thm:local:law:VP}) applies also to the perturbed variance profile model, and that the matrix resolvent limit, $\fM$, is well-behaved under the perturbation. Specifically, we recall $\bar \Sigma = \frac{1}{n} \sum_{i \leq n} \Var[X_i]$ and, for a small $\eta > 0$, we consider the perturbed data matrix 
\begin{align*}
    X_\eta \;\coloneqq\; X  \big(I_p + \eta \bar \Sigma \big)^{-1/2} \;.
\end{align*}
The first result shows that $X_\eta$ is also a data matrix under \Cref{asst:VP} that satisfies \Cref{asst:VP}, which makes the local law (\Cref{thm:local:law:VP}) applicable. In particular if $\eta$ is bounded from above, $X_\eta$ satisfies \Cref{asst:VP} with its parameters independent of $\eta$.

\begin{lemma} \label{lem:asst:VP} Suppose $X$ under the variance profile model (\Cref{mod:vp})  satisfies \Cref{asst:VP} with parameters $s_*, \psi_1, \psi_2, L_1, L_2$. Then $X_\eta$ is also a data matrix under \Cref{mod:vp}, whose variance profile satisfies \Cref{asst:VP} with parameters 
\begin{align*}
    s_* \,,\, \mfrac{\psi_1}{(1+\eta s_*)^{2L_1}} \,,\, \mfrac{\psi_2}{(1+\eta s_*)^{2L_2}} \,,\, L_1 \,,\, L_2\;.
\end{align*}
\end{lemma}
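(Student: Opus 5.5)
Under \Cref{mod:vp}, $X$ has independent Gaussian entries, and $\bar\Sigma=\diag\{\bar\Sigma_{11},\ldots,\bar\Sigma_{pp}\}$ with $\bar\Sigma_{ll}=\sum_{i\le n}S_{il}$ by \eqref{eq:Sigmabar-S}. The plan is to exploit this diagonality: $(I_p+\eta\bar\Sigma)^{-1/2}$ is diagonal, so $X_\eta$ is just $X$ with column $l$ rescaled by the deterministic factor $(1+\eta\bar\Sigma_{ll})^{-1/2}$. Consequently $X_\eta$ again has independent mean-zero Gaussian entries, and its variance profile is
\begin{align*}
    S^\eta_{il} \;=\; \mfrac{S_{il}}{1+\eta\bar\Sigma_{ll}}\;.
\end{align*}
This places $X_\eta$ in \Cref{mod:vp}, and what remains is to check the three conditions of \Cref{asst:VP}.

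First, we bound the scaling factors. By the flatness condition, $\bar\Sigma_{ll}=\sum_{i\le n}S_{il}\le n s_*/(n+p)\le s_*$. Hence, for every $l$,
\begin{align*}
    (1+\eta s_*)^{-1}\;\le\;(1+\eta\bar\Sigma_{ll})^{-1}\;\le\;1\;.
\end{align*}
The upper bound immediately gives $S^\eta_{il}\le S_{il}\le s_*/(n+p)$, so flatness holds with the same constant $s_*$. The lower bound gives the entrywise comparison $S^\eta\ge (1+\eta s_*)^{-1}S$.

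For the connectivity conditions, all entries involved are nonnegative, so matrix products are monotone in the entries. Each factor $S^\eta(S^\eta)^\top$ or $(S^\eta)^\top S^\eta$ is therefore entrywise at least $(1+\eta s_*)^{-2}$ times the corresponding product built from $S$. Taking $L_1$-th and $L_2$-th powers yields
\begin{align*}
    [(S^\eta (S^\eta)^\top)^{L_1}]_{ii'}\ge \mfrac{\psi_1}{(1+\eta s_*)^{2L_1}(n+p)}\;,\qquad
    [((S^\eta)^\top S^\eta)^{L_2}]_{jj'}\ge \mfrac{\psi_2}{(1+\eta s_*)^{2L_2}(n+p)}\;,
\end{align*}
which are exactly the claimed parameters.

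There is no genuine obstacle in this argument. The only points that need care are recording that $\bar\Sigma$ is diagonal under \Cref{mod:vp}, so that the rescaling preserves independence of the entries and hence the model, and bounding $\bar\Sigma_{ll}$ by $s_*$ rather than by something that grows with $n$.
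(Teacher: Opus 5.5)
Your proof is correct and follows the paper's argument essentially step for step. Diagonality of $\bar\Sigma$ gives the column-rescaled profile $S^\eta_{il}=S_{il}/(1+\eta\bar\Sigma_{ll})$, and the bound $\bar\Sigma_{ll}\le s_*$ yields both flatness and $S^\eta\ge(1+\eta s_*)^{-1}S$; the paper quotes that bound from its lemma bounding $\bar\Sigma$, while you derive it directly from flatness. Entrywise monotonicity of products of nonnegative matrices then gives the connectivity bounds. Your flatness step, $S^\eta_{il}\le S_{il}\le s_*/(n+p)$, is stated more carefully than the paper's display, which only records $S^\eta_{ij}\le s_*$.
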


We will make use of the following result, where we inherit the notation of \Cref{appendix:local:law} to write, for a matrix $A \in \C^{b \times b}$,
\begin{align*}
    \normtwo{A} 
    \;\coloneqq&\;
    \sup_{v \neq 0} \mfrac{\|A v\|}{\| v\|}  
    &\text{ and }&&
    \norminf{A} 
    \;\coloneqq&\;
    \sup_{v \neq 0} \mfrac{\|A v\|_\infty}{\| v\|_\infty}  
    \;.
\end{align*}

\begin{lemma} \label{lem:Sigma:op:bound} Under \Cref{mod:vp} with \Cref{asst:VP}, $\max\big\{ \| \bar \Sigma \|_2 \,,\, \|\bar\Sigma\|_\infty\big\} \leq s_*$.
\end{lemma}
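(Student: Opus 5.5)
Under \Cref{mod:vp} the averaged covariance is diagonal, with $\bar\Sigma_{ll}=\sum_{i\le n}S_{il}$ by \eqref{eq:Sigmabar-S}. So both $\|\bar\Sigma\|_2$ and $\|\bar\Sigma\|_\infty$ equal $\max_{l\le p}\bar\Sigma_{ll}$: for a diagonal matrix with nonnegative entries, the operator norm induced by either the Euclidean or the $\infty$-norm is the largest absolute diagonal entry. The task therefore reduces to a uniform bound on the column sums of $S$.

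For this we use only the flatness part of \Cref{asst:VP}, namely $S_{il}\le s_*/(n+p)$. Summing over the $n$ rows gives
\begin{align*}
    \bar\Sigma_{ll} \;=\; \msum_{i\le n} S_{il} \;\le\; \mfrac{n\, s_*}{n+p} \;\le\; s_*
\end{align*}
for every $l$, uniformly in $n$ and $p$. Taking the maximum over $l$ then gives $\max\{\|\bar\Sigma\|_2,\|\bar\Sigma\|_\infty\}\le s_*$.

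There is no real obstacle here. The only point to check is that $\bar\Sigma$ is diagonal, which holds because \Cref{mod:vp} has independent entries, so $\Var[X_i]=\diag(nS_{i1},\ldots,nS_{ip})$. This diagonal form is what makes both norms reduce to the maximal entry, with no off-diagonal contributions to control.
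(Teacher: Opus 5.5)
Your proof is correct and matches the paper's argument: both rest on the diagonality of $\bar\Sigma$ under \Cref{mod:vp} and the flatness bound $S_{il}\le s_*/(n+p)$, which gives $n s_*/(n+p)\le s_*$. The only cosmetic difference is the order of steps. The paper first applies the triangle inequality to $\frac1n\sum_i \Var[X_i]$ and bounds each $\|\Var[X_i]/n\|_2=\max_j S_{ij}$. You instead compute the diagonal entries $\bar\Sigma_{ll}=\sum_i S_{il}$ directly, which is if anything slightly sharper.
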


\begin{proof}[Proof of \Cref{lem:Sigma:op:bound}] The $\| \argdot \|_2$ bound follows by computing
\begin{align*}
    \| \bar \Sigma \|_2
    \;=\;
    \Big\| \mfrac{1}{n} \msum_{i=1}^n \Var[X_i] \Big\|_2
    \;\leq&\;
    \msum_{i=1}^n \Big\| \mfrac{\Var[X_i]}{n} \Big\|_2
    \;\overset{(a)}{=}\;
    \msum_{i=1}^n \max_{1 \leq j \leq p} S_{ij} 
    \\
    \;\leq&\; 
    \mfrac{n}{p+n} s_*
    \;\leq\; 
    s_*\;.
\end{align*}
In $(a)$, we have noted that $\Var[X_i]$ is a diagonal matrix under \Cref{mod:vp}. The $\| \argdot \|_\infty$ bound is identical by noting also that $\Var[X_i]$ is a diagonal matrix under \Cref{mod:vp}. 
\end{proof}

\begin{proof}[Proof of \Cref{lem:asst:VP}] We first note that under \Cref{mod:vp}, each $\Var[X_i]$ is a diagonal matrix and so is $\bar \Sigma$. The variance profile $S^\eta$ corresponding to $X_\eta$ is given entry-wise as 
\begin{align*}
    S^\eta_{ij} 
    \;=\; 
    \Var\Big[X_{ij} \mfrac{1}{(1 + \eta \bar \Sigma_{jj}  )^{1/2}}  \Big] 
    \;=\; 
    \mfrac{S_{ij}}{1+ \eta \, \bar \Sigma_{jj}}
    \;\overset{(a)}{\leq}\; 
    \mfrac{s_*}{1+ \eta \, \bar \Sigma_{jj}}
    \;\leq\; 
    s_*
    \;,
    \tagaligneq \label{eq:S:eta:calc}
\end{align*}
where we have used the first condition of \Cref{asst:VP} in $(a)$ above. To verify  the second and third condition, we first observe that for any $d_1, d_2 \in \N$, if the $d_1 \times d_2$ matrices $A, B, A', B'$ satisfy that $A \geq B \geq 0$ entrywise and $A' \geq B' \geq 0$ entrywise, then 
\begin{align*}
    \big( A A' \big)_{ij} \;=\; \msum_{k} A_{ik} A'_{kj} \;\geq\; \big( B B' \big)_{ij}  \;\geq\; 0 \;,
\end{align*}
i.e.~$AA' \geq BB' \geq 0$ entrywise. Meanwhile by \eqref{eq:S:eta:calc} and  \Cref{lem:Sigma:op:bound},
\begin{align*}
    S^\eta \;\geq\; \mfrac{1}{1 + \eta s_*} S \;\geq\; 0 \qquad \text{ entrywise. }
\end{align*}
Under \Cref{asst:VP}, $(SS^\top)^{L_1} \geq \frac{\psi_1}{n+p}$ entrywise and  $(S^\top S)^{L_2} \geq \frac{\psi_2}{n+p}$ entrywise. As such 
\begin{align*}
    \big(S^\eta (S^\eta)^\top\big)^{L_1} 
    \;\geq&\; 
    \mfrac{1}{(1+ \eta s_*)^{2L_1}} \big(SS^\top\big)^{L_1} 
    \;\geq\; 
    \mfrac{\psi_1 }{(1+ \eta s_*)^{2L_1} (n+p)}
    \quad 
    \text{ entrywise, }
    \\
    \big( (S^\eta)^\top (S^\eta)\big)^{L_2} 
    \;\geq&\; 
    \mfrac{1}{(1+ \eta s_*)^{2L_2}} \big(S^\top S\big)^{L_2} 
    \;\geq\; 
    \mfrac{ \psi_2 }{(1+ \eta s_*)^{2L_2} (n+p)}
    \quad 
    \text{ entrywise. }
\end{align*}
This concludes the proof.
\end{proof}

\vspace{.5em}

\Cref{lem:asst:VP} shows that $X_\eta$ falls under \Cref{mod:vp}, which allows us to inherit the definition of $\fM$ for its perturbed counterpart. Specifically, we denote the corresponding deterministic approximation from \Cref{thm:local:law:VP} as 
\begin{align*}
    \fM^\eta(z)\;,
\end{align*}
which is the unique solution to the modified QVE (c.f.~\eqref{eq:combined_QVE})
\begin{align*}
    - \diag\{ \fM^\eta(z) \}^{-1} \;=\; z  \, I_{n+p} + 
    \diag\{
    \Sf^\eta \, \fM^\eta(z)
    \}
    \;,
    \qquad 
    z \in \bbH\;,
    \tagaligneq \label{eq:QVE:eta}
\end{align*}
where
\begin{align*}
    \fS^\eta \;\coloneqq\; 
    \begin{psmallmatrix} 
        0 & S (I_p + \eta \bar \Sigma)^{-1} \\
        (I_p + \eta \bar \Sigma)^{-1} S^\top & 0
    \end{psmallmatrix}
    \;\in\;
    \R^{(n+p) \times (n+p)}\;.
\end{align*}
The next result controls the $\eta$-derivatives of $\fM^\eta$, which quantify the stability of $\fM^\eta(z)$ under the perturbation $\eta$. We use the notation $\lesssim$ from \Cref{appendix:local:law} below, which indicates inequalities up to a multiplicative positive and bounded constant that depends only on $\gamma = \lim \frac{p}{n}$ and the constants $s_*$, $L_1$, $L_2$, $\psi_1$ and $\psi_2$ in \Cref{asst:VP}.

\begin{lemma}[Derivatives of $\fM^\eta$] \label{lem:Meta:derivatives} Suppose $\eta \in (0, 1/s_*)$. For $z \in \bbH$ with $|z| \leq 1$, there exists a constant $c > 0$ that  depends only on $s_*, \psi_1, \psi_2, L_1, L_2$ from \Cref{asst:VP} such that 
\begin{align*}
    \max\Big\{ \big\| \fM^\eta (z) \|_\infty \,,\, \big\| \partial_\eta \fM^\eta(z)  \big\|_\infty  \,,\, \big\| \partial_\eta^2 \fM^\eta(z)  \big\|_\infty  \,,\, \big\| \partial_\eta^3 \fM^\eta(z)  \big\|_\infty  \Big\}
    \;\lesssim\;
    (\Im(z))^{-c}\;.
\end{align*}
\end{lemma}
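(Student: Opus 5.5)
The plan is to differentiate the perturbed QVE \eqref{eq:QVE:eta} in $\eta$ and invert the linearized stability operator $\fB^\eta(z)$, reusing the $z$-explicit bounds of \Cref{appendix:local:law}. First, by \Cref{lem:asst:VP}, for $\eta\in(0,1/s_*)$ the profile $S^\eta$ satisfies \Cref{asst:VP} with constants $s_*$, $\psi_1/2^{2L_1}$, $\psi_2/2^{2L_2}$, $L_1,L_2$, which are uniform in $\eta$. Hence \Cref{lem:abs:fM:bound} gives $\|\fM^\eta(z)\|_\infty\le (\Im z)^{-1}$ directly. Likewise \Cref{lem:bound_B_inverse} applied to $\fB^\eta(z)$ gives $\|(\fB^\eta)^{-1}(z)\|_\infty\lesssim 1+(\Im z)^{-c_L}$ for $|z|\le1$, since $\Im(z)|z|+s_*\lesssim1$. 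We also record the lower bound $|\fM^\eta_l(z)|\gtrsim \Im z$ from \Cref{lem:abs:fM:bound}.

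Next, write the QVE as $\bF^\eta(\fM^\eta)=\fM^{\eta,-1}+z\bone+\fS^\eta\fM^\eta=0$ and differentiate in $\eta$. This gives
\begin{align*}
\big(-\diag\{\fM^\eta\}^{-2}+\fS^\eta\big)\,\partial_\eta\fM^\eta \;=\; -(\partial_\eta\fS^\eta)\,\fM^\eta .
\end{align*}
As in the proof of \Cref{lem:dz:fM:bound}, the operator on the left equals $-\diag\{|\fM^\eta|\}^{-1}\fB^\eta\diag\{|\fM^\eta|\}^{-1}$. Therefore
\begin{align*}
\partial_\eta\fM^\eta \;=\; \diag\{|\fM^\eta|\}\,(\fB^\eta)^{-1}\diag\{|\fM^\eta|\}\,(\partial_\eta\fS^\eta)\,\fM^\eta .
\end{align*}
The entries of $\partial_\eta^k\fS^\eta$ are $S_{ij}\,k!\,(-\bar\Sigma_{jj})^k(1+\eta\bar\Sigma_{jj})^{-k-1}$. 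So $\|\partial_\eta^k\fS^\eta\|_\infty\le k!\,s_*^{k}\|\fS\|_\infty\lesssim1$ by \Cref{lem:Sigma:op:bound,lem:fS:norm}, uniformly for $\eta\in(0,1/s_*)$. Combining these bounds in $\|\cdot\|_\infty$ yields $\|\partial_\eta\fM^\eta\|_\infty\lesssim(\Im z)^{-3-c_L}$.

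For the second and third derivatives, differentiate the identity again. The $k$-th derivative satisfies the same linear equation
\begin{align*}
\big(-\diag\{\fM^\eta\}^{-2}+\fS^\eta\big)\,\partial^k_\eta\fM^\eta \;=\; R_k .
\end{align*}
Here $R_k$ is a finite sum of two kinds of terms. The first kind are products of lower-order derivatives $\partial^{a}_\eta\fM^\eta$ multiplied by powers of $\fM^{\eta,-1}$; these come from differentiating $\fM^{-1}$, namely the $2(\partial\fM)^2\fM^{-3}$-type terms. The second kind are terms $(\partial^{a}_\eta\fS^\eta)\partial^{b}_\eta\fM^\eta$. Each $\|\fM^{\eta,-1}\|_\infty\lesssim(\Im z)^{-1}$, and the lower-order derivatives are bounded inductively by powers of $(\Im z)^{-1}$. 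Hence $\|R_k\|_\infty\lesssim(\Im z)^{-c_k}$, and inverting as above gives the claim with $c=\max_k c_k$.

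The main obstacle is ensuring that $\fB^\eta(z)$ is invertible with the stated $\infty$-norm bound uniformly in $\eta$. This reduces to checking that every constant in \Cref{lem:bound_B_inverse} depends on $S^\eta$ only through the \Cref{asst:VP} constants, which \Cref{lem:asst:VP} supplies. A secondary point is differentiability in $\eta$ itself. For that I would invoke the implicit function theorem, using invertibility of $\fB^\eta$ together with smoothness of $\fS^\eta$ in $\eta$.
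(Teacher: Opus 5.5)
Your proposal is correct and follows essentially the same route as the paper. You differentiate the $\eta$-perturbed QVE through the implicit function theorem and invert $-\diag\{\fM^\eta\}^{-2}+\fS^\eta=-\diag\{|\fM^\eta|\}^{-1}\fB_\eta\diag\{|\fM^\eta|\}^{-1}$, using the $\eta$-uniform bound on $\|\fB_\eta^{-1}\|_\infty$ from \Cref{lem:bound_B_inverse} (applicable via \Cref{lem:asst:VP}) together with $\|\partial_\eta^k\fS^\eta\|_\infty\lesssim 1$ and an induction on the higher derivatives. Your explicit remainder structure for $R_k$, with $\|\fM^{\eta,-1}\|_\infty\lesssim(\Im z)^{-1}$ taken from the lower bound in \Cref{lem:abs:fM:bound}, is just a more careful version of the paper's brief ``polynomial in lower-order derivatives'' step.
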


\begin{proof}[Proof of \Cref{lem:Meta:derivatives}] We first rewrite \eqref{eq:QVE:eta} as 
\begin{align*}
    0
    \;=&\; 
    \Big( \mfrac{1}{\fM_1^\eta(z)} \,,\, \cdots \,,\, \mfrac{1}{\fM_{n+p}^\eta(z)}  \Big)^\top
    +
    z \, \bone_{n+p}
    +
    \fG^\eta
    \, \fM^\eta(z)
    \;\eqqcolon\;
    \bF^\eta(z)
    \;,
\end{align*}
which is the perturbed version of $\bF(z)$ defined in \eqref{eq:perturbed_combined_QVE}. Following the notation of \eqref{eq:defn:fB}, we can compute
\begin{align*}
    \Big( \mfrac{\partial \bF^\eta(z)}{\partial \fM^\eta(z)} \Big)^{-1}
    \;=&\;
    \big( 
        - \diag\{ \fM^\eta(z) \}^{-2}
        + \fG^\eta
    \big)^{-1}
    \tagaligneq \label{eq:F:M:inverse:compute}
    \\
    \;=&\;
    -
    \diag\{ | \fM^\eta(z)| \}
    \;
    \fB_\eta^{-1}(z)
    \;
    \diag\{ | \fM^\eta(z)| \}
    \;,
\end{align*}
where we have denoted 
\begin{align*}
    \fB_\eta(z)
    \;\coloneqq&\;
    \diag\bigg\{ 
            \mfrac{|\fM_1^\eta(z)|^2}{\fM_1^\eta(z)^2}
            \,,\, \ldots \,,\,
            \mfrac{|\fM_{n+p}^\eta(z)|^2}{\fM_{n+p}^\eta(z)^2}
    \bigg\}
    -
    \diag\{ | \fM^\eta(z)| \} \, \fG^\eta(z)  \, \diag\{ | \fM^\eta(z)| \}
    \;.
\end{align*}
In particular, since $\eta \in (0,1/s_*)$, the perturbed data matrix $X^\eta$ also satisfies \Cref{asst:VP} with some modified parameters that depend only on $s_*, \psi_1, \psi_2, L_1, L_2$ by \Cref{lem:asst:VP}, and therefore the bound on $\fB$ from \Cref{lem:bound_B_inverse} applies directly to $\fB_\eta$ and yields that, for some constant $c'_L$ that depends only on $s_*, \psi_1, \psi_2, L_1, L_2$,
\begin{align}
    \norminf{ \fB_\eta^{-1}(z) }
    \;\lesssim&\;
    1 
    + 
    \max\Big\{ 1 \,,\, 
    \Big( \mfrac{\Im(z) \, |z|
        + s_*
        }{\Im(z)}  \Big)^{c'_L} \Big\}
    \,
    (\Im(z))^{12}
    \;,
    \label{eq:estimate_norm_B_inverse:eta}
\end{align}
where we recall the notation from \Cref{appendix:local:law} that for a matrix $A \in \C^{b \times b}$,
\begin{align*}
    \norminf{A} \;\coloneqq\; \sup_{v \neq 0} \mfrac{\| A v \|_\infty}{\|v \|_\infty}\;.
\end{align*}
Also note that by \Cref{lem:abs:fM:bound},
\begin{align*}
    \big\| \diag\{ | \fM^\eta(z)| \}  \big\|_{\infty} 
    \;=\; 
    \sup_{1 \leq l \leq n+p} | \fM_l(z)| \;\leq\; \mfrac{1}{\Im(z)}\;.
    \tagaligneq \label{eq:Meta:bound}
\end{align*}
This implies that 
\begin{align*}
    \Big\| 
        \, \Big( \mfrac{\partial \bF^\eta(z)}{\partial \fM^\eta(z)} \Big)^{-1} \,
    \Big\|_\infty
    \;\lesssim\;
    \mfrac{1}{(\Im(z))^2}
    + 
    \max\Big\{ 1 \,,\, 
    \Big( \mfrac{\Im(z) \, |z|
        + s_*
        }{\Im(z)}  \Big)^{c_L} \Big\}
    \,
    (\Im(z))^{10}
    \;.
    \tagaligneq \label{eq:derivative:F:M:bound}
\end{align*}
To use this to control the stability of $\fM^\eta$, we observe that by the implicit function theorem,
\begin{align*}
    \partial_\eta \fM^\eta(z) 
    \;=&\; 
    -
    \Big( \mfrac{\partial \bF^\eta}{\partial \fM^\eta(z)} \Big)^{-1} 
    \mfrac{\partial \bF^\eta}{\partial \eta}
    \;.
    \tagaligneq \label{eq:derivative:IFT:bound}
\end{align*}
Moreover,
\begin{align*}
    \Big\| \mfrac{\partial \bF^\eta}{\partial \eta} \Big\|_\infty
    \;=&\;
    \Big\|
        \begin{psmallmatrix} 
            0 & S (I_p + \eta \bar \Sigma)^{-2}  \bar \Sigma \\
            \bar \Sigma (I_p + \eta \bar \Sigma)^{-2} S^\top & 0
        \end{psmallmatrix}
        \, \fM^\eta(z)
    \Big\|_\infty
    \tagaligneq \label{eq:F:eta:compute}
    \\
    \;\overset{\eqref{eq:Meta:bound}}{\leq}&\;
    \Big\| 
        \begin{psmallmatrix} 
            0 & S (I_p + \eta \bar \Sigma)^{-2}  \bar \Sigma \\
            \bar \Sigma (I_p + \eta \bar \Sigma)^{-2} S^\top & 0
        \end{psmallmatrix}
    \Big\|_\infty
    \,
    \mfrac{1}{\Im(z)} 
    \\
    \;\leq&\;
    \mfrac{2}{\Im(z)} \, \big\| S (I_p + \eta \bar \Sigma)^{-2}  \bar \Sigma  \big\|_\infty
    \\
    \;\leq&\;
    \mfrac{2}{\Im(z)} \, \| S \|_\infty \, \big\| (I_p + \eta \bar \Sigma)^{-2} \big\|_\infty \, \|  \bar \Sigma  \|_\infty
    \\
    \;\overset{(a)}{\leq}&\;
    \mfrac{2 s_*^2}{\Im(z)}
    \;.
    \tagaligneq \label{eq:derivative:F:eta:bound}
\end{align*}
In $(a)$ above, we have used that $\|\bar \Sigma \|_\infty \leq s_*$ by \Cref{lem:Sigma:op:bound}, that since $\bar \Sigma$ is diagonal,
\begin{align*}
    \big\| (I_p + \eta \bar \Sigma)^{-2} \big\|_\infty  \;=\; \max_{1 \leq j \leq p} \mfrac{1}{(1+ \eta \bar \Sigma_{jj} )^2} \;\leq\; 1
    \;,
\end{align*}
and that since $\sup_{i \leq n, j \leq p} S_{ij}  \leq \frac{s_*}{p+n}$,
\begin{align*}
    \| S \|_\infty 
    \;=&\; 
    \sup_{v \in \R^p \,,\, \| v \|_\infty = 1} \| S v \|_\infty
    \;=\;
    \sup_{v \in \R^p \,,\, \| v \|_\infty = 1} \, \max_{1 \leq i \leq n}  \Big| \msum_{j =1 }^p S_{ij} v_j  \Big|
    \\
    \;\leq&\;
    \sup_{v \in \R^p \,,\, \| v \|_\infty = 1} \, \mfrac{s_*}{p+n} \msum_{j =1 }^p | v_j | 
    \;\leq\;
     \mfrac{s_* p}{p+n}  \sup_{v \in \R^p \,,\, \| v \|_\infty = 1} \, \| v \|_\infty 
     \;\leq\;
     s_*\;.
\end{align*}
Combining \eqref{eq:derivative:F:M:bound}, \eqref{eq:derivative:IFT:bound} and \eqref{eq:derivative:F:eta:bound}, we obtain the first desired bound
\begin{align*}
    \big\| \partial_\eta \fM^\eta(z)  \big\|_\infty 
    \;\leq&\;
    \Big\| 
        \, \Big( \mfrac{\partial \bF^\eta(z)}{\partial \fM^\eta(z)} \Big)^{-1} \,
    \Big\|_\infty 
    \, 
    \Big\| \mfrac{\partial \bF^\eta}{\partial \eta} \Big\|_\infty
    \\
    \;\lesssim&\;
    \mfrac{1}{(\Im(z))^3}
    + 
    \max\Big\{ 1 \,,\, 
    \Big( \mfrac{\Im(z) \, |z|
        + s_*
        }{\Im(z)}  \Big)^{c_L} \Big\}
    \,
    (\Im(z))^{9}
    \\
    \;\overset{(b)}{\lesssim}&\; (\Im(z))^{- c_1}
    \;,
\end{align*}
where $c_1 > 0$ is some constant that depends only on $s_*, \psi_1, \psi_2, L_1, L_2$. Note that $(b)$ is the only place where we have used $|z| \leq 1$. For the second derivative, we can apply the implicit function theorem and use an analogous argument to note that there exist polynomial functions $q_2: \R^3 \rightarrow \R$  such that 
\begin{align*}
    \big\| \partial_\eta^2 \fM^\eta(z)  \big\|_\infty 
    \;\lesssim&\;
    \Big\| 
        \, \Big( \mfrac{\partial \bF^\eta(z)}{\partial \fM^\eta(z)} \Big)^{-1} \,
    \Big\|_\infty
    \,\times\,
    p_2\Big( 
        \Big\| \mfrac{\partial^2 \bF^\eta(z)}{\partial \fM^\eta(z)^2} \big( \partial_\eta \fM^\eta(z) \otimes \partial_\eta   \fM^\eta(z)^\top \big) \Big\|_\infty  
        \,,\,
    \\
    &\hspace{11em} 
        \Big\| \partial_{\fM^\eta(z)}\partial_\eta \bF^\eta(z) \,\big( \partial_\eta   \fM^\eta(z)^\top  \big) \Big\|_\infty
        \,,\,
    \\
    &\hspace{11em} 
        \big\| \partial_\eta^2 \bF^\eta(z) \big\|_\infty
    \Big)
    \\
    \;\lesssim&\;
    (\Im(z))^{- c_2}
\end{align*}
for some constant $c_2 > 0$ that depends only on $s_*, \psi_1, \psi_2, L_1, L_2$. A similar bound holds for the third derivative:
\begin{align*}
    \big\| \partial_\eta^3 \fM^\eta(z)  \big\|_\infty \;\lesssim&\; (\Im(z))^{- c_3}\;.
\end{align*}
Taking $c= \max\{1, c_1, c_2, c_3\}$ and applying the triangle inequality on the three derivative bounds above as well as \eqref{eq:Meta:bound} yield the desired bound.
\end{proof}

\subsection{Proof of \texorpdfstring{\Cref{thm:bias}}{the bias theorem}}  \label{appendix:proof:bias}

The objective is to approximate
\begin{align*}
    \cR_B^\lambda(X)
    \;=\; 
    \big\| \bar \Sigma^{1/2}
        \big( (W_n + \lambda I_p)^{-1}  W_n - I_p \big)
        \beta 
    \big\|^2 
    \;.
\end{align*}
We will take $\lambda \in (n^{-2\upsilon}, \lambda_0)$ for the majority of the proof, where $\upsilon >0$ is chosen as in \Cref{thm:local:law:VP}, and restrict the range of values of $\lambda$ later. The proof consists of the following steps:
\begin{proplist}
    \item Rewrite $\cR_B^\lambda(X)$ as the $\eta$-derivative of some function $\cF^\lambda(\eta)$, which is linear in the perturbed matrix resolvent 
    \begin{align*}
        \big(W_n + \lambda I_p + \eta \bar \Sigma\big)^{-1}
        \;,
    \end{align*}
    and can therefore be approximated by the local law derived in \Cref{thm:local:law:VP};
    \item Extend the approximation of $\cF^\lambda(\eta)$ to that of $\partial_\eta \cF^\lambda(\eta)$ by applying a technical tool developed in \cite{hastie2022surprises} (see \Cref{lemma:ConvDerivative} below); 
    \item Rewriting the limiting quantity from the local law, which involves $\fM$, in terms of $\br(\lambda)$.
\end{proplist} 

\vspace{.5em}

\noindent 
\textbf{Step (i): Relate $\cR^\lambda_B(X)$ to $\cF^\lambda(\eta)$, for which a deterministic approximation is available.} We follow the strategy of \cite{hastie2022surprises} to introduce a perturbation parameter 
\begin{align*}
     \eta \;\in\; \Big(0 \,,\,  \mfrac{1}{s_*} \Big)\;,
     \tagaligneq \label{eq:eta:range}
\end{align*}
and consider the random function  
\begin{align*}
    \cF^\lambda(\eta)
    \;\coloneqq&\;
    \lambda
    \,
    \beta^\top 
     \big( W_n + \lambda I_p + \lambda \eta \bar \Sigma \big)^{-1} \beta 
     \\
     \;=&\;
    \lambda
    \beta_\eta^\top 
    \Big( W_\eta + \lambda I_p \Big)^{-1} \beta_\eta
    \;=\;
    \lambda
    \beta_\eta^\top 
    \Big( \mfrac{1}{n} X_\eta^\top X_\eta + \lambda I_p \Big)^{-1} \beta_\eta
     \;,
\end{align*}
where we have denoted 
\begin{align*}
    \beta_\eta \;\coloneqq&\; (I_p + \eta \bar \Sigma)^{-1/2} \beta\;,
    \qquad 
    W_\eta \;\coloneqq\; 
    (I_p + \eta \bar \Sigma)^{-1/2} 
    W_n 
    (I_p + \eta \bar \Sigma)^{-1/2} 
    \;,
    \\
    X_\eta \;\coloneqq&\;  X (I_p + \eta \bar \Sigma)^{-1/2} \;.
\end{align*}
Then we can express the bias term of the risk as
\begin{align*}
    \cR_B^\lambda(X)
    \;=&\;
    \big\| \bar \Sigma^{1/2}
            \big( (W_n + \lambda I_p)^{-1}  W_n - I_p \big)
            \beta 
        \big\|^2 
    \\
    \;=&\;
    \lambda^2 \, \big\| \bar \Sigma^{1/2} \big( W_n + \lambda I_p \big)^{-1} \beta \big\|^2
    \;=\;
    -
    \partial \cF^\lambda(0) \;.
    \tagaligneq \label{eq:bias:as:derivative}
\end{align*}

\vspace{1em}

To apply the local law, we first use a similar calculation as \eqref{eq:resolvent:Wn} to obtain
\begin{align*}
    ( W_\eta + \lambda I_p )^{-1} 
    \;=\;
    \mfrac{G_2^\eta\big( \sqrt{-\lambda} \big)}{\sqrt{-\lambda}}
    \;,
\end{align*}
where we have defined the $\eta$-perturbed matrix resolvents
\begin{align*}
    \big(G_2^\eta(z) \big)_{l,l'} 
    \;\coloneqq&\; 
    G_{n+l,n+l'}^\eta(z) 
    \text{ for } 1 \leq l,l' \leq p\;,
    \\
    G^\eta(z)
    \;\coloneqq&\;
    (H^\eta - z I_{n+p})^{-1}\;,
    \quad 
    \text{ where }
    H^\eta \;\coloneqq\; \begin{psmallmatrix}
        0 & \frac{1}{\sqrt{n}} X_\eta \\
        \frac{1}{\sqrt{n}} X_\eta^\top & 0
    \end{psmallmatrix}
    \;.
\end{align*}
This allows us to rewrite 
\begin{align*}
    \cF^\lambda(\eta)
    \;=\;
    - \sqrt{-\lambda}
    \, 
    \beta_\eta^\top 
    G_2^\eta\big( \sqrt{-\lambda} \big)  \beta_\eta
    \;.
\end{align*}
Observe that our local law (\Cref{thm:local:law:VP}) can be applied to replace $G^\eta(\sqrt{-\lambda})$ by $\fM^\eta(\sqrt{-\lambda})$: By \Cref{lem:asst:VP}, $X_\eta$ is a data matrix under \Cref{mod:vp} that satisfies \Cref{asst:VP}, so \Cref{thm:local:law:VP} applies to $G^\eta(\sqrt{-\lambda})$. In particular, the bounding constants in \Cref{thm:local:law:VP} are independent of $\eta$ since we have chosen $\eta$ to be bounded in \eqref{eq:eta:range} (see the discussion before \Cref{lem:asst:VP}). We denote the deterministic analogue of $\cF^\lambda(\eta)$ as 
\begin{align*}
    F^\lambda(\eta) 
    \;\coloneqq\;  
    - \sqrt{-\lambda} \, \msum_{l=1}^p (\beta_\eta)_l^2  \, \fM_{n+l}^\eta( \sqrt{-\lambda} )
    \;.
    \tagaligneq \label{eq:defn:F}
\end{align*}
We also note that since $\lambda \in \big( n^{-2\upsilon}, \lambda_0)$ for some universal constant $\lambda_0 > 0$,
\begin{align*}
    \Im\big(\sqrt{-\lambda} \big)
    \;=\; 
    \lambda^{1/2} 
    \;\geq\; n^{-\upsilon}
    \;,
    \qquad\text{ and }\qquad 
    \big| \sqrt{-\lambda} \big| = \lambda^{1/2}
    \;\leq\; \lambda_0^{1/2}\;,
\end{align*}
and in particular $\sqrt{-\lambda} \in \D_{\upsilon, \lambda_0^{1/2}}$ defined in \Cref{thm:local:law:VP}. Then 
\begin{align*}
    \big| 
        \cF^\lambda(\eta) 
        - 
        F^\lambda(\eta) 
    \big|
    &\;=\;
    \lambda^{1/2}
    \Big| 
        \msum_{l,l'=1}^p
         (\beta_\eta)_l  (\beta_\eta)_{l'}
        ( G_2^\eta(\sqrt{-\lambda}))_{ll'}
        - 
        \msum_{l=1}^p (\beta_\eta)_l^2  \, 
        \fM_{n+l}^\eta(\sqrt{-\lambda} )
    \Big|
    \\
    &\;=\;
    \lambda^{1/2}
    \,
    \Big| 
        \msum_{l,l'=1}^p
         (\beta_\eta)_l  (\beta_\eta)_{l'}
        \Big(
         ( G_2^\eta(\sqrt{-\lambda}))_{ll'}
         -
         \fM_{n+l}^\eta(\sqrt{-\lambda} )
         \,
         \ind_{\{l=l'\}}
         \Big)
    \Big|
    \\
    &\;\leq\;
    \lambda^{1/2}
    \,
        \msum_{l=1}^p
         (\beta_\eta)_l^2  
        \Big|
         ( G_2^\eta(\sqrt{-\lambda}))_{ll}
         -
         \fM_{n+l}^\eta(\sqrt{-\lambda} )
         \Big|
    \\
    &
    \hspace{2em}
    +
     \lambda^{1/2}
    \,
        \msum_{l \neq l'}
         | (\beta_\eta)_l | \, | (\beta_\eta)_{l'} |
         \,
        \big| G_2^\eta(\sqrt{-\lambda})_{ll'} \big|
    \\
    &\;\overset{(a)}{\prec}\;
    \lambda^{1/2}
    \,
    \Big(
    \| \beta_\eta \|^2
    \,
    n^{-(c_L+1) \upsilon}
    +
    \| \beta_\eta \|_{l_1}^2
    \,
    n^{3 \upsilon - \frac{1}{2}}
    \Big)
    \;.
    \tagaligneq \label{eq:bias:local:law:err}
\end{align*}
In $(a)$ above, we have used both the generic bound in \Cref{thm:local:law:VP} and the sharper off-diagonal approximation.

\vspace{1em}

\noindent 
\textbf{Step (ii): Extend the approximation of $\cF^\lambda(\eta)$ to $\partial \cF^\lambda(\eta)$. for which a deterministic approximation is available.}  To turn this into a control on the derivative  $\partial \cF^\lambda(\eta)$, we fix a sufficiently small constant $\eta_0 \leq 1/(2s_*)$ and note that, first note that uniformly over $\lambda \in (n^{-2\upsilon}, \lambda_0)$ and $\eta \in [0, \eta_0)$, the function $\eta \mapsto \fM^\eta(\sqrt{-\lambda})$ is analytic (by e.g.~applying the implicit function theorem as in the proof of \Cref{lem:Meta:derivatives}). We will use the following result:

\begin{lemma}[Lemma 5 of \cite{hastie2022surprises}] \label{lemma:ConvDerivative} For any $k\in\N$, there exist  absolute constants $C_{1,k},C_{2,k}<\infty$, such that the following holds: Given any two functions $f,g:[x-\Delta,x+\Delta]\to \R$, that are $(2k+1)$ times differentiable with bounded derivatives, and any $\delta' \in (0,\Delta/k)$, we have
\begin{align*}
\big|f'(x)-g'(x)\big|\le \frac{C_{1,k}}{\delta'} \|f-g\|_{\infty}+ C_{2,k}\|f^{(2k+1)}-g^{(2k+1)}\|_{\infty}(\delta')^{2k}\, .
  \end{align*}
\end{lemma}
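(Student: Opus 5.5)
The plan is to reduce the statement to a single function $h\coloneqq f-g$ and apply a high-order central finite-difference formula for $h'(x)$. Since $h$ is $(2k+1)$ times differentiable on $[x-\Delta,x+\Delta]$ with bounded derivatives, and $f'-g'=h'$, $f-g=h$, $f^{(2k+1)}-g^{(2k+1)}=h^{(2k+1)}$, it suffices to show
\begin{align*}
    |h'(x)| \;\le\; \mfrac{C_{1,k}}{\delta'}\,\|h\|_\infty + C_{2,k}\,\|h^{(2k+1)}\|_\infty\,(\delta')^{2k}\;.
\end{align*}

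\emph{Step 1: choose the weights.} Fix the $2k+1$ nodes $j\in\{-k,\ldots,k\}$. I would pick real weights $(a_j)_{|j|\le k}$, depending only on $k$, such that
\begin{align*}
    \msum_{j=-k}^{k} a_j\, j^m \;=\; \ind_{\{m=1\}}\;,\qquad m=0,1,\ldots,2k\;.
\end{align*}
This is a $(2k+1)\times(2k+1)$ linear system whose matrix is the Vandermonde matrix on the distinct nodes $-k,\ldots,k$. That matrix is invertible, so a solution exists, and it is an absolute constant vector depending only on $k$.

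\emph{Step 2: Taylor expansion.} Set $\delta\coloneqq\delta'$. The condition $\delta'<\Delta/k$ guarantees that every point $x+j\delta$ with $|j|\le k$ lies in $[x-\Delta,x+\Delta]$. At each such point I would expand
\begin{align*}
    h(x+j\delta)=\msum_{m=0}^{2k}\mfrac{h^{(m)}(x)}{m!}(j\delta)^m+R_j\;,\qquad |R_j|\le \mfrac{|j|^{2k+1}\delta^{2k+1}}{(2k+1)!}\,\|h^{(2k+1)}\|_\infty\;,
\end{align*}
using the Lagrange form of the remainder. Multiplying by $a_j$, summing over $j$, and using the moment conditions from Step 1 kills every term except the one with $m=1$. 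This gives
\begin{align*}
    \msum_j a_j h(x+j\delta)\;=\;\delta\, h'(x)+\msum_j a_jR_j\;.
\end{align*}

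\emph{Step 3: conclude.} Dividing by $\delta$ and applying the triangle inequality yields
\begin{align*}
    |h'(x)|\;\le\;\mfrac{\sum_j|a_j|}{\delta}\,\|h\|_\infty+\mfrac{k^{2k+1}\sum_j|a_j|}{(2k+1)!}\,\|h^{(2k+1)}\|_\infty\,\delta^{2k}\;.
\end{align*}
So the claim holds with $C_{1,k}=\sum_j|a_j|$ and $C_{2,k}=k^{2k+1}C_{1,k}/(2k+1)!$. Both constants are absolute and depend only on $k$.

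There is no serious obstacle here. The only points to verify are the solvability of the Vandermonde system, which holds because the nodes are distinct, and that all evaluation points stay inside the domain, which is exactly the role of the hypothesis $\delta'<\Delta/k$.
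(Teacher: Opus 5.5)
Your proof is correct. Setting $h=f-g$, choosing the weights $(a_j)_{|j|\le k}$ from the invertible Vandermonde system so that they extract the first-order Taylor coefficient, and bounding the Lagrange remainders gives exactly the claimed inequality, with $\delta'<\Delta/k$ keeping every node $x+j\delta'$ in the domain. The paper does not prove this lemma itself but imports it from \cite{hastie2022surprises}, and your Taylor-expansion/Vandermonde-inversion argument is the standard proof of that result.
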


The goal is to apply \Cref{lemma:ConvDerivative} with $k=1$ and with $f$ and $g$ replaced by $\cF^\lambda$ and $F^\lambda$ respectively. To this end, we need to control the third derivatives $\partial^3 \cF^\lambda(\eta)$ and  $\partial^3 F^\lambda(\eta)$ with respect to $\eta$. We inherit the notation of \Cref{appendix:local:law} to write $\normtwo{A} = \sup_{v \neq 0} \frac{\|A v\|}{\| v\|}$. First note that for some universal constant $c_1 > 0$, almost surely
\begin{align*}
    \big| \partial^3 \cF^\lambda(\eta) \big|
    \;\leq&\;
    c_1 \,\lambda^4 \, \| \beta \|^2 
    \Big\| 
     \Big( W_n + \lambda I_p +
      \lambda \eta \bar \Sigma \Big)^{-1} \Big\|_2^4 
     \| \bar \Sigma \|_2^3
    \\
    \;\leq&\;
        c_1 \| \beta \|^2  \| \bar \Sigma \|_2^3
    \, 
    \;\leq\;
        c_1 \| \beta \|^2  s_*^3
    \;,
    \tagaligneq \label{eq:bias:third:empirical:F:control}
\end{align*}
where we have used \Cref{lem:Sigma:op:bound} in the last inequality. On the other hand, 
\begin{align*}
    \big| \partial^3 F^\lambda(\eta) \big|
    \;\lesssim&\;
    \lambda^{1/2} 
    \, 
    \Big( 
        \msum_{l=1}^p  (\beta_\eta)_l^2    
    \Big)  
    \, 
    \big\| \partial_\eta^3 \, \fM_{n+l}^\eta( \sqrt{-\lambda} ) \big\|_\infty 
     \\
     &\;
     +
     \lambda^{1/2}  
     \,
    \Big( 
        \msum_{l=1}^p \big| \partial_\eta (\beta_\eta)_l^2    \big|
    \Big)
     \,  
     \big\| \partial_\eta^2 \, \fM_{n+l}^\eta( \sqrt{-\lambda} ) \big\|_\infty 
     \\
     &\;
     +
     \lambda^{1/2}  
     \,
    \Big( 
        \msum_{l=1}^p \big| \partial_\eta^2 (\beta_\eta)_l^2 \big|
    \Big)
     \, 
     \big\| \partial_\eta \, \fM_{n+l}^\eta( \sqrt{-\lambda} ) \big\|_\infty  
     \\
     &\;
     +
     \lambda^{1/2}  
     \,
    \Big( 
        \msum_{l=1}^p \big| \partial_\eta^3 (\beta_\eta)_l^2 \big|
    \Big)
     \, 
     \big\| \fM_{n+l}^\eta( \sqrt{-\lambda} ) \big\|_\infty 
     \\
     \;\overset{(b)}{\lesssim}&\;
     \lambda^{ \frac{1-c}{2} }
    \, \msum_{l=1}^p
    \Big(
        (\beta_\eta)_l^2
        +
        \big| \partial_\eta (\beta_\eta)_l^2 \big|
        +
        \big| \partial_\eta^2 (\beta_\eta)_l^2 \big|
        + 
        \big| \partial_\eta^3 (\beta_\eta)_l^2 \big|
    \Big)
    \tagaligneq \label{eq:bias:third:pop:F:control:first}
\end{align*}
for some constant $c > 0$ that depends only on $s_*, \psi_1, \psi_2, L_1, L_2$ from \Cref{asst:VP}. In $(b)$ above, we have applied \Cref{lem:Meta:derivatives} to control the $\fM^\eta$ terms. Now note that since  $\bar \Sigma$ is diagonal, 
\begin{align*}
    (\beta_\eta)_l \;=\; e_l^\top (I_p + \eta \bar \Sigma)^{-1/2} \beta
    \;=\;
    \mfrac{1}{1+\eta \bar \Sigma_{ll}} \beta_l \;.
    \tagaligneq \label{eq:beta:eta}
\end{align*}
Therefore for $k=0,1,2,3$, noting additionally that $\| \bar \Sigma\|_\infty \leq s_*$ by \Cref{lem:Sigma:op:bound},
\begin{align*}
    \msum_{l=1}^p \big| \partial_\eta^k (\beta_\eta)_l^2 \big|
    \;=&\;
    \msum_{l=1}^p \beta_l^2  \, \Big| \partial_\eta^k \Big( \mfrac{1}{1+\eta \bar \Sigma_{ll}} \Big) \Big|
    \;\lesssim\;
    \| \beta \|^2 \, s_*^k \;.
\end{align*}
Substituting this to \eqref{eq:bias:third:pop:F:control:first} gives
\begin{align*}
    \big| \partial^3 F^\lambda(\eta) \big|
    \;\lesssim&\;
    \lambda^{ \frac{1-c}{2} } \| \beta \|^2
    \;.
    \tagaligneq \label{eq:bias:third:pop:F:control}
\end{align*}
Combining \Cref{lemma:ConvDerivative} with \eqref{eq:bias:local:law:err},  \eqref{eq:bias:third:empirical:F:control} and \eqref{eq:bias:third:pop:F:control} gives that, almost surely
\begin{align*}
    &\;
    \big| \cR_B^\lambda(X) - \big( -\partial F^\lambda(0) \big) \big| 
    \;=\;
    \big| \big(- \partial \cF^\lambda(0) \big) - \big(- \partial F^\lambda(0) \big) \big| 
    \\
    &\;\lesssim\;
    \inf_{\delta' > 0}
    \Big\{ 
        \mfrac{1}{\delta'}    
        \big| \cF^\lambda(0)  -  F^\lambda(0) \big| 
        \,+\,
        (\delta')^{2}
        \big| \partial^3 \cF^\lambda(0)  -  \partial^3 F^\lambda(0) \big| 
    \Big\}
    \\
    &\;\overset{\eqref{eq:bias:local:law:err}}{\prec}\;
    \inf_{\delta' > 0}
    \Big\{ 
        \mfrac{\lambda^{1/2}}{\delta'}    
        \Big(
            \| \beta \|^2
            \,
            n^{-(c_L+1) \upsilon}
            +
            \| \beta \|_{l_1}^2
            \,
            n^{3 \upsilon - \frac{1}{2}}
        \Big)
        \,+\,
        (\delta')^{2}
        \big( 
            \big| \partial^3 \cF^\lambda(0) \big|  
            +
            \big| \partial^3 F^\lambda(0) \big| 
        \big)
    \Big\}
    \\
    &\;\overset{
        (c)
    }{\prec}\;
    \inf_{\delta' > 0}
    \Big\{ 
        \mfrac{\lambda^{1/2}}{\delta'}    
        \Big(
            \| \beta \|^2
            \,
            n^{-(c_L+1) \upsilon}
            +
            \| \beta \|_{l_1}^2
            \,
            n^{3 \upsilon - \frac{1}{2}}
        \Big)
        \,+\,
        (\delta')^{2}
        \,
        \Big( 
            \| \beta \|^2  s_*^3
            +
            |\lambda|^{\frac{1-c}{2}} \, \| \beta \|^2
        \Big)
    \Big\}
    \;.
\end{align*}
In $(c)$ above, we have used \eqref{eq:bias:third:empirical:F:control} and \eqref{eq:bias:third:pop:F:control}. Choosing 
\begin{align*}
    \delta' 
    \;=\;
    \mfrac{ 
        \lambda^{1/6}
        \,
        \big(
            \| \beta \|^{2/3}
            \,
            n^{- (c_L+1) \upsilon / 3}
            +
            \| \beta \|_{l_1}^{2/3}
            \,
            n^{\upsilon - 1/6}
        \big)
    }{
        \| \beta \|^{2/3}  s_*
        +
        \lambda^{(1-c)/6} \, \| \beta \|^{2/3}
    }
    \;,
\end{align*}
we obtain 
\begin{align*}
    &\;
    \big| \cR_B^\lambda(X)  - \big( -\partial F^\lambda(0) \big) \big| 
    \\
    &\;\prec\;
    \lambda^{1/3}
    \,
    \big(
        \| \beta \|^{2/3}  s_*
        +
        \lambda^{(1-c)/6} \, \| \beta \|^{2/3}
    \big) 
    \,
    \big( 
        \| \beta \|^{4/3}
        \,
        n^{-  2(c_L+1) \upsilon / 3}
        +
        \| \beta \|_{l_1}^{4/3}
        \,
        n^{ 2 \upsilon - 1/3}
    \big)
    \;.
\end{align*}
We now use the condition that $s_*$, $L_1$, $L_2$, $\psi_1$ and $\psi_2$ from \Cref{asst:VP} as well as $\| \beta \|^2$ are universal constants, and so are $c, c_L > 0$. Then 
\begin{align*}
    &\;
    \big| \cR_B^\lambda(X) - \big( -\partial F^\lambda(0) \big) \big| 
    \\
    &\;\prec\;
    \lambda^{1/3}
    \,
    \big(
        1
        +
        \lambda^{(1-c)/6} 
    \big) 
    \,
    \big( 
        n^{-  2(c_L+1) \upsilon / 3}
        +
        \| \beta \|_{l_1}^{4/3}
        \,
        n^{ 2 \upsilon - \frac{1}{3} }
    \big)
    \;.
\end{align*}
Now use the condition that  $\| \beta \|_{l_1} \leq n^{\frac{1}{4} - \delta}$ for some fixed $\delta > 0$. We choose $\upsilon \in (0, \frac{1}{4(c_L+2)})$ from \Cref{thm:local:law:VP} to be such that $\upsilon \leq \min\{  \frac{1}{4(c_L+2)}, \frac{1}{3} \delta\}$, which implies 
\begin{align*}
    &\;
    \big| \cR_B^\lambda(X) - \big( -\partial F^\lambda(0) \big) \big| 
    \\
    &\;\prec\;
    \lambda^{1/3}
    \,
    \big(
        1
        +
        \lambda^{(1-c)/6} 
    \big) 
    \,
    \big( 
        n^{-  2(c_L+1) \upsilon / 3}
        +
        n^{\frac{1}{3} - \delta}
        \,
        n^{ \frac{2}{3} \delta - \frac{1}{3} }
    \big)
    \\
     &\;=\;
    \big(
        \lambda^{1/3}
        +
        \lambda^{1/2 - c/6 } 
    \big) 
    \,
    \big( 
        n^{-  2(c_L+1) \upsilon / 3}
        +
        n^{ - \delta /3}
    \big)
    \;.
\end{align*}
Finally, provided that $\lambda \in (n^{-\alpha}, \lambda_0)$ with 
\begin{align*}
    \alpha 
    \;<\; 
    \max\Big\{ 
        2 \upsilon
        \,,\,    
        \mfrac{
            2(c_L+1) \upsilon / 3
        }
        {
            \frac{c}{6} - \frac{1}{2} 
        }
        \,,\,
        \mfrac{
            \delta / 3
        }
        {
            \frac{c}{6} - \frac{1}{2} 
        }
    \Big\}
    \;\eqqcolon\;
    \alpha_0\;,
\end{align*}
we obtain the stochastic domination statement that for some universal constant $c' > 0$,
\begin{align*}
    \big| \cR_B^\lambda(X) - \big( -\partial F^\lambda(0) \big) \big|  \;\prec\; n^{- c'}\;.
\end{align*}
By the Borel-Cantelli lemma, we obtain that 
\begin{align*}
    \big| \cR_B^\lambda(X) - \big( -\partial F^\lambda(0) \big) \big| \;\xrightarrow{\rm a.s.}\; 0\;.
    \tagaligneq \label{eq:risk:bias:conv}
\end{align*}

\vspace{.5em}

\noindent 
\textbf{Step (iii): Rewriting $\partial F^\lambda(0)$ in terms of $\br(\lambda)$.} 
We first differentiate \eqref{eq:defn:F} to compute 
\begin{align*}
    \partial F^\lambda(\eta) 
    \;=&\;  
    - \sqrt{-\lambda} \, \msum_{l=1}^p 
    \partial_\eta \Big(
    (\beta_\eta)_l^2  \, \fM_{n+l}^\eta( \sqrt{-\lambda} )
    \Big) 
    \\
    \;\overset{\eqref{eq:beta:eta}}{=}&\;
    - \sqrt{-\lambda} \, \msum_{l=1}^p \beta_l^2 
    \, \partial_\eta \Big(
    \mfrac{1}{1+ \eta \bar \Sigma_{ll}}  \, \fM_{n+l}^\eta( \sqrt{-\lambda} )
    \Big) 
    \;.
\end{align*}
To compute $\big( \partial_\eta \, \fM_{n+l}^\eta(z) \big)_{1 \leq l \leq p}$, we denote the $\C^n$ and $\C^p$ vectors 
\begin{align*}
    \bu^\eta(\lambda) 
    \;\coloneqq&\; 
    \big(  - \sqrt{-\lambda} \; \fM_l^\eta(\sqrt{-\lambda}) \big)_{1 \leq l \leq n}
    &\text{ and }&&
    \br^\eta(\lambda) 
    \;\coloneqq&\; 
    \Big(
        \mfrac{
            - \sqrt{-\lambda} \; \fM_{n+l}^\eta(\sqrt{-\lambda}) 
        }{
            1 + \eta \bar \Sigma_{ll}
        }
    \Big)_{1 \leq l \leq p}\;,
\end{align*}
which allows us to rewrite 
\begin{align*}
    \partial F^\lambda(\eta) 
    \;=&\;  
     \msum_{l=1}^p \beta_l^2 
    \partial_\eta \, r^\eta_l(\lambda) 
    \;.
    \tagaligneq \label{eq:partial:F:lambda:eta}
\end{align*}
Observe that the modified QVE  \eqref{eq:QVE:eta} at $z=\sqrt{-\lambda}$ reads
\begin{align*}
    &\;
    -
    \begin{psmallmatrix}
        \diag\{ \frac{\bu^\eta(\lambda)}{-\sqrt{-\lambda}}  \}^{-1} & \\
        & \diag\{ \frac{\br^\eta(\lambda)}{-\sqrt{-\lambda}} (1+\eta \bar \Sigma_{ll})  \}^{-1}
    \end{psmallmatrix}
    \\
    &\;=\;
     \sqrt{-\lambda} \, I_{n+p} 
    + 
    \diag\bigg\{
        \begin{psmallmatrix}
            I_n & \\
            & (I_p + \eta \bar \Sigma)^{-1}
        \end{psmallmatrix}
        \, \Sf \, 
        \begin{psmallmatrix}
            I_n & \\
            & (I_p + \eta \bar \Sigma)^{-1}
        \end{psmallmatrix}
        \begin{psmallmatrix}
        \frac{\bu^\eta( \lambda)}{-\sqrt{-\lambda}} \\
        \frac{\br^\eta( \lambda)}{-\sqrt{-\lambda}} (1+\eta \bar \Sigma_{ll}) 
    \end{psmallmatrix}
    \bigg\}\;,
\end{align*}
which rearranges to 
\begin{align*}
    \begin{psmallmatrix}
        I_n & \\
        & (I_p + \eta \bar \Sigma)^{-1}
    \end{psmallmatrix}
    \;=\;
    \begin{psmallmatrix}
        \diag\{ \bu^\eta(\lambda)  \} & \\
        & \diag\{ \br^\eta(\lambda) \}
    \end{psmallmatrix}
    \bigg( 
      I_{n+p} 
    + 
    \mfrac{1}{\lambda}
    \diag\bigg\{
        \begin{psmallmatrix}
            I_n & \\
            & (I_p + \eta \bar \Sigma)^{-1}
        \end{psmallmatrix}
        \, \Sf \, 
        \begin{psmallmatrix}
        \bu^\eta( \lambda) \\
        \br^\eta( \lambda)
    \end{psmallmatrix}
    \bigg\}
    \bigg)
    \;.
\end{align*}
This implies
\begin{align*}
    I_n
    \;=&\;
    \diag\{ \bu^\eta(\lambda)\}
    + 
    \mfrac{1}{\lambda} 
    \diag\{ \bu^\eta(\lambda)\}
    \diag\big\{ 
        S 
        \,
        \br^\eta(\lambda) 
    \big\}
    \\
    \;=&\;
    \mfrac{1}{\lambda} 
    \diag\{\bu^\eta(\lambda)\} 
    \,
        \diag\big\{ 
            \lambda \bone_n 
            +
            S \, \br^\eta(\lambda)  
        \big\}
    \;,
    \\
    I_p
    \;=&\;
    \diag\{ \br^\eta(\lambda) \}
    \, (I_p + \eta \bar \Sigma )
    + 
    \mfrac{1}{\lambda}
    \, 
    \diag\{ \br^\eta(\lambda) \}
    \,
    \diag\big\{ 
        S^\top
        \diag\{ \bu^\eta(\lambda) \} \bone_n
    \big\}
    \\
    \;=&\;
    \diag\{ \br^\eta(\lambda) \}
    \, (I_p + \eta \bar \Sigma )
    +
    \diag\{ \br^\eta(\lambda) \}
    \,
    \diag\Big\{
    S^\top 
    \diag\{ \lambda \bone_n + S \br^\eta(\lambda) \}^{-1}
    \bone_n
    \Big\}
    \;,
    \tagaligneq \label{eq:r2} 
\end{align*}
where $\bone_n \in \R^n$ is the all one vector. Differentiating \eqref{eq:r2} with respect to $\eta$ gives
\begin{align*}
    0
    \;=&\;
    \diag\{ \partial_\eta \br^\eta(\lambda)\}
    \, 
    (I_p + \eta \bar \Sigma )
    +
    \diag\{ \br^\eta(\lambda)\}
    \, 
    \bar \Sigma
    \\
    &\;
    +
    \diag\{ \partial_\eta \br^\eta(\lambda)\}
    \,
    \diag\Big\{
        S^\top 
        \diag\{ \lambda \bone_n + S \br^\eta(\lambda) \}^{-1}
        \bone_n
    \Big\}
    \\
    &\;
    -
    \diag\{ \br^\eta(\lambda)\}
    \,
    \diag\Big\{
    S^\top 
    \diag\{ \lambda \bone_n + S \br^\eta(\lambda) \}^{-2}
    S \, 
    \diag\{ \partial_\eta \br^\eta(\lambda) \} \bone_n
    \Big\}
    \\
    \;\overset{\eqref{eq:r2}}{=}&\;
    \diag\{ \partial_\eta \br^\eta(\lambda)\}
    \, 
    \diag\{ \br^\eta(\lambda)\}^{-1} 
    +
    \diag\{ \br^\eta(\lambda)\} \, 
    \bar \Sigma
    \\
    &\;
    -
    \diag\{ \br^\eta(\lambda)\}
    \,
    \diag\Big\{
    S^\top 
    \diag\{ \lambda \bone_n + S \br^\eta(\lambda) \}^{-2}
    S \, \partial_\eta \br^\eta(\lambda) 
    \Big\}
    \;.
\end{align*}
Noting that $\bar \Sigma$ is diagonal, we obtain 
\begin{align*}
    0
    \;=&\;
    \diag\{ \br^\eta(\lambda)\}^{-1} 
    \, 
    \partial_\eta \br^\eta(\lambda)
    +
    \bar \Sigma \, \br^\eta(\lambda)
    \\
    &\;
    -
    \Big( 
        \diag\{ \br^\eta(\lambda)\}
        \,
        S^\top 
        \diag\{ \lambda \bone_n + S \br^\eta(\lambda) \}^{-2}
        S 
    \Big) \, \partial_\eta \br^\eta(\lambda) 
    \;,
\end{align*}
which rearranges to give 
\begin{align*}
    \partial_\eta \br^\eta(\lambda) 
    \;=&\;
    -
    \Big( 
        \diag\{ \br^\eta(\lambda)\}^{-1} 
        -
            \diag\{ \br^\eta(\lambda)\}
            \,
            S^\top 
            \diag\{ \lambda \bone_n + S \br^\eta(\lambda) \}^{-2}
            S
    \Big)^{-1} 
    \bar \Sigma \br^\eta(\lambda) 
    \;.
    \tagaligneq \label{eq:partial:eta:r}
\end{align*}
Now notice that by construction,
\begin{align*}
    \br^\eta(\lambda) \,\big|_{\eta =0 } \;=\; \br(\lambda)\;,
\end{align*}
where we recall from \eqref{defn:r} that $r_l(\lambda) = -\sqrt{-\lambda} \, \fM_{n+l}(\sqrt{-\lambda})$. Substituting the above two expressions into \eqref{eq:partial:F:lambda:eta} gives 
\begin{align*}
    - \partial F^\lambda(0) 
    \;=&\;  
    -
     \msum_{l=1}^p \beta_l^2  \,
    \partial_\eta \, r^\eta_l(\lambda) \,  \big|_{\eta =0 } 
    \\
    \;=&\;
    \sum_{l=1}^p \beta_l^2 e_l^\top
    \Big(
        \diag\{ \br(\lambda)\}^{-1} 
        -
            \diag\{ \br(\lambda)\}
            \,
            S^\top 
            \diag\{ \lambda \bone_n + S \br(\lambda) \}^{-2}
            S
    \Big)^{-1} 
    \bar \Sigma \br(\lambda) 
    \;.
\end{align*}
Plugging this into \eqref{eq:risk:bias:conv} gives the desired statement that
\begin{align*}
    &\;
    \Bigg| 
        \,
        \cR_B^\lambda(X) - 
        \sum_{l=1}^p \beta_l^2 e_l^\top
        \Big(
            \diag\{ \br(\lambda)\}^{-1} 
            -
                \diag\{ \br(\lambda)\}
                \,
                S^\top 
                \diag\{ \lambda \bone_n + S \br(\lambda) \}^{-2}
                S
        \Big)^{-1} 
        \bar \Sigma \br(\lambda) 
    \Bigg| 
    \\
    &\;\xrightarrow{\rm a.s.}\; 0\;.
\end{align*}
The properties of $\br(\lambda)$ follow from \Cref{lem:r:eqns,lem:r:values}.

\qed

\subsection{Proof of \texorpdfstring{\Cref{thm:var}}{the variance theorem}} \label{appendix:proof:var}
The objective is to approximate 
\begin{align*}
    \cR_V^\lambda(X)
    \;=&\;
    \mfrac{\sigma^2_\epsilon}{n}
    \Tr\big( 
        \bar \Sigma \, W_n ( W_n + \lambda I_p)^{-2}
    \big)
    \;.
\end{align*}
Similar to \Cref{thm:bias}, we first take $\lambda \in (n^{-2\upsilon}, \lambda_0)$ and define 
\begin{align*}
    \cF_V^\lambda(X) 
    \;\coloneqq\;
    \mfrac{\sigma^2_\epsilon}{n} 
    \Tr\big( 
        \lambda \bar \Sigma
        ( W_n + \lambda I_p )^{-1} \big)
\end{align*}
such that 
\begin{align*}
    \cR_V^\lambda(X)
    \;=&\;
    \mfrac{\sigma^2_\epsilon}{n} \Tr\big( 
         \bar \Sigma
        \, W_n \, ( W_n + \lambda I_p )^{-2} \big)
    \\
    \;=&\;
    \mfrac{\sigma^2_\epsilon}{n} \Tr\big( 
        \bar \Sigma
        \, ( W_n + \lambda I_p )^{-1} \big)
    -
    \mfrac{\sigma^2_\epsilon}{n} \Tr\big( 
        \lambda \bar \Sigma
        \, ( W_n + \lambda I_p )^{-2} \big)
    \\
    \;=&\;
    \partial_\lambda \, \cF_V^\lambda(X) \;.
\end{align*}
Recall from \eqref{eq:resolvent:Wn} that $G_2(\sqrt{-\lambda}) / \sqrt{-\lambda} = (W_n+\lambda I_p)^{-1}$, which implies 
\begin{align*}
    \cF_V^\lambda(X) 
    \;=\;
    \mfrac{\sigma^2_\epsilon}{n} 
    \Tr\Big( 
        \lambda  \bar \Sigma
        \mfrac{ G_2(\sqrt{-\lambda}) }{\sqrt{-\lambda}} 
    \Big)
    \;\overset{(a)}{=}\;
    \mfrac{\sigma^2_\epsilon}{n} 
     \msum_{l=1}^p
        \bar \Sigma_{ll}
        \big(
            -
            \sqrt{-\lambda} \,
            G_{n+l, n+l}(\sqrt{-\lambda}) 
        \big)
    \;.
\end{align*}
In $(a)$, we have recalled that $\bar \Sigma$ is diagonal under \Cref{mod:vp}. Define the deterministic approximation 
\begin{align*}
    F^\lambda_V \;\coloneqq\; 
         \mfrac{\sigma^2_\epsilon}{n}
            \,
            \msum_{l=1}^p
            \, \bar \Sigma_{ll}\,
            \big( - \sqrt{-\lambda} \, 
                \fM_{n+l}(\sqrt{-\lambda})
            \big)\;.
\end{align*}
As with the proof of \Cref{thm:bias}, since $\lambda \in (n^{-2\upsilon}, \lambda_0)$, $\sqrt{-\lambda} \in \D_{\upsilon;\lambda_0^{1/2}}$, and the local law (\Cref{thm:local:law:VP}) applies to give
\begin{align*}
    \big| 
         \cF^\lambda_V(X) 
         -
         F^\lambda_V
    \big|
    \;&\leq\;
    \mfrac{\sigma^2_\epsilon}{n}
    \msum_{l=1}^p
    | \bar \Sigma_{ll}|
    \,
    \lambda^{1/2}
    \,
    \Big| 
         G_{n+l, n+l}(\sqrt{-\lambda}) - \fM_{n+l}(\sqrt{-\lambda}) 
    \Big|
    \\
    \;&\prec\; 
    \sigma^2_\epsilon \, \lambda^{1/2} \, n^{ - (c_L+1) \upsilon }
    \;.
\end{align*}
In the last line, we have also used \Cref{lem:Sigma:op:bound} to note that $| \bar \Sigma_{ll}| \leq s_*$. Note in particular that the local law holds uniformly over $\lambda \in (n^{-2\upsilon}, \lambda_0)$. 
Since both    $\cF^\lambda_V(X)$ and $F^\lambda_V$ are analytic in $\lambda \in (n^{-2\upsilon},    \lambda_0)$, 
we can use the same argument as Appendix A.3~of \cite{hastie2022surprises} to apply a differentiation and obtain 
\begin{align*}
    \big| 
        \cR_V^\lambda(X)
        - 
        \partial_\lambda F^\lambda_V
    \big| 
    \;\prec\;
   \mfrac{ \sigma^2_\epsilon}{\lambda^{1/2}} \, n^{ - (c_L+1) \upsilon }
    \;.
\end{align*}
As such, provided that $\lambda \in (n^{-\alpha}, \lambda_0)$ with 
\begin{align*}
    \alpha \;<\; \max\{ 2 \upsilon \,,\, 2(c_L+1) \upsilon \big\} \;\eqqcolon\; \alpha_1\;,
\end{align*}
we obtain the stochastic domination statement that for some universal constant $c'' > 0$,
\begin{align*}
    \big| 
        \cR_V^\lambda(X)
        - 
        \partial_\lambda F^\lambda_V
    \big|  \;\prec\; n^{- c''}\;.
\end{align*}
By the Borel-Cantelli lemma, we obtain that 
\begin{align*}
    \big| 
        \cR_V^\lambda(X)
        - 
        \partial_\lambda F^\lambda_V
    \big|  \;\xrightarrow{\rm a.s.}\; 0\;.
\end{align*}
To obtain the expression in terms of $\br(\lambda)$, we recall from \eqref{defn:r} that $r_l(\lambda) = -\sqrt{-\lambda} \, \fM_{n+l}(\sqrt{-\lambda})$. This implies
\begin{align*}
    \partial_\lambda F^\lambda_V 
    \;=&\; 
    \mfrac{\sigma^2_\epsilon}{n}
    \,
    \msum_{l=1}^p
    \, \bar \Sigma_{ll}\,
    \partial_\lambda 
    \big( - \sqrt{-\lambda} \, 
        \fM_{n+l}(\sqrt{-\lambda})
    \big)
    \;=\;
    \mfrac{\sigma^2_\epsilon}{n}
    \,
    \msum_{l=1}^p
    \, \bar \Sigma_{ll}\, 
    \partial  r_l(\lambda)
    \;.
\end{align*}
The properties of $\partial r_l$ follow from \Cref{lem:r:values}.

\qed

\section{Additional simulations} \label{sec:additional-experiments}

We collect further diagonal-support experiments that realise the degeneracy mechanism of
\Cref{sec:vp-risk-main} coordinate-wise, together with approximate-degeneracy and full-rank
controls. Throughout, $n$ is fixed and the risk is Monte-Carlo averaged over $100$ independent
draws of a two- or three-group design with the flat signal $\beta=\bone_p/\sqrt p$; the peak
locations quoted below are the specialisation of the switching rule \Cref{thm:var-switch} to
the two-group design worked out in \Cref{ex:two-group-peaks}.

\subsection{Diagonal support: two and three groups}

\Cref{fig:app-two-group,fig:app-two-group-reverse} vary the support overlap of two groups and
\Cref{fig:app-three-group} adds a third group; in each case the empirical peaks fall at the
thresholds of \Cref{ex:two-group-peaks} (and its three-group analogue), with the number of
peaks growing as the supports are made to switch at distinct aspect ratios.

\begin{figure}[htbp]
    \centering
    \includegraphics[width=\linewidth]{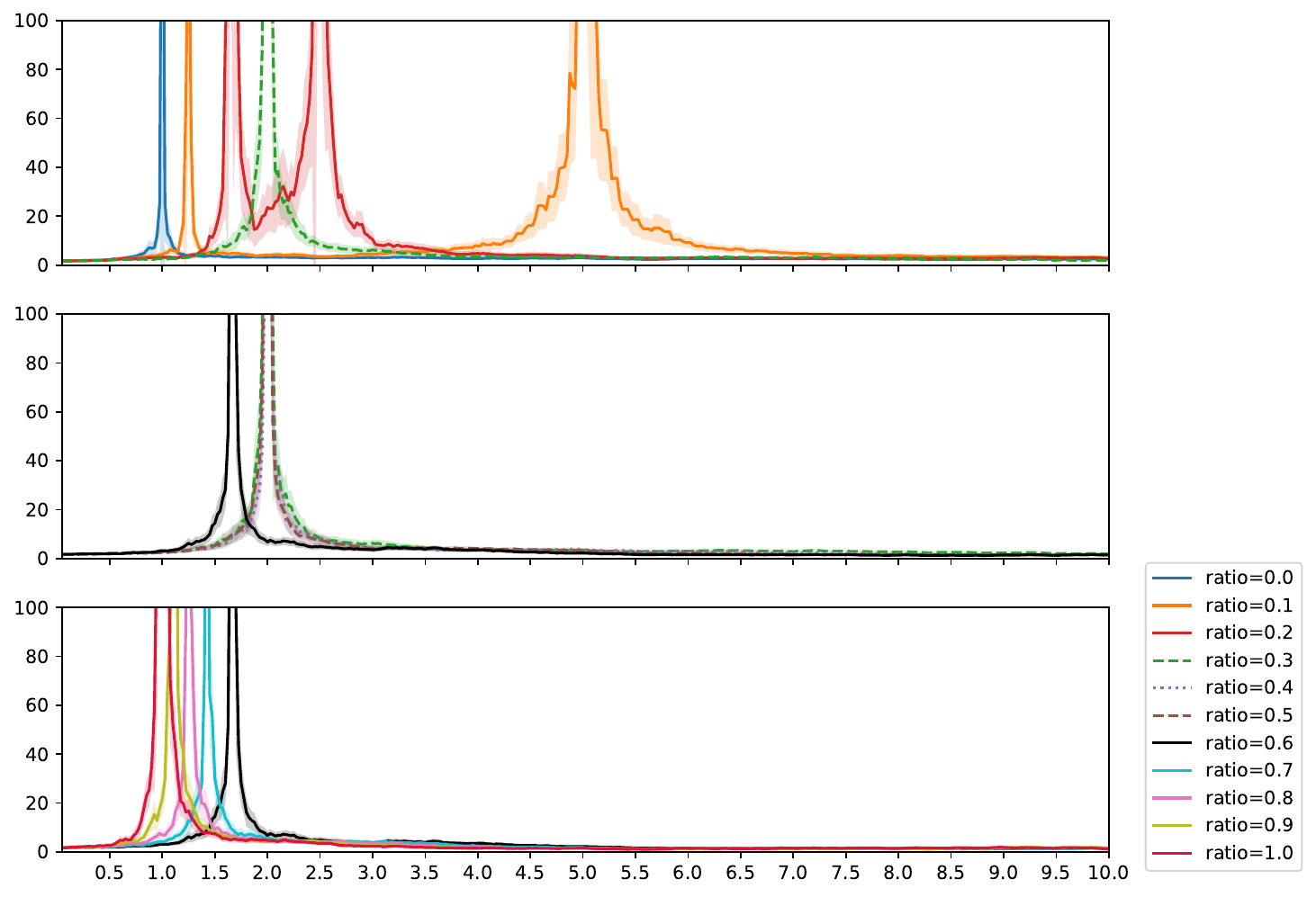}
    \caption{Two groups of $n/2$ observations, $n=200$. Group $1$ is $\cN(0,\diag\{1,\dots,1,0,\dots,0\})$
    with the top $p/2$ coordinates active; group $2$ is $\cN(0,\diag\{1,\dots,1,0,\dots,0\})$ with the
    top $\rho p$ coordinates active. Test risk against $p/n$; each curve is one value of $\rho$
    (\texttt{ratio}$=\rho$).}
    \label{fig:app-two-group}
\end{figure}

\begin{figure}[htbp]
    \centering
    \includegraphics[width=\linewidth]{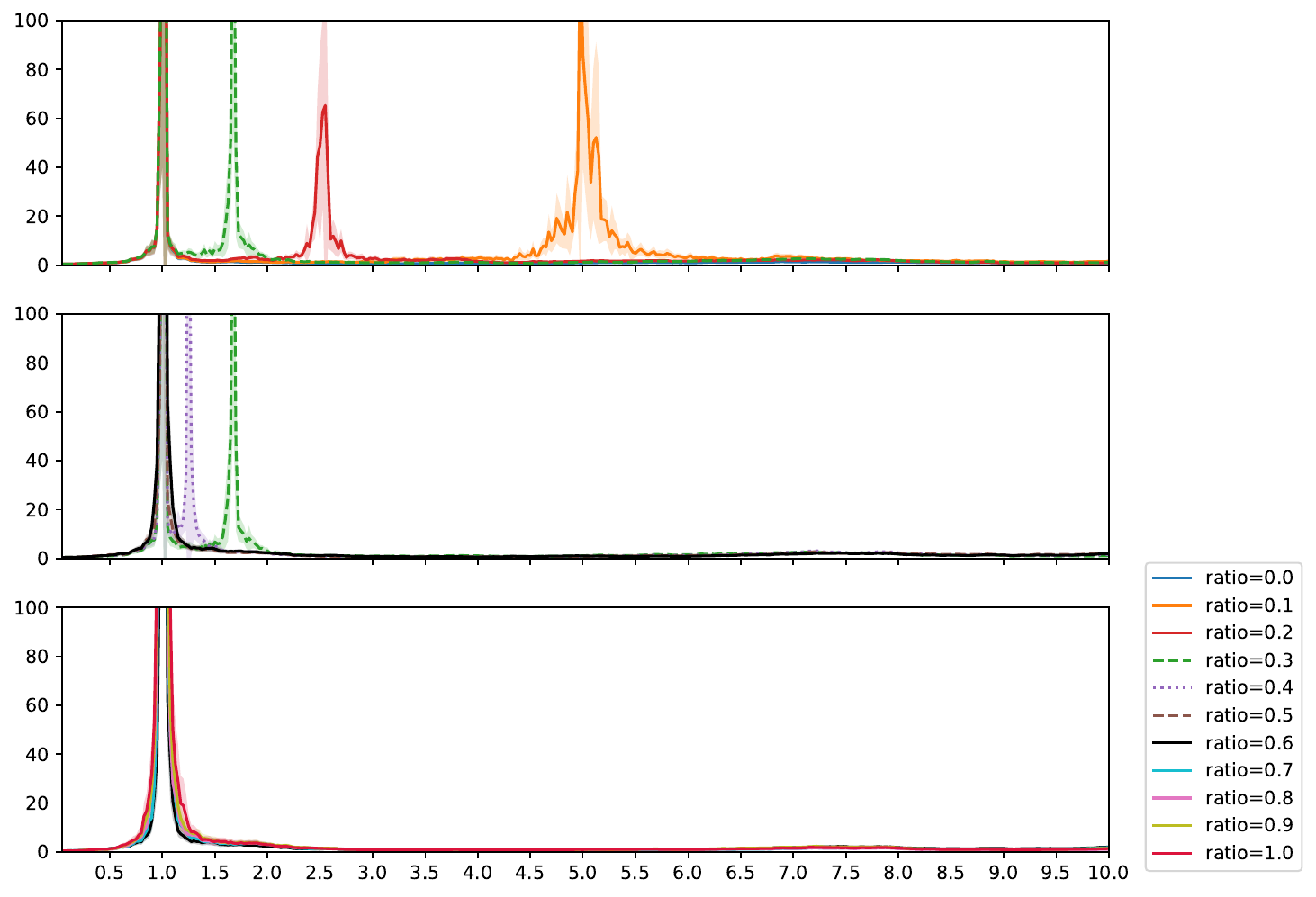}
    \caption{As \Cref{fig:app-two-group}, but group $2$ activates its \emph{bottom} $\rho p$
    coordinates, so the two supports overlap only partially. Test risk against $p/n$, $n=200$;
    each curve is one value of $\rho$ (\texttt{ratio}$=\rho$).}
    \label{fig:app-two-group-reverse}
\end{figure}

\begin{figure}[htbp]
    \centering
    \includegraphics[width=\linewidth]{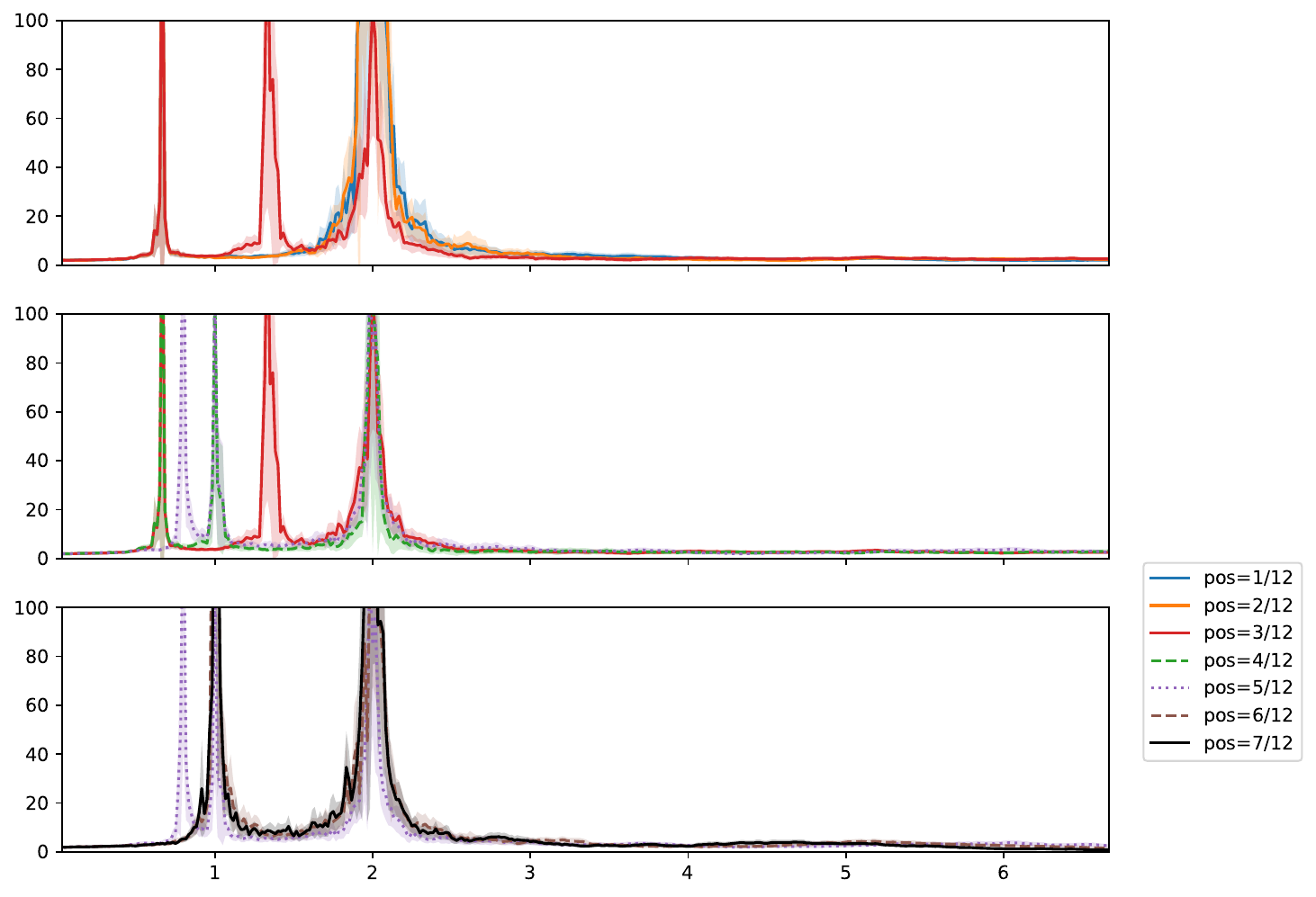}
    \caption{Three groups of $n/3$ observations, $n=300$. Group $1$ activates the top $p/3$
    coordinates, group $2$ the bottom $p/2$, and group $3$ a middle band of width $p/6$ starting at
    coordinate $\rho p$. Test risk against $p/n$; each curve is one value of $\rho$
    (\texttt{pos}$=\rho$).}
    \label{fig:app-three-group}
\end{figure}

\subsection{Approximate degeneracy: small in place of zero eigenvalues}

To probe robustness to \emph{exact} zeros, \Cref{fig:app-diminish,fig:app-diminish-beyond}
replace the vanishing eigenvalues by a small scale $\sigma^2$ (and its reciprocal), and
\Cref{fig:app-diminish-overlap,fig:app-diminish-overlap-beyond} repeat this with overlapping
supports. As $\sigma\to0$ the near-degenerate directions reproduce the exact-zero peaks of
\Cref{fig:app-two-group}; for moderate $\sigma$ the peaks soften, quantifying the
exact-zero idealisation.

\begin{figure}[htbp]
    \centering
    \includegraphics[width=0.75\linewidth]{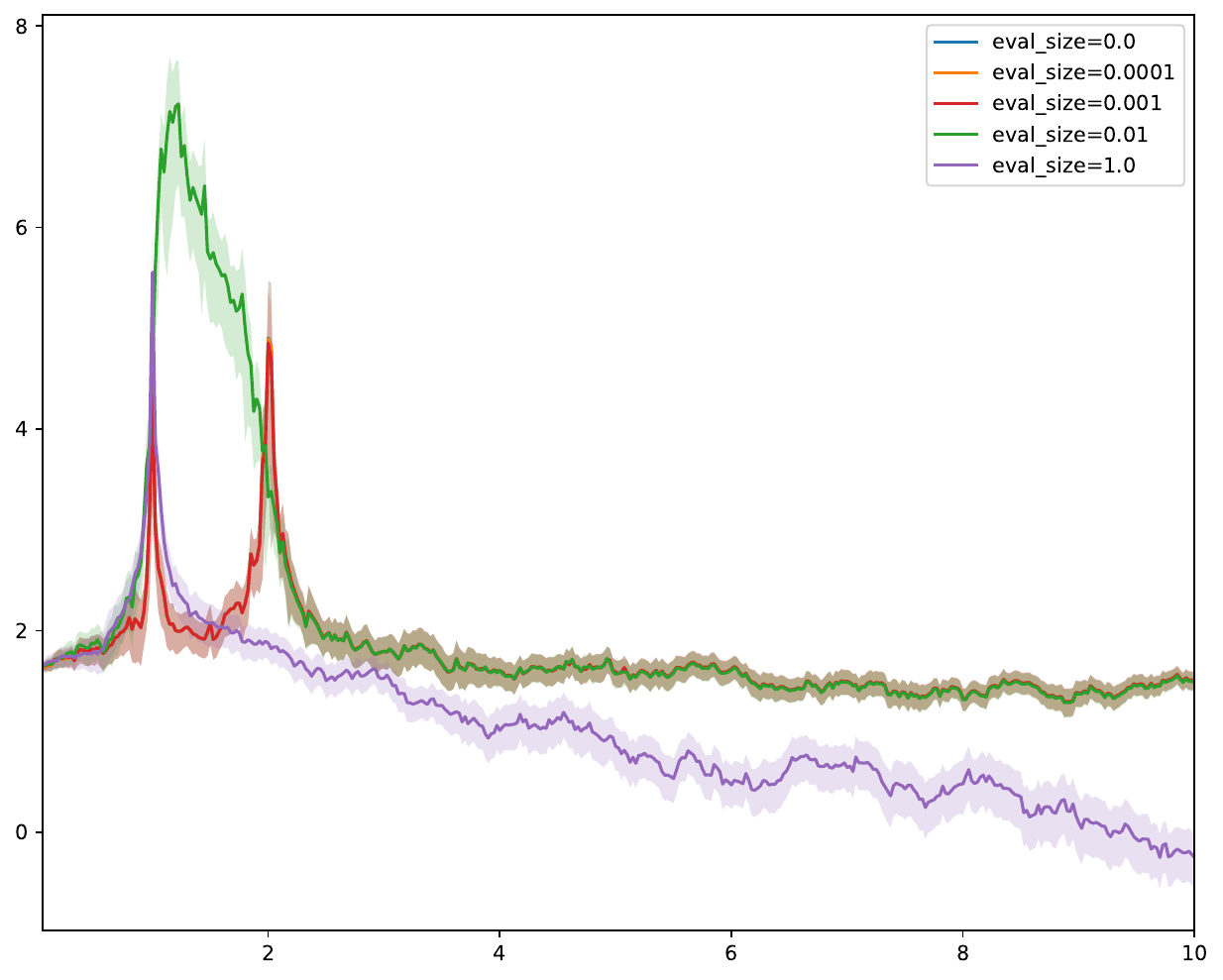}
    \caption{Two groups of $n/2$, $n=200$; test risk ($\log_{10}$ scale) against $p/n$. Group $1$
    has covariance $\diag\{1,\dots,1,\sigma^2,\dots,\sigma^2\}$ (top $p/2$ equal to $1$); group $2$
    has $\diag\{\sigma^2,\dots,\sigma^2,1,\dots,1\}$ (bottom $p/4$ equal to $1$). Each curve is one
    value of $\sigma$ (\texttt{eval\_size}$=\sigma$).}
    \label{fig:app-diminish}
\end{figure}

\begin{figure}[htbp]
    \centering
    \includegraphics[width=0.75\linewidth]{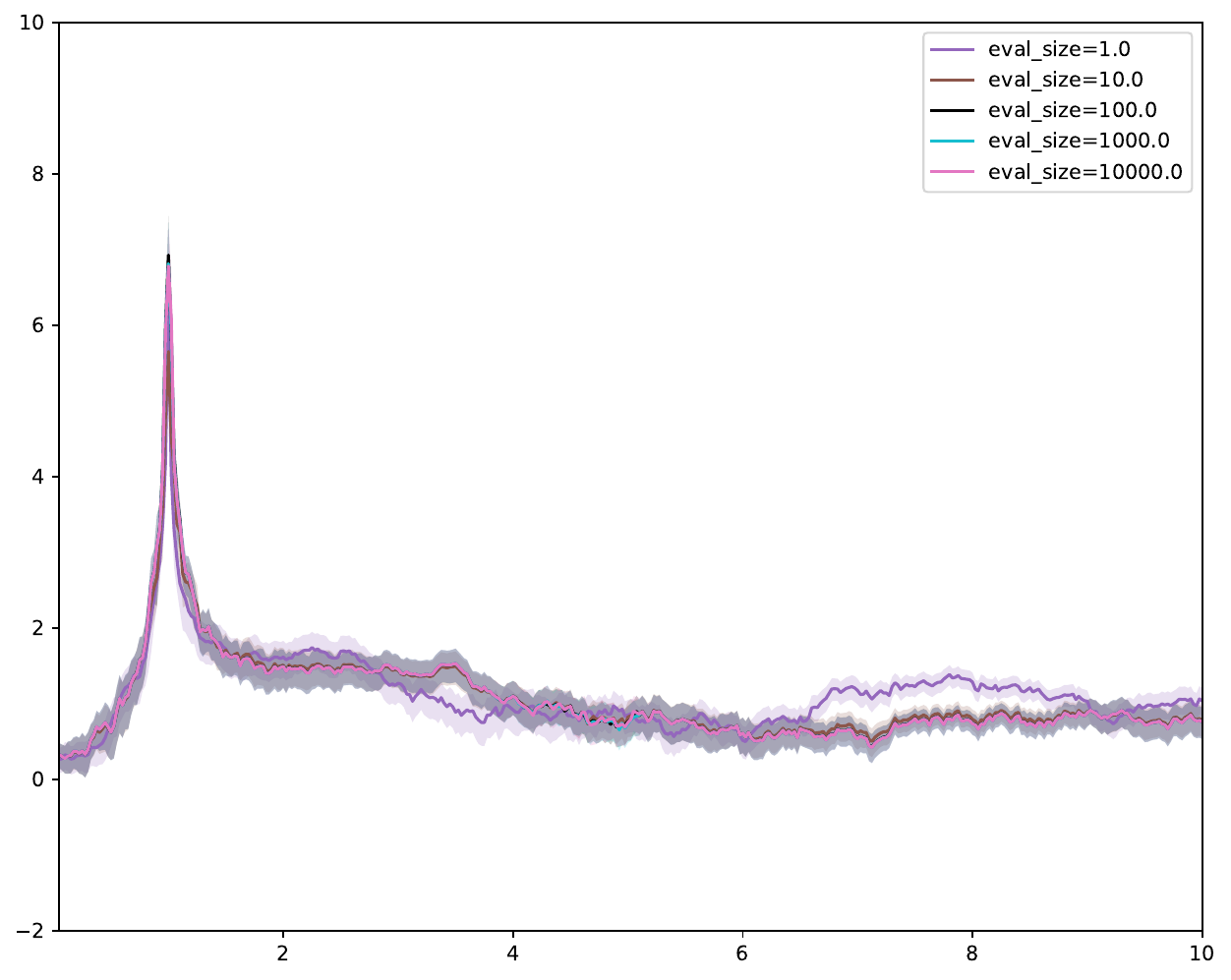}
    \caption{As \Cref{fig:app-diminish} with the small scale inverted: group $1$ covariance
    $\diag\{\sigma^{-2},\dots,\sigma^{-2},1,\dots,1\}$ (top $p/2$), group $2$
    $\diag\{1,\dots,1,\sigma^{-2},\dots,\sigma^{-2}\}$ (bottom $p/4$). Test risk ($\log_{10}$) against
    $p/n$, $n=200$; each curve is one $\sigma$ (\texttt{eval\_size}$=\sigma$).}
    \label{fig:app-diminish-beyond}
\end{figure}

\begin{figure}[htbp]
    \centering
    \includegraphics[width=0.75\linewidth]{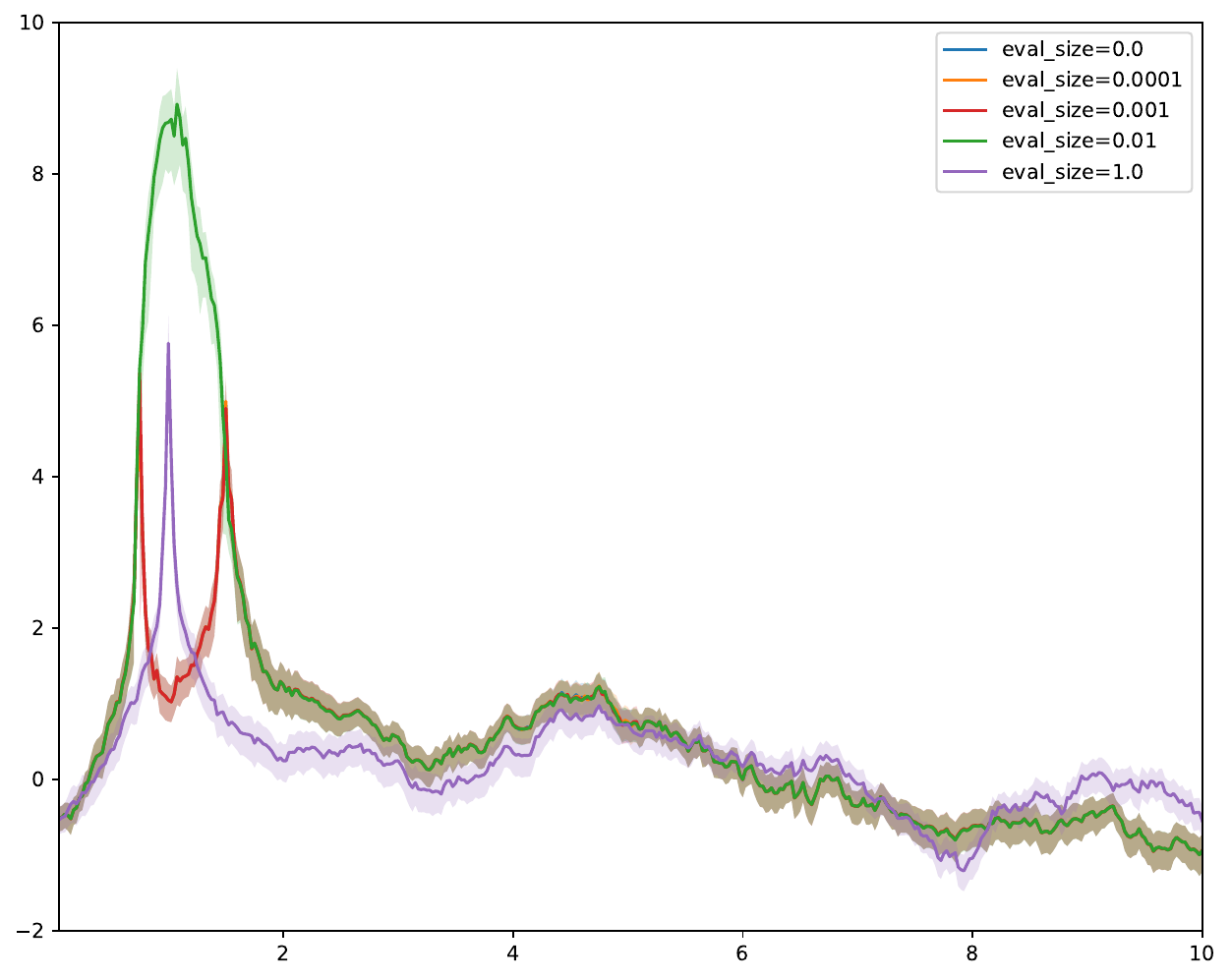}
    \caption{Overlapping-support version of \Cref{fig:app-diminish}: group $1$ activates its top
    $p/3$ coordinates and group $2$ its bottom $3p/4$, with the remaining directions carrying the
    small scale $\sigma^2$. Test risk ($\log_{10}$) against $p/n$, $n=200$; each curve is one
    $\sigma$ (\texttt{eval\_size}$=\sigma$).}
    \label{fig:app-diminish-overlap}
\end{figure}

\begin{figure}[htbp]
    \centering
    \includegraphics[width=0.75\linewidth]{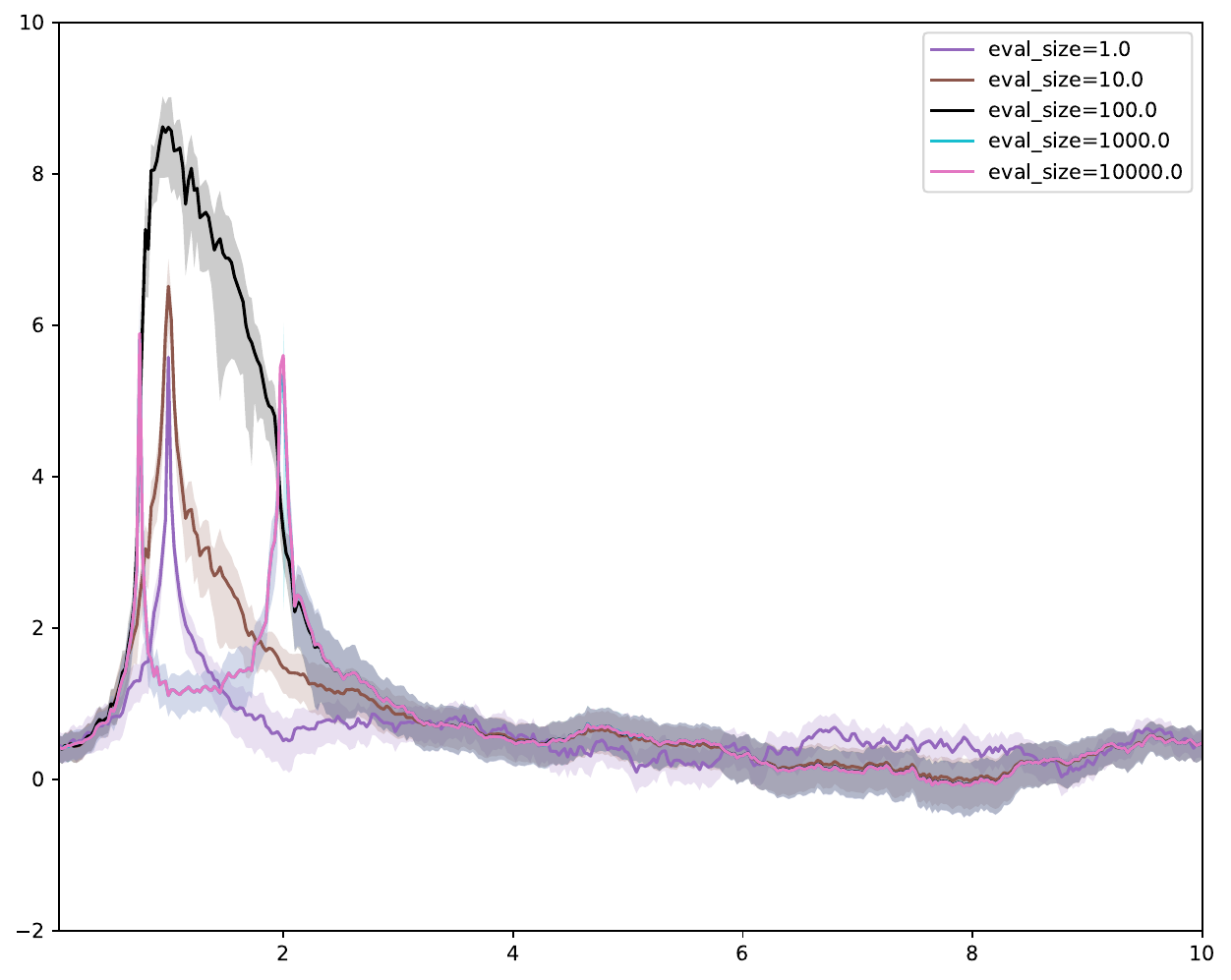}
    \caption{As \Cref{fig:app-diminish-overlap} with the small scale inverted to $\sigma^{-2}$.
    Test risk ($\log_{10}$) against $p/n$, $n=200$; each curve is one $\sigma$
    (\texttt{eval\_size}$=\sigma$).}
    \label{fig:app-diminish-overlap-beyond}
\end{figure}

\subsection{Full-rank, non-commuting controls}

Finally, \Cref{fig:app-nondiag,fig:app-nondiag-two} use full-rank, non-simultaneously
diagonalizable covariances (a rotation by $\theta$), and \Cref{fig:app-nondiag-size} a
full-rank sanity check with varying scale. Consistent with \Cref{sec:non:sim:diag:but:pos},
these show only the classical peak at $p/n=1$: without spectral degeneracy, heterogeneity adds
no descents.

\begin{figure}[htbp]
    \centering
    \includegraphics[width=0.75\linewidth]{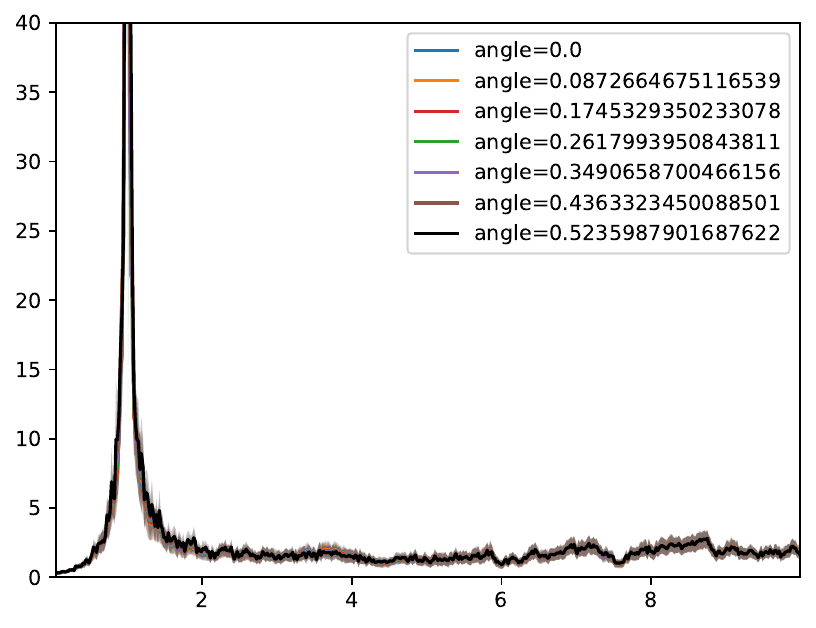}
    \caption{Full-rank, non-commuting control. Two groups of $n/2$, $n=200$. Group $1$ has covariance $\diag\{1,\dots,1,4,\dots,4\}$ (top $p/2$ equal to $1$); group $2$ has covariance $\big(\begin{smallmatrix}1 & \sin 2\theta\\ \sin 2\theta & 1\end{smallmatrix}\big)\otimes I_{p/2}$,$\theta\in[0,\pi/6]$. Test risk against $p/n$; each curve is one $\theta$ (\texttt{angle}$=\theta$).    
    }
    \label{fig:app-nondiag}
\end{figure}

\begin{figure}[htbp]
    \centering
    \includegraphics[width=0.75\linewidth]{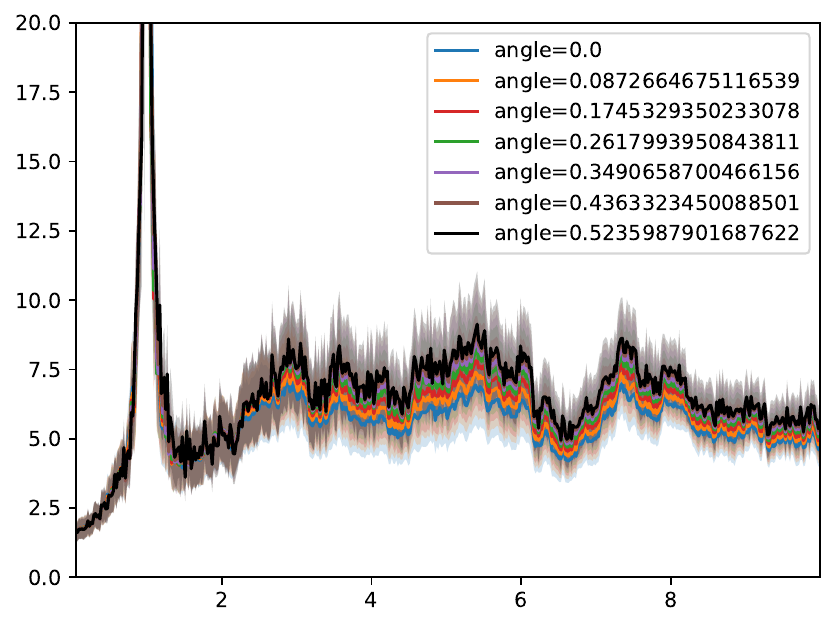}
    \caption{As \Cref{fig:app-nondiag}, but group $1$ is isotropic, covariance $I_{p/2}$. Test risk against $p/n$, $n=200$; each curve is one $\theta$ (\texttt{angle}$=\theta$).
    }
    \label{fig:app-nondiag-two}
\end{figure}

\begin{figure}[htbp]
    \centering
    \includegraphics[width=0.75\linewidth]{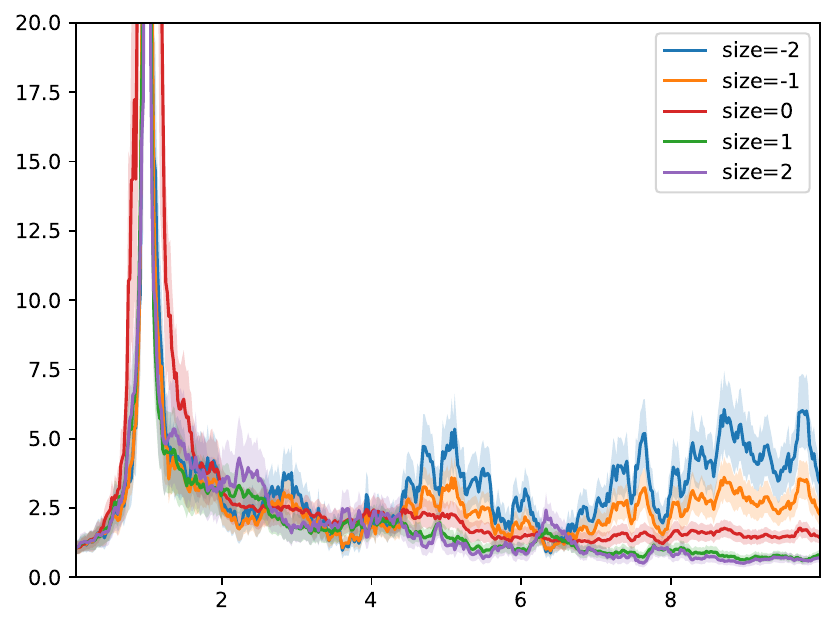}
    \caption{Full-rank sanity check with varying scale. Two groups of $n/2$, $n=200$. Group $1$ is
    isotropic, $\cN(0,I_p)$; group $2$ has covariance $\diag\{1,\dots,1,\sigma^2,\sigma^2\}$. Test
    risk against $p/n$; each curve is one $\sigma$ (\texttt{size}$=\sigma$).}
    \label{fig:app-nondiag-size}
\end{figure}

\end{document}